\newtheorem{Thm}{Theorem}[section]
\newtheorem{Prop}[Thm]{Proposition}
\newtheorem{Cor}[Thm]{Corollary}
\newtheorem{Lem}[Thm]{Lemma}
\newtheorem{Clm}[Thm]{Claim}
\newtheorem{Conj}[Thm]{Conjecture}
\newtheorem{Ma}{Main Theorem 1}[section]
\newtheorem{Mb}{Main Theorem 1'}[section]
\newtheorem{Md}{Main Theorem 2}[section]
\newtheorem{Me}{Main Theorem 3}[section]
\theoremstyle{definition}
\newtheorem{notation}[Thm]{Notation}
\newtheorem{Def}[Thm]{Definition}
\theoremstyle{remark}
\newtheorem{Rem}{Remark}[section]
\newtheorem{Example}{Example}[section]
\newcommand{\Ric}{\mathop{\mathrm{Ric}}\nolimits}
\newcommand{\tr}{\mathop{\mathrm{tr}}\nolimits}
\newcommand{\Imag}{\mathop{\mathrm{Im}}\nolimits}
\newcommand{\Id}{\mathop{\mathrm{Id}}\nolimits}
\newcommand{\Vol}{\mathop{\mathrm{Vol}}\nolimits}
\newcommand{\diam}{\mathop{\mathrm{diam}}\nolimits}
\newcommand{\Card}{\mathop{\mathrm{Card}}\nolimits}
\newcommand{\Span}{\mathop{\mathrm{Span}}\nolimits}
\newcommand{\rad}{\mathop{\mathrm{rad}}\nolimits}
\newcommand{\LIP}{\mathop{\mathrm{LIP}}\nolimits}
\newcommand{\Test}{\mathop{\mathrm{Test}}\nolimits}
\newcommand{\Hess}{\mathop{\mathrm{Hess}}\nolimits}
\newcommand{\Scal}{\mathop{\mathrm{Scal}}\nolimits}
\newcommand{\Ca}{\mathop{1}\nolimits}
\newcommand{\Cb}{\mathop{2}\nolimits}
\newcommand{\Ck}{\mathop{3}\nolimits}
\newcommand{\Cm}{\mathop{4}\nolimits}
\newcommand{\Da}{\mathop{1}\nolimits}
\newcommand{\Db}{\mathop{2}\nolimits}
\newcommand{\Dp}{\mathop{3}\nolimits}
\newcommand{\Dq}{\mathop{4}\nolimits}
\newcommand{\Dr}{\mathop{5}\nolimits}
\title[eigenvalue pinching without positive Ricci]{Sphere theorems and eigenvalue pinching without positive Ricci curvature assumption}
\author{Masayuki Aino}
\address{Graduate School of Mathematics, Nagoya University, Chikusa-Ku Nagoya, 464-8602, Japan}
\email{m16100c@math.nagoya-u.ac.jp}
\subjclass[2010]{53C20}
\keywords{Gromov-Hausdorff distance, sphere theorem, eigenvalue pinching}
\begin{document}
\maketitle
\begin{abstract}
Considering the almost rigidity of the Obata theorem, we generalize Petersen and Aubry's sphere theorem about eigenvalue pinching without assuming the positivity of Ricci curvature, only assuming $\Ric\geq-Kg$ and $\diam\leq D$ for some positive constants $K>0$ and $D>0$.
\end{abstract} 
\tableofcontents
\section{Introduction}
The main aim of this article is to give the almost rigidity result about the Obata theorem without assuming the positivity of Ricci curvature, only assuming $\Ric\geq-Kg$ and $\diam\leq D$ for some positive constants $K>0$ and $D>0$.
The Obata theorem  is as follows:
\begin{Thm}[\cite{Ob}] \label{Ob}
Take an integer $n\geq 2$.
Let $(M,g)$ be a closed Riemannian manifold of dimension $n$. If $(M,g)$ admits a non-constant function $f\in C^\infty(M)$ with $\nabla^2 f + f g=0$, then $(M,g)$ is isometric to the standard sphere of radius $1$.
\end{Thm}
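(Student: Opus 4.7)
The plan is to derive from the Hessian identity $\nabla^2 f + f g = 0$ an explicit formula for $f$ in terms of the distance to a maximum point, and then read off the metric in geodesic polar coordinates. First, I would observe that $|\nabla f|^2 + f^2$ is constant on $M$: differentiating in direction $X$ gives $2\nabla^2 f(X,\nabla f) + 2 f X(f) = -2 f g(X,\nabla f) + 2 f g(\nabla f, X) = 0$. After rescaling $f$ by a positive constant (which preserves the equation), normalize so that $|\nabla f|^2 + f^2 \equiv 1$; then $|f| \leq 1$, and at a maximum point $p$ necessarily $f(p) = 1$.

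Next, I would restrict the equation to a unit-speed geodesic $\gamma$ from $p$. Setting $\phi(t) := f(\gamma(t))$, one has $\phi''(t) = \nabla^2 f(\gamma'(t),\gamma'(t)) = -\phi(t)$ with $\phi(0) = 1$, $\phi'(0) = 0$, so $\phi(t) = \cos t$. Hence $f(x) = \cos d(p,x)$ for every $x \in M$, and in particular $\diam(M,g) \leq \pi$. The minimum of $f$ is $-1$ (at any critical point $|\nabla f| = 0$ forces $f = \pm 1$), attained at some point $q$ with $d(p,q) = \pi$. Applying the same analysis to $-f$, which also satisfies the Obata equation and attains its maximum at $q$, gives $-f(x) = \cos d(q,x)$; combining yields $d(p,x) + d(q,x) = \pi$ for all $x$, which in turn forces uniqueness of the minimum point $q$.

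Finally, I would build the isometry $M \to S^n$ through geodesic polar coordinates at $p$. Since $f = \cos d(p,\cdot)$ is smooth on $M$ and $\arccos$ is smooth on $(-1,1)$, the distance function $r := d(p,\cdot)$ is smooth on $M \setminus \{p,q\}$ with $|\nabla r| = 1$; thus $p$ has no cut points other than $q$, and $\exp_p$ restricts to a diffeomorphism from the open ball $B(0,\pi) \subset T_p M$ onto $M \setminus \{q\}$. In these coordinates the metric takes the form $g = dr^2 + h_r$, and the angular part of the Hessian equation, $\nabla^2 f(X,X) = -\cos(r)\, g(X,X)$ for $X$ tangent to level sets, reduces to the ODE $\partial_r h_r = 2\cot(r)\, h_r$ with initial datum $h_r \sim r^2 g_{S^{n-1}}$ as $r \to 0$. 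This forces $h_r = \sin^2(r)\, g_{S^{n-1}}$, identifying $M$ with the round sphere. The main obstacle will be the boundary behavior at $r = \pi$: one must check that $\exp_p$ extends smoothly across $\partial B(0,\pi)$ by collapsing it to the single point $q$, which follows from the non-degeneracy of $\nabla^2 f(q) = -f(q) g_q = g_q$, i.e.\ from $q$ being a non-degenerate isolated minimum of $f$.
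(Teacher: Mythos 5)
The paper does not supply its own proof of this statement: it is quoted directly from Obata's paper \cite{Ob} and used as a black box, so there is no internal argument in the paper to compare against. Your proposal should therefore be judged on its own.

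Your overall skeleton is the standard one and the first and third paragraphs are sound: the conserved quantity $|\nabla f|^2 + f^2$, the ODE $\phi'' + \phi = 0$ along unit-speed geodesics emanating from a maximum $p$, the identity $f = \cos d(p,\cdot)$, and finally the warped-product ODE $\partial_r h_r = 2\cot r\,h_r$ in polar coordinates. However, there is a genuine gap in the second paragraph. From $\cos d(p,x) = -\cos d(q,x)$ you infer $d(p,x)+d(q,x)=\pi$, but $\cos$ is $2\pi$-periodic and only injective on $[0,\pi]$, so this deduction presupposes $d(p,x),d(q,x)\le\pi$ — that is, it presupposes $\diam(M)\le\pi$, which is precisely what is being used to control the picture. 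Without a lower Ricci bound there is no Bonnet--Myers to invoke, so $\diam\le\pi$, the uniqueness of the maximum $p$ and of the minimum $q$, and the surjectivity of $\exp_p\colon B(0,\pi)\to M\setminus\{q\}$ all require a separate argument, and the hypothesis $n\ge 2$ must actually be used (for $n=1$ every circle $\mathbb{R}/2\pi k\mathbb{Z}$ admits $\cos\theta$ as a nontrivial solution).

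The standard way to close this gap, compatible with your setup, is the following. Since $|\nabla f|^2+f^2\equiv 1$, the critical set of $f$ is $\{f=\pm1\}$ and the Hessian there equals $\mp g$, so the critical points are nondegenerate, hence isolated, hence finitely many. On $M\setminus\{f=\pm1\}$ set $V:=-\nabla f/|\nabla f|$; a direct computation using $\nabla^2 f=-fg$ gives $\nabla_V V=0$, so the integral curves of $V$ are unit-speed geodesics along which $\arccos f$ increases at unit rate, hence each flow line runs from a point of $\{f=1\}$ to a point of $\{f=-1\}$ in time exactly $\pi$. Moreover, for any unit $u\in T_pM$, the Cauchy--Schwarz equality applied to $\langle\nabla f,\dot\gamma_u\rangle=-\sin t$ versus $|\nabla f|=|\sin t|$ shows $\dot\gamma_u(t)=V(\gamma_u(t))$ on $(0,\pi)$, so every geodesic from $p$ is a flow line of $V$ (and in particular $\exp_p|_{B(0,\pi)\setminus\{0\}}$ is injective, by uniqueness of integral curves and the fact that $\arccos f$ recovers the time parameter). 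The open ``basins'' $\exp_{p_i}(B(0,\pi))\setminus\{p_i\}$ of the distinct maxima $p_i$ are pairwise disjoint (the backward flow from a point is unique) and cover $M\setminus\{f=\pm1\}$; since $n\ge 2$, removing a finite set leaves $M$ connected, so there is a single basin, hence a unique maximum $p$ and, by the same argument applied to $-f$, a unique minimum $q$. Only now do $\diam(M)=\pi$, $M\setminus\{q\}=\exp_p(B(0,\pi))$, and the excess identity $d(p,x)+d(q,x)=\pi$ follow, after which your polar-coordinate ODE finishes the proof. Your appeal in the last sentence to the nondegeneracy of $\nabla^2 f(q)$ does ensure $q$ is isolated, but it does not by itself yield either the uniqueness of $q$ or the surjectivity of $\exp_p$ without the connectedness step above.
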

We emphasize that the Obata theorem itself does not require the assumption of positive Ricci curvature.
The Obata theorem implies the celebrated Lichnerowicz-Obata theorem.
In the following, $\lambda_k(g)$ denotes the $k$-th eigenvalue of the Laplacian acting on functions.
\begin{Thm}
Take an integer $n\geq 2$.
Let $(M,g)$ be an $n$-dimensional closed Riemannian manifold. If $\Ric \geq (n-1) g$, then $\lambda_1(g)\geq n$.
The equality holds if and only if $(M,g)$ is isometric to the standard sphere of radius $1$.
\end{Thm}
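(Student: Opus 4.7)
The plan is to apply the Bochner formula to a first eigenfunction and combine it with a Cauchy–Schwarz estimate for the Hessian together with the Ricci hypothesis. Let $f$ be a non-constant first eigenfunction, normalized so that $\Delta f = -\lambda_1(g) f$ (so that $\int_M |\nabla f|^2 = \lambda_1(g)\int_M f^2$). The argument will pair two pointwise inequalities, and the rigidity in the equality case will reduce to the Obata theorem already stated in the excerpt.

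First I would recall the Bochner identity
\[
\tfrac{1}{2}\Delta|\nabla f|^2 = |\nabla^2 f|^2 + \langle \nabla f, \nabla \Delta f\rangle + \Ric(\nabla f,\nabla f)
\]
and integrate it over the closed manifold $M$. The left-hand side integrates to zero, and on an eigenfunction the middle term contributes $-\lambda_1(g)\int_M |\nabla f|^2$. Next, I would invoke the pointwise Cauchy–Schwarz inequality $|\nabla^2 f|^2 \geq (\tr \nabla^2 f)^2/n = (\Delta f)^2/n$, which after integration yields $\int_M |\nabla^2 f|^2 \geq (\lambda_1(g)/n)\int_M |\nabla f|^2$, and the Ricci hypothesis, which yields $\int_M \Ric(\nabla f,\nabla f)\geq (n-1)\int_M |\nabla f|^2$. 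Substituting both into the integrated Bochner identity and dividing by the strictly positive quantity $\int_M |\nabla f|^2$ gives
\[
0 \geq \frac{\lambda_1(g)}{n} - \lambda_1(g) + (n-1),
\]
which rearranges to $\lambda_1(g)\geq n$.

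For the equality case $\lambda_1(g)=n$, both pointwise inequalities must saturate. Saturation of Cauchy–Schwarz forces $\nabla^2 f = (\Delta f/n)\,g = -fg$, i.e.\ $\nabla^2 f + fg = 0$ on all of $M$. Since $f$ is non-constant, Theorem~\ref{Ob} immediately identifies $(M,g)$ with the standard unit sphere; conversely, the height functions on the unit sphere realize $\lambda_1=n$, closing the characterization. There is no real obstacle here: the analytic portion is routine manipulation of Bochner's formula, and the only substantive input to the rigidity statement is the Obata theorem cited above, which does the geometric work of producing the isometry with $S^n$.
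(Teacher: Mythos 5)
Your argument is correct and is exactly the classical Lichnerowicz--Obata proof the paper has in mind: the paper offers no proof of its own, merely remarking that the statement follows from the Obata theorem (Theorem \ref{Ob}), and your rigidity step invokes precisely that theorem after the Bochner/Cauchy--Schwarz computation forces $\nabla^2 f+fg=0$. The only cosmetic point is that you work with the sign convention $\Delta f=-\lambda_1 f$, opposite to the paper's $\Delta=-\tr_g\nabla^2$ (for which $\Delta f=\lambda_1 f$), but you use it consistently, so nothing is affected.
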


Petersen \cite{Pe1} and Aubry \cite{Au} showed the stability result of the Lichnerowicz-Obata theorem.
In the following, $d_{GH}$ denotes the Gromov-Hausdorff distance function (see Definition \ref{DGH} for the definition of the Gromov-Hausdorff distance).
\begin{Thm}[\cite{Au}, \cite{Pe1}]\label{PA}
Given an integer $n\geq 2$ and an $\epsilon>0$, there exists $\delta(n,\epsilon)>0$ such that if $(M,g)$ is an $n$-dimensional closed Riemannian manifold with $\Ric \geq (n-1) g$ and $\lambda_n(g)\leq n+\delta$, then $d_{GH}(M,S^n)\leq \epsilon$.
\end{Thm}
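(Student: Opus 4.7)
My approach will be by contradiction combined with Gromov-Hausdorff compactness. Suppose the theorem fails: there exist $n\geq 2$, $\epsilon>0$, and a sequence of closed $n$-manifolds $(M_i,g_i)$ with $\Ric_{g_i}\geq(n-1)g_i$ and $\lambda_n(g_i)\to n$, yet $d_{GH}(M_i,S^n)>\epsilon$ for every $i$. Myers' theorem gives $\diam(M_i,g_i)\leq\pi$, so Gromov's precompactness theorem yields a subsequence converging in Gromov-Hausdorff distance to a compact metric space $(X,d_X)$. It will suffice to prove $X$ is isometric to $S^n$, which contradicts the separation assumption.

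The plan for the second step is to extract approximate Obata solutions from the pinched eigenfunctions. Let $f_1^i,\ldots,f_n^i$ be $L^2$-orthonormal eigenfunctions of $-\Delta_{g_i}$ with eigenvalues $\lambda_1^i\leq\cdots\leq\lambda_n^i\leq n+o(1)$. From Bochner's identity, integration by parts, and the Ricci lower bound, a direct computation yields
\[\int_{M_i}\bigl|\Hess f_k^i+f_k^i g_i\bigr|^2\,d\mu_{g_i}\leq(\lambda_k^i-1)(\lambda_k^i-n)\int_{M_i}(f_k^i)^2\,d\mu_{g_i},\]
whose right-hand side tends to $0$ as $i\to\infty$. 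Standard Cheng-Yau type gradient and sup bounds give uniform $C^0$ and $H^2$ control on the $f_k^i$, so a diagonal subsequence converges in the spectral-convergence sense under Ricci-bounded Gromov-Hausdorff limits (Cheeger-Colding, Honda) to Lipschitz limits $f_k^\infty$ on $X$. These are eigenfunctions of the limit Laplacian with eigenvalue $n$, retain $L^2$-orthonormality, and satisfy the weak Obata equation $\Hess f_k^\infty+f_k^\infty g_X=0$.

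Finally, I will identify $X$ with $S^n$. Differentiating the Obata equation yields $|\nabla f_k^\infty|^2+(f_k^\infty)^2=c_k$ almost everywhere, which may be normalized so $c_k=1$. The map $F^\infty:=(f_1^\infty,\ldots,f_n^\infty)\colon X\to\bar B^n\subset\mathbb{R}^n$ is then $1$-Lipschitz, surjective onto $\bar B^n$, and approximately distance-preserving in the $n$ horizontal directions. To obtain the missing vertical coordinate one sets $f_{n+1}^\infty=\pm\sqrt{1-\sum_k(f_k^\infty)^2}$, choosing the sign consistently on each of the two components of $X\setminus(F^\infty)^{-1}(\partial B^n)$; the resulting map into $S^n\subset\mathbb{R}^{n+1}$ is a metric isometry. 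I expect the principal obstacle to be exactly this last step: recovering the $(n+1)$th coordinate from only $n$ pinched eigenfunctions is Aubry's sharpening over Petersen's easier $\lambda_{n+1}$-pinching, in which $n+1$ functions come for free. One must verify that $X\setminus(F^\infty)^{-1}(\partial B^n)$ really splits into two components glued consistently along the equator; this requires a careful synthetic Obata-type rigidity argument on the (possibly non-smooth) limit $X$, while everything preceding it is the routine combination of Bochner's identity with spectral convergence theory.
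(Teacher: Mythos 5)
Your Bochner computation is correct, and the reduction to a Gromov--Hausdorff compactness argument combined with spectral convergence is a legitimate route. However, the proof has a genuine gap in exactly the step you flag as "the principal obstacle," and it is not a minor technical point to be filled in --- it is the entire mathematical content that separates Aubry's theorem (pinching on $\lambda_n$) from Petersen's weaker version (pinching on $\lambda_{n+1}$).

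Concretely, after passing to the limit you have $n$ Obata solutions $f_1^\infty,\dots,f_n^\infty$ on $X$ with $|\nabla f_k^\infty|^2+(f_k^\infty)^2=1$. The move $f_{n+1}^\infty:=\pm\sqrt{1-\sum_k(f_k^\infty)^2}$ requires two things you do not establish: first, that $\sum_k(f_k^\infty)^2\le 1$ pointwise (each coordinate is bounded by $1$, but the sum of $n$ squares need not be); second, that the $f_k^\infty$ have pointwise orthonormal gradients, without which $F^\infty$ is only $\sqrt n$-Lipschitz rather than $1$-Lipschitz and the ``equator'' $(F^\infty)^{-1}(\partial B^n)$ has no good structure. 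More seriously, even granting both, the construction as sketched does \emph{not} rule out the hemisphere: if $X$ is isometric to a closed hemisphere $S^n_+$ and the $f_k^\infty$ are the restrictions of $n$ of the ambient linear coordinates, then $X\setminus(F^\infty)^{-1}(\partial B^n)$ has a single component, the square root is globally well-defined with one sign, your map into $S^n$ is defined --- yet $X\ne S^n$. So the step ``the resulting map is a metric isometry onto $S^n$'' is asserted, not proved, and a natural candidate counterexample survives your argument. Excluding this case is exactly where Aubry (and the paper's Sections 3--4) must introduce genuinely new machinery: the construction of an $(n{+}1)$-th almost-parallel section of $E=TM\oplus\mathbb R e$ via the isomorphism $\bigwedge^n E\cong E$, and a mapping-degree argument to show the approximation map $M\to S^n$ is onto and to handle orientability. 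The paper's compactness-style alternative proof in Appendix A also has to confront the hemisphere directly and dispatches it by invoking \cite[Theorem 6.2]{CC1} (a Gromov--Hausdorff limit of closed manifolds with lower Ricci bound has no boundary), combined with Honda's almost-cosine rigidity. Your proposal names neither mechanism, so the argument does not close.

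\end{document}
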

Note that Petersen considered the pinching condition on $\lambda_{n+1}$ and Aubry improved it.
Aubry also showed that the assumption $\lambda_{n-1}\leq n+\delta$ is not enough to get the Gromov-Hausdorff closeness to the standard sphere.
In the proof of Theorem 1.3, the following vector bundle and its connection play an important role.
For a Riemannian manifold $(M,g)$, we put $E=E_M:=TM \oplus \mathbb{R}e$, where $\mathbb{R}e$ is a rank $1$ trivial bundle on $M$.
We consider the product metric $\langle \cdot,\cdot \rangle_E$ and the following connection $\nabla^E$ on $E$:
\begin{equation*}
\begin{split}
\langle X+f e, Y+h e\rangle_E:=&g(X,Y)+f h,\\
\nabla^E_Y (X+f e):=&\nabla_Y X+ f Y+(Y f-g(Y,X))e,
\end{split}
\end{equation*}
for any $X,Y\in \Gamma(TM)$ and $f,h\in C^\infty(M)$.
Define $\bar{\Delta}^E:=(\nabla^E)^\ast \nabla^E$. See Definition \ref{def1} for details.
When $M$ is closed, we consider the eigenvalues of $\bar{\Delta}^E$:
\begin{equation*}
0\leq \lambda_1(\bar{\Delta}^E)\leq\lambda_2(\bar{\Delta}^E)\leq \cdots  \to \infty.
\end{equation*}
It is not difficult to prove that $\lambda_1(\bar{\Delta}^E)=0$ holds if and only if there exists a non-constant solution of the Obata equation $\nabla^2 f+f g=0$.

The following six conditions are mutually equivalent for any sequence of $n$-dimensional closed Riemannian manifolds $\{(M_i,g_i)\}_{i\in \mathbb{N}}$ with $\Ric \geq (n-1)g_i$:
\begin{itemize}
\item[(i)] $\Vol(M_i)\to \Vol(S^n)$,
\item[(ii)] $\rad(M_i)\to \pi$,
\item[(iii)] $\lambda_{n+1}(g_i)\to n$,
\item[(iv)] $\lambda_{n}(g_i)\to n$,
\item[(v)] $\lambda_{n}(\bar{\Delta}^{E},M_i)\to 0$,
\item[(vi)] $d_{GH}(M_i,S^n)\to 0$,
\end{itemize}
when $i\to \infty$,
where we defined $\rad (M)=\inf \{r\in\mathbb{R}: M=B_r(x)\text{ for some $x\in M$}\}$ for any Riemannian manifold $(M,g)$ (see \cite{Au}, \cite{CC3}, \cite{Co1}, \cite{Co2}, \cite{Co3}, \cite{Ho}, \cite{Pe1}).
In this paper, we show that  we only need to assume $\Ric\geq -Kg_i$, $\diam(M_i)\leq D$ to show that (v) implies (vi).
\begin{Ma}
Given an integer $n\geq 2$ and positive real numbers $\epsilon>0$, $K>0$ and $D>0$, there exists $\delta(n,K,D,\epsilon)>0$ such that if $(M,g)$ is an $n$-dimensional closed Riemannian manifold with  $\Ric \geq-K g$, $\diam(M)\leq D$ and $\lambda_n(\bar{\Delta}^E)\leq\delta$, then $d_{GH}(M,S^n)\leq \epsilon$.
\end{Ma}
\begin{Rem}
In fact, we prove
$$
d_{GH}(M,S^n)\leq C(n,K,D)\lambda_n(\bar{\Delta}^E)^{\frac{1}{1000n^2}}.
$$
See Theorem \ref{p43b}.
This estimate might be far from the optimal one.
\end{Rem}

This result is equivalent to the following result (see section 4).
\begin{Mb}
Given an integer $n\geq 2$ and positive real numbers $\epsilon>0$, $K>0$ and $D>0$, there exists $\delta(n,K,D,\epsilon)>0$ such that the following property holds.
Let $(M,g)$ be an $n$-dimensional closed Riemannian manifold with $\Ric \geq-K g$ and $\diam(M)\leq D$. If there exists an $n$-dimensional subspace $V$ of $C^\infty(M)$ such that  $\|\nabla^2 f+fg\|_2\leq \delta\|f\|_2$ holds for all $f\in V$, then
$d_{GH}(M,S^n)\leq \epsilon$.
\end{Mb}
The equivalence of these theorems corresponds to the equivalence of the conditions (iv) and (v).

The key of the proof of these main theorems is the following:
\begin{Prop}\label{thm14}
Take an integer $n\geq 2$ and positive real numbers  $K>0$ and $D>0$.
Let $(M,g)$ be an $n$-dimensional closed Riemannian manifold with $\Ric \geq-K g$ and $\diam(M)\leq D$.
Suppose that a non-zero function $f\in C^\infty(M)$ satisfies $\|\nabla^2 f+f g\|_{2}\leq \delta\|f\|_{2}$ for a sufficiently small $\delta>0$.
Then, there exist a non-constant function $f_1$ and a point $p\in M$ such that the following properties hold.
Putting $h\colon M\to \mathbb{R}$ as $h=\sqrt{n+1}\|f_1\|_{L^2}\cos d(p,\cdot)$, we have
\begin{align}
\label{f} \|f_1-h\|_{\infty}&\leq C(n,K,D) \delta^\frac{1}{48n}\|f_1\|_{2},\\
\label{grad}\|\nabla f_1-\nabla h\|_{2}&\leq  C(n,K,D) \delta^\frac{1}{48n}\|f_1\|_{2},\\
\label{hess}\|\nabla^2 f_1+f_1 g\|_{2}&\leq C(n,K,D)\delta^\frac{1}{2}\|f_1\|_{2},
\end{align}
for some positive constant $C(n,K,D)>0$.
\end{Prop}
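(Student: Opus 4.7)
The plan is to run an ODE argument along geodesics emanating from a maximum of $|f|$, integrate the resulting pointwise bound using the Cheeger-Colding segment inequality, and upgrade to the sup norm using Moser-type Lipschitz estimates valid under $\Ric\ge-Kg$, $\diam\le D$. The amplitude $\sqrt{n+1}\|f_1\|_2$ in $h$ will fall out of an approximate Pythagorean identity linking $\|f_1\|_2^2$ and $\|\nabla f_1\|_2^2$. First I would normalize $\|f\|_2=1$; tracing $\nabla^2 f+fg$ gives $\|\Delta f+nf\|_2\le\sqrt{n}\,\delta$, and integrating this equation against the constant $1$ shows $|\int_M f\,dV|\le C\delta$, so after subtracting its mean one may assume $\int_M f=0$ with only a universal change in the hypothesis. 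Moser iteration applied to the approximate eigenfunction equation $\Delta f=nf+O_{L^2}(\delta)$ under $\Ric\ge-Kg$, $\diam\le D$ then yields $\|f\|_\infty$ and $\|\nabla f\|_\infty$ bounded by $C(n,K,D)$.

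\textbf{ODE along geodesics.} Let $p\in M$ realize $\|f\|_\infty$, with $f(p)>0$ after possibly changing sign, so $\nabla f(p)=0$. For each $q\in M$ I fix a unit-speed minimizing geodesic $\gamma:[0,d(p,q)]\to M$ from $p$ to $q$ and set $u(t):=f(\gamma(t))$. Because $g(\dot\gamma,\dot\gamma)=1$,
\begin{equation*}
|u''(t)+u(t)|=\bigl|(\nabla^2 f+fg)(\dot\gamma,\dot\gamma)\bigr|_{\gamma(t)}\le|\nabla^2 f+fg|_{\gamma(t)},
\end{equation*}
and combining $u'(0)=0$ with Duhamel's formula for $v''+v$ produces the pointwise bound
\begin{equation*}
|f(q)-f(p)\cos d(p,q)|\le\int_0^{d(p,q)}|\nabla^2 f+fg|_{\gamma(s)}\,ds.
\end{equation*}

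\textbf{Averaging, sup-norm closure, amplitude.} The Cheeger-Colding segment inequality based at $p$ controls the integral of the right-hand side over $q\in M$ by $C(n,K,D)\,\|\nabla^2 f+fg\|_1\le C\delta$, so $\|f-f(p)\cos d(p,\cdot)\|_1\le C\delta$. Interpolating this $L^1$ bound against the uniform Lipschitz estimate from the first step via a cover by small metric balls promotes it to the sup-norm estimate (1.1), with the Hölder loss absorbed into the exponent $\delta^{1/(48n)}$. The gradient bound (1.2) follows analogously, either by differentiating the Duhamel identity along transverse directions or by estimating $\int|\nabla(f-h)|^2$ through integration by parts against $\Delta(f-h)\approx-n(f-h)$. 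To pin down the amplitude I combine $\|\nabla f\|_2^2\approx n\|f\|_2^2$ (from $\Delta f\approx nf$) with $\|f\|_2^2\approx f(p)^2\int_M\cos^2 d(p,\cdot)$ and $\|\nabla f\|_2^2\approx f(p)^2\int_M\sin^2 d(p,\cdot)$; summing and using $\sin^2+\cos^2=1$ forces $f(p)^2\approx(n+1)\|f\|_2^2$ for the normalized $L^2$ measure, which is exactly $\|h\|_\infty^2$.

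\textbf{Choice of $f_1$ and main obstacle.} The function $f_1$ is then taken as a mild modification of the mean-zero $f$ above — for example a projection onto an approximate eigenspace or a mollification — chosen so that (1.1)-(1.2) survive while the Hessian defect is improved to (1.3); the reason (1.3) is only $\delta^{1/2}$ rather than $\delta$ is that this modification is necessarily lossy. The technical heart is the sup-norm upgrade in the third step: with only $\Ric\ge-Kg$ the Cheeger-Colding constants depend exponentially on $\sqrt{K}D$, and the $L^1\to L^\infty$ passage through Moser iteration has a dimension-dependent Hölder exponent, so compounding these determines the small power $1/(48n)$ in (1.1)-(1.2). Tracking how all constants depend on $(n,K,D)$ through each of these steps, and ensuring that the modification defining $f_1$ does not destroy the closeness estimates obtained for $f$, is the main bookkeeping challenge.
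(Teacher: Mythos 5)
Your high-level plan (ODE along geodesics from a near-maximum, segment inequality, $L^1\to L^\infty$ upgrade, amplitude from $\|\nabla f\|_2^2\approx n\|f\|_2^2$) matches the paper's, but there is a concrete gap in the choice of the base point $p$ for the ODE. You take $p$ to be the exact maximum of $|f|$ (so $\nabla f(p)=0$) and then invoke a \emph{pointed} segment inequality at $p$ to control $\int_M\int_0^{d(p,q)}|\nabla^2f+fg|\circ\gamma_{p,q}\,ds\,dq$. The Cheeger--Colding segment inequality (Theorem \ref{p2f}) is a double integral over $M\times M$; there is no direct pointed version based at an arbitrary prescribed point — the constants blow up near the vertex and, more importantly, the argument only produces a \emph{large set} of good base points, not a specific one. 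The paper's fix (Lemma \ref{p2e} and the proof of Proposition \ref{p2i}) is precisely to use the double-integral segment inequality together with Chebyshev and Bishop--Gromov to produce a good set $Q$ of base points, then to pick $p\in Q\cap B_\epsilon(\tilde p)$ near the max $\tilde p$. Since this $p$ is no longer a critical point, one must separately control $|\nabla f_1(p)|$; the paper does this with a Cheng--Yau-type gradient estimate localized near the maximum (Lemma \ref{p2d}), which you omit entirely. Without both of those ingredients the ODE argument does not close.

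Two secondary issues. First, you leave the definition of $f_1$ vague ("a projection onto an approximate eigenspace or a mollification"), but the paper's choice $f_1=T_{(n,\delta)}(f)$ — the $L^2$ projection onto the span of Laplace eigenfunctions with $|\lambda_j-n|\le\sqrt{\delta}$ — is essential: Lemma \ref{p2c} shows it preserves the Hessian pinching (giving (\ref{hess})) \emph{and} inherits uniform $\|f_1\|_\infty$, $\|\nabla f_1\|_\infty$ bounds from the eigenfunction gradient estimate, which is the Lipschitz control you need for the $L^1\to L^\infty$ step. A generic mollification would not give (\ref{hess}) with the $\sqrt{\delta}$ rate. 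Second, your suggested route to the gradient bound (\ref{grad}) by "differentiating the Duhamel identity along transverse directions" does not work: the geodesics from $p$ only give radial derivative control, and the paper needs a separate argument (Claims \ref{C1}--\ref{C2}) comparing $|\nabla f_1|^2$ to $|\nabla\tilde h|^2$ on the good set and exploiting the near-equality $\|\nabla f_1\|_2^2\le(1+C\delta^{1/12n})\|\nabla\tilde h\|_2^2$ to recover the transverse directions in $L^2$.
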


For the proof of Proposition \ref{thm14}, see Lemma \ref{p2c} and Proposition \ref{p2i} (see also Lemma \ref{add} and Definition \ref{Def1}).
The conclusions (\ref{f}), (\ref{grad}) and (\ref{hess}) correspond to the assumptions (2.2), (2.3) and (2.4) in Cheeger-Colding's paper \cite[p.198]{CC2}, respectively.
These kind of properties are usually deduced using the condition that some geometrical quantity is almost optimal to the assumption of Ricci curvature, e.g., $\Ric\geq (n-1)g$ (see \cite{Co1} and \cite{Pe1}).
Proposition \ref{thm14} claims that, for our case, we do not need to assume (\ref{f}) and (\ref{grad}) once we know $\|\nabla^2 f+f g\|_2\leq \delta\|f\|_{2}$, even if we only assumed $\Ric \geq-K g$ and $\diam(M)\leq D$.



We next consider constant scalar curvature metrics.
The following theorem is known (see \cite[Theorem 24]{Kh}).
\begin{Thm}
Take an integer $n\geq 2$.
Let $(M,g)$ be an $n$-dimensional closed Riemannian manifold with $\Scal_g=n(n-1)$, where $\Scal_g$ denotes its scalar curvature. If $(M,g)$ admits a non-constant function $f\in C^\infty(M)$ with $\nabla^2 f + \frac{\Delta f}{n} g=0$, then $(M,g)$ is isometric to the standard sphere of radius $1$.
\end{Thm}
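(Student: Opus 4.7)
The plan is to reduce the statement to Obata's theorem (Theorem~\ref{Ob}) by invoking Tashiro's classical structure theorem for concircular scalar fields, and then rigidifying the resulting warping function via the scalar curvature hypothesis.

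First I would rewrite the hypothesis as $\nabla^2 f = u\, g$ by setting $u = -\Delta f/n$ (under the convention $\Delta = -\tr \nabla^2$), and note $u \not\equiv 0$, since otherwise $f$ would be harmonic and hence constant on the closed manifold $M$. Taking the divergence of $\nabla^2 f = u g$ and using the standard identity $\mathrm{div}(\nabla^2 f) = d(\tr \nabla^2 f) + \Ric(\nabla f,\cdot)$ yields $\Ric(\nabla f) = -(n-1)\nabla u$. Taking a second divergence and using $\mathrm{div}\,\Ric = \tfrac{1}{2} d\Scal = 0$ together with $\langle \Ric, u g\rangle = u \Scal$, I obtain the scalar equation
\begin{equation*}
\Delta u = n u.
\end{equation*}

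The key geometric input is Tashiro's structure theorem: a non-constant solution of $\nabla^2 f = u g$ on a closed manifold forces $M$ to be diffeomorphic to $S^n$ and the metric to have the warped product form $g = dr^2 + \phi(r)^2 \sigma$ on $r \in [0,L]$, where $\phi(0) = \phi(L) = 0$ and $(N^{n-1},\sigma)$ is the fiber. Smoothness of $g$ at the two poles (the critical points of $f$) forces $|\phi'(0)| = |\phi'(L)| = 1$ and also forces $(N,\sigma)$ to be the round sphere $(S^{n-1}, g_{\mathrm{round}})$, since near a pole the metric has the form of a cone $dr^2 + r^2\sigma$ that extends smoothly only if $\sigma$ is round. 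In this representation $f = f(r)$ with $f'(r) = c\phi(r)$ and $u(r) = c\phi'(r)$ for some non-zero constant $c$.

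It remains to pin down $\phi$. Substituting the warped product ansatz into the scalar curvature formula (with $\Scal_{S^{n-1}} = (n-1)(n-2)$) and imposing $\Scal = n(n-1)$ yields
\begin{equation*}
(n-2)\bigl(1 - (\phi')^2\bigr) - 2\phi \phi'' = n\phi^2.
\end{equation*}
Using $\phi$ as the independent variable and setting $W = (\phi')^2$ reduces this to the first-order linear ODE $\phi\, dW/d\phi + (n-2) W = (n-2) - n\phi^2$; with integrating factor $\phi^{n-2}$ the general solution is $W = 1 - \phi^2 + C\phi^{2-n}$. Smoothness at the poles forces $W \to 1$ as $\phi \to 0^+$, so $C = 0$, giving $(\phi')^2 = 1 - \phi^2$ and hence $\phi(r) = \sin r$. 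Therefore $(M, g)$ is isometric to the standard unit sphere.

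The main obstacle I anticipate is invoking Tashiro's theorem itself, which requires careful analysis of the gradient flow of $f$ around its critical points as well as the smoothness check at the poles to identify the fiber as the round sphere. A tempting shortcut would be to try to upgrade $\Delta u = n u$ directly to the full Obata equation $\nabla^2 u + u g = 0$ and then apply Theorem~\ref{Ob} to $u$; however, differentiating $\Ric(\nabla f) = -(n-1)\nabla u$ once more shows that $\nabla^2 u + u g$ involves $\nabla_{\nabla f} \Ric$ in an essential way, so this route appears to require the Einstein condition $\Ric = (n-1) g$, which is not automatic from constant scalar curvature alone and which the warped product analysis effectively supplies.
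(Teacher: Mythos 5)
The paper does not actually prove this statement: it is quoted as a known result and attributed to K\"{u}hnel \cite[Theorem 24]{Kh}, so there is no internal proof to compare against. Your proposal is a correct self-contained route and is essentially the classical argument behind that citation: the identity $\Ric(\nabla f)=-(n-1)\nabla u$, Tashiro's structure theorem reducing the compact case to a doubly-capped warped product $dr^2+\phi(r)^2\sigma$ with round unit fiber and $|\phi'|=1$ at the poles, and then the constant-scalar-curvature ODE $(n-2)(1-(\phi')^2)-2\phi\phi''=n\phi^2$, whose first-order reduction $(\phi')^2=1-\phi^2+C\phi^{2-n}$ together with the pole condition gives $C=0$ and $\phi=\sin r$. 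Two minor remarks: the equation $\Delta u=nu$ you derive is never used afterwards, so it can be dropped; and for $n=2$ the term $C\phi^{2-n}$ is a constant rather than singular at $\phi=0$, but the boundary condition $(\phi')^2\to 1$ still forces $C=0$, so the argument goes through in all dimensions $n\geq 2$ (with the standard care you already flag about passing the first-order reduction across the zeros of $\phi'$ and about the smoothness analysis at the two critical points inside Tashiro's theorem). Your closing observation is also accurate: upgrading $\Delta u=nu$ to the Obata equation for $u$ would need $\nabla_{\nabla f}\Ric$ to vanish, i.e.\ essentially the Einstein condition, which is only available a posteriori.
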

Note that the condition $\nabla^2 f + \frac{\Delta f}{n} g=0$ is weaker than the condition $\nabla^2 f + f g=0$.
By the Bochner formula, the pinching condition $\|\nabla^2 f +\frac{\Delta f}{n}g\|_2\leq \delta\|\Delta f\|_2$ is equivalent to the condition $\int_M \Ric(\nabla f,\nabla f)\,d\mu_g\geq\left(\frac{n-1}{n}-\delta^2\right)\int_M (\Delta f)^2\,d\mu_g$.
Thus, we consider the Riemannian invariants $\Omega_k(g)$ introduced in \cite{Ai}:
\begin{equation*}
\Omega_k(g):=\inf\Big\{\sup_{f\in V\setminus \{0\}}\frac{\int_M \Ric(\nabla f,\nabla f)\,d\mu_g}{\int_M (\Delta f)^2\,d\mu_g}: V\text{ is a $k$-dimensional subspace of $H$}\Big\},
\end{equation*}
where we put
\begin{equation*}
H:=\{f\in W^{2,2}(M):\int_M f\,d\mu_g=0\}.
\end{equation*}
Note that we always have $\Omega_k\leq \frac{n-1}{n}$, and for the $n$-dimensional standard sphere, we have $\Omega_1=\cdots=\Omega_{n+1}=\frac{n-1}{n}$.
Using these Riemannian invariants, we show the following result as a corollary of Main Theorem 1'.
\begin{Md}
Given an integer $n\geq 2$ and positive real numbers $\epsilon>0$, $K>0$ and $D>0$, there exists $\delta(n,K,D,\epsilon)>0$ such that if $(M,g)$ is an $n$-dimensional closed Riemannian manifold with $\Scal_g=n(n-1)$, $\Ric \geq-K g$, $\diam(M)\leq D$ and $\Omega_{n}(g)\geq \frac{n-1}{n}-\delta^2$, then
$d_{GH}(M,S^n)\leq \epsilon$.
\end{Md}

By the following theorem due to Cheeger-Colding \cite[Theorem A.1.12]{CC1}, we get $M$ is diffeomorphic to the $n$-dimensional standard sphere if $\epsilon$ is sufficiently small in our main theorems.
\begin{Thm}[\cite{CC1}]
Take an integer $n\geq 2$ and a positive real number $K>0$.
Let $\{(M_i,g_i)\}_{i\in \mathbb{N}}$ be a sequence of $n$-dimensional closed Riemannian manifolds with $\Ric_{g_i} \geq-K g_i$ and
converges to an $n$-dimensional closed Riemannian manifold $(M,g)$ in Gromov-Hausdorff topology.
Then, for sufficiently large $i$, $M_i$ is diffeomorphic to $M$.
\end{Thm}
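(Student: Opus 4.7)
The plan is to exploit that the limit $(M,g)$ is a smooth closed Riemannian $n$-manifold of the same dimension as the approximants, which forces the sequence to be non-collapsing. First, Colding's volume convergence theorem gives $\Vol(M_i)\to \Vol(M)>0$. Combined with the Bishop-Gromov inequality under $\Ric_{g_i}\geq -Kg_i$, this produces constants $v>0$ and $r_0>0$ independent of $i$ such that $\Vol(B_r(x))\geq v r^n$ for every $x\in M_i$ and every $r\leq r_0$.

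Next, I would invoke the $\epsilon$-regularity machinery of Cheeger-Colding. Smoothness of $(M,g)$ implies that for every $\eta>0$ there is a scale $r_1=r_1(\eta)\in (0,r_0]$ such that every geodesic ball of radius at most $r_1$ in $M$ is $\eta r$-close in Gromov-Hausdorff distance to the Euclidean ball of the same radius. By the assumed convergence, the analogous statement holds on $M_i$ at scale $r_1/2$, for $i$ large. Under $\Ric\geq -K$ and non-collapsing, the almost-volume-cone implies almost-metric-cone theorem then yields on each such ball a harmonic coordinate chart in which the metric components have uniform $C^{0,\alpha}$ bounds, for some $\alpha\in (0,1)$ depending only on $n$, $K$ and $v$.

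Finally, I would glue: cover $M$ by finitely many balls carrying such harmonic charts, transfer them to $M_i$ via Gromov-Hausdorff approximations, and use Arzelà-Ascoli together with uniqueness of the limit to see that the pulled-back metrics converge to $g$ in $C^{0,\alpha}$ in these common charts. A smooth partition of unity on $M$ combined with a Riemannian center-of-mass construction then produces a map $F_i:M_i\to M$ whose differential is uniformly close to a linear isometry in each chart; for $i$ large this is a diffeomorphism by the inverse function theorem. The main obstacle is the $\epsilon$-regularity step: producing harmonic coordinates with uniform $C^{0,\alpha}$ bounds from only a one-sided Ricci bound requires the full almost-rigidity machinery of \cite{CC1}, in particular the Reifenberg-type control on all small scales. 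Once that is in place, the diffeomorphism conclusion reduces to a standard gluing.
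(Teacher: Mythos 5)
The paper does not prove this theorem itself; it is quoted directly as \cite[Theorem A.1.12]{CC1}, so your sketch should be measured against the Cheeger--Colding argument. Your skeleton is right: non-collapsing forced by Colding's volume convergence and smoothness of the limit, an $\epsilon$-regularity input at all small scales, and a gluing via partition of unity / Riemannian center of mass. That is indeed the shape of the proof in \cite{CC1}.

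The gap is in the middle step. Under the hypotheses in play --- a one-sided bound $\Ric\geq -Kg$, non-collapsing, and Gromov--Hausdorff proximity to Euclidean balls at all small scales --- one does \emph{not} obtain harmonic coordinate charts in which the metric components satisfy uniform $C^{0,\alpha}$ bounds. That kind of harmonic-radius control (Anderson) requires a two-sided bound $|\Ric|\leq K$; also, the ``almost volume cone implies almost metric cone'' theorem you cite is about cone structure, not coordinate regularity. What the Cheeger--Colding machinery actually produces here is a $\delta$-splitting map: a vector-valued harmonic map $u\colon B_r(p)\to\mathbb{R}^n$ that is a $\Psi(\delta\mid n)r$-Gromov--Hausdorff approximation whose Hessian is small only in scale-invariant $L^2$. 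This does not give $C^{0,\alpha}$ regularity of $g_{ij}=\langle du_i,du_j\rangle$, so your subsequent Arzel\`a--Ascoli/Cheeger--Gromov step (extract $C^{0,\alpha}$-convergent metrics in common charts, identify the limit, and conclude) cannot be run as stated. The way \cite{CC1} closes the argument is via the intrinsic Reifenberg method of their Appendix A.1: the Reifenberg condition at all small scales, together with the lower Ricci bound and non-collapsing, lets one assemble the global map to $M$ directly from the splitting maps, giving first a bi-H\"older homeomorphism and then, because each splitting map is smooth with differential uniformly close to an isometry away from a set of small measure, a smooth map with everywhere nondegenerate differential for $i$ large --- hence a diffeomorphism. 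You correctly flag the $\epsilon$-regularity step as the crux, but as written the argument would stall at the unproved (and in general false, for one-sided Ricci) uniform $C^{0,\alpha}$ coordinate control.
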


In section 5, we give an application of Main Theorem 1 to the almost umbilical manifolds.
We show that $\lambda_{n+1}(\bar{\Delta}^E)$ is small for the almost umbilical manifolds:
\begin{Prop}\label{umb}
Let \((M,g)\) be an $n$-dimensional oriented closed Riemannian manifold and $\iota \colon (M,g)\to \mathbb{R}^{n+1}$ an isometric immersion.
Then, we have
\begin{equation*}
\lambda_{n+1}(\bar{\Delta}^E)\leq 2\|A- \Id \|_2^2,
\end{equation*}
where $A\in \Gamma(T^\ast M\otimes T M)$ denotes the second fundamental form.
\end{Prop}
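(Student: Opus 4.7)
The plan is to exhibit an $(n+1)$-dimensional subspace $V \subset \Gamma(E)$ on which the Rayleigh quotient $\|\nabla^E\sigma\|_2^2/\|\sigma\|_2^2$ is controlled by $\|A-\Id\|_2^2$, so that the min--max characterization of $\lambda_{n+1}(\bar\Delta^E)$ produces the claimed bound. The geometric idea is the identification $E \cong \iota^\ast T\mathbb{R}^{n+1}$ obtained by sending $e$ to a unit normal $\nu$ of the immersion: the Gauss and Weingarten formulas show that under this identification $\nabla^E$ differs from the pulled-back Euclidean flat connection by a tensor in which $A-\Id$ appears linearly, so restrictions of constant ambient vector fields should be $\nabla^E$-parallel precisely when $A=\Id$ (i.e., on the unit sphere).

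Accordingly, for each $v \in \mathbb{R}^{n+1}$ I would set
\begin{equation*}
\sigma_v := v^T + (v \cdot \nu)\,e \;\in\; \Gamma(E),
\end{equation*}
where $v^T$ is the tangential component of $v$ and $v\cdot\nu$ the Euclidean inner product. The pointwise identity $|\sigma_v|_E^2 = |v^T|^2 + (v\cdot\nu)^2 = |v|^2$ makes $v \mapsto \sigma_v$ injective, so $V := \{\sigma_v : v \in \mathbb{R}^{n+1}\}$ is genuinely $(n+1)$-dimensional, with $\|\sigma_v\|_2^2 = |v|^2 \Vol(M)$.

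Next I would compute $\nabla^E \sigma_v$ using the two identities $\nabla_Y v^T = -(v\cdot\nu)AY$ and $Y(v\cdot\nu) = g(v^T, AY)$, both of which follow by differentiating $v = v^T + (v\cdot\nu)\nu$ and invoking $\bar\nabla v = 0$ together with Gauss--Weingarten. Substituting into the definition of $\nabla^E$ should give the clean expression
\begin{equation*}
\nabla^E_Y \sigma_v \;=\; -(v\cdot\nu)(A-\Id)Y \;+\; g\bigl((A-\Id)Y,\, v^T\bigr)\, e,
\end{equation*}
in which $A-\Id$ appears as a common factor in both components. Summing over an orthonormal frame and using the elementary estimate $|(A-\Id)v^T|^2 \leq |A-\Id|^2|v^T|^2$ then yields a pointwise bound $|\nabla^E \sigma_v|^2 \leq C|v|^2|A-\Id|^2$; integrating and applying the min--max principle to $V$ will give the proposition, with the explicit constant $2$ recovered after tracking the straightforward inequalities.

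The only genuinely conceptual step, and the main obstacle, is choosing the sections correctly. The naive choice $\sigma_v = \nabla f_v + f_v e$ attached to the linear coordinate $f_v = \iota \cdot v$ produces $\nabla^E_Y \sigma_v = f_v Y - (v\cdot\nu)AY$, which already fails to vanish on a unit sphere that is not centered at the origin (there $f_v - v\cdot\nu$ is a nonzero constant), and so cannot yield a bound purely in terms of $\|A-\Id\|_2$. The translation-invariant pullback $\sigma_v = v^T + (v\cdot\nu)e$ of a constant vector field on $\mathbb{R}^{n+1}$ is what forces $A-\Id$ to appear as a factor in $\nabla^E \sigma_v$, and recognizing this as the right test family is the key insight of the proof.
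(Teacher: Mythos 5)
Your proposal is correct and follows essentially the same route as the paper: the test sections $\sigma_v = v^T + (v\cdot\nu)e$ are exactly the paper's $S_v = \nabla f_v + \langle\nu,v\rangle_0 e$ (since $\nabla f_v = v^T$), and the computation $\nabla^E_Y S_v = -\langle\nu,v\rangle_0(A-\Id)Y + g((A-\Id)Y, v^T)e$, the pointwise identity $|S_v|^2 = |v|^2$, and the Rayleigh (min--max) principle applied to this $(n+1)$-dimensional family are precisely the paper's argument.
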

Let $h:=\frac{1}{n}\tr A$ denotes the mean curvature.
Several authors considered whether the manifold is close to the standard sphere when $A-h\Id$ is small in certain senses (\cite{DM}, \cite{DM2}, \cite{DRG}, \cite{Pz}, \cite{RS}, \cite{SX} and\cite{SX2}).
Perez \cite{Pz} and Roth-Scheuer \cite{RS} considered the assumption that $\|A-h\Id\|_p$ is small for some $p>n$ and showed the closeness to the standard sphere in several meaning. In particular, they showed that $M$ is Hausdorff close to the standard sphere under such an assumption (see Definition \ref{Dhau} for the definition of the Hausdorff distance).
De Rosa and Gioffr\`{e} \cite{DRG} considered a more general situation, called anisotropic nearly umbilical hypersurfaces, and showed a $W^{2,p}$-approximation under the convexity assumption.

In general, the condition $A-\overline{h}\Id$ is small is stronger than the condition $A-h\Id$ is small,
where $\overline{h}$ denotes the average value of the mean curvature:
\begin{equation*}
\overline{h}:=\frac{1}{\Vol(M)}\int_M h\,d\mu_g
\end{equation*}
because we have $|A-\overline{h}\Id|^2=|A-h \Id|^2+n|h-\overline{h}|^2$.
However, using the method in \cite{DT}, Perez \cite{Pz} showed that $\|A-\overline{h}\Id\|_2\leq C\|A-h\Id\|_2$ under the assumption $\Ric\geq 0$.
Cheng-Zhou \cite{CZ} generalized it under the assumption of the lower bound of $\frac{1}{\lambda_1}\Ric$.
Note that Perez also considered the case $\|A-h\Id\|_p$ is small for some $1<p\leq n$ and showed $M$ is Hausdorff close to the standard sphere under the assumption $\Ric\geq 0$ (see \cite[Corollary 2.5, Proposition 3.2]{Pz}).

Combining Cheng-Zhou result and Main Theorem 1, we have the following theorem:
\begin{Me}
Given an integer $n\geq 2$ and positive real numbers $\epsilon>0$, $K>0$, there exists $\delta(n,K,\epsilon)>0$ such that the following property holds.
Let $(M,g)$ be an $n$-dimensional oriented closed Riemannian manifold with $\Ric\geq -K\lambda_1(g)g$, and $\iota \colon (M,g)\to \mathbb{R}^{n+1}$ an isometric immersion.
If $\|A-h \Id\|_2\leq \delta\|h\|_2$,
then we have the following properties:
\begin{itemize}
\item[(i)] $d_{GH}(M,S^n(1/\|h\|_2))\leq \epsilon/\|h\|_2$ and $M$ is diffeomorphic to the $n$-dimensional standard sphere.
\item[(ii)] $d_{H}(\iota(M),S_M)\leq \epsilon/\|h\|_2$, where $d_{H}$ denotes the Hausdorff distance and $S_M$ is defined by
\begin{equation*}
S_M:=\left\{a+\frac{1}{\Vol(M)}\int_M \iota(x) \,d\mu_g(x) \in\mathbb{R}^{n+1}\colon a\in \mathbb{R}^{n+1} \text{ with } |a|=\frac{1}{\|h\|_2} \right\}.
\end{equation*}
\item[(iii)] There exists a measurable subset $E\subset M$ such that $\Vol(M\backslash E)\leq \epsilon \Vol(M)$ and $\iota$ is injective on $E$.
\end{itemize}
\end{Me}
\begin{Rem}
We give several remarks on our assumptions.
\begin{itemize}
\item[(i)] The condition $\Ric\geq -K\lambda_1(g)g$ is homothetically invariant.
\item[(ii)] If $\Ric\geq 0$, then the assumption $\Ric\geq -K\lambda_1(g)g$ is always satisfied for any $K>0$. 
\item[(iii)] If $\Ric\geq -Kg$ and $\diam(M)\leq D$, then we have
$\Ric\geq -C(n,K,D)\lambda_1(g) g$ by the Li-Yau estimate $\lambda_1(\tilde{g})\geq C(n,K,D)>0$ (see \cite[p.116]{SY}).
\item[(iv)] We prove the theorem under slightly weaker assumptions (Theorem \ref{um7}).
In particular, if $\Ric\geq -Kg$ and $\diam(M)\leq D$ or $\Ric\geq 0$, then the assumption ``oriented'' can be removed.
\item[(v)] By the Reilly inequality $\lambda_1(g)\leq n\|h\|_2$ \cite{Rei} (see also \cite[section 3]{AG0}), we have
$\Ric\geq -n K \|h\|_2 g$ if $\Ric\geq -K\lambda_1(g)g$.
We do not know whether we can replace the assumption $\Ric\geq - K\lambda_1(g) g$ to the weaker assumption $\Ric\geq -n K \|h\|_2 g$.
However, once we know that $h$ is almost constant in $L^2$-sense, the assumption $\Ric\geq -n K \|h\|_2 g$ is enough (Theorem \ref{um4.5}).
\end{itemize}
\end{Rem}

The structure of this paper is as follows.

In section 2, we consider a pinching condition on a single function.
Comparing what we get from the Cheeger-Colding segment inequality and what we get from spectral method, we obtain a nice property that a modified function from the originally pinched one is an almost cosine function of the distance as we mentioned in Proposition \ref{thm14}.
We also show the small excess like property, i.e., the converse of the triangle inequality almost holds for a certain triple of points.
This section is the core of our paper.

In section 3, we consider Petersen's method \cite{Pe1}, which gives an approximation map to the standard sphere by using eigenfunctions.
In subsection 3.1, we give some preparations to apply Petersen's method to our problem.
In subsection 3.2, we give a modification of Petersen's argument for our problem.

In section 4, we complete the proof of Main Theorem 1 using Aubry's method \cite{Au}, which gives a pinching condition on the first $(n+1)$-th eigenvalue using a pinching condition on the first $n$-th eigenvalue, where $n$ is the dimension of the manifold.
In the section, we give a modification of Aubry's argument for our problem.
We show the equivalence of our pinching condition on the functions in Main Theorem 1' and the pinching condition on the eigenvalues of a certain Laplacian on the vector bundle $E$, which is a direct sum of a tangent bundle and a rank $1$ trivial bundle in Main Theorem 1.
Taking a wedge product of $n$ eigensections of $E$, 
we give a new almost $(n+1)$-th eigensection through the isomorphism $\bigwedge^n E\cong E$ if the manifold is orientable, and show that the unorientable case cannot occur under our pinching condition by a mapping degree argument.

In section 5, we give an application of Main Theorem 1 to the almost umbilical manifolds.
We show that the almost umbilical manifolds satisfy our pinching condition in Main Theorem 1.

In section 6, we give some examples of collapsing sequences of Riemannian manifolds to show non-continuity of the value of $\lambda_k(\bar{\Delta}^E)$  in the measured Gromov-Hausdorff topology,
and consider the converse of our main theorem for non-collapsing cases.
\begin{sloppypar}
{\bf Acknowledgments}.\ 
I am grateful to my supervisor, Professor Shinichiroh Matsuo, for his advice, also Professor Osamu Kobayashi.
Especially, I would like to thank Professor Erwann Aubry for his valuable suggestions.
I also thank Professor Shouhei Honda for his useful advice and kindly explaining his works to me. Appendix B is based on his suggestion.
Part of this work was done during my stay at the University of C\^{o}te d'Azur. 
I am grateful to the referee for careful reading of the paper and making valuable suggestions.
This work was supported by JSPS Overseas Challenge Program for Young Researchers and by JSPS Research Fellowships for Young Scientists.
\end{sloppypar}


\section{Single function pinching}
In this section, we consider the pinching condition ``$\|\nabla^2 f + fg\|_2$ is small'' for a smooth function, and show that the modified one is an almost cosine function (Proposition \ref{p2i}) and that the Riemannian manifold has the small excess like property (Proposition \ref{p2k}), which asserts that the converse of the triangle inequality almost holds for a certain triple of points, under the pinching condition.
\subsection{Basic definitions}
We first recall some basic definitions and fix our convention.
\begin{Def}[Hausdorff distance]\label{Dhau}
Let $(X,d)$ be a metric space.
For each point $x_0\in X$, subsets $A,B\subset X$ and $r>0$, define
\begin{align*}
d(x_0,A):=&\inf\{d(x_0,a):a\in A\},\\
B_{r}(x_0):=&\{x\in X: d(x,x_0)<r\},\\
B_{r}(A):=&\{x\in X:d(x,A)<r\},\\
d_{H,d}(A,B):=&\inf\{\epsilon>0:A\subset B_{\epsilon}(B) \text{ and } B\subset B_{\epsilon}(A)\}
\end{align*}
We call $d_{H,d}$ the Hausdorff distance.
\end{Def}
If the distance $d$ is clear from the context, we simply write $d_H$ for the Hausdorff distance.
The Hausdorff distance defines a metric on the collection of compact subsets of $X$.
\begin{Def}[Gromov-Hausdorff distance]\label{DGH}
Let $(X,d_X),(Y,d_Y)$ be metric spaces.
Define
\begin{align*}
d_{GH}(X,Y):=\inf\Big\{d_{H,d}(X,Y): &\text{ $d$ is a metric on $X\coprod Y$ such that}\\
&\qquad\qquad\quad\text{$d|_X=d_X$ and $d|_Y=d_Y$}\Big\}.
\end{align*}
\end{Def}
The Gromov-Hausdorff distance defines a metric on the set of isometry classes of compact metric spaces (see \cite[Proposition 11.1.3]{Pe3}).

\begin{Def}[$\epsilon$-Hausdorff approximation map]\label{hap}
Let $(X,d_X),(Y,d_Y)$ be metric spaces.
We say that a map $f\colon X\to Y$ is an $\epsilon$-Hausdorff approximation map for an $\epsilon>0$ if the following two conditions hold.
\begin{itemize}
\item[(i)] For all $a,b\in X$, we have $|d_X(a,b)-d_Y(f(a),f(b))|< \epsilon$,
\item[(ii)] $f(X)$ is $\epsilon$-dense in $Y$, i.e., for all $y\in Y$, there exists $x\in X$ with $d_Y(f(x),y)< \epsilon$.
\end{itemize}
\end{Def}
If there exists an  $\epsilon$-Hausdorff approximation map $f\colon X\to Y$, then we can show that $d_{GH}(X,Y)\leq 3\epsilon/2$ by considering the following metric $d$ on $X\coprod Y$.
\begin{empheq}[left={d(a,b)=\empheqlbrace}]{align*}
&\qquad d_X(a,b)&& (a,b\in X),\\
&\,\frac{\epsilon}{2} +\inf_{x\in X}(d_X(a,x)+d_Y(f(x),b))&&(a\in X,\,b\in Y),\\
&\qquad d_Y(a,b)&&(a,b\in Y).
\end{empheq}
If $d_{GH}(X,Y)< \epsilon$, then there exists a $2\epsilon$-Hausdorff approximation map from $X$ to $Y$.

Let $C(u_1,\ldots,u_l)>0$ denotes a positive function depending only on the numbers $u_1,\ldots,u_l$.
For a set $X$, $\Card X$ denotes a cardinal number of $X$.

Let $(M,g)$ be a closed Riemannian manifold.
For any $p\geq 1$, we use the normalized $L^p$-norm:
\begin{equation*}
\|f\|_p^p:=\frac{1}{\Vol(M)}\int_M |f|^p\,d\mu_g,
\end{equation*}
and $\|f\|_{\infty}:=\mathop{\mathrm{sup~ess}}\limits_{x\in M}|f(x)|$ for a measurable function on $M$. We also use this notation for tensors.
We have $\|f\|_p\leq \|f\|_q$ for any $p\leq q \leq \infty$.
Let $\nabla$ denotes the Levi-Civita connection.
Throughout in this paper, 
 $0=\lambda_0(g)< \lambda_1(g) \leq \lambda_2(g) \leq\cdots \to \infty$
denotes the eigenvalues of the Laplacian $\Delta=-\sum_{i,j}g^{ij}\nabla_i \nabla_j$ acting on functions, and $\{\phi_i\}$ denotes the complete orthonormal system of eigenfunctions in $L^2(M)$:
\begin{equation*}
\Delta \phi_i=\lambda_i \phi_i,\quad \|\phi_i\|_2=1.
\end{equation*}
We sometimes identify $TM$ and $T^\ast M$ using the metric $g$.

Given points $x,y\in M$, $\gamma_{x,y}$ denotes one of minimal geodesics with unit speed such that $\gamma_{x,y}(0)=x$ and $\gamma_{x,y}(d(x,y))=y$.
For any $x\in M$ and $u\in T_x M$ with $|u|=1$, put
$$
t(u):=\sup\{t\in\mathbb{R}_{>0}: d(x,\exp_x(t u))=t\},
$$
and define the interior set $I_x\subset M$  at $x$ (see also \cite[p.104]{Sa}) by
$$
I_x:=\{\exp_x (t u): u\in T_x M \text{ with $|u|=1$ and } 0\leq t< t(u)\}.
$$
Then, $I_x$ is open and $\Vol(M\setminus I_x)=0$ \cite[III Lemma 4.4]{Sa}.
For any $y\in I_x\setminus \{x\}$, the minimal geodesic $\gamma_{x,y}$ is uniquely determined. The function $d(x,\cdot)\colon M\to \mathbb{R}$ is differentiable in $I_x\setminus\{x\}$ and $\nabla d(x,\cdot)(y)=\dot{\gamma}_{x,y}(d(x,y))$ holds for any $y\in I_x\setminus \{x\}$ \cite[III Proposition 4.8]{Sa}.

We define operators $\nabla^\ast \colon \Gamma(T^\ast M\otimes T^\ast M)\to \Gamma(T^\ast M)$ and $d^\ast \colon \Gamma(\bigwedge^k T^\ast M)\to \Gamma(\bigwedge^{k-1}T^\ast M)$ by
\begin{align*}
\nabla^\ast(\alpha\otimes \beta):&=-\tr_{T^\ast M} \nabla(\alpha\otimes \beta)
=-\sum_{i=1}^n \left(\nabla_{e_i}\alpha\right)(e_i)-\sum_{i=1}^n\alpha(e_i)\nabla_{e_i}\beta.\\
d^\ast \omega:&=-\sum_{i=1}^n\iota(e_i)\nabla_{e_i}\omega
\end{align*}
for all $\alpha\otimes\beta\in \Gamma(T^\ast M\otimes T^\ast M)$ and $\omega\in\Gamma(\bigwedge^k T^\ast M)$, where $\iota$ denotes the inner product, $n=\dim M$ and $\{e_1,\ldots,e_n\}$ is an orthonormal basis of $TM$.
If $M$ is closed, then we have
\begin{align*}
\int_M \langle T,\nabla \alpha\rangle\,d\mu_g&=\int_M \langle \nabla^\ast T, \alpha\rangle\,d\mu_g,\\
\int_M \langle \omega,d\eta \rangle\,d\mu_g&=\int_M \langle d^\ast \omega, \eta \rangle\,d\mu_g
\end{align*}
for all $T\in\Gamma(T^\ast M\otimes T^\ast M)$, $\alpha\in\Gamma(T^\ast M)$, $\omega\in\Gamma(\bigwedge^k T^\ast M)$ and $\eta\in\Gamma(\bigwedge^{k-1} T^\ast M)$ by the divergence formula.
The Hodge Laplacian $\Delta\colon \Gamma(\bigwedge^k T^\ast M)\to\Gamma(\bigwedge^k T^\ast M)$ is defined by
$$
\Delta:=d d^\ast +d^\ast d.
$$
We frequently use the following formula:
\begin{Thm}[Bochner formula]
Let $(M,g)$ be a Riemannian manifold.
Then, we have
\begin{equation*}
\Delta \omega =\nabla^\ast \nabla \omega + \Ric(\omega^\sharp,\cdot)
\end{equation*}
for all $\omega\in\Gamma(T^\ast M)$,
where $\omega^\sharp$ denotes a vector field such that $g(\omega^\sharp,Y)=\omega(Y)$ for any vector field $Y$.
\end{Thm}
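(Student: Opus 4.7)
The plan is to prove this pointwise by fixing an arbitrary $p \in M$ and working in a local orthonormal frame $\{e_i\}_{i=1}^n$ chosen so that $(\nabla_{e_j} e_i)(p) = 0$ for all $i,j$ (constructed by parallel-transporting an orthonormal basis along radial geodesics from $p$, or equivalently by Gram-Schmidt on $\partial_i$ in normal coordinates). The point of this normalization is that at $p$, covariant derivatives of tensor components coincide with ordinary derivatives, and many terms coming from the Christoffel symbols drop out.

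First I would unpack the two summands of $\Delta\omega = dd^\ast\omega + d^\ast d\omega$ in this frame. Using the definitions in the excerpt, $d^\ast\omega = -\sum_i (\nabla_{e_i}\omega)(e_i)$ and $d\omega(X,Y) = (\nabla_X\omega)(Y) - (\nabla_Y\omega)(X)$. Differentiating once more and evaluating at $p$ (where $\nabla e_i = 0$) gives
\begin{align*}
(dd^\ast\omega)(X) &= -\sum_i (\nabla_X \nabla_{e_i}\omega)(e_i), \\
(d^\ast d\omega)(X) &= -\sum_i (\nabla_{e_i}\nabla_{e_i}\omega)(X) + \sum_i (\nabla_{e_i}\nabla_X\omega)(e_i).
\end{align*}
Adding the two expressions, the $-\sum_i(\nabla_{e_i}\nabla_{e_i}\omega)(X)$ piece is precisely $(\nabla^\ast\nabla\omega)(X)$ at $p$, by the definition of $\nabla^\ast$ applied to the 2-tensor $\nabla\omega$ (this is where I would double-check sign conventions against the formula for $\nabla^\ast$ given in the excerpt).

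The remaining contribution is the commutator $\sum_i\bigl[(\nabla_{e_i}\nabla_X\omega)(e_i) - (\nabla_X \nabla_{e_i}\omega)(e_i)\bigr]$. By the Ricci identity, this equals $\sum_i (R(e_i, X)\omega)(e_i)$ at $p$ (the $\nabla_{[e_i, X]}\omega$ correction is absorbed by noting $[e_i, X](p)$ and the remaining tensorial dependence cancels; alternatively one just computes in normal coordinates where all Lie brackets of the frame vanish at $p$). For a $1$-form one has $(R(Y,Z)\omega)(W) = -\omega(R(Y,Z)W)$, so
\[
\sum_i (R(e_i, X)\omega)(e_i) = -\omega\Bigl(\sum_i R(e_i, X) e_i\Bigr) = \Ric(\omega^\sharp, X),
\]
by the definition of the Ricci tensor as the appropriate trace of the Riemann curvature. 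Combining the two contributions yields $\Delta\omega(X) = (\nabla^\ast\nabla\omega)(X) + \Ric(\omega^\sharp, X)$ at $p$, and since $p$ was arbitrary the identity holds on all of $M$. The main (mild) obstacle is purely bookkeeping: keeping the sign conventions for $R$, $\Ric$, $d^\ast$, and $\nabla^\ast$ consistent with those declared in the paper, so that the contraction in the last step really produces $+\Ric(\omega^\sharp,\cdot)$ rather than its negative.
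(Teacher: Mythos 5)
Your argument is correct, and there is nothing in the paper to compare it against: the paper states the Bochner (Weitzenb\"ock) formula for $1$-forms as a classical fact and never proves it, using it only as a black box. Your computation is the standard proof, and it is consistent with the paper's conventions: with $d^\ast\omega=-\sum_i\iota(e_i)\nabla_{e_i}\omega$ and $\nabla^\ast T=-\sum_i(\nabla_{e_i}T)(e_i,\cdot)$ as defined in Section 2.1, the geodesic-frame normalization $\nabla e_i(p)=0$ (together with extending $X$ so that $\nabla X(p)=0$, so that $[e_i,X](p)=0$ and the $\nabla_{[e_i,X]}\omega$ term drops) gives exactly the two displayed identities, the rough-Laplacian term matches $\nabla^\ast\nabla\omega$ at $p$, and the curvature term $-\sum_i\omega(R(e_i,X)e_i)=\Ric(\omega^\sharp,X)$ comes out with the correct sign for the curvature convention in which $\Ric_{S^n}=(n-1)g$, which is the convention the paper uses throughout. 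The only care points are the ones you already flagged (sign conventions and the choice of extension of $X$), and they are routine.
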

By the Bochner formula, we immediately have
$$
\int_M(\Delta f)^2 \,d\mu_g=\int_M |\nabla^2 f|^2\,d\mu_g-\int_M\Ric(\nabla f,\nabla f) \,d\mu_g
$$
for all $f\in C^\infty(M)$.

Finally, we list some important notation.
Some of them will be defined in the later sections.
\begin{itemize}
\item $d_H$ denotes the Hausdorff distance (see Definition \ref{Dhau}).
\item $d_{GH}$ denotes the Gromov-Hausdorff distance (see Definition \ref{DGH}).
\item $B_r(x)$ denotes the open ball centered at $x\in X$ with radius $r>0$ for a metric space $(X,d)$.
\item $S^n(r)$ denotes the $n$-dimensional standard sphere of radius $r$.
\item $S^n:=S^n(1)$.
\item For any closed Riemannian manifold $(M,g)$:
\begin{itemize}
\item $d$ denotes the Riemannian distance function.
\item $\Ric$ denotes the Ricci curvature.
\item $\Scal$ denotes the scalar curvature.
\item $\diam$ denotes the diameter.
\item $\Vol$ or $\mu_g$ denotes the Riemannian volume measure.
\item$\|\cdot\|_p$ denotes the normalized $L^p$-norm for each $p\geq 1$, which is defined by
\begin{equation*}
\|f\|_p^p:=\frac{1}{\Vol(M)}\int_M |f|^p\,d\mu_g
\end{equation*}
for any measurable function $f$ on $M$.
\item $\|f\|_{\infty}$ denotes the essential sup of $|f|$ for any measurable function $f$ on $M$.
\item $\nabla$ denotes the Levi-Civita connection.
\item $\nabla^2$ denotes the Hessian for functions.
\item $\Delta\colon \Gamma(\bigwedge^k T^\ast M)\to\Gamma(\bigwedge^k T^\ast M)$ denotes the Hodge Laplacian defined by $\Delta:=d d^\ast +d^\ast d$.
We frequently use the Laplacian acting on functions.
Note that $\Delta=-\tr_g \nabla^2$ holds for functions under our sign convention. 
\item $0=\lambda_0(g)< \lambda_1(g) \leq \lambda_2(g) \leq\cdots \to \infty$
denotes the eigenvalues of the Laplacian acting on functions.
\item $\{\phi_i\}$ denotes the complete orthonormal system of eigenfunctions in $L^2(M)$.
\item $\gamma_{x,y}\colon [0,d(x,y)]\to M$ denotes one of minimal geodesics with unit speed such that $\gamma_{x,y}(0)=x$ and $\gamma_{x,y}(d(x,y))=y$ for any $x,y\in M$.
\item $I_x$ denotes the interior set at $x\in M$. We have $\Vol(M\setminus I_x)=0$. We have that $\gamma_{x,y}$ is uniquely determined and $\nabla d(x,\cdot)=\dot{\gamma}_{x,y}(d(x,y))$ holds for any $y\in I_x\setminus\{x\}$.
\item $T_{(n,\delta)}\colon L^2(M)\to L^2(M)$ denotes the projection onto a subspace $\Span\{\phi_j:j\in \mathbb{N}\text{ with }n-\sqrt{\delta}\leq\lambda_j\leq n+\sqrt{\delta}\}\subset C^\infty(M)$ for each $n\in \mathbb{Z}_{\geq 2}$ and $\delta>0$. We call $T_{(n,\delta)}\colon L^2(M)\to L^2(M)$ the $(n,\delta)$-projection (see Definition \ref{Def1} below).
\item We say that a subspace $V$ in $C^\infty(M)$ satisfies the $\delta$-pinching condition for some $\delta>0$ if the pinching condition $\|\nabla^2 f +f g\|_2\leq\delta\|f\|_2$ holds for all $f\in V$ (see Definition \ref{dpin} below).
\item $E$ denotes the vector bundle $TM\oplus \mathbb{R}e$, where $\mathbb{R}e$ denotes the rank $1$ trivial bundle.
\item $\nabla^E$ denotes the connection on $E$ defined by $\nabla^E_Y (X+f e):=\nabla_Y X+ f Y+(Y f-g(Y,X))e$ for any $X,Y\in \Gamma(TM)$ and $f\in C^\infty(M)$.
\item $\bar{\Delta}^E$ denotes the connection Laplacian acting on $\Gamma(E)$ (see Definition \ref{def1} below).
\item $0\leq \lambda_1(\bar{\Delta}^E) \leq \lambda_2(\bar{\Delta}^E) \leq\cdots \to \infty$ denotes the eigenvalues of the connection Laplacian $\bar{\Delta}^E$ acting on $\Gamma(E)$.
\end{itemize}
\end{itemize}
Note that the lowest eigenvalue of the Laplacian $\Delta$ acting on function is always equal to $0$, and so we start counting the eigenvalues of it from $i=0$.
This is not the case with the connection Laplacian $\bar{\Delta}^E$ acting on $\Gamma(E)$, and so we start counting the eigenvalues of it from $i=1$.

\subsection{Modification of a pinched function}
In this subsection, we modify a function that satisfies our pinching condition to a  function that is easy to handle from the point of view of spectral geometry.
More precisely, we show that if a function $f\in C^\infty(M)$ satisfies our pinching condition $\|\nabla^2 f + f g\|_2\leq \delta\|f\|_2$, then
the projection of $f$ onto a subspace $\Span\{\phi_j: j\in \mathbb{N}\text{ with }n-\sqrt{\delta}\leq\lambda_j\leq n+\sqrt{\delta}\}\subset C^\infty(M)$ also satisfies our pinching condition.

We first consider the following unnormalized condition.
\begin{Lem}\label{p2a}
Let $(M,g)$ be an $n$-dimensional closed Riemannian manifold.
Suppose that there exist real numbers $\delta>0$, $c\in \mathbb{R}$ and a non-constant smooth function $f\in C^\infty(M)$ with $\|\nabla^2 f+ c f g\|_2\leq \delta\|\Delta f\|_2$.
Then, we have the following properties.
\begin{itemize}
\item[(i)] We have $\|\Delta f- n c f\|_2\leq \sqrt{n}\delta \|\Delta f\|_2$.
\item[(ii)] Set $f_0:=f-\int_M f \,d\mu_g/\Vol(M)$. Then, we have $f_0\neq 0$ and $\|\nabla^2 f_0+ c f_0 g\|_2\leq \delta\|\Delta f_0\|_2$, and so $\|\Delta f_0- n c f_0\|_2\leq \sqrt{n}\delta \|\Delta f_0\|_2$.
\item[(iii)] If $\delta\leq \frac{1}{2\sqrt{n}}$, then $c>0$ and $\lambda_1(g)\leq n c(1+2\sqrt{n}\delta)$ hold.
\item[(iv)]  For positive real numbers $K>0$ and $D>0$, there exists a positive constant $C_{\Ca}(n,K,D)>0$ such that the following property holds.
If $\Ric\geq -Kg$, $\diam(M)\leq D$ and $\delta\leq  \frac{1}{2\sqrt{n}}$, then $c\geq C_{\Ca}(n,K,D)$ holds.
\item[(v)] For a positive real number $K>0$, there exists a positive constant $C_{\Cb}(n,K)>0$ such that if $\delta\leq \frac{1}{2\sqrt{n}}$ and $\Ric\leq Kg$, then $c\leq C_{\Cb}(n,K)$ holds.
\end{itemize}
\end{Lem}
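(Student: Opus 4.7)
The plan is to handle the five items in sequence, each building on the previous.

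For (i), take the pointwise trace of $\nabla^2 f + cfg$. With $\tr(\nabla^2 f) = -\Delta f$ and $\tr g = n$, this trace equals $-\Delta f + ncf$; the pointwise Cauchy--Schwarz inequality $|\tr T|^2 \leq n|T|^2$ for a symmetric $(0,2)$-tensor on an $n$-manifold integrates to $\|\Delta f - ncf\|_2 \leq \sqrt{n}\|\nabla^2 f + cfg\|_2 \leq \sqrt{n}\delta\|\Delta f\|_2$.

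For (ii), the non-constancy of $f$ guarantees $f_0 := f - \overline{f} \neq 0$, where $\overline{f} = \Vol(M)^{-1}\int_M f\,d\mu_g$, and $\nabla^2 f_0 = \nabla^2 f$, $\Delta f_0 = \Delta f$. A direct expansion using $\Vol(M)^{-1}\int_M \tr(\nabla^2 f + cfg)\,d\mu_g = nc\overline{f}$ and $\|c\overline{f}g\|_2^2 = nc^2\overline{f}^2$ yields $\|\nabla^2 f_0 + cf_0 g\|_2^2 = \|\nabla^2 f + cfg\|_2^2 - nc^2\overline{f}^2 \leq \delta^2\|\Delta f_0\|_2^2$, after which (i) applied to $f_0$ gives the claimed trace bound. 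For (iii), expand $f_0 = \sum_{i \geq 1} a_i\phi_i$ (the $i = 0$ term vanishes since $\int f_0\,d\mu_g = 0$); the estimate becomes $\sum_i a_i^2(\lambda_i - nc)^2 \leq n\delta^2\sum_i a_i^2\lambda_i^2$, so since $f_0 \neq 0$ some $a_i$ is nonzero with $(\lambda_i - nc)^2 \leq n\delta^2\lambda_i^2$, i.e.\ $\lambda_i(1 - \sqrt{n}\delta) \leq nc$. Under $\delta \leq 1/(2\sqrt{n})$, we have $1 - \sqrt{n}\delta \geq 1/2 > 0$, so $nc > 0$ hence $c > 0$, and the elementary bound $1/(1-x) \leq 1 + 2x$ on $[0, 1/2]$ gives $\lambda_1 \leq \lambda_i \leq nc(1 + 2\sqrt{n}\delta)$. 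Item (iv) is then immediate from the Li--Yau lower bound $\lambda_1 \geq C(n, K, D) > 0$ under $\Ric \geq -Kg$ and $\diam(M) \leq D$ (cf.\ the Introduction): $c \geq \lambda_1/(2n) \geq C_1(n, K, D)$.

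Part (v) is the main step. Expanding the pinching via the integration-by-parts identity $\Vol(M)^{-1}\int_M \langle\nabla^2 f_0, f_0 g\rangle\,d\mu_g = -\|\nabla f_0\|_2^2$ gives
\[
\|\nabla^2 f_0\|_2^2 - 2c\|\nabla f_0\|_2^2 + nc^2\|f_0\|_2^2 \leq \delta^2\|\Delta f_0\|_2^2.
\]
The Bochner identity combined with $\Ric \leq Kg$ produces the lower bound $\|\nabla^2 f_0\|_2^2 \geq \|\Delta f_0\|_2^2 - K\|\nabla f_0\|_2^2$, and substituting this together with $\|\nabla f_0\|_2^2 \leq \|f_0\|_2\|\Delta f_0\|_2$ yields
\[
(1 - \delta^2)\|\Delta f_0\|_2^2 - (K + 2c)\|f_0\|_2\|\Delta f_0\|_2 + nc^2\|f_0\|_2^2 \leq 0.
\]
Non-negativity of the discriminant of this quadratic in $\|\Delta f_0\|_2$ forces $(K + 2c)^2 \geq 4n(1 - \delta^2)c^2$; for $\delta \leq 1/(2\sqrt{n})$ this reads $(K + 2c)^2 \geq (4n - 1)c^2$, and since $\sqrt{4n - 1} > 2$ for $n \geq 2$, solving for $c$ yields $c \leq K/(\sqrt{4n - 1} - 2) =: C_2(n, K)$.

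The main obstacle is (v): the trace-based arguments behind (i)--(iv) are inherently one-sided and cannot yield an upper bound on $c$ by themselves. One has to enter the full Hessian via Bochner in the direction that turns $\Ric \leq Kg$ into a lower bound on $\|\nabla^2 f_0\|_2^2$; combined with the upper bound coming from the pinching, this produces a quadratic inequality whose discriminant controls $c$ from above.
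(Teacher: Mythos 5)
Your proof is correct and rests on the same essential ingredients as the paper's: the trace/trace-free decomposition of $\nabla^2 f + cfg$, integration by parts against $g$, the Bochner formula, the Cauchy--Schwarz bound $\|\nabla f_0\|_2^2 \le \|f_0\|_2\|\Delta f_0\|_2$, and the Li--Yau estimate. Parts (i), (ii), (iv) essentially coincide with the paper's argument. There are two places where your route genuinely diverges. For (iii), you pass to the spectral decomposition $f_0=\sum_{i\ge 1}a_i\phi_i$ and extract a single frequency $\lambda_i$ obeying $|\lambda_i-nc|\le\sqrt{n}\,\delta\lambda_i$, which yields $c>0$ and the $\lambda_1$-bound in one stroke; the paper instead integrates its identity (\ref{1a}), drops the nonnegative term $nc^2\|f_0\|_2^2$ to obtain $2c\|\nabla f_0\|_2^2\ge\bigl(\tfrac1n-\delta^2\bigr)\|\Delta f_0\|_2^2>0$ (hence $c>0$), and then deduces $\lambda_1\le nc(1+2\sqrt{n}\delta)$ from the reverse triangle inequality $(1-\sqrt{n}\delta)\|\Delta f_0\|_2\le nc\|f_0\|_2$. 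Your spectral version is arguably cleaner. For (v), you keep both $\|f_0\|_2$ and $\|\Delta f_0\|_2$ as free quantities and close with a discriminant inequality $(K+2c)^2\ge 4n(1-\delta^2)c^2$, whereas the paper first invokes $nc\|f_0\|_2\le 2\|\Delta f_0\|_2$ (a consequence of (ii)) to eliminate $\|f_0\|_2$ and arrive at $\delta^2\ge\tfrac{n-1}{n}-\tfrac{2K}{nc}$. The resulting explicit constants differ (your $K/(\sqrt{4n-1}-2)$ versus the paper's $8K/(4n-5)$), but both are admissible values of $C_{\Cb}(n,K)$. Your closing paragraph correctly identifies the substance of (v), namely that the one-sided trace argument must be supplemented by Bochner to control the Hessian term from below.
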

\begin{proof}
For any smooth function $F\in C^\infty(M)$, we have
\begin{equation}\label{1a}
\begin{split}
&|\nabla^2 F+ c F g|^2=|\nabla^2 F|^2- 2c F\Delta F+nc^2 |F|^2\\
=& \frac{1}{n} |\Delta F|^2 - 2c F \Delta F +n c^2 |F|^2+\left|\nabla^2 F+\frac{\Delta F}{n}g\right|^2\\
= &\frac{1}{n} |\Delta F- n c F|^2+\left|\nabla^2 F+\frac{\Delta F}{n}g\right|^2.
\end{split}
\end{equation}
This implies (i).

Since $f$ is non-constant, we have $f_0\neq 0$.
By (\ref{1a}), we get
\begin{equation*}
\begin{split}
\|\nabla^2 f_0 +c f_0 g\|_2^2
=&\|\nabla^2 f +c f g\|_2^2-n c^2\left(\frac{1}{\Vol(M)}\int_M f\,d\mu_g\right)^2\\
\leq&\delta^2 \|\Delta f\|_2^2=\delta^2 \|\Delta f_0\|_2^2.
\end{split}
\end{equation*}
This implies (ii).

By (\ref{1a}) and (ii), we get $\delta^2\|\Delta f_0\|_2^2\geq \frac{1}{n}\|\Delta f_0\|_2^2-2c\|\nabla f_0\|_2^2$.
Thus, we get $c>0$ if $\delta\leq \frac{1}{2\sqrt{n}}$.
By (ii), we have
\begin{equation}\label{1b}
(1-\sqrt{n}\delta)\|\Delta f_0\|_2\leq n c \|f_0\|_2.
\end{equation}
Thus, we get $\lambda_1\|f_0\|_2\leq \|\Delta f_0\|_2\leq n c (1+2\sqrt{n}\delta)\|f_0\|_2$. This implies (iii).

We next prove (iv).
Assume that  $\Ric\geq -Kg$, $\diam(M)\leq D$ and $\delta\leq \frac{1}{2\sqrt{n}}$.
By the Li-Yau estimate \cite[p.116]{SY}, we have
$\lambda_1\geq C(n,K,D)>0$.
Thus, by (iii) we get $c\geq \frac{\lambda_1}{2n}\geq C_1(n,K,D)$.
This implies (iv).

Finally, we prove (v). 
Assume that $\Ric\leq Kg$ and $\delta\leq \frac{1}{2\sqrt{n}}$ (we do not assume $\Ric\geq -K g$).
By (ii), we have $n c \|f_0\|_2\leq 2\|\Delta f_0\|_2$.
By the Bochner formula and (\ref{1a}), we get
\begin{equation*}
\begin{split}
\delta^2\|\Delta f_0\|_2^2\geq\|\nabla^2 f_0+\frac{\Delta f_0}{n}g\|_2^2
=&\|\nabla^2 f_0\|_2^2 -\frac{1}{n}\|\Delta f_0\|_2^2\\
=&\frac{n-1}{n}\|\Delta f_0\|_2^2-\frac{1}{\Vol(M)}\int_M \Ric(\nabla f_0,\nabla f_0)\,d\mu_g\\
\geq&\frac{n-1}{n}\|\Delta f_0\|_2^2-K\|\nabla f_0\|_2^2\\
\geq&\frac{n-1}{n}\|\Delta f_0\|_2^2-K\|\Delta f_0\|_2 \|f_0\|_2\\
\geq&\left(\frac{n-1}{n}-\frac{2K}{n c}\right)\|\Delta f_0\|_2^2.
\end{split}
\end{equation*}
Thus, we get $c\leq C_{\Cb}(n,K)$.
\end{proof}

We immediately have the following corollary.
\begin{Cor}\label{p2b}
Take an integer $n\geq 2$ and positive real numbers $K>0$, $D>0$, $\mu>0$ and $0<\delta\leq \frac{1}{2\sqrt{n}}$.
Let $(M,g)$ be an $n$-dimensional closed Riemannian manifold with $\Ric \geq-K g$ and $\diam(M)\leq D$, and let $c$ be a real constant with $c\leq \mu$.
Suppose that there exists a non-constant function $f\in C^\infty(M)$ with $\|\nabla^2 f+c f g\|_2\leq\delta \|\Delta f\|_2$.
Then, $c\geq C_{\Ca}>0$ and the metric $\tilde{g}=cg$ satisfies $\Ric_{\tilde{g}} \geq - K/C_1 \tilde{g}$, $\diam(M,\tilde{g})\leq \mu^\frac{1}{2} D$ and $\|(\nabla^{\tilde{g}})^2 f+f\tilde{g}\|_{L^2(M,\tilde{g})}\leq \delta \|\Delta^{\tilde{g}} f\|_{L^2(M,\tilde{g})}$.
\end{Cor}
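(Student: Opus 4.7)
The statement is essentially a rescaling reformulation of Lemma \ref{p2a}, and the plan is to invoke that lemma for the lower bound on $c$ and then mechanically check how each quantity transforms under the constant homothety $\tilde g = c g$.

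First I would apply Lemma \ref{p2a}(iv) directly: all of its hypotheses ($\Ric \geq -Kg$, $\diam(M) \leq D$, a non-constant $f$ with $\|\nabla^2 f + cfg\|_2 \leq \delta \|\Delta f\|_2$, and $\delta \leq 1/(2\sqrt{n})$) are in force, so we obtain $c \geq C_{\Ca}(n,K,D) > 0$. In particular $\tilde g = cg$ is a bona fide Riemannian metric and $1/c \leq 1/C_{\Ca}$.

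Next I would verify the three assertions about $\tilde g$ using standard scaling rules. Since a constant homothety does not change the Levi-Civita connection, the Ricci tensor and the Hessian of $f$ are unchanged as $(0,2)$-tensors: $\Ric_{\tilde g} = \Ric_g$ and $(\nabla^{\tilde g})^2 f = \nabla^2 f$. Hence $\Ric_{\tilde g} = \Ric_g \geq -K g = -(K/c)\tilde g \geq -(K/C_{\Ca})\tilde g$, which is the first conclusion. For the diameter, distances scale by $\sqrt c$, so $\diam(M,\tilde g) = \sqrt c\,\diam(M,g) \leq \sqrt \mu\, D$.

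The remaining pinching inequality is the only place requiring a genuine calculation. For a $(0,2)$-tensor $T$ one has $|T|_{\tilde g}^2 = c^{-2} |T|_g^2$ and $d\mu_{\tilde g} = c^{n/2} d\mu_g$, so the factor $c^{n/2}$ cancels in the normalized $L^2$-norm and $\|T\|_{L^2(\tilde g)} = c^{-1} \|T\|_{L^2(g)}$; for scalar functions the same cancellation gives $\|\cdot\|_{L^2(\tilde g)} = \|\cdot\|_{L^2(g)}$, while $\Delta^{\tilde g} f = c^{-1} \Delta^g f$. Combining these, $(\nabla^{\tilde g})^2 f + f\tilde g = \nabla^2 f + cfg$, its normalized $L^2(\tilde g)$-norm equals $c^{-1}\|\nabla^2 f + cfg\|_{L^2(g)}$, and $\|\Delta^{\tilde g} f\|_{L^2(\tilde g)} = c^{-1}\|\Delta^g f\|_{L^2(g)}$; the factor $c^{-1}$ cancels from both sides, converting the hypothesis into the desired pinching inequality for $\tilde g$.

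There is no real obstacle here: the only substantive ingredient is Lemma \ref{p2a}(iv), and the rest is bookkeeping. The one point to check carefully is that the normalized $L^2$-norm hides a volume factor $c^{n/2}$ that must cancel correctly against the pointwise rescaling of $(0,2)$-tensors; dimension-independence of the resulting scaling for both sides of the pinching inequality is what makes the conclusion clean.
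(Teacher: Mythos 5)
Your proposal is correct and matches the paper's intent: the paper states the corollary as an immediate consequence of Lemma \ref{p2a} (iv) together with the standard behaviour of the Ricci tensor, Hessian, distance and normalized $L^2$-norms under the constant rescaling $\tilde g = cg$, which is exactly the bookkeeping you carry out. Your scaling computations (in particular the cancellation of the volume factor $c^{n/2}$ and the common factor $c^{-1}$ on both sides of the pinching inequality) are accurate, so nothing is missing.
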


By Corollary \ref{p2b}, we only consider the case when $c=1$. 
By Lemma \ref{p2a} (v), we do not need to assume $c\leq \mu$ if we assume $\Ric \leq Kg$.
Note that if a non-zero function $f$ satisfies $\|\nabla^2 f+ f g\|_2\leq \delta\|\Delta f\|_2$ for some $\delta>0$, then $f$ is non-constant.

In the following, we sometimes normalize the function $f_1$ defined in Lemma \ref{p2c} so that $\|f_1\|_2^2=1/(n+1)$.
Thus, the condition $\|\nabla^2 f + fg\|_2\leq\delta \|f\|_2$ seems more natural.
Since $\sqrt{n}\|\nabla^2 f + fg\|_2\geq \|\Delta f -n f\|_2$ (see (\ref{1a})), we get the following lemma immediately.
\begin{Lem}\label{add}
Let $(M,g)$ be an $n$-dimensional closed Riemannian manifold.
Take arbitrary $0<\delta\leq \frac{1}{2\sqrt{n}}$.
Then, we have the following properties.
\begin{itemize}
\item[(i)] If a smooth function $f\in C^\infty (M)$ satisfies $\|\nabla^2 f +f g\|_2\leq \delta \|\Delta f\|_2$, then $\|\nabla^2 f +f g\|_2\leq 2n \delta \|f\|_2$.
\item[(ii)]  If a smooth function $f\in C^\infty (M)$ satisfies $\|\nabla^2 f +f g\|_2\leq \delta \|f\|_2$, then $\|\nabla^2 f +f g\|_2\leq \delta \|\Delta f\|_2$.
\end{itemize}
\end{Lem}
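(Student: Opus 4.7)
The plan is to deduce both parts from the single inequality
\[
\sqrt{n}\,\|\nabla^2 f+fg\|_2 \;\geq\; \|\Delta f - n f\|_2,
\]
which is exactly the pointwise identity (\ref{1a}) of Lemma \ref{p2a} specialized to $c=1$, integrated over $M$, and with the nonnegative term $\|\nabla^2 f+\tfrac{\Delta f}{n}g\|_2^{2}$ discarded. The reverse triangle inequality then gives
\[
\sqrt{n}\,\|\nabla^2 f + f g\|_2 \;\geq\; \bigl| \|\Delta f\|_2 - n\|f\|_2 \bigr|,
\]
and this is the only tool I would use.

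For (i), from the hypothesis $\|\nabla^2 f+fg\|_2\leq \delta\|\Delta f\|_2$ I would rearrange the displayed inequality as
\[
\|\Delta f\|_2 \;\leq\; \sqrt{n}\,\|\nabla^2 f+fg\|_2 + n\|f\|_2 \;\leq\; \sqrt{n}\,\delta\,\|\Delta f\|_2 + n\|f\|_2.
\]
The assumption $\delta\leq \tfrac{1}{2\sqrt{n}}$ yields $1-\sqrt{n}\,\delta\geq \tfrac12$, whence $\|\Delta f\|_2\leq 2n\|f\|_2$. Substituting this back into the hypothesis gives $\|\nabla^2 f+fg\|_2\leq 2n\delta\|f\|_2$, as claimed.

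For (ii), from the hypothesis $\|\nabla^2 f+fg\|_2\leq \delta\|f\|_2$ the same master inequality instead gives
\[
n\|f\|_2 - \|\Delta f\|_2 \;\leq\; \sqrt{n}\,\|\nabla^2 f+fg\|_2 \;\leq\; \sqrt{n}\,\delta\,\|f\|_2,
\]
so that $\|\Delta f\|_2 \geq (n-\sqrt{n}\,\delta)\|f\|_2 \geq (n-\tfrac12)\|f\|_2 \geq \|f\|_2$ using $n\geq 2$ and $\delta\leq \tfrac{1}{2\sqrt{n}}$. Therefore $\|\nabla^2 f+fg\|_2\leq \delta\|f\|_2\leq \delta\|\Delta f\|_2$. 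There is no genuine obstacle here: the lemma is pure bookkeeping that lets the paper switch freely between the two natural normalizations of the pinching constant, and the author has already indicated in the sentence preceding the statement that the whole argument is a one-line consequence of (\ref{1a}).
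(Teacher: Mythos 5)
Your proof is correct and follows exactly the route the paper intends: it derives $\sqrt{n}\,\|\nabla^2 f+fg\|_2\geq\|\Delta f-nf\|_2$ from the identity (\ref{1a}) with $c=1$ and then finishes with the reverse triangle inequality and the hypothesis $\delta\leq\frac{1}{2\sqrt{n}}$ (together with $n\geq 2$ in part (ii), which is the paper's standing assumption). The paper states the lemma as an immediate consequence of that same inequality without writing out the bookkeeping, so your argument simply supplies the details it omits.
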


Let us consider the projection of our function onto a subspace $\Span\{\phi_j:j\in \mathbb{N}\text{ with }n-\sqrt{\delta}\leq\lambda_j\leq n+\sqrt{\delta}\}\subset C^\infty(M)$.

\begin{Lem}\label{p2c}
Given an integer $n\geq 2$ and positive real numbers $K>0$ and $D>0$, there exist positive constants $C(n,K,D)>0$ and $N_1(n,K,D)>0$ such that the following properties hold.
Take a positive real number $0<\delta\leq\frac{1}{8n^3}$.
Let $(M,g)$ be an $n$-dimensional closed Riemannian manifold with $\Ric\geq -Kg$ and $\diam(M)\leq D$.
Suppose that there exists a non-zero smooth function $f\in C^\infty(M)$ with $\|\nabla^2 f+ f g\|_2\leq \delta\|f\|_2$.
Then, we have the following properties.
\begin{itemize}
\item[(i)] Decompose $f$ as $f=\sum_{j=0}^\infty \alpha_j\phi_j$ $(\alpha_j\in \mathbb{R})$. Set
\begin{equation*}
f_1:=\sum_{n-\sqrt{\delta}\leq \lambda_j\leq n+\sqrt{\delta}} \alpha_j\phi_j.
\end{equation*}
Then, we have $f_1\neq0$ and
\begin{equation*}
\|\nabla^2 f_1+ f_1g\|_2\leq C\sqrt{\delta}\|f_1\|_2,\,
\|\Delta f_1 - n  f_1\|_2\leq C \delta\|f_1\|_2. 
\end{equation*}
\item[(ii)]  We normalize $f_1$ so that $\|f_1\|_2^2=\frac{1}{n+1}$.
Then, we have
\begin{equation*}
\|\nabla f_1\|_{\infty}\leq C,\,\|f_1\|_{\infty} \leq C,
\end{equation*}
\begin{equation*}
\|\nabla \phi_j\|_{\infty}\leq C, \,\|\phi_j\|_{\infty} \leq C
\end{equation*}
for all $j$ with $n-\sqrt{\delta}\leq \lambda_j\leq n+\sqrt{\delta}$, and
\begin{equation*}
\Card\{j\in \mathbb{Z}_{>0}: n-\sqrt{\delta}\leq \lambda_j\leq n+\sqrt{\delta}\}\leq N_1.
\end{equation*}
\end{itemize}
\end{Lem}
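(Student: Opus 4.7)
The plan is to combine the pointwise identity (\ref{1a}) from Lemma \ref{p2a} (with $c=1$), the orthogonality of the eigendecomposition, and a Bochner computation applied to the spectral tail. Since $\|\nabla^2 f + fg\|_2 \leq \delta\|f\|_2$, (\ref{1a}) immediately yields $\|\Delta f - nf\|_2 \leq \sqrt{n}\,\delta\|f\|_2$. Expanding $\Delta f - nf = \sum_j (\lambda_j - n)\alpha_j\phi_j$ and using the spectral gap $|\lambda_j - n| > \sqrt{\delta}$ outside the window, the tail $f_2 := f - f_1$ satisfies
\[
\delta\|f_2\|_2^2 \leq \sum_{|\lambda_j - n|>\sqrt{\delta}}(\lambda_j - n)^2\alpha_j^2 \leq \|\Delta f - nf\|_2^2 \leq n\delta^2\|f\|_2^2,
\]
so $\|f_2\|_2^2 \leq n\delta\|f\|_2^2$. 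Since $\delta \leq \tfrac{1}{8n^3}$, this gives $\|f_1\|_2^2 \geq (1 - n\delta)\|f\|_2^2 \geq \tfrac{1}{2}\|f\|_2^2$, in particular $f_1 \neq 0$ and $\|f\|_2 \leq \sqrt{2}\,\|f_1\|_2$.

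For the Laplacian estimate in (i), the key observation is that $\Delta f_1 - nf_1$ is the $L^2$-orthogonal projection of $\Delta f - nf$ onto $\Span\{\phi_j : n-\sqrt{\delta}\leq\lambda_j\leq n+\sqrt{\delta}\}$, hence
\[
\|\Delta f_1 - nf_1\|_2 \leq \|\Delta f - nf\|_2 \leq \sqrt{n}\,\delta\|f\|_2 \leq C(n)\,\delta\|f_1\|_2.
\]
For the Hessian estimate, I use the triangle inequality
\[
\|\nabla^2 f_1 + f_1 g\|_2 \leq \|\nabla^2 f + fg\|_2 + \|\nabla^2 f_2 + f_2 g\|_2
\]
and bound the tail by applying (\ref{1a}) to $f_2$:
\[
\|\nabla^2 f_2 + f_2 g\|_2^2 = \tfrac{1}{n}\|\Delta f_2 - nf_2\|_2^2 + \Bigl\|\nabla^2 f_2 + \tfrac{\Delta f_2}{n}g\Bigr\|_2^2.
\]
The first piece collapses by orthogonal projection. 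For the second, the Bochner identity gives $\|\nabla^2 f_2 + \tfrac{\Delta f_2}{n}g\|_2^2 = \tfrac{n-1}{n}\|\Delta f_2\|_2^2 - \tfrac{1}{\Vol(M)}\int_M\Ric(\nabla f_2,\nabla f_2)\,d\mu_g$, and $\Ric \geq -Kg$ combined with $\|\nabla f_2\|_2^2 \leq \|f_2\|_2\|\Delta f_2\|_2$ upgrades this to a bound in terms of $\|\Delta f_2\|_2^2$ and $K\|f_2\|_2\|\Delta f_2\|_2$. Since $\|\Delta f_2\|_2 \leq \|\Delta f_2 - nf_2\|_2 + n\|f_2\|_2 \leq C(n)\sqrt{\delta}\|f\|_2$ and $\|f_2\|_2 \leq \sqrt{n\delta}\|f\|_2$, every term comes out $O(\sqrt{\delta}\|f_1\|_2)$, completing (i).

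For part (ii), after normalizing $\|f_1\|_2^2 = \tfrac{1}{n+1}$, I would invoke three standard estimates valid on closed manifolds with $\Ric \geq -Kg$ and $\diam(M) \leq D$: (a) an eigenvalue counting bound $\Card\{j : \lambda_j \leq n+1\} \leq N_1(n,K,D)$ of Weyl/Buser type from volume comparison and the heat kernel; (b) a uniform $L^\infty$ bound $\|\phi_j\|_\infty \leq C(n,K,D)$ for $\lambda_j \leq n+1$ via Moser iteration or heat kernel estimates; and (c) a Cheng--Yau-type gradient estimate $\|\nabla\phi_j\|_\infty \leq C(n,K,D)$ in the same range. Writing $f_1 = \sum_{j\in J}\alpha_j\phi_j$ with $|J|\leq N_1$ and $\sum\alpha_j^2 = \tfrac{1}{n+1}$, Cauchy--Schwarz then delivers both $\|f_1\|_\infty$ and $\|\nabla f_1\|_\infty$ bounded by $C(n,K,D)$.

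The hard part is the Hessian estimate in (i): applying (\ref{1a}) directly to $f_1$ is insufficient, because without positive Ricci one cannot absorb the Bochner error $+K\|\nabla f_1\|_2^2$ into a small quantity. The trick is to keep the full pinching on $f$, transfer the Hessian problem to the $L^2$-small tail $f_2$, and use the simultaneous smallness of $\|f_2\|_2$ and $\|\Delta f_2 - nf_2\|_2$ to absorb the negative-Ricci contribution.
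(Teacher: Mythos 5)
Your proposal is correct and follows essentially the same route as the paper: project onto the spectral window $n-\sqrt{\delta}\leq\lambda_j\leq n+\sqrt{\delta}$, show the tail $f_2$ is $L^2$-small together with $\Delta f_2-nf_2$, control the tail's Hessian via the Bochner identity with the $-K$ Ricci term absorbed through $\|\nabla f_2\|_2^2\leq\|f_2\|_2\|\Delta f_2\|_2$, and conclude (ii) from the eigenfunction gradient/sup bounds, a Gromov-type eigenvalue counting estimate, and Cauchy--Schwarz. The only cosmetic differences are that you keep the constant mode inside the tail instead of first passing to the mean-zero part $f_0$, bound $\|\Delta f_2\|_2$ by the direct triangle inequality $\|\Delta f_2\|_2\leq\|\Delta f_2-nf_2\|_2+n\|f_2\|_2$ rather than the paper's splitting into high frequencies $\lambda_j>2n$ and the intermediate range, and obtain $\|\phi_j\|_\infty$ by Moser iteration rather than from the gradient bound together with a zero of $\phi_j$.
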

\begin{proof}
By Lemma \ref{add}, we have $\|\nabla^2 f+ f g\|_2\leq \delta\|\Delta f\|_2$.
Since $\frac{1}{8n^3} \leq \frac{1}{2\sqrt{n}}$,
we can use the result of Lemma \ref{p2a} (i), (ii), (iii) and (iv).
Set
\begin{align*}
f_1:=&\sum_{n-\sqrt{\delta}\leq \lambda_j\leq n+\sqrt{\delta}} \alpha_j\phi_j,\quad
&&f_2:=f_0-f_1=\sum_{0<\lambda_j\leq n-\sqrt{\delta}} \alpha_j\phi_j+\sum_{\lambda_j\geq n+\sqrt{\delta}} \alpha_j\phi_j, \\
f_3:=& \sum_{\lambda_j>2 n } \alpha_j\phi_j,\quad
&&f_4:=f_2-f_3=\sum_{0<\lambda_j\leq n-\sqrt{\delta}} \alpha_j\phi_j+\sum_{n+\sqrt{\delta}\leq\lambda_j\leq 2n} \alpha_j\phi_j.
\end{align*}
Then, by the definition of $f_2$, Lemma \ref{p2a} (ii) and (\ref{1b}) , we have
\begin{equation*}
\begin{split}
\sqrt{\delta} \|f_2\|_2\leq \|\Delta f_2-n f_2\|_2\leq \|\Delta f_0-n f_0\|_2\leq &\sqrt{n}\delta\|\Delta f_0\|_2\\
\leq &2n \sqrt{n} \delta \|f_0\|_2.
\end{split}
\end{equation*}
Thus, we get 
\begin{equation}\label{1c}
\|f_2\|_2\leq \sqrt{n \delta}\|\Delta f_0\|_2\leq 2n\sqrt{n\delta}\|f_0\|_2.
\end{equation}
Therefore, we have $\|f_0\|_2^2=\| f_1\|_2^2+\|f_2\|_2^2\leq \|f_1\|_2^2+4n^3\delta \| f_0\|_2^2$, and so we get
\begin{equation}\label{1e}
\| f_0\|_2^2\leq (1+8n^3 \delta)\| f_1\|_2^2\leq 2\| f_1\|_2^2.
\end{equation}
In particular, we get $f_1\neq0$.
Since we have
\begin{equation*}
\|\Delta f_3 -n f_3\|_2^2
=\sum_{\lambda_j>2 n}\alpha_j^2 (\lambda_j-n )^2
\geq \sum_{\lambda_j>2 n }\alpha_j^2 \frac{1}{4}\lambda_j^2=\frac{1}{4}\|\Delta f_3\|_2^2,
\end{equation*}
and (\ref{1c}), we get
\begin{equation}\label{1d1}
\begin{split}
\|\Delta f_2\|_2^2=\|\Delta f_3\|_2^2+\|\Delta f_4\|_2^2\leq& 4\|\Delta f_3- n f_3\|_2^2+ (2 n)^2 \|f_4\|_2^2\\
\leq &4\|\Delta f_0- n f_0\|_2^2+ (2 n)^2 \|f_2\|_2^2\\
\leq &C \delta \| f_0\|_2^2.
\end{split}
\end{equation}
By (\ref{1c}) and (\ref{1d1}), we have 
\begin{equation}\label{1f}
\|\nabla f_2\|_2^2\leq \|\Delta f_2\|_2 \|f_2\|_2\leq C\delta \|f_0\|_2^2.
\end{equation}
By the Bochner formula, (\ref{1d1}) and (\ref{1f}), we get
\begin{equation}\label{1g}
\begin{split}
\|\nabla^2 f_2\|_2^2=&\|\Delta f_2\|_2^2 -\frac{1}{\Vol(M)} \int_M \Ric(\nabla f_2,\nabla f_2)\,d\mu_g\\
\leq  &\|\Delta f_2\|_2^2+K\|\nabla f_2\|_2^2\leq C\delta \|f_0\|_2^2.
\end{split}
\end{equation}
By (\ref{1c}), (\ref{1e}) and (\ref{1g}), we have
\begin{equation*}
\begin{split}
\|\nabla^2 f_1+ f_1 g\|_2\leq &
\|\nabla^2 f_0+f_0 g\|_2+ \|\nabla^2 f_2\|_2+\sqrt{n}  \|f_2\|_2\\
\leq &C \sqrt{\delta} \|f_0\|_2
\leq C\sqrt{\delta}\|f_1\|_2,
\end{split}
\end{equation*}
and
\begin{equation*}
\begin{split}
\|\Delta f_1-n f_1\|_2\leq &
\|\Delta f_0- n f_0\|_2\\
\leq &\sqrt{n}\delta \|\Delta f_0\|_2
\leq C\delta\| f_1\|_2.
\end{split}
\end{equation*}
Thus, we get (i).

We next prove (ii).
By the gradient estimate for eigenfunctions  \cite[Theorem 7.3]{Pe1},
we have $\|\nabla \phi_j\|_{\infty}\leq  C(n,K,D)$ for all $j$ with $\lambda_j\leq n+\sqrt{\delta}(\leq n+1)$. Since $\phi_j$ takes $0$ at some point, we have $\|\phi_j\|_{\infty}\leq C$.
By the Gromov estimate of the eigenvalues \cite[Appendix C]{Gr}, for each $j$, we have 
\begin{equation*}
\lambda_j\geq  C(n,K,D) j^\frac{2}{n}>0.
\end{equation*}
Thus, for $j$ with $\lambda_j\leq n+\sqrt{\delta}$, we have
$j\leq \left(\frac{n+1}{C}\right)^\frac{n}{2}=:N_1(n,K,D)$.
Therefore, if we normalize $f_1$ so that $\|f_1\|_2^2=\frac{1}{n+1}$, we get
\begin{equation*}
\begin{split}
\|\nabla f_1\|_{\infty}\leq &\sum_{n-\sqrt{\delta}\leq \lambda_j\leq n+\sqrt{\delta}} |\alpha_j|\|\nabla \phi_j\|_{\infty}\\
\leq & C \left(\sum_{n-\sqrt{\delta}\leq \lambda_j\leq n+\sqrt{\delta}} \alpha_j^2\right)^\frac{1}{2}N_1^\frac{1}{2}\leq C
\end{split}
\end{equation*}
and $\|f_1\|_{\infty}\leq C$.
Thus, we get (ii).
\end{proof}

We give the following definition based on Lemma \ref{p2c}.
\begin{Def}\label{Def1}
Take an integer $n\geq 2$.
For an $n$-dimensional closed Riemannian manifold $(M,g)$ and a $\delta>0$, we define the $(n,\delta)$-projection $T_{(n,\delta)}\colon L^2(M)\to L^2(M)$ as follows.
For any function $f\in L^2(M)$ such that $f=\sum_{j=0}^\infty \alpha_j\phi_j$ ($\alpha_j\in\mathbb{R}$), we define 
\begin{equation*}T_{(n,\delta)}(f)= \sum_{n-\sqrt{\delta}\leq \lambda_j\leq n+\sqrt{\delta}} \alpha_j\phi_j.
\end{equation*}
\end{Def}

\subsection{Gradient estimate}
In this subsection, we modify the Cheng-Yau estimate to apply it to our function $f_1$ in Lemma \ref{p2c}.
Although we do not have $L^\infty$ estimate for the Hessian $\nabla^2 f_1$,
Lemma \ref{p2d} below enable us to give an upper bound of the norm of gradient $|\nabla f_1|$ around the maximum point of $f_1$.
The proof of the following lemma is similar to that of \cite[Theorem 7.1]{Ch}.
\begin{Lem}\label{p2d}
Take an integer $n\geq 2$ and positive real numbers $K>0$, $D>0$ and $0<\epsilon_1 \leq1$. Then, there exists a positive constant $ C(n,K,D)>0$ such that the following property holds.
Let $(M,g)$ be an $n$-dimensional closed Riemannian manifold with $\Ric\geq-Kg$ and $\diam(M)\leq D$. Take an open subset $A\subset M$ and a function $f\in C^\infty(M)$ with $\|f\|_2^2=\frac{1}{n+1}$ of the form
\begin{equation*}
f=\sum_{\lambda_j\leq n+1} \alpha_j\phi_j.
\end{equation*}
Suppose that at some point $p\in A$, $f(p)=\sup_{x\in A} f(x)$,
and $\sup_{x\in A} (f(p)-f(x)+\epsilon_1)^{-2}|\nabla f|^2(x)$ is attained at some point in $A$.
Then, for all $x\in A$, we have 
\begin{equation*}
|\nabla f|^2(x)\leq \frac{C}{\epsilon_1}\left(f(p)-f(x)+\epsilon_1\right)^2.
\end{equation*}
\end{Lem}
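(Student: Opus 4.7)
The plan is a Cheng--Yau-type maximum principle argument applied to the ratio $\phi := G^{-2}|\nabla f|^2$, where $G := f(p)-f+\epsilon_1$ satisfies $G\geq \epsilon_1$ on $A$ (since $f(p)=\sup_A f$). The desired inequality is exactly $\phi \leq C/\epsilon_1$ on $A$. By hypothesis $\phi$ attains its supremum at some $q\in A$; if $\phi(q)=0$ the bound is trivial, so assume $|\nabla f|(q)>0$ and work at $q$, where $\nabla\phi=0$ and $\Delta\phi\geq 0$ (using the positive Laplacian convention $\Delta = -\tr_g\nabla^2$).

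The first-order condition at $q$ unwinds to $\nabla|\nabla f|^2 = -2\phi G\,\nabla f$, equivalently $\nabla^2 f(\nabla f,\cdot) = -\phi G\,\nabla f$, so $\nabla f$ is an eigenvector of $\nabla^2 f$ with eigenvalue $-\phi G$. Combined with $\tr\nabla^2 f = -\Delta f$ and Cauchy--Schwarz on the remaining $n-1$ eigenvalues, this produces the refined Kato inequality
\[
|\nabla^2 f|^2 \;\geq\; (\phi G)^2 + \frac{(\Delta f - \phi G)^2}{n-1}
\]
at $q$. This refinement is the heart of the argument and the main technical obstacle: the naive Kato $|\nabla^2 f|^2\geq(\phi G)^2$ makes the leading $\phi^2$ terms produced below cancel exactly, leaving no useful information, and the trace identity for $\nabla^2 f$ is precisely what rescues the argument.

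Next, combining the Bochner formula $\tfrac12\Delta|\nabla f|^2 = -|\nabla^2 f|^2 + \langle\nabla f,\nabla\Delta f\rangle - \Ric(\nabla f,\nabla f)$ with $\Delta(uv) = (\Delta u)v + u\Delta v - 2\langle\nabla u,\nabla v\rangle$ applied to $u=|\nabla f|^2$, $v=G^{-2}$ (using $\nabla G=-\nabla f$, $\Delta G = -\Delta f$ and the first-order condition at $q$), one obtains
\[
\Delta\phi(q) \;=\; -\frac{2|\nabla^2 f|^2}{G^2} + \frac{2\langle\nabla f,\nabla\Delta f\rangle - 2\Ric(\nabla f,\nabla f)}{G^2} + 2\phi^2 + \frac{2\phi\,\Delta f}{G}.
\]
Substituting the refined Kato bound and expanding $(\Delta f-\phi G)^2$, the $\pm 2\phi^2$ contributions cancel and one is left with a genuine leading term $-\tfrac{2\phi^2}{n-1}$.

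The uniform bounds needed to finish come exactly as in Lemma \ref{p2c}(ii): since $f = \sum_{\lambda_j\leq n+1}\alpha_j\phi_j$ with $\|f\|_2^2 = 1/(n+1)$, the Gromov eigenvalue estimate bounds the number of terms by some $N(n,K,D)$, and the gradient estimate for eigenfunctions gives $\|\phi_j\|_\infty + \|\nabla\phi_j\|_\infty \leq C(n,K,D)$; consequently $\|f\|_\infty$, $\|\nabla f\|_\infty$, $\|\Delta f\|_\infty$, $\|\nabla\Delta f\|_\infty$ and $\|G\|_\infty$ are all bounded by $C(n,K,D)$. Inserting these bounds together with $\Ric\geq -Kg$ and $1/G\leq 1/\epsilon_1$ into $0\leq\Delta\phi(q)$ produces the quadratic inequality
\[
\frac{\phi^2}{n-1} \;\leq\; \frac{C\phi}{\epsilon_1} + \frac{C\sqrt{\phi}}{\epsilon_1} + C\phi,
\]
with $C=C(n,K,D)$. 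Since $0<\epsilon_1\leq 1$, splitting into the cases $\phi(q)\leq 1$ and $\phi(q)\geq 1$ forces $\phi(q) \leq C'(n,K,D)/\epsilon_1$. Because $q$ realizes the supremum, the same bound holds on all of $A$, which is the desired conclusion.
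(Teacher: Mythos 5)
Your proposal is correct and follows essentially the same Cheng--Yau-type maximum-principle argument as the paper: both maximize the same ratio $(f(p)-f+\epsilon_1)^{-2}|\nabla f|^2$ over $A$, apply the Bochner formula at the maximum point, and close the resulting quadratic inequality using the uniform bounds on $f$, $\nabla f$, $\Delta f$, $\nabla\Delta f$ coming from the Gromov eigenvalue count and Petersen's gradient estimate for eigenfunctions. The only difference is cosmetic: the paper substitutes $v=\log(f(p)-f+\epsilon_1)$ and extracts the good quadratic term from $\frac{1}{n}(\Delta v)^2$ with a two-case analysis, whereas you work with the quotient directly and obtain it from the trace-refined Kato inequality forced by the first-order condition at the maximum; both implementations are valid.
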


\begin{proof}
Note that there exist  positive constants $N_2(n,K,D)>0$ and $ C(n,K,D)>0$ such that
\begin{equation*}
\Card \{j: \lambda_j \leq n+1\}\leq N_2
\end{equation*}
and $\|\nabla\phi_j\|_{\infty}\leq  C$
for all $j$ with $\lambda_j \leq n+1$ by the gradient estimate for eigenfunctions  \cite[Theorem 7.3]{Pe1} and Gromov's eigenvalue estimate \cite[Appendix C]{Gr}.

Set
$h(x):=f(p)-f(x)+\epsilon_1(\geq \epsilon_1)$ and $v(x):=\log h(x)$ for any $x\in A$.
Then, we have
\begin{equation*}
\nabla v=\frac{\nabla h}{h}=\frac{-\nabla f}{h},\quad \Delta v=\frac{\Delta h}{h}+|\nabla v|^2, \quad |\nabla v|^2=\frac{|\nabla f|^2}{(f(p)-f+\epsilon_1)^2}.
\end{equation*}
By the Bochner formula, we get
\begin{equation}\label{1i}
\begin{split}
\frac{1}{2}\Delta |\nabla v|^2&=\langle\nabla \Delta v,\nabla v\rangle-\Ric(\nabla v,\nabla v)-|\nabla^2 v+\frac{\Delta v}{n} g|^2-\frac{1}{n}|\Delta v|^2\\
&\leq \langle\nabla \Delta v,\nabla v\rangle+K |\nabla v|^2-\frac{1}{n}\left(\frac{\Delta h}{h}+|\nabla v|^2\right)^2.
\end{split}
\end{equation}
We have
\begin{equation}\label{1j}
\begin{split}
 \langle\nabla \Delta v,\nabla v\rangle=& \langle\nabla \frac{\Delta h}{h},\nabla v\rangle+ \langle\nabla |\nabla v|^2,\nabla v\rangle\\
=& \langle\frac{\nabla \Delta h}{h},\nabla v\rangle-\frac{\Delta h}{h}|\nabla v|^2+ \langle\nabla |\nabla v|^2,\nabla v\rangle.
\end{split}
\end{equation}
We estimate the first and second terms of (\ref{1j}).
Since we have 
\begin{equation*}
\frac{\Delta h}{h}=-\frac{\Delta f}{h}=-\frac{1}{h}\sum_{\lambda_j\leq n+1}\lambda_j \alpha_j\phi_j,
\end{equation*}
we get
\begin{equation}\label{1k}
\begin{split}
\left|\frac{\Delta h}{h}\right|
\leq& \frac{n+1}{\epsilon_1}\sum_{\lambda_j\leq n+1}|\alpha_j||\phi_j|
\leq \frac{C}{\epsilon_1}.
\end{split}
\end{equation}
Since we have 
\begin{equation*}
\begin{split}
\left\langle\frac{\nabla \Delta h}{h},\nabla v\right\rangle=-\frac{1}{h}\sum_{\lambda_j\leq n+1}\lambda_j \alpha_j \langle \nabla \phi_j,\nabla v\rangle,
\end{split}
\end{equation*}
we get 
\begin{equation}\label{1l}
\begin{split}
\left|\left\langle\frac{\nabla \Delta h}{h},\nabla v\right\rangle\right|
\leq \frac{C}{\epsilon_1}|\nabla v|.
\end{split}
\end{equation}
By (\ref{1k}) and (\ref{1l}), we get
\begin{equation}\label{1m}
\left|\left\langle\frac{\nabla \Delta h}{h},\nabla v\right\rangle-\frac{\Delta h}{h}|\nabla v|^2\right|
\leq \frac{ C}{\epsilon_1}|\nabla v|+ \frac{ C}{\epsilon_1}|\nabla v|^2.
\end{equation}

By the assumption, $|\nabla v|^2$ takes its maximum value at some point $q\in A$.
We have $\langle\nabla |\nabla v|^2,\nabla v\rangle(q)=0$.
By (\ref{1j}) and (\ref{1m}), we get
\begin{equation*}
|\langle \nabla \Delta v,\nabla v\rangle|(q)\leq \frac{ C}{\epsilon_1}|\nabla v|(q)+ \frac{ C}{\epsilon_1}|\nabla v|^2(q),
\end{equation*}
and so, by (\ref{1i}) and the maximum principle,
\begin{equation*}
0\leq \frac{1}{2}\Delta |\nabla v|^2(q)\leq \frac{ C}{\epsilon_1} |\nabla v|^2(q) +\frac{ C}{\epsilon_1}|\nabla v|(q)-\frac{1}{n}\left(\frac{\Delta h}{h}+|\nabla v|^2\right)^2(q).
\end{equation*}

We consider the following cases.

\begin{itemize}
\item[ (a)] $|\nabla v|^2(q)\geq -2\frac{\Delta h}{h}(q)$.
\item[ (b)] $|\nabla v|^2(q)< -2\frac{\Delta h}{h}(q)$.
\end{itemize}

We first consider the case (a).
Since we have $\frac{\Delta h}{h}(q)+|\nabla v|^2(q)\geq \frac{1}{2}|\nabla v|^2(q)$, we get
\begin{equation*}
|\nabla v|^4(q) \leq  \frac{ C}{\epsilon_1} \left(|\nabla v|^2(q) +|\nabla v|(q)\right).
\end{equation*}
We further consider the following two cases.

\begin{itemize}
\item[(a-1)] $ |\nabla v|^2(q) \leq |\nabla v|(q)$.
\item[(a-2)] $ |\nabla v|^2(q) >|\nabla v|(q)$.
\end{itemize}
For both cases, we get
$|\nabla v|^2(q) \leq\frac{C}{\epsilon_1}$.

We next consider the case (b).
By (\ref{1k}), we get  
\begin{equation*}
|\nabla v|^2(q)\leq -2\frac{\Delta h}{h}(q)\leq \frac{C}{\epsilon_1}.
\end{equation*}
Thus, we get
$|\nabla v|^2(q)\leq \frac{ C}{\epsilon_1}$ for every cases.
Since we have $|\nabla v|^2(x)\leq |\nabla v|^2(q)$ for all $x\in A$, we get the lemma.
\end{proof}


\subsection{Segment inequality}

In this subsection, we recall the segment inequality \cite[Theorem 2.1]{CC2} and prove its easy consequence.
Recall that $\gamma_{y_1,y_2}$ denotes one of minimal geodesics with unite speed such that $\gamma_{y_1,y_2}(0)=y_1$ and $\gamma_{y_1,y_2}(d(y_1,y_2))=y_2$. The minimal geodesic $\gamma_{y_1,y_2}$ is uniquely determined if $y_2\in I_{y_1}\setminus \{y_1\}$, and we have $\Vol(M\setminus I_{y_1})=0$ (see subsection 2.1 for details).

\begin{Thm}[segment inequality]\label{p2f}
Given an integer $n\geq 2$ and positive real numbers $K>0$ and $D>0$, there exists a positive constant $ C(n,K,D)>0$ such that the following property holds.
Let $(M,g)$ be an $n$-dimensional closed Riemannian manifold with $\Ric\geq -Kg$ and $\diam(M)\leq D$.
For any non-negative measurable function $h\colon M\to \mathbb{R}_{\geq 0}$, we have
\begin{equation*}
\frac{1}{\Vol(M)^2}\int_{M\times M} \int_0^{d(y_1,y_2)} h\circ \gamma_{y_1,y_2}(s) \,dsdy_1dy_2\leq  \frac{C}{\Vol(M)}\int_M h\,d\mu_g.
\end{equation*}
\end{Thm}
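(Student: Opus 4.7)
My plan is to derive the inequality from the classical segment inequality of Cheeger-Colding (cf.\ \cite[Theorem 2.1]{CC2}) by a preliminary rescaling, briefly sketching the key steps so that the dependence of the constant on $(n,K,D)$ is visible. First, rescale by $\tilde g := (K/(n-1))g$ so that $\Ric_{\tilde g}\geq -(n-1)\tilde g$ and $\diam(M,\tilde g)\leq D_1 := \sqrt{K/(n-1)}\,D$; distances, volumes, and the innermost $ds$-integration rescale by explicit powers of $\sqrt{K/(n-1)}$, and these factors are absorbed into the final constant.

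Working in $\tilde g$, for a.e.\ $(y_1,y_2)$ the minimizing geodesic $\gamma_{y_1,y_2}$ is unique. In polar geodesic coordinates at $y_1$, writing $y_2 = \exp_{y_1}(r\xi)$ with $dy_2 = J_{y_1}(r,\xi)\,dr\,d\xi$, Fubini swaps $s$ and $r$ to produce an integrand $h(\exp_{y_1}(s\xi))\cdot\int_s^{t(\xi)} J_{y_1}(r,\xi)\,dr$, and the Bishop-Gromov volume comparison (that $r\mapsto J_{y_1}(r,\xi)/\sinh^{n-1}(r)$ is non-increasing on $[0,t(\xi)]$) controls the inner radial integral in terms of $J_{y_1}(s,\xi)$ and an integral of $\sinh^{n-1}$ on $[0,D_1]$. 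To obtain the correct $\Vol(M)$-scaling on the right-hand side, I symmetrize via $y_1\leftrightarrow y_2$ (restricting $s$ to $[0,d(y_1,y_2)/2]$ at the cost of a factor $2$) and use the double-exponential parametrization of $M\times M$ by $(z,\nu,s,t)$ with $(z,\nu)$ in the unit tangent bundle and $y_1=\exp_z(-s\nu)$, $y_2=\exp_z(t\nu)$. The Jacobian of this change of variables, controlled by Bishop-Gromov, produces upon integration over $\nu$ and $t\in[0,\tau^+(z,\nu)]$ the factor $\Vol(M)=\int\int J_z(t,\nu)\,dt\,d\nu$.

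The main technical obstacle is precisely in producing this $\Vol(M)$-factor: the naive single-sided argument yields only $\int_M h\,d\mu_g$ times a constant independent of $\Vol(M)$, which is strictly weaker than the stated inequality when $\Vol(M)$ is small. Symmetrization together with the double-exponential change of variables, as in Cheeger-Colding, is what recovers the correct scaling. After collecting all constants from rescaling and Bishop-Gromov, the final constant depends only on $n$, $K$ and $D$.
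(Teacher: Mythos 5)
Your top-level reduction coincides with the paper's own treatment: Theorem \ref{p2f} is not proved in the paper at all, it is the Cheeger--Colding segment inequality \cite[Theorem 2.1]{CC2} recalled in normalized form, and your plan --- rescale so that $\Ric_{\tilde g}\geq-(n-1)\tilde g$, apply the classical statement with $A_1=A_2=M\subset B_{D_1}(p)$ (where $D_1$ is your rescaled diameter bound), and track how both sides scale --- does yield the inequality with a constant depending only on $(n,K,D)$. Note in particular that the classical statement already carries the factor $\Vol(A_1)+\Vol(A_2)$ on its right-hand side, so with $A_1=A_2=M$ the $\Vol(M)$-scaling you worry about is automatic.

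That is where your sketch goes astray: the ``main technical obstacle'' you identify is not an obstacle, and the device you introduce to overcome it is both unnecessary and unjustified as stated. The single-sided argument bounds, for each \emph{fixed} $y_1$, the $y_2$-integral of the contribution of the far half of each segment ($s\geq d(y_1,y_2)/2$) by $C(n,D_1)\int_M h\,d\mu_{\tilde g}$; the restriction to the far half is what keeps the Bishop--Gromov ratio $\sinh^{n-1}(r)/\sinh^{n-1}(s)$ with $s\leq r\leq 2s\leq 2D_1$ bounded. The factor $\Vol(M)$ then appears simply by integrating this bound, which is uniform in $y_1$, over $y_1\in M$, and the near half is recovered from the $y_1\leftrightarrow y_2$ symmetry of the double integral --- so your claim that the single-sided argument ``yields only $\int_M h$ times a constant independent of $\Vol(M)$'' conflates the inner estimate (at fixed $y_1$) with the double integral. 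No parametrization of $M\times M$ through the unit tangent bundle is needed, and as described that step does not work: the map $(z,\nu,s,t)\mapsto(\exp_z(-s\nu),\exp_z(t\nu))$ is far from injective (every interior point of a given segment reproduces the same pair), so the assertion that its Jacobian ``produces the factor $\Vol(M)$'' would require an accounting of this multiplicity that you do not supply. Deleting that paragraph and either quoting \cite{CC2} as the paper does, or running the polar-coordinate argument with the half-segment splitting and symmetrization as above, completes the proof.
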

We use the segment inequality to prove the following lemma.
The following lemma gives us one dimensional integral pinching conditions on most of the geodesics under the integral pinching condition for a function on $M$.
\begin{Lem}\label{p2e}
Given an integer $n\geq2$ and positive real numbers $K>0$, $D>0$ and $0<\delta\leq \frac{1}{8n^3}$, there exists a positive constant $ C_{\Ck}(n,K,D)>0$ such that the following property holds.
Let $(M,g)$ be an $n$-dimensional closed Riemannian manifold with $\Ric\geq -Kg$ and $\diam(M)\leq D$.
Suppose that a non-zero function $f\in C^\infty(M)$ satisfies $\|\nabla^2 f+f g\|_2\leq \delta\|f\|_2$ and $\|f_1\|_2^2=\frac{1}{n+1}$, where we put $f_1=T_{(n,\delta)}(f)$.
For any $y_1\in M$, we define
\begin{equation*}
\begin{split}
D(y_1):=\Big\{y_2\in I_{y_1}\setminus\{y_1\}:  \int_0^{d(y_1,y_2)} |\nabla^2 f_1+f_1 g|\circ\gamma_{y_1,y_2}(s)\, d s\leq\delta^\frac{1}{6}
\Big\},
\end{split}
\end{equation*}
and
\begin{equation*}
Q:=\{y_1\in M: \Vol(D(y_1))\geq(1-\delta^\frac{1}{6})\Vol(M)\}.
\end{equation*}
Then, we have
\begin{equation*}
\Vol(Q)\geq(1- C_{\Ck}\delta^\frac{1}{6})\Vol(M).
\end{equation*}
\end{Lem}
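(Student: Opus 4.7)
The plan is to chain together Lemma \ref{p2c}(i), the segment inequality (Theorem \ref{p2f}), and a two-step Markov/Chebyshev argument.

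First I would record the $L^1$-pinching for $f_1$. By Lemma \ref{p2c}(i), $\|\nabla^2 f_1+f_1g\|_2\le C\sqrt{\delta}\,\|f_1\|_2 = C\sqrt{\delta}/\sqrt{n+1}$, and since $\|\cdot\|_1\le\|\cdot\|_2$ for the normalized norms, we obtain
\begin{equation*}
\frac{1}{\Vol(M)}\int_M |\nabla^2 f_1+f_1 g|\,d\mu_g \le C(n,K,D)\,\sqrt{\delta}.
\end{equation*}

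Next I would apply the segment inequality with $h:=|\nabla^2 f_1+f_1g|$. Setting
\begin{equation*}
\Psi(y_1,y_2):=\int_0^{d(y_1,y_2)}|\nabla^2 f_1+f_1g|\circ\gamma_{y_1,y_2}(s)\,ds,
\end{equation*}
which is well-defined for $y_2\in I_{y_1}\setminus\{y_1\}$ (a full-measure set), Theorem \ref{p2f} yields
\begin{equation*}
\frac{1}{\Vol(M)^2}\int_{M\times M}\Psi(y_1,y_2)\,dy_1\,dy_2 \le C(n,K,D)\,\sqrt{\delta}.
\end{equation*}

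Finally I would run Markov's inequality twice. Define
\begin{equation*}
\Phi(y_1):=\frac{1}{\Vol(M)}\int_M \Psi(y_1,y_2)\,dy_2,
\end{equation*}
so that $\frac{1}{\Vol(M)}\int_M\Phi\,dy_1 \le C\sqrt{\delta}$. Observe that $M\setminus D(y_1)$ has positive measure exactly when $\Psi(y_1,\cdot)>\delta^{1/6}$ on a set of positive measure; the Markov bound applied in the $y_2$-variable gives
\begin{equation*}
\Vol(M\setminus D(y_1)) \le \delta^{-1/6}\,\Vol(M)\,\Phi(y_1).
\end{equation*}
Hence $y_1\notin Q$ forces $\Phi(y_1)\ge \delta^{1/6}\cdot\delta^{1/6}=\delta^{1/3}$, and a second Markov inequality in the $y_1$-variable produces
\begin{equation*}
\Vol(M\setminus Q)\le \delta^{-1/3}\,\Vol(M)\cdot C\sqrt{\delta}= C\delta^{1/6}\,\Vol(M),
\end{equation*}
which is the claimed estimate.

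There is no real obstacle here; the only thing to be careful about is choosing the exponents $\delta^{1/6}$ consistently so that the two Markov steps balance, and remembering that $\Vol(M\setminus I_{y_1})=0$ so one can integrate over $M$ rather than $I_{y_1}\setminus\{y_1\}$ without loss. The constants $C(n,K,D)$ absorb the segment-inequality constant and the $\sqrt{n+1}$ normalization factor, producing the required $C_{\Ck}(n,K,D)$.
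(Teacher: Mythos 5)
Your proposal is correct and follows essentially the same route as the paper: the $L^1$ bound on $|\nabla^2 f_1+f_1g|$ from Lemma \ref{p2c}, the segment inequality, and the two Markov/Chebyshev steps in $y_2$ and then $y_1$ are exactly the paper's argument, merely phrased in terms of the averaged function $\Phi$ instead of integrating the lower bound $\delta^{1/3}$ over $M\setminus Q$ directly.
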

\begin{proof}
For any $y_1\in M\setminus Q$, we have $\Vol(M\setminus D(y_1))>\delta^\frac{1}{6}\Vol(M)$, and so we get
\begin{equation}\label{1n}
\begin{split}
&\frac{1}{\Vol(M)}\int_{M} \int_0^{d(y_1,y_2)} |\nabla^2 f_1+f_1 g|\circ \gamma_{y_1,y_2}(s) \,dsdy_2\\
\geq &\frac{1}{\Vol(M)}\int_{M \setminus D(y_1)} \int_0^{d(y_1,y_2)} |\nabla^2 f_1+f_1 g|\circ \gamma_{y_1,y_2}(s) \,dsdy_2\\
\geq &\frac{\Vol(M\setminus D(y_1))}{\Vol(M)} \delta^\frac{1}{6} \geq \delta^\frac{1}{3}.
\end{split}
\end{equation}
By Theorem \ref{p2f}, (\ref{1n}) and $\|\nabla^2 f_1+ f_1g\|_1\leq \|\nabla^2 f_1 +  f_1 g\|_2\leq C \sqrt{\delta}$, we get
\begin{equation*}
\begin{split}
&\frac{\Vol(M\setminus Q)}{\Vol(M)} \delta^\frac{1}{3}\\
\leq& \frac{1}{\Vol(M)^2} \int_{M\setminus Q} dy_1\int_M dy_2\int_0^{d(y_1,y_2)} |\nabla^2 f_1+f_1 g|\circ \gamma_{y_1,y_2}(s) \,ds\\
\leq &\frac{1}{\Vol(M)^2}\int_{M\times M} \int_0^{d(y_1,y_2)} |\nabla^2 f_1+f_1 g|\circ \gamma_{y_1,y_2}(s) \,dsdy_1dy_2\\
\leq&C\sqrt{\delta}.
\end{split}
\end{equation*}
Therefore, we get $\Vol(M\setminus Q)\leq C \delta^\frac{1}{6}\Vol(M)$.
\end{proof}
\subsection{Almost cosine function}
In this subsection, we show that our function $f_1$ in Lemma \ref{p2c} is an almost cosine function in $L^\infty$ and $W^{1,2}$ sense.

We need the following easy consequence of the Bishop-Gromov inequality.
\begin{Thm}\label{p2h}
Given a positive integer $n\geq2$ and positive real numbers $K>0$ and $D>0$, there exists a positive constant $ C_{\Cm}(n,K,D)>0$ such that the following property holds.
Let $(M,g)$ be an $n$-dimensional closed Riemannian manifold with $\Ric\geq -Kg$ and $\diam(M)\leq D$.
Then, for any $p\in M$ and $0<r\leq D+1$, we have $\Vol(B_r(p))\geq  C_{\Cm} r^n \Vol(M)$.
\end{Thm}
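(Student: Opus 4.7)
The plan is to apply the Bishop--Gromov volume comparison theorem with the model space of constant sectional curvature $-K/(n-1)$. Set $k := K/(n-1)$ so that $\Ric \geq -(n-1)k\, g$, and let
\[
V_k(r) := \omega_{n-1}\int_0^r \left(\frac{\sinh(\sqrt{k}\,t)}{\sqrt{k}}\right)^{n-1} dt
\]
denote the volume of an $r$-ball in the $n$-dimensional simply connected space form of constant curvature $-k$. Bishop--Gromov states that the function $r \mapsto \Vol(B_r(p))/V_k(r)$ is non-increasing in $r$ for every $p \in M$.

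Since $\diam(M) \leq D$, for any $p \in M$ we have $B_{D+1}(p) = M$, so $\Vol(B_{D+1}(p)) = \Vol(M)$. Applying monotonicity to any $0 < r \leq D+1$ yields
\[
\frac{\Vol(B_r(p))}{V_k(r)} \geq \frac{\Vol(B_{D+1}(p))}{V_k(D+1)} = \frac{\Vol(M)}{V_k(D+1)},
\]
whence $\Vol(B_r(p)) \geq \bigl(V_k(r)/V_k(D+1)\bigr)\,\Vol(M)$.

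It remains to estimate the ratio $V_k(r)/V_k(D+1)$ from below by $C(n,K,D)\, r^n$. For the numerator, the elementary inequality $\sinh(\sqrt{k}\,t)/\sqrt{k} \geq t$ for $t \geq 0$ gives
\[
V_k(r) \geq \omega_{n-1}\int_0^r t^{n-1}\,dt = \omega_n r^n,
\]
while the denominator $V_k(D+1)$ is a constant depending only on $n$, $K$, and $D$. Setting $C_{\Cm}(n,K,D) := \omega_n / V_k(D+1)$ completes the proof. The argument is entirely routine, so there is no real obstacle; one only needs to be mildly careful that the comparison radius $D+1$ (rather than $D$) is used so that $r = D+1$ is admissible and the base point is interior to the ball $B_{D+1}(p)$.
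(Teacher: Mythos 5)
Your proof is correct and is exactly the argument the paper has in mind: the paper states this result without proof as an ``easy consequence of the Bishop--Gromov inequality,'' and your use of the volume-ratio monotonicity against the model space of curvature $-K/(n-1)$, together with $B_{D+1}(p)=M$ and the bound $\sinh(\sqrt{k}\,t)/\sqrt{k}\geq t$, is the standard way to obtain the constant $C_{\Cm}(n,K,D)$.
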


The following lemma is standard (c.f. \cite[Lemma 2.41]{CC2}).
\begin{Lem}\label{p2g}
Take positive real numbers $l,\epsilon_i>0$ $(i=0,1,2)$ and real numbers $a,b\in \mathbb{R}$.
Put
\begin{equation*}
\widetilde{u}(t):=a \cos t+ b \sin t,
\end{equation*}
for any $t\in \mathbb{R}$.
Suppose that a smooth function $u\colon [0,l]\to \mathbb{R}$ satisfies
\begin{itemize}
\item[$\cdot$] $|u(0)-a|\leq \epsilon_0$,
\item[$\cdot$] $|u'(0)-b|\leq \epsilon_1$,
\item[$\cdot$] $\int_0^l |u''(t)+u(t)| \,dt\leq\epsilon_2$.
\end{itemize}
Then, we have
\begin{equation*}
\begin{split}
|u(t)-\widetilde{u}(t)|&\leq \epsilon_0 \cosh t+(\epsilon_1+\epsilon_2)\sinh t,\\
|u'(t)-\widetilde{u}'(t)|&\leq \epsilon_1+\epsilon_2+ \int_0^t|u(s)-\widetilde{u}(s)|\,ds,
\end{split}
\end{equation*}
for all $t\in [0,l]$.
\end{Lem}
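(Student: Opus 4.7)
The plan is to set $w(t) := u(t) - \widetilde{u}(t)$ and $\eta(t) := u''(t) + u(t)$. Since $\widetilde{u}'' + \widetilde{u} = 0$ identically, $w$ satisfies the inhomogeneous linear ODE $w'' + w = \eta$ on $[0,l]$ with initial data bounded by $|w(0)| \le \epsilon_0$, $|w'(0)| \le \epsilon_1$, and forcing controlled by $\int_0^l |\eta(s)|\,ds \le \epsilon_2$. The whole proof is then a direct computation based on the explicit solution formula for this constant-coefficient ODE.

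For the first estimate, I would write $w$ via the Duhamel / variation-of-parameters formula
$$
w(t) = w(0)\cos t + w'(0)\sin t + \int_0^t \sin(t-s)\,\eta(s)\,ds.
$$
Taking absolute values and using the elementary inequalities $|\cos t| \le 1 \le \cosh t$, $|\sin t| \le \sinh t$, and $|\sin(t-s)| \le \sinh(t-s) \le \sinh t$ for $0 \le s \le t$ (valid since $\sinh$ is increasing on $[0,\infty)$), I would obtain
$$
|w(t)| \le \epsilon_0 \cosh t + \epsilon_1 \sinh t + \sinh t \int_0^t |\eta(s)|\,ds \le \epsilon_0\cosh t + (\epsilon_1+\epsilon_2)\sinh t,
$$
which is the stated bound.

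For the second estimate, rather than differentiating the Duhamel formula, I would simply integrate the rewritten ODE $u'' = -u + \eta$ once to get $u'(t) = u'(0) - \int_0^t u(s)\,ds + \int_0^t \eta(s)\,ds$, and similarly $\widetilde{u}'(t) = b - \int_0^t \widetilde{u}(s)\,ds$. Subtracting and bounding the constant term by $\epsilon_1$ and the $\eta$-integral by $\epsilon_2$ yields
$$
|u'(t) - \widetilde{u}'(t)| \le \epsilon_1 + \epsilon_2 + \int_0^t |u(s) - \widetilde{u}(s)|\,ds,
$$
as desired. There is no genuine obstacle here; the only mildly non-obvious point is the choice to control $|\cos|$ and $|\sin|$ by the hyperbolic functions $\cosh$ and $\sinh$, which produces clean bounds that are sharp at $t=0$ and remain valid uniformly on $[0,l]$ without needing Gronwall iteration.
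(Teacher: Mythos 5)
Your proof is correct: the Duhamel formula $w(t)=w(0)\cos t+w'(0)\sin t+\int_0^t\sin(t-s)\,\eta(s)\,ds$ for $w=u-\widetilde{u}$, $\eta=u''+u$, together with $|\cos t|\leq\cosh t$, $|\sin(t-s)|\leq\sinh(t-s)\leq\sinh t$, gives exactly the first bound, and integrating $u''=-u+\eta$ once gives the second. The paper states this lemma without proof, calling it standard (cf.\ Cheeger--Colding), and your variation-of-parameters argument is precisely the standard proof one would supply, so there is nothing to flag.
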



Recall that 
we defined the $(n,\delta)$-projection $T_{(n,\delta)}\colon L^2(M)\to L^2(M)$ to be the orthogonal projection onto a subspace $\Span\{\phi_j:j\in \mathbb{N}\text{ with }n-\sqrt{\delta}\leq\lambda_j\leq n+\sqrt{\delta}\}$.
In Lemma \ref{p2c}, we showed that if a function $f$ satisfies our pinching condition $\|\nabla^2 f + f g\|_2\leq \delta\|f\|_2$, then $T_{(n,\delta)}(f)$ also satisfies our pinching condition.

The following proposition is the goal of this subsection.
\begin{Prop}\label{p2i}
Given an integer $n\geq 2$ and positive real numbers $K>0$ and $D>0$, there exist positive constants $ C(n,K,D)>0$ and $0<\delta_{\Da}(n,K,D)\leq\frac{1}{8n^3}$ such that the following properties hold.
Take a positive real number $0<\delta\leq \delta_{\Da}$.
Let $(M,g)$ be an $n$-dimensional closed Riemannian manifold with $\Ric\geq -Kg$ and $\diam(M)\leq D$.
Suppose that a non-zero function $f\in C^\infty(M)$ satisfies $\|\nabla^2 f+f g\|_2\leq \delta\| f\|_2$ and $\|f_1\|_2^2=\frac{1}{n+1}$, where we put $f_1:=T_{(n,\delta)}(f)$.
Then, there exists a point $p\in M$ such that $f_1(p)>0$, $|f_1(p)-1|\leq  C\delta^\frac{1}{48n}$, and putting $h\colon M\to \mathbb{R}$ as $h:=\cos d(p,\cdot)$, we have
\begin{equation*}
\begin{split}
\|f_1-h\|_{\infty}&\leq C \delta^\frac{1}{48n},\\
\|\nabla f_1-\nabla h\|_2&\leq  C \delta^\frac{1}{48n}.
\end{split}
\end{equation*}
\end{Prop}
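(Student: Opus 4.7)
The plan is to choose $p$ as an approximate maximum of $f_1$ lying inside the ``good'' set of Lemma \ref{p2e}, to compare $f_1$ with $\cos d(p,\cdot)$ along most geodesics from $p$ via Lemma \ref{p2g}, and finally to use the approximate eigenfunction property of $f_1$ to pin $f_1(p)\approx 1$. Lemma \ref{p2c} supplies $\|f_1\|_{\infty},\|\nabla f_1\|_{\infty}\leq C(n,K,D)$ and $\|\nabla^2 f_1+f_1 g\|_2\leq C\sqrt{\delta}$; after replacing $f$ by $-f$ if necessary I assume $\max f_1\geq |\min f_1|$, so a global maximizer $p_0$ has $f_1(p_0)>0$. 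By Theorem \ref{p2h} the ball $B_r(p_0)$ has normalized volume at least $Cr^n$, so for $r\sim\delta^{1/(6n)}$ it meets the set $Q$ of Lemma \ref{p2e} (whose complement has normalized volume $\leq C\delta^{1/6}$); pick $p\in Q\cap B_r(p_0)$ and observe $|f_1(p)-f_1(p_0)|\leq C\delta^{1/(6n)}$ from the Lipschitz bound on $f_1$.

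Next I would prove the pointwise cosine approximation. Applying Lemma \ref{p2d} at $p_0$ with $A=M$ and $\epsilon_1=\delta^{1/(6n)}$ yields $|\nabla f_1|(p)\leq C\delta^{1/(12n)}$. For $y\in D(p)$, set $u(t):=f_1(\gamma_{p,y}(t))$; since $(\nabla^2 f_1)(\dot\gamma,\dot\gamma)=-u+(\nabla^2 f_1+f_1 g)(\dot\gamma,\dot\gamma)$, the definition of $D(p)$ gives $\int_0^{d(p,y)}|u''(t)+u(t)|\,dt\leq\delta^{1/6}$. Lemma \ref{p2g} with $a=f_1(p)$, $b=0$, $\epsilon_0=0$, $\epsilon_1\sim\delta^{1/(12n)}$, $\epsilon_2=\delta^{1/6}$ then yields $|f_1(y)-f_1(p)\cos d(p,y)|\leq C\delta^{1/(12n)}$ on $D(p)$. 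Since $\Vol(D(p)^c)\leq\delta^{1/6}\Vol(M)$, every point of $M$ lies within $\sim\delta^{1/(6n)}$ of $D(p)$ by Theorem \ref{p2h}, and Lipschitz continuity of $f_1$ and of $\cos d(p,\cdot)$ extends the bound to $\|f_1-f_1(p)\cos d(p,\cdot)\|_{\infty}\leq C\delta^{1/(12n)}$.

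For the $L^2$ gradient comparison, let $V$ be a parallel vector field along $\gamma_{p,y}$ with $V\perp\dot\gamma$. The identity $(\nabla^2 f_1)(\dot\gamma,V)=\tfrac{d}{dt}\langle\nabla f_1,V\rangle$, together with the $D(p)$-pinching and $|\nabla f_1(p)|\leq C\delta^{1/(12n)}$, gives $|(\nabla f_1(y))_{\perp}|\leq C\delta^{1/(12n)}$; the derivative part of Lemma \ref{p2g} controls the radial component simultaneously, so $|\nabla f_1-f_1(p)\nabla\cos d(p,\cdot)|\leq C\delta^{1/(12n)}$ on $D(p)$, and the $L^{\infty}$ bound on $\nabla f_1$ absorbs the complement to give $\|\nabla f_1-f_1(p)\nabla\cos d(p,\cdot)\|_2\leq C\delta^{1/(12n)}$. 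To fix $f_1(p)$, note $\|f_1\|_2^2=1/(n+1)$ by normalization, $\|\nabla f_1\|_2^2=n/(n+1)+O(\delta)$ (from $\int_M f_1\Delta f_1\,d\mu_g/\Vol(M)$ and Lemma \ref{p2c}), while $\|\cos d(p,\cdot)\|_2^2+\|\nabla\cos d(p,\cdot)\|_2^2=1$ by $\cos^2+\sin^2=1$ almost everywhere. The two $L^2$-approximations then force $f_1(p)^2=1+O(\delta^{1/(12n)})$, and $f_1(p)>0$ gives $|f_1(p)-1|\leq C\delta^{1/(12n)}$; the three claims follow by the triangle inequality (the slightly worse exponent $1/(48n)$ in the statement reflects more conservative bookkeeping of the various losses).

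The hard part will be reconciling the good-set requirement with the need to work near a true maximum: Lemma \ref{p2d} is sharp only at a genuine maximum, while Lemma \ref{p2e} gives the one-dimensional pinching only from base points in a set of almost full measure. Balancing these via Bishop--Gromov at scale $\sim\delta^{1/(6n)}$ is what produces the fractional exponent. A secondary subtle point is that controlling the tangential component of $\nabla f_1$ requires the full $L^2$ Hessian pinching rather than the weaker Laplace pinching $\|\Delta f_1-nf_1\|_2\leq C\delta\|f_1\|_2$.
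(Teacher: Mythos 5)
Your overall architecture is correct and tracks the paper: you choose $p\in Q$ near a global maximizer $p_0$ of $f_1$ using Bishop--Gromov, get $|\nabla f_1(p)|\leq C\delta^{1/(12n)}$ from Lemma \ref{p2d}, apply the comparison ODE Lemma \ref{p2g} along geodesics from $p$ to points of $D(p)$ to obtain the pointwise cosine estimate, extend to $M$ by Lipschitz continuity and Bishop--Gromov, and finally pin $f_1(p)$ near $1$ by matching $\|f_1\|_2^2+\|\nabla f_1\|_2^2$ against $\|\tilde h\|_2^2+\|\nabla\tilde h\|_2^2$ where $\tilde h=f_1(p)\cos d(p,\cdot)$. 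This is the paper's plan for everything except the gradient estimate, where you genuinely diverge.

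The paper's route to $\|\nabla f_1-\nabla\tilde h\|_2$ is indirect: it uses spectral bounds ($\|\nabla\tilde h\|_2^2\geq (n-\sqrt\delta)(1-C\delta^{1/(12n)})\|\tilde h\|_2^2$ plus $\|\nabla f_1\|_2^2\leq(n+\sqrt\delta)\|f_1\|_2^2$), the one-sided pointwise inequality $|\nabla\tilde h|^2\leq|\nabla f_1|^2+C\delta^{1/(12n)}$ on $D(p)$ coming from the radial derivative estimate, and a Chebyshev argument (Claims \ref{C1}, \ref{C2}) to show the tangential part of $\nabla f_1$ is small on most of $D(p)$. Your route is direct: for a parallel unit field $V$ along $\gamma_{p,y}$ with $V(0)\perp\dot\gamma(0)$, the identity $\frac{d}{dt}\langle\nabla f_1,V\rangle=(\nabla^2 f_1)(\dot\gamma,V)=(\nabla^2 f_1+f_1 g)(\dot\gamma,V)$ (the $f_1 g$ term drops since $g(\dot\gamma,V)=0$), together with $|\nabla f_1(p)|\leq C\delta^{1/(12n)}$ and $\int_0^{d(p,y)}|\nabla^2 f_1+f_1 g|\,ds\leq\delta^{1/6}$, gives a pointwise bound $|(\nabla f_1(y))_\perp|\leq C\delta^{1/(12n)}$ for every $y\in D(p)$; since $\nabla\tilde h$ is purely radial and the radial component is handled by the derivative part of Lemma \ref{p2g}, you get the pointwise estimate $|\nabla f_1-\nabla\tilde h|\leq C\delta^{1/(12n)}$ on $D(p)$ directly. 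This is simpler than the Chebyshev argument, avoids the intermediate set $S$, and in fact yields $\|\nabla f_1-\nabla\tilde h\|_2\leq C\delta^{1/(12n)}$ and consequently $|f_1(p)-1|\leq C\delta^{1/(12n)}$, strictly better than the $C\delta^{1/(48n)}$ of the statement (so the statement still follows). What your approach relies on that the paper's bypasses is exactly the full Hessian $L^2$-pinching $\|\nabla^2 f_1+f_1 g\|_2\leq C\sqrt\delta$ controlling off-diagonal components $(\nabla^2 f_1+f_1 g)(\dot\gamma,V)$; the paper's Claims use only the diagonal entry $(\nabla^2 f_1+f_1 g)(\dot\gamma,\dot\gamma)$ plus the $L^2$ spectral comparison, which is why it incurs the $1/48n$ loss. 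Both arguments are sound; yours is tighter.
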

\begin{proof}
Take a maximum point $\tilde{p}\in M$ of $f_1$.
By Lemma \ref{p2d},
\begin{equation}\label{2a}
|\nabla f_1|^2\leq \frac{ C}{\delta^\frac{1}{6n}}(f_1(\tilde{p})-f_1+\delta^\frac{1}{6n})^2.
\end{equation}
Put $\epsilon:=\left(2 C_{\Ck}\delta^\frac{1}{6}/ C_{\Cm}\right)^\frac{1}{n}$. We can assume that $\epsilon\leq 1$ by taking $\delta_{\Da}$ sufficiently small.
By Lemma \ref{p2e} and the Bishop-Gromov inequality (Theorem \ref{p2h}), we get
\begin{equation*}
\Vol\left(B_{\epsilon}(\tilde{p})\right)\geq 2  C_{\Ck}\delta^\frac{1}{6}\Vol(M)\geq 2\Vol(M\setminus Q).
\end{equation*}
Thus, we have $Q\cap B_{\epsilon}(\tilde{p})\neq \emptyset$.
Take a point $p \in Q\cap B_{\epsilon}(\tilde{p})$.
Since $\|\nabla f_1\|_{\infty}\leq C$ by Lemma \ref{p2c} (ii), we have $|f(\tilde{p})-f(p)|\leq  C \delta^{\frac{1}{6n}}$.
Thus, by (\ref{2a}), we get
\begin{equation}\label{2b}
|\nabla f_1(p)|\leq  C \delta^\frac{1}{12n}.
\end{equation}

Let us show that $f_1$ is close to $f_1(p)\cos d(p,\cdot)$ in $L^\infty$ sense.
Take arbitrary $x\in D(p)$.
Since
\begin{equation*}
(\nabla^2f_1+f_1 g)(\dot{\gamma}_{p,x}(s),\dot{\gamma}_{p,x}(s))=\frac{d^2}{ds^2} f_1\circ\gamma_{p,x}(s)+f_1\circ\gamma_{p,x}(s),
\end{equation*}
we have
\begin{equation*}
\left|\frac{d^2}{ds^2} f_1\circ\gamma_{p,x}+f_1\circ\gamma_{p,x}\right|\leq |\nabla^2f_1+
f_1 g|\circ\gamma_{p,x}.
\end{equation*}
Therefore, we get
\begin{equation*}
\int_0^{d(p,x)}\left|\frac{d^2}{ds^2} f_1\circ\gamma_{p,x}+f_1\circ\gamma_{p,x}\right|\, d s\leq\delta^\frac{1}{6}.
\end{equation*}
Moreover, by (\ref{2b}), the smooth map $f_1\circ\gamma_{p,x}\colon [0,d(p,x)]\to\mathbb{R}$ satisfies
\begin{itemize}
\item[$\cdot$] $|f_1\circ\gamma_{p,x}(0)-f_1(p)|=0$,
\item[$\cdot$] $|\frac{d}{ds}f_1\circ\gamma_{p,x}(0)-0| \leq  C\delta^\frac{1}{12n}$.
\end{itemize}
Thus, by Lemma \ref{p2g}, for all $t\in [0,d(p,x)]$, we get
\begin{equation*}
\begin{split}
|f_1(\gamma_{p,x}(t))-f_1(p)\cos t|\leq&C\delta^\frac{1}{12n},\\
|\langle\nabla f_1,\dot{\gamma}_{p,x}(t)\rangle+f_1(p)\sin t|
\leq & C\delta^\frac{1}{12n}.
\end{split}
\end{equation*}
In particular, for all $x\in D(p)$, we get
\begin{equation}\label{2c}
\begin{split}
|f_1(x)-f_1(p)\cos d(p,x)|&\leq  C\delta^\frac{1}{12n},\\
|\langle\nabla f_1(x),\dot{\gamma}_{p,x}\rangle+f_1(p)\sin d(p,x)|&\leq  C\delta^\frac{1}{12n}.
\end{split}
\end{equation}

Take arbitrary $x\in M$.
Similarly to $p$, we can take a point $\tilde{x}\in D(p)$ with $d(x,\tilde{x})\leq (2\delta^\frac{1}{6}/ C_{\Cm})^\frac{1}{n}$ by Lemma \ref{p2e} and the Bishop-Gromov inequality (Theorem \ref{p2h}).
By (\ref{2c}) and  $\|\nabla f_1\|_{\infty}\leq C$, we get
\begin{equation*}
\begin{split}
&|f_1(x)-f_1(p)\cos d(p,x)|\\
\leq& |f_1(x)-f_1(\tilde{x})|+|f_1(\tilde{x})-f_1(p)\cos d(p,\tilde{x})|\\
&\qquad+|f_1(p)\cos d(p,\tilde{x})-f_1(p)\cos d(p,x)|\\
\leq & C\delta^\frac{1}{12n}.
\end{split}
\end{equation*}
Define $\tilde{h}:=f_1(p)\cos d(p,\cdot)$. Then, we get
\begin{equation}\label{2d}
\|f_1-\tilde{h}\|_{\infty}\leq  C\delta^\frac{1}{12n},
\end{equation}
and so
\begin{equation}\label{2e}
\frac{1}{\sqrt{n+1}}- C\delta^\frac{1}{12n}\leq
\|\tilde{h}\|_2\leq \frac{1}{\sqrt{n+1}}+ C \delta^\frac{1}{12n}.
\end{equation}
Since $\|f_1\|_\infty\geq \|f_1\|_2=\frac{1}{\sqrt{n+1}}$ and $f_1(p)\geq f_1(\tilde{p})-C\delta^{\frac{1}{6n}}\geq -C\delta^{\frac{1}{6n}}$, we get
\begin{equation}\label{rv0}
f_1(p)\geq \frac{1}{\sqrt{n+1}}-C \delta^{\frac{1}{12n}}>0
\end{equation}
by the definition of $\tilde{h}$ and (\ref{2d}).

Let us estimate $\|\nabla f_1- \nabla \tilde{h}\|_2$.
Decompose $\tilde{h}$ as $\tilde{h}=\sum_{j=0}^\infty \beta_j\phi_j$ ($\beta_j\in\mathbb{R}$). Define $h_1:=\sum_{\lambda_j\geq n-\sqrt{\delta}} \beta_j\phi_j$ and $h_2:=\tilde{h}-h_1$.
Then, we have $\int_M f_1 h_2\,d\mu_g=0$, and so, by (\ref{2d}) and (\ref{2e}),
\begin{equation*}
\begin{split}
\|h_2\|_2^2=&\frac{1}{\Vol(M)}\int_M \tilde{h}h_2\,d\mu_g\\
=&\frac{1}{\Vol(M)}\int_M (\tilde{h}-f_1)h_2\,d\mu_g
\leq  C\delta^\frac{1}{12n}\|\tilde{h}\|_2
\leq C \delta^\frac{1}{12n}\|\tilde{h}\|_2^2.
\end{split}
\end{equation*}
Thus, we get
\begin{equation*}
\|\nabla \tilde{h}\|_2^2\geq (n-\sqrt{\delta})\|h_1\|_2^2\geq (n-\sqrt{\delta}) (1-C\delta^\frac{1}{12n})\|\tilde{h}\|_2^2.
\end{equation*}
Moreover, by (\ref{2e}) we have
\begin{equation*}
\|\nabla f_1\|_2^2\leq (n+\sqrt{\delta})\|f_1\|_2^2\leq (n+\sqrt{\delta})(1+C \delta^\frac{1}{12n})\|\tilde{h}\|_2^2.
\end{equation*}
Therefore, we get
\begin{equation}\label{2f}
\begin{split}
\|\nabla f_1\|_2^2
\leq  (1+ C\delta^\frac{1}{12n})\|\nabla \tilde{h}\|_2^2.
\end{split}
\end{equation}

Note that for any  $x\in I_p\setminus\{p\}$, we have $\nabla \tilde{h}(x)=-(f_1(p)\sin d(p,x) )\dot{\gamma}_{p,x}(d(p,x))$.
Thus, by $\|f_1\|_{\infty}\leq C$, for almost all point $x\in M$, we have $|\nabla \tilde{h}|(x)\leq C$.

For any $x\in D(p)$, by (\ref{2c}) we have
\begin{equation*}
\begin{split}
 C \delta^\frac{1}{12n}
\geq&\left|\langle\nabla f_1(x),\dot{\gamma}_{p,x}\rangle+f_1(p)\sin d(p,x)\right|\\
\geq&|f_1(p)\sin d(p,x)|-|\langle\nabla f_1(x),\dot{\gamma}_{p,x}\rangle|\\
=&|\nabla \tilde{h}|(x)-|\langle\nabla f_1(x),\dot{\gamma}_{p,x}\rangle|.
\end{split}
\end{equation*}
Thus, for any $x\in D(p)$, using $\|\nabla f_1\|_{\infty}\leq C$, we get
\begin{equation}\label{2g}
\begin{split}
|\nabla \tilde{h}|^2(x)
\leq &\left(|\langle\nabla f_1(x),\dot{\gamma}_{p,x}\rangle|+ C\delta^\frac{1}{12n}\right)^2\\
\leq&|\langle\nabla f_1(x),\dot{\gamma}_{p,x}\rangle|^2+ C \delta^\frac{1}{12n}
\leq |\nabla f_1|^2(x)+ C\delta^\frac{1}{12n}.
\end{split}
\end{equation}
To estimate $\|\nabla f_1-\nabla\tilde{h}\|_2$, we show that the value $|\nabla f_1-\nabla\tilde{h}|(x)$ is small for most of the points $x\in M$ in the following two claims.
\begin{Clm}\label{C1}
Define $S:=\left\{x\in D(p):|\nabla \tilde{h}|^2(x)+\delta^\frac{1}{24n}\leq|\nabla f_1|^2(x)\right\}$.
Then, we have $\Vol(S)\leq  C \delta^\frac{1}{24n}\Vol(M)$.
\end{Clm}
\begin{proof}[Proof of Claim \ref{C1}]
By the definition of $S$, (\ref{2f}) and (\ref{2g}), we get
\begin{equation*}
\begin{split}
\|\nabla f_1\|_2^2\geq& \frac{1}{\Vol(M)}\int_{D(p)}|\nabla f_1|^2\,d\mu_g\\
\geq &\frac{1}{\Vol(M)}\int_{D(p)\setminus S}|\nabla f_1|^2\,d\mu_g+ \frac{1}{\Vol(M)}\int_{S} |\nabla \tilde{h}|^2\,d\mu_g+\frac{\Vol(S)}{\Vol(M)}\delta^\frac{1}{24n}\\
\geq & \frac{1}{\Vol(M)}\int_{D(p)}|\nabla \tilde{h}|^2\,d\mu_g- C\delta^\frac{1}{12n}+\frac{\Vol(S)}{\Vol(M)}\delta^\frac{1}{24n}\\
\geq &\|\nabla \tilde{h}\|_2^2- C \frac{\Vol(M\setminus D(p))}{\Vol(M)}
- C \delta^\frac{1}{12n}+\frac{\Vol(S)}{\Vol(M)}\delta^\frac{1}{24n}\\
\geq & \frac{1}{1+ C\delta^\frac{1}{12n}}\|\nabla f_1 \|_2^2-C\delta^\frac{1}{12n}+\frac{\Vol(S)}{\Vol(M)}\delta^\frac{1}{24n}.
\end{split}
\end{equation*}
Thus,
\begin{equation*}
\begin{split}
\frac{\Vol(S)}{\Vol(M)}\delta^\frac{1}{24n}\leq&  C\delta^\frac{1}{12n}\|\nabla f_1\|_2^2+C\delta^\frac{1}{12n}\leq  C \delta^\frac{1}{12n},
\end{split}
\end{equation*}
and so we get the claim.
\end{proof}

\begin{Clm}\label{C2}
For any $x\in D(p)\setminus S$, we have $|\nabla f_1-\nabla \tilde{h}|^2(x)\leq  C\delta^\frac{1}{24n}$.
\end{Clm}
\begin{proof}[Proof of Claim \ref{C2}]
For any $x\in D(p)\setminus S$, we have
\begin{equation}\label{2i}
|\nabla \tilde{h}|^2(x)+\delta^\frac{1}{24n}>|\nabla f_1|^2(x).
\end{equation}
Since we have $\left\langle\nabla\tilde{h}(x),\dot{\gamma}_{p,x}\right\rangle\dot{\gamma}_{p,x}=\nabla \tilde{h}(x)$, we get
\begin{equation}\label{2j}
\begin{split}
&|\nabla f_1-\nabla \tilde{h}|^2(x)\\
\leq&\left|\left\langle\nabla f_1(x)-\nabla \tilde{h}(x), \nabla f_1(x)-\nabla \tilde{h}(x)-\langle\nabla f_1(x)-\nabla\tilde{h}(x),\dot{\gamma}_{p,x}\rangle\dot{\gamma}_{p,x}\right\rangle\right|\\
&\qquad\qquad\qquad\qquad + \left|\left\langle\nabla f_1(x)-\nabla \tilde{h}(x),\langle\nabla f_1(x)-\nabla\tilde{h}(x),\dot{\gamma}_{p,x}\rangle\dot{\gamma}_{p,x}\right\rangle\right|\\
=&\left|\left\langle\nabla f_1(x),\nabla f_1(x)-\langle\nabla f_1(x),\dot{\gamma}_{p,x}\rangle\dot{\gamma}_{p,x}\right\rangle\right|+ \left|\left\langle\nabla f_1(x)-\nabla\tilde{h}(x),\dot{\gamma}_{p,x}\right\rangle\right|^2,
\end{split}
\end{equation}
By (\ref{2g}) and (\ref{2i}),
\begin{equation*}
\begin{split}
&\left|\left\langle\nabla f_1(x),\nabla f_1(x)-\langle\nabla f_1(x),\dot{\gamma}_{p,x}\rangle\dot{\gamma}_{p,x}\right\rangle\right|\\
=&|\nabla f_1|^2(x)-\left|\left\langle\nabla f_1(x),\dot{\gamma}_{p,x}\right\rangle\right|^2\\
\leq& |\nabla f_1|^2(x)-|\nabla \tilde{h}|^2(x)+ C\delta^\frac{1}{12n}
\leq C\delta^\frac{1}{24n},
\end{split}
\end{equation*}
and by (\ref{2c}),
\begin{equation*}
\begin{split}
\left|\left\langle\nabla f_1(x)-\nabla\tilde{h}(x),\dot{\gamma}_{p,x}\right\rangle\right|^2=&|\langle\nabla f_1(x),\dot{\gamma}_{p,x}\rangle+f_1(p)\sin d(p,x)|^2 \leq  C\delta^\frac{1}{6n}.
\end{split}
\end{equation*}
Thus, by (\ref{2j}), we get
$|\nabla f_1-\nabla \tilde{h}|^2(x)\leq C\delta^\frac{1}{24n}$.
\end{proof}
Since
\begin{equation*}
\|\nabla f_1-\nabla \tilde{h}\|_{\infty}=\|\nabla f_1+f_1(p)\sin d(p,\cdot)\nabla (d(p,\cdot))\|_{\infty}\leq C,
\end{equation*}
we get
\begin{equation*}
\begin{split}
&\|\nabla f_1-\nabla \tilde{h}\|_2^2\\
=&\frac{1}{\Vol(M)}\int_{M\setminus D(p)}|\nabla f_1-\nabla \tilde{h}|^2\,d\mu_g
+\frac{1}{\Vol(M)}\int_{D(p)\setminus S}|\nabla f_1-\nabla \tilde{h}|^2\,d\mu_g\\
&\qquad\qquad\qquad+\frac{1}{\Vol(M)}\int_{S}|\nabla f_1-\nabla \tilde{h}|^2\,d\mu_g\\
\leq & C \delta^\frac{1}{24n}
\end{split}
\end{equation*}
by Claim \ref{C1} and Claim \ref{C2}.
Thus, we get
\begin{equation}\label{2k}
\|\nabla f_1-\nabla \tilde{h}\|_2\leq C\delta^\frac{1}{48n}.
\end{equation}

Let us show that $f_1(p)$ is close to $1$, and complete the proof.
For any $x\in I_p\setminus\{p\}$, we have
\begin{equation*}
f_1(p)^2=\tilde{h}(x)^2+|\nabla \tilde{h}|^2(x),
\end{equation*}
and so
\begin{equation*}
\begin{split}
&\left|f_1(p)^2-(f_1(x)^2+|\nabla f_1|^2(x))\right|\\
\leq& |f_1(x)^2-\tilde{h}(x)^2|+\left||\nabla f_1|^2(x)-|\nabla \tilde{h}|^2(x)\right|\\
\leq & C|f_1(x)-\tilde{h}(x)|+ C\left|\nabla f_1(x)-\nabla \tilde{h}(x)\right|.
\end{split}
\end{equation*}
Thus, by (\ref{2d}), (\ref{rv0}) and (\ref{2k}), we get
\begin{equation*}
\begin{split}
&\left|f_1(p)-1\right|\\
\leq&\left|f_1(p)-1\right| \left|f_1(p)+1\right|\\
=&\left|f_1(p)^2-(n+1)\|f_1\|_2^2\right|\\
\leq &\left|f_1(p)^2-(\|f_1\|_2^2+\|\nabla f_1\|_2^2)\right|+\left|n\|f_1\|_2^2-\|\nabla f_1\|_2^2\right|\\
\leq&\left|\frac{1}{\Vol(M)}\int_M f_1(p)^2-(f_1^2+|\nabla f_1|^2)\,d\mu_g\right|+\sqrt{\delta}\|f_1\|_2^2\\
\leq &C\|f_1-\tilde{h}\|_1+C \|\nabla f_1-\nabla \tilde{h}\|_1+\frac{\sqrt{\delta}}{n+1}
\leq  C\delta^\frac{1}{48n}.
\end{split}
\end{equation*}
This gives
\begin{equation*}
\begin{split}
\|h-\tilde{h}\|_{\infty}&\leq C\delta^\frac{1}{48n},\\
\|\nabla h-\nabla \tilde{h}\|_{\infty}&\leq C\delta^\frac{1}{48n}.
\end{split}
\end{equation*}
Thus, we have
\begin{equation*}
\begin{split}
\|f_1-h\|_{\infty}&\leq C\delta^\frac{1}{48n},\\
\|\nabla f_1-\nabla h\|_2&\leq C\delta^\frac{1}{48n}
\end{split}
\end{equation*}
by (\ref{2d}) and (\ref{2k}).
Therefore, we get Proposition \ref{p2i}.
\end{proof}

\subsection{Small excess}
In this subsection, we show the small excess like property under our pinching condition.
To explain this, we recall the small excess lemma for Riemannian manifolds with $\Ric\geq(n-1)g$ \cite[Lemma 1]{GP}.
The small excess lemma asserts that if a closed Riemannian manifold $(M,g)$ with $\Ric \geq (n-1)g$ has two points $p,q\in M$ such that $d(p,q)$ is close to $\pi$, then the excess
$$
d(p,x)+d(x,q)-d(p,q)
$$
is small for all $x\in M$.
Note that the classical Myers theorem and Cheng's maximal diameter theorem assert that
we have $\diam(M)\leq \pi$ under the assumption $\Ric \geq (n-1)g$, and $\diam(M)=\pi$ holds if and only if $(M,g)$ is isometric to the $n$-dimensional standard sphere with radius $1$.
The proof of the small excess lemma relies on the Bishop-Gromov inequality:
$$
\frac{\Vol(B_r(x))}{\Vol (M)}\geq \frac{\Vol(B_r^{S^n})}{\Vol(S^n)}
$$
holds for all $x\in M$ and $0\leq r\leq \pi$, where $B_r^{S^n}$ denotes the metric ball in $S^n$ with radius $r$.
Since the proof uses the positive Ricci curvature assumption deeply,
it cannot be applied to our situation immediately.
To show the small excess like property under our pinching condition, we divide the manifold to pieces such that the small excess property holds in each pieces.
More precisely, under our pinching condition, we show that we can take finite points $x_0,x_1,\ldots,x_N$ such that if we define $B_{ij}$ for each $i,j$ such that $d(x_i,x_j)$ is close to $\pi$, to be the subset of $M$ consisting of the points $x$ that satisfy the small excess property:
$$
d(x_i,x)+d(x,x_j)-d(x_i,x_j)
$$
is small,
then we have $\bigcup B_{ij}=M$ (see Proposition \ref{p2k} for details).

The following lemma is standard.
\begin{Lem}\label{p2j}
For all $t\in \mathbb{R}$, we have
\begin{equation*}
1-\frac{1}{2}t^2\leq \cos t\leq 1-\frac{1}{2}t^2+\frac{1}{24}t^4.
\end{equation*}
For any $t\in [-\pi,\pi]$, we have $\cos t\leq 1-\frac{1}{9}t^2$, and so $|t|\leq3(1-\cos t)^\frac{1}{2}$.
For any $t_1,t_2 \in [0,\pi]$, we have $|t_1-t_2|\leq3|\cos t_1-\cos t_2|^\frac{1}{2}$.
\end{Lem}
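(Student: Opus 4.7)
The two-sided inequality $1-t^2/2\le\cos t\le 1-t^2/2+t^4/24$ is a routine Taylor bound. I would set $f(t):=\cos t-1+t^2/2$, noting that $f(0)=f'(0)=0$ and $f''(t)=1-\cos t\ge 0$, so $f\ge 0$ on $\mathbb{R}$. The upper bound then follows by the same scheme applied to $g(t):=1-t^2/2+t^4/24-\cos t$, whose second derivative is exactly the $f$ just shown non-negative, and which also satisfies $g(0)=g'(0)=0$.

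For $\cos t\le 1-t^2/9$ on $[-\pi,\pi]$, I would rewrite $1-\cos t=2\sin^2(t/2)$ and apply Jordan's inequality $|\sin s|\ge 2|s|/\pi$ on $[-\pi/2,\pi/2]$, which is valid since $t/2\in[-\pi/2,\pi/2]$. This gives $1-\cos t\ge 2t^2/\pi^2\ge t^2/9$ because $\pi^2\le 18$. The conclusion $|t|\le 3(1-\cos t)^{1/2}$ is then just a rearrangement.

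For the last inequality, the plan is to use the product-to-sum identity. Assume without loss of generality that $t_1\ge t_2$ and set $s:=(t_1+t_2)/2$ and $\delta:=(t_1-t_2)/2$, so that $\cos t_2-\cos t_1=2\sin s\sin\delta$. The constraints $t_1,t_2\in[0,\pi]$ with $t_1\ge t_2$ force $\delta\in[0,\pi/2]$ and $s\in[\delta,\pi-\delta]$. On this range, monotonicity of $\sin$ on $[0,\pi/2]$ together with symmetry about $\pi/2$ gives $\sin s\ge\sin\delta\ge 0$, hence $\cos t_2-\cos t_1\ge 2\sin^2\delta\ge 8\delta^2/\pi^2=2(t_1-t_2)^2/\pi^2$ by a second application of Jordan's inequality. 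Since $\pi^2/2\le 9$, this yields $(t_1-t_2)^2\le 9|\cos t_1-\cos t_2|$, as required.

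There is no real obstacle; the lemma is elementary. The only mildly delicate point is the step $\sin s\ge\sin\delta$ in the third part, which uses both halves of $[0,\pi]$ symmetrically, and the numerical verification that $\pi^2\le 18$ is just barely strong enough to produce the constants $1/9$ and $3$ in the statement.
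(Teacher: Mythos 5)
The paper gives no proof of this lemma at all (it is stated as ``standard''), so there is nothing to compare against; your argument is correct and complete. The Taylor-type comparisons via $f''=1-\cos t\ge 0$, the use of $1-\cos t=2\sin^2(t/2)$ with Jordan's inequality and $\pi^2\le 18$, and the product-to-sum step $\cos t_2-\cos t_1=2\sin s\sin\delta\ge 2\sin^2\delta$ (with the correct verification that $s\in[\delta,\pi-\delta]$ forces $\sin s\ge\sin\delta$) all check out, and they yield exactly the constants $1/9$ and $3$ in the statement.
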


Let us show the small excess like property under our pinching condition.
\begin{Prop}\label{p2k}
Given an integer $n\geq 2$ and positive real numbers $K>0$ and $D>0$, there exist positive constants $N_3(n,K,D)>0$ and $0<\delta_{\Db}(n,K,D)\leq\delta_{\Da}$ such that the following properties hold.
Take a positive real number $0<\delta\leq \delta_{\Db}$.
Let $(M,g)$ be an $n$-dimensional closed Riemannian manifold with $\Ric\geq -Kg$ and $\diam(M)\leq D$.
Suppose that a non-zero function $f\in C^\infty(M)$ satisfies $\|\nabla^2 f+f g\|_2\leq \delta\|f\|_2$.
Take a point $p\in M$ of Proposition \ref{p2i}.
Then, we have following properties.
\begin{itemize}
\item[(i)] For each $m\in \mathbb{Z}_{>0}$, we define
\begin{equation*}
A_m:=\{x\in M: m\pi-\delta^\frac{1}{100n}\leq d(p,x) \leq m\pi+\delta^\frac{1}{100n}\}.
\end{equation*}
Then, $A_1\neq \emptyset$.
Define a relation $\sim$ on $A_m$ by
\begin{equation*}
x\sim y :\iff d(x,y)<\delta^\frac{1}{200n^2},
\end{equation*}
for $x,y\in A_m$.
Then, $\sim$ is an equivalence relation on $A_m$.
Moreover, we can choose finite points $x_1,\ldots,x_N\in M$ $(N\leq N_3)$ such that
each $x_i$ $(i=1,\ldots,N)$ is an element of $A_{m_i}$ for some $m_i\in \mathbb{Z}_{>0}$ and
\begin{equation*}
\bigcup_{m=1}^\infty A_{m}/\sim =\{[x_1],\ldots,[x_N]\}
\end{equation*}
holds. 
\item[(ii)] For all $i=1,\ldots, N$, we have
$$
\|f_1-(-1)^{m_i}\sqrt{n+1}\|f_1\|_2 \cos d(x_i,\cdot)\|_{\infty}\leq C\delta^\frac{1}{75n^2}\|f_1\|_2,
$$
where we put $f_1:=T_{(n,\delta)}(f)$.
\item[(iii)] We set $x_0:=p$. For each $i,j=0,\ldots, N$ $(i\neq j)$, there exists a positive integer $k_{ij}\in\mathbb{Z}_{>0}$ such that
\begin{equation*}
\left|d(x_i,x_j)-\pi k_{ij}\right|\leq \delta^\frac{1}{200n^2}.
\end{equation*}
Moreover, if $(-1)^{m_i}=(-1)^{m_j}$, then $k_{ij}$ is even, and if $(-1)^{m_i}=-(-1)^{m_j}$, then $k_{ij}$ is odd.
\item[(iv)] For each $i,j=0,\ldots, N$ such that $\left|d(x_i,x_j)-\pi\right|\leq \delta^\frac{1}{200n^2}$, we define
\begin{equation*}
\begin{split}
B_{ij}&:=\{x\in M: d(x_i,x)+d(x,x_j)\leq d(x_i,x_j)+\delta^\frac{1}{250n^2}\},\\
\widetilde{B}_{ij}&:=\{x\in B_{ij}: d(x,x_i)>3\delta^\frac{1}{250n^2}\text{ and } d(x,x_j)> 3\delta^\frac{1}{250n^2}\}.
\end{split}
\end{equation*}
Then, $M=\bigcup_{ij} B_{ij}$ holds, $\widetilde{B}_{ij}$ is an open subset of $M$ for each $i,j$, and
$\widetilde{B}_{ij}\cap B_{kl}=\emptyset$ if $\{k,l\}\neq \{i,j\}$.
\item[(v)] Suppose that $i,j,k=0,\ldots, N$ satisfy $j\neq k$, $\left|d(x_i,x_j)-\pi\right|\leq \delta^\frac{1}{200n^2}$ and $\left|d(x_i,x_k)-\pi\right|\leq \delta^\frac{1}{200n^2}$.
Then, for any $x\in B_{ij}$ and $y\in B_{ik}$, we have $d(x,y)\geq d(x,x_i)+d(x_i,y)-5\delta^\frac{1}{250n^2}$.
\end{itemize}
\end{Prop}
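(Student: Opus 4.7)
The plan is to exploit the cosine-like approximation of $f_1$ furnished by Proposition \ref{p2i} as a rigid template that propagates around $M$. Normalize so $\|f_1\|_2^2 = 1/(n+1)$, giving $f_1 \approx \cos d(p,\cdot)$ in $L^\infty$ with error $C\delta^{1/48n}$ at the base point $p$ of Proposition \ref{p2i}. A key technical observation is that $-f$ satisfies the same pinching hypothesis as $f$ and $T_{(n,\delta)}(-f)=-f_1$, so Proposition \ref{p2i} applies equally to $-f$.

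For (i), applying Proposition \ref{p2i} to $-f$ produces a point $q$ with $(-f_1)(q)\approx 1$, whence $\cos d(p,q)\approx -1$; Lemma \ref{p2j} gives $|d(p,q)-\pi|\leq C\delta^{1/96n}\leq \delta^{1/100n}$ for small $\delta$, so $q\in A_1$. Part (ii) uses the same ODE method centered at each representative: for any $x\in A_m$, $(-1)^{m}f_1(x)$ is close to $1$, so Lemma \ref{p2d} applied to $(-1)^{m}f_1$ combined with Lemma \ref{p2e} produces a nearby base point with small gradient, and Lemma \ref{p2g} integrated along geodesics from it delivers $f_1\approx (-1)^{m}\cos d(x,\cdot)$ in $L^\infty$. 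With (ii), transitivity of $\sim$ on $A_m$ becomes a quantization argument: for $x,y\in A_m$, evaluating $f_1\approx(-1)^m\cos d(x,\cdot)$ at $y$ and using $f_1(y)\approx(-1)^m$ gives $|1-\cos d(x,y)|\leq C\delta^{1/75n^2}$; Lemma \ref{p2j}, applied piecewise on $[0,D]$, pins $d(x,y)$ within $C\delta^{1/150n^2}$ of some $2k\pi$. Since $C\delta^{1/150n^2}\ll \delta^{1/200n^2}$ for small $\delta$, the condition $d(x,y)<\delta^{1/200n^2}$ isolates $d(x,y)\leq C\delta^{1/150n^2}$, and the triangle inequality then gives transitivity. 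Finiteness comes from a Bishop--Gromov packing count via Theorem \ref{p2h}: any two distinct representatives satisfy $d(x_i,x_j)\geq \pi/2$ (in different $A_m$'s, by triangle inequality against $p$; in the same $A_m$, by the quantization), so disjoint balls of fixed radius $\pi/4$ about them fit in $M$, which has volume bounded in terms of $n,K,D$.

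Part (iii) is then immediate from (ii): evaluating at $x_j$ yields $|\cos d(x_i,x_j)-(-1)^{m_i+m_j}|\leq C\delta^{1/75n^2}$, and Lemma \ref{p2j} pins $d(x_i,x_j)$ within $C\delta^{1/150n^2}$ of $k_{ij}\pi$ with the predicted parity. For (iv), given $x\in M$ with $c:=f_1(x)$, pick $x_i$ with $m_i$ even ($x_0=p$) and $x_j$ with $m_j$ odd (exists by (i)); then (ii) yields $\cos d(x_i,x)\approx c$ and $\cos d(x_j,x)\approx -c$, so by Lemma \ref{p2j} $d(x_i,x)+d(x,x_j)\leq \pi+C\delta^{1/150n^2}\leq d(x_i,x_j)+\delta^{1/250n^2}$, placing $x\in B_{ij}$. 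The openness of $\widetilde B_{ij}$ and the disjointness of $\widetilde B_{ij}$ from $B_{kl}$ with $\{i,j\}\neq\{k,l\}$ reduce to continuity of the distance function and the same parity bookkeeping as in (v).

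The crux of (v) is a parity dichotomy. The hypotheses $|d(x_i,x_j)-\pi|\leq\delta^{1/200n^2}$ and $|d(x_i,x_k)-\pi|\leq\delta^{1/200n^2}$ force $k_{ij}=k_{ik}=1$ via (iii), so $m_j,m_k$ both have parity opposite to $m_i$ and hence equal parity; (iii) then gives $d(x_j,x_k)\approx k_{jk}\pi$ with $k_{jk}$ even. Since $x_j\neq x_k$ are distinct representatives while $d(x_j,x_k)\leq 2\pi+O(\delta^{1/200n^2})$ by triangle inequality, we must have $k_{jk}=2$, i.e.\ $d(x_j,x_k)\approx 2\pi$. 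Plugging into $d(x_j,x_k)\leq d(x_j,x)+d(x,y)+d(y,x_k)$ with $d(x,x_j)\leq \pi-d(x_i,x)+2\delta^{1/250n^2}$ (from $x\in B_{ij}$) and $d(y,x_k)\leq \pi-d(x_i,y)+2\delta^{1/250n^2}$ (analogously) produces the reverse triangle inequality $d(x,y)\geq d(x_i,x)+d(x_i,y)-5\delta^{1/250n^2}$. The main technical obstacle throughout is the exponent bookkeeping: the errors $\delta^{1/75n^2}$ and $\delta^{1/150n^2}$ inherited from (ii) must stay safely below the tolerances $\delta^{1/200n^2}$ and $\delta^{1/250n^2}$ declared in the proposition, which dictates the choice of $\delta_{\Db}\leq\delta_{\Da}$.
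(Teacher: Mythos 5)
The fatal gap is in your treatment of (iv), which is the heart of the proposition. From $\|f_1-(-1)^{m_i}\cos d(x_i,\cdot)\|_\infty\leq C\delta^{1/75n^2}$ you only learn $d(x_i,x)$ up to the multivaluedness of the arccosine: it is pinned near one of the values $2k\pi\pm\arccos((-1)^{m_i}c)$, $k\in\mathbb{Z}_{\geq 0}$. Hence ``$\cos d(x_i,x)\approx c$ and $\cos d(x_j,x)\approx -c$'' does not imply $d(x_i,x)+d(x,x_j)\leq \pi+C\delta^{1/150n^2}$, and an arbitrarily chosen even/odd pair need not sandwich $x$ at all (nor satisfy $d(x_i,x_j)\approx\pi$, since $k_{ij}$ could be $3,5,\dots$). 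In the model $[0,3\pi]$ with $f_1=\cos t$, take $x$ at $t=2.5\pi$, $x_i=p$ at $t=0$, $x_j$ at $t=\pi$: both cosine conditions hold with $c\approx 0$, yet $d(x_i,x)+d(x,x_j)=4\pi$ and $x\notin B_{ij}$; the correct pair is the one at $t=2\pi,3\pi$. More seriously, the actual content of $M=\bigcup_{ij}B_{ij}$ is to exclude a ``dead end'': a priori $(-1)^{m_i}f_1$ could have an interior local minimum strictly above $-1$ beyond $x_i$, so that no representative of opposite parity lies within distance $\approx\pi$ of $x$, and the $L^\infty$ approximation of $f_1$ by cosines centered at the $x_i$ cannot rule this out by itself. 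The paper excludes it by the contradiction argument of Claims \ref{c7}--\ref{c9}: assuming a point outside $\bigcup B_{kl}$, it constructs the regions $F\supset\widetilde F$, takes a minimum point $y_1$ of $(-1)^{m_i}f_1$ on $\widetilde F$, applies Lemma \ref{p2d} on the interior of $F$ (this is precisely where the case $A\neq M$ of that lemma is needed), produces $y_3\in Q$ with $|\nabla f_1|(y_3)\leq C\delta^{1/12n}$, and runs Lemma \ref{p2g} to force $|f_1(y_3)|\approx 1$, contradicting $y_3\in F$. None of this mechanism appears in your proposal, so (iv) is unproved; note also that the openness of $\widetilde B_{ij}$ is not just ``continuity of the distance function'' ($B_{ij}$ is closed), but uses the covering statement together with the separation Claim \ref{c10}, as in Claim \ref{c11}.

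Two smaller points. In (i), from $\cos d(p,q)\approx -1$ and Lemma \ref{p2j} (applied after reducing modulo $2\pi$) you only get $d(p,q)$ close to some odd multiple of $\pi$, so $q$ itself need not lie in $A_1$ when $D>\pi$; as in Claim \ref{c1} one concludes $d(p,q)\geq\pi-C\delta^{1/96n}$ and then takes the point $\gamma_{p,q}(\min(\pi,d(p,q)))$ to see $A_1\neq\emptyset$. Apart from this, your quantization and Bishop--Gromov packing for (i), your proofs of (iii) and (v), and your route to (ii) --- applying Lemma \ref{p2d} with $A=M$ to $(-1)^m f_1$ at a near-global maximum and then integrating Lemma \ref{p2g} from a nearby point of $Q$ --- are sound; the last is a legitimate variant of the paper's Claims \ref{c2}--\ref{c3} (which instead transfer the $L^2$ bound on $\nabla f_1-\nabla h$ from Proposition \ref{p2i}), provided you choose $\epsilon_1\sim\delta^{1/50n}$ rather than $\delta^{1/6n}$ so that the height gap and $\epsilon_1$ are balanced.
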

\begin{proof}
We first assume $\delta\leq \delta_{\Da}$.
Set $f_1:=T_{(n,\delta)}(f)$.
We can assume $\|f_1\|_2^2=\frac{1}{n+1}$.
Take a point $p\in M$ of Proposition \ref{p2i} and set $h:=\cos d(p,\cdot)$.
Considering $-f$, we can take a point $q\in M$ with $|f_1(q)+1|\leq C\delta^\frac{1}{48n}$. 
\begin{Clm}\label{c1}
There exists a positive constant $\eta(n,K,D)>0$ such that the following property holds.
If $\delta\leq\eta$, then we have $d(p,q)\geq \pi-\delta^\frac{1}{100n}$.
In particular, we have $A_1\neq\emptyset$.
\end{Clm}
\begin{proof}[Proof of Claim \ref{c1}]
We have
\begin{equation*}
|1-\cos(d(p,q)-\pi)|=|1+\cos d(p,q)|\leq|1+f_1(q)|+|f_1(q)-h(q)|\leq C\delta^\frac{1}{48n}.
\end{equation*}
Take an integer $m_q\geq 0$ with $d(p,q)-\pi-2m_q\pi \in [-\pi,\pi]$.
By Lemma \ref{p2j}, we get
\begin{equation*}
|d(p,q)-\pi-2m_q\pi|\leq C\delta^\frac{1}{96n},
\end{equation*}
and so 
\begin{equation*}
d(p,q)\geq \pi-C \delta^\frac{1}{96n}.
\end{equation*}
Thus, we get the claim.
\end{proof}
Let us show that, for all point $x\in M$, there exists a nice point close to $x$ such that the value of $|\nabla f_1- \nabla h|$ is small at the point.
\begin{Clm}\label{c2}
There exists a positive constant $\eta(n,K,D)>0$ such that the following property holds.
Suppose that $\delta\leq \eta$.
For all $x\in M$, there exists a point $\tilde{x}\in B_{\delta^\frac{1}{75n^2}}(x)\cap (I_p\setminus\{p\})\cap Q$ with $|\nabla f_1 -\nabla h|(\tilde{x})\leq \delta^\frac{1}{75n^2}$.
\end{Clm}
\begin{proof}[Proof of Claim \ref{c2}]
Take arbitrary $x\in M$.
Suppose that, for all $\tilde{x}\in B_{\delta^\frac{1}{75n^2}}(x)\cap (I_p\setminus\{p\})$, we have either $\tilde{x}\notin Q$ or $|\nabla f_1-\nabla h|(\tilde{x})\geq \delta^\frac{1}{75n^2}$.
By the Bishop-Gromov inequality (Theorem \ref{p2h}), we get
$\Vol\left(B_{\delta^\frac{1}{75n^2}}(x)\right)\geq C_{\Cm}\delta^\frac{1}{75n}\Vol(M)$, and so we have at least one of the following:

\begin{itemize}
\item[(a)] $\Vol(M\setminus Q)\geq \frac{1}{2}C_{\Cm}\delta^\frac{1}{75n}\Vol(M)$.
\item[(b)] $\Vol\left(\{\tilde{x}\in I_p\setminus\{p\}:|\nabla f_1-\nabla h|(\tilde{x})\geq \delta^\frac{1}{75n^2}\}\right)\geq \frac{1}{2}C_{\Cm}\delta^\frac{1}{75n}\Vol(M)$.
\end{itemize}
If (a) holds, then by Lemma \ref{p2e}, $C_{\Ck}\delta^\frac{1}{6}\geq \frac{1}{2}C_{\Cm}\delta^\frac{1}{75n}$, and so $\delta^\frac{1}{4n}\geq C$.
If (b) holds, then by Proposition \ref{p2i},
\begin{equation*}
\frac{1}{2}C_{\Cm}\delta^\frac{n+1}{75n^2}\leq \|\nabla f_1-\nabla h\|_1\leq C \delta^\frac{1}{48n}
\leq C\delta^\frac{n+1}{72n^2},
\end{equation*}
and so $\delta^\frac{1}{600n^2}\geq C$.
Taking $\eta$ sufficiently small, the above situation cannot occur. Thus, we get the claim. 
\end{proof}
Let us show that our function $f_1$ is almost cosine function of the distance from the point in $A_m$ similarly to Proposition \ref{p2i} using Claim \ref{c2}.
\begin{Clm}\label{c3}
For any point $x\in A_m$ $(m\in \mathbb{Z}_{>0})$, we have
\begin{equation*}
\|f_1-(-1)^m \cos d(x,\cdot)\|_{\infty}\leq C\delta^\frac{1}{75n^2}.
\end{equation*}
\end{Clm}
\begin{proof}[Proof of Claim \ref{c3}]
Take arbitrary $x\in A_m$.
By Claim \ref{c2}, we can take a point $\tilde{x}\in B_{\delta^\frac{1}{75n^2}}(x)\cap (I_p\setminus\{p\}) \cap Q$ with $|\nabla f_1 -\nabla h|(\tilde{x})\leq \delta^\frac{1}{75n^2}$.
Then, we have $|d(p,\tilde{x})-m\pi|\leq 2\delta^\frac{1}{75n^2}$, and so by Proposition \ref{p2i},
\begin{equation*}
\begin{split}
|f_1(\tilde{x})-(-1)^m |&\leq |h(\tilde{x})-\cos m\pi|+C\delta^\frac{1}{48n}\leq C\delta^\frac{1}{75n^2},\\
|\nabla f_1(\tilde{x})|&\leq |\nabla h(\tilde{x})|+\delta^\frac{1}{75n^2}\leq 3\delta^\frac{1}{75n^2}.
\end{split}
\end{equation*}
Thus, similarly to (\ref{2d}) in Proposition \ref{p2i}, we get the claim.
\end{proof}


Note that we also have
\begin{equation*}
\|f_1-\cos \d(p,\cdot)\|_{\infty}\leq C\delta^\frac{1}{75n^2}.
\end{equation*}

Let us show that the distance between the points in $A_m$ and $A_{m'}$ is almost integer multiple of $\pi$.
\begin{Clm}\label{c4}
There exists a positive constant $\eta(n,K,D)>0$ such that the following property holds.
Suppose that $\delta\leq \eta$.
Define $A_0:=\{p\}$.
For each $m,m'\in \mathbb{Z}_{\geq 0}$, $x\in A_m$ and $y\in A_{m'}$, there exists an integer $k\in\mathbb{Z}_{\geq0}$ such that
\begin{equation*}
\left|d(x,y)-\pi k\right|\leq \delta^\frac{1}{200n^2}.
\end{equation*}
Moreover, if $(-1)^{m}=(-1)^{m'}$, then $k$ is even, and if $(-1)^{m}=-(-1)^{m'}$, then $k_{ij}$ is odd.
\end{Clm}
\begin{proof}[Proof of Claim \ref{c4}]
By Proposition \ref{p2i}, we have
\begin{equation*}
\begin{split}
\left|f_1(y)-(-1)^{m'} \right|
\leq &C\delta^\frac{1}{48n}+(1-\cos (d(p,y)-m'\pi))
\leq C \delta^\frac{1}{100n}.
\end{split}
\end{equation*}
Thus, by Claim \ref{c3}, we get
\begin{equation*}
\begin{split}
&\left|1-(-1)^{m+m'}\cos d(x,y)\right|\\
\leq&\left|f_1(y)-(-1)^{m'}\right|+\left|f_1(y)-(-1)^{m}\cos d(x,y)\right|
\leq C\delta^\frac{1}{75n^2}.
\end{split}
\end{equation*}
Take an integer $\tilde{k}\in \mathbb{Z}$ such that $d(x,y)+(m+m')\pi-2\tilde{k}\pi \in [-\pi,\pi]$.
Then, by Lemma \ref{p2j}, we get 
\begin{equation*}
\begin{split}
\left|d(x,y)+(m+m')\pi-2\tilde{k}\pi\right|
\leq C \delta^\frac{1}{150n^2}.
\end{split}
\end{equation*}
Putting $k=2\tilde{k}-(m+m')$, we get the claim.
\end{proof}

The purpose of the following two claims is to prove (i).
\begin{Clm}\label{c5}
The relationship $\sim$ is an equivalence relation on $A_m$ for all $m\in \mathbb{Z}_{>0}$.
\end{Clm}
\begin{proof}[Proof of Claim \ref{c5}]
Suppose that, $x\in A_m$ and $y\in A_m$ satisfy $d(x,y)>\delta^\frac{1}{200n^2}$.
Then, by Claim \ref{c4}, there exists a positive integer $k\in \mathbb{Z}_{>0}$ such that
\begin{equation*}
\left|d(x,y)-2\pi k\right|\leq \delta^\frac{1}{200n^2}.
\end{equation*}
In particular,
$d(x,y)\geq 2\pi-\delta^\frac{1}{200n^2}$ holds.

If $x\sim y$ and $y\sim z$ ($x,y,z\in A_m$), then $d(x,z)\leq 2\delta^\frac{1}{200n^2}$, and so $d(x,z)\leq \delta^\frac{1}{200n^2}$.
\end{proof}

\begin{Clm}\label{c6}
There exists a positive constant $N_3(n,K,D)>0$ such that
\begin{equation*}
\Card \left(\bigcup_{m=1}^\infty A_{m}/\sim \right)\leq N_3.
\end{equation*}
\end{Clm}
\begin{proof}[Proof of Claim \ref{c6}]
Take equivalent classes $[x_1],\ldots, [x_l]\in \bigcup_{m=1}^\infty A_{m}/\sim$ ($[x_i]\neq [x_j]$ when $i\neq j$).
Then, by Claim \ref{c4}, we have $d(x_i,x_j)\geq \frac{\pi}{2}$ when $i\neq j$.
Thus, $B_{ \frac{\pi}{4}}(x_i)\cap B_{\frac{\pi}{4}}(x_j)=\emptyset$ when $i\neq j$.
Therefore, by the Bishop-Gromov inequality (Theorem \ref{p2h}),
\begin{equation*}
\Vol(M)\geq \sum_{i=1}^l \Vol(B_{ \frac{\pi}{4}}(x_i))\geq l C_{\Cm} \left(\frac{\pi}{4}\right)^n\Vol(M).
\end{equation*}
Putting $N_3(n,K,D):=\left(4/\pi\right)^n/C_{\Cm}$, we get the claim.
\end{proof}

Define $N:=\Card \left(\bigcup_{m=1}^\infty A_{m}/\sim \right)$.
Take points $x_1,\ldots,x_N\in M$ such that each $x_i$ ($i=1,\ldots,N$) is an element of $A_{m_i}$ for some $m_i\in \mathbb{Z}_{>0}$, and
\begin{equation*}
\bigcup_{m=1}^\infty A_{m}/\sim =\{[x_1],\ldots,[x_N]\}
\end{equation*}
holds.
We get (ii) by Claim \ref{c3}.
We set $x_0:=p$.
Using Claim \ref{c4}, for each $i,j=0,\ldots,N$ ($i\neq j$), we define a positive integer $k_{ij}\in\mathbb{Z}_{>0}$ to be
\begin{equation*}
\left|d(x_i,x_j)-\pi k_{ij}\right|\leq \delta^\frac{1}{200n^2}.
\end{equation*}
If $(-1)^{m_i}=(-1)^{m_j}$, then $k_{ij}$ is even, and if $(-1)^{m_i}=-(-1)^{m_j}$, then $k_{ij}$ is odd.
This gives (iii).
Note that if $\left|d(x_i,x_j)-\pi\right|\leq \delta^\frac{1}{200n^2}$,
then $m_i=m_j+1$ or $m_i=m_j-1$.
For each $i,j=0,\ldots, N$ with $\left|d(x_i,x_j)-\pi\right|\leq \delta^\frac{1}{200n^2}$, we define
\begin{equation*}
\begin{split}
B_{ij}&:=\{x\in M: d(x_i,x)+d(x,x_j)\leq d(x_i,x_j)+\delta^\frac{1}{250n^2}\},\\
\widetilde{B}_{ij}&:=\{x\in B_{ij}: d(x,x_i)>3\delta^\frac{1}{250n^2}\text{ and } d(x,x_j)> 3\delta^\frac{1}{250n^2}\}.
\end{split}
\end{equation*}
The purpose of the following three claims is to show that $M=\bigcup_{ij}B_{ij}$.
To do this, if $M\neq \bigcup_{ij}B_{ij}$, we apply Lemma \ref{p2d} to the subset $\text{interior} (F)\subset M$, where the subset $F$ is defined in the proof of Claim \ref{c9} and $d(x,x_i)\geq\delta^{\frac{1}{210n^2}}$ holds for all $i=0,\ldots, N$ and $x\in F$.
Then, we show that the gradient $|\nabla f_1|$ is small at the almost minimum point of $f_1$ in $\text{interior} (F)$, and $f_1$ is an almost cosine function of the distance from the point.
This implies that the point is close to $x_i$ for some $i$, and contradicts to the definition of $F$.
The following claim, which asserts that the points in $M\setminus \bigcup_{ij}B_{ij}$ are not so close to $x_i$ for any $i$, is the first step to justify the above argument.
\begin{Clm}\label{c7}
There exists a positive constant $\eta(n,K,D)>0$ such that the following property holds.
Suppose that $\delta\leq \eta$.
We have $d(x,x_i)> \delta^\frac{1}{225n^2}$ for all $x\notin \bigcup_{jk} B_{jk}$ and $i=0,\ldots,N$.
In particular, $A_m\setminus \bigcup_{ij} B_{ij}=\emptyset$ for all $m\in\mathbb{Z}_{\geq0}$.
\end{Clm}
\begin{proof}[Proof of Claim \ref{c7}]
We can assume that $2\delta^\frac{1}{225n^2}\leq \delta^\frac{1}{250n^2}$.

Take arbitrary $x\in M$ such that $d(x,x_i)\leq \delta^\frac{1}{225n^2}$ holds for some $i=0,\ldots,N$.
For all $z\in M$, we get
\begin{equation}\label{dd}
d(x_i,x)+d(x,z)\leq 2 d(x_i,x)+ d(x_i,z)\leq \delta^\frac{1}{250n^2}+ d(x_i,z).
\end{equation}
If $i=0$, we take $j=1,\ldots,N$ with $x_j\in A_1$.
Then, we have $|d(x_i,x_j)-\pi|\leq \delta^\frac{1}{100n}$.
If $i\geq 1$, we take $j=0,\ldots,N$ as follows.
There exists a point $t\in [0,d(p,x_i)]$ with $\gamma_{p,x_i}(t)\in A_{m_i-1}$.
We take $j=0,\ldots,N$ with $[x_j]=[\gamma_{p,x_i}(t)]$.
Then, we have
\begin{equation*}
d(p,\gamma_{p,x_i}(t))+d(\gamma_{p,x_i}(t),x_i)=d(p,x_i).
\end{equation*}
Thus, by $x_j\sim\gamma_{p,x_i}(t)$, we get
\begin{equation*}
\begin{split}
d(x_i,x_j)\leq d(x_i,\gamma_{p,x_i}(t))+d(\gamma_{p,x_i}(t),x_j)=&d(p,x_i)-d(p,\gamma_{p,x_i}(t))+d(\gamma_{p,x_i}(t),x_j)\\
\leq &\pi+ 2\delta^\frac{1}{100n}+\delta^\frac{1}{200n^2}<\pi+3\delta^\frac{1}{200n^2}.
\end{split}
\end{equation*}
By Claim \ref{c4}, we get $\left|d(x_i,x_j)-\pi\right|\leq \delta^\frac{1}{200n^2}$.
For both cases, $\left|d(x_i,x_j)-\pi\right|\leq \delta^\frac{1}{200n^2}$ holds.
Putting $z=x_j$ into (\ref{dd}), we get $x\in B_{ij}$.
Thus, we get the claim.
\end{proof}

Let us estimate the distance $d(x,x_j)$ for $x\notin\bigcup_{kl} B_{kl}$.
\begin{Clm}\label{c8}
There exists a positive constant $\eta(n,K,D)>0$ such that the following property holds.
Suppose that $\delta\leq \eta$. For all $x\notin \bigcup_{kl} B_{kl}$, there exists $i=0,\ldots,N$ such that
\begin{equation*}
\begin{split}
\delta^\frac{1}{225n^2}\leq d(x,x_i)&\leq \pi+\delta^\frac{1}{200n^2},\\
d(x,x_j)&\geq \pi+\delta^\frac{1}{220n^2}\, \text{ for all $j$ with $(-1)^{m_i}=-(-1)^{m_j}$}.
\end{split}
\end{equation*}
\end{Clm}
\begin{proof}[Proof of Claim \ref{c8}]
Take arbitrary $x\notin \bigcup_{kl} B_{kl}$.
We first prove that there exists $i=0,\ldots,N$ with $d(x_i,x)\leq \pi+\delta^\frac{1}{200n^2}$.
Take an integer $m\in\mathbb{Z}_{\geq0}$ with $d(p,x)-m\pi \in[0,\pi]$.
By Claim \ref{c7}, we have $x\notin A_m$ and $x\notin A_{m+1}$. Thus, $ d(p,x)-m\pi \in[\delta^\frac{1}{100n},\pi-\delta^\frac{1}{100n}]$.
There exists a point $t\in [0,d(p,x)]$ with $\gamma_{p,x}(t)\in A_m$.
Take $i=0,\ldots,N$ with $[x_i]=[\gamma_{p,x}(t)]$.
Since we have $d(p,x)=d(p,\gamma_{p,x}(t))+d(\gamma_{p,x}(t),x)$,
we get
\begin{equation*}
\begin{split}
d(x_i,x)\leq &d(\gamma_{p,x}(t),x_i)+d(\gamma_{p,x}(t),x)\\
\leq &d(p,x)-d(p,\gamma_{p,x}(t))+\delta^\frac{1}{200n^2}\leq \pi+\delta^\frac{1}{200n^2}.
\end{split}
\end{equation*}

We fix an $i=0,\ldots,N$ with $d(x_i,x)\leq\pi+\delta^\frac{1}{200n^2}$.
Take arbitrary $j$ with $(-1)^{m_i}=-(-1)^{m_j}$.

We first consider the case $k_{ij}\geq 3$.
Then, we get
\begin{equation}\label{d0}
d(x,x_j)\geq d(x_i,x_j)-d(x,x_i)\geq 2\pi-2\delta^\frac{1}{200n^2}\geq\pi+\delta^\frac{1}{220n^2}.
\end{equation}

We next consider the case $k_{ij}=1$.
By Claim \ref{c3} and $m_i=m_j\pm 1$,
\begin{equation*}
\begin{split}
\left|f_1(x)-(-1)^{m_i} \cos d(x_i,x)\right|\leq &C\delta^\frac{1}{75n^2},\\
\left|f_1(x)+(-1)^{m_i} \cos d(x_j,x)\right|\leq &C\delta^\frac{1}{75n^2}.
\end{split}
\end{equation*}
Thus, we get
\begin{equation*}
\left|\cos d(x_i,x)+\cos d(x_j,x)\right|\leq C\delta^\frac{1}{75n^2}.
\end{equation*}
We consider the following five cases:
\begin{itemize}
\item[(a)] $\delta^\frac{1}{225n^2}\leq d(x_i,x)\leq \pi$ and $\delta^\frac{1}{225n^2}\leq d(x_j,x)\leq \pi$ hold.
\item[(b)] $\pi<d(x_i,x)\leq\pi+ \delta^\frac{1}{200n^2}$ and $\delta^\frac{1}{225n^2}\leq d(x_j,x)\leq \pi$ hold.
\item[(c)] $\delta^\frac{1}{225n^2}\leq d(x_i,x)\leq \pi$ and $\pi< d(x_j,x)\leq 2\pi$ hold.
\item[(d)] $\pi<d(x_i,x)\leq\pi+ \delta^\frac{1}{200n^2}$ and $\pi< d(x_j,x)\leq 2\pi$ hold.
\item[(e)] $d(x_j,x)>2\pi$ holds.
\end{itemize}
We can show that the cases (a) and (b) cannot occur, and $d(x,x_j)\geq \pi+\delta^\frac{1}{220n^2}$ holds for the cases (c), (d) and (e), and so we get the claim.
We only prove the assertion for the case (c).
Similarly to the case (c), we can show that the case (a) contradicts to the assumption $x\notin B_{ij}$,
the case (b) contradicts to Claim \ref{c7}, and the assertion holds for the case (d).
The assertion is trivial for the case (e).



Suppose that the case (c) holds.
Since $\left|\cos d(x_i,x)-\cos(d(x_j,x)-\pi)\right|\leq C\delta^\frac{1}{75n^2}$,
we get 
\begin{equation*}
\left|
\pi+d(x_i,x)-d(x_j,x)
\right|
\leq C\delta^\frac{1}{150n^2},
\end{equation*}
by Lemma \ref{p2j}.
Thus, we get
\begin{equation*}
\begin{split}
d(x_j,x)\geq d(x_i,x)+\pi-C\delta^\frac{1}{150n^2}
\geq \pi+\delta^\frac{1}{225n^2}-C\delta^\frac{1}{150n^2}\geq \pi+\delta^\frac{1}{220n^2}.
\end{split}
\end{equation*}
Thus, we get the assertion for the case (c).
\end{proof}

Let us complete the proof of the assertion $\bigcup_{kl} B_{kl}=M$.
\begin{Clm}\label{c9}
There exists a positive constant $\eta(n,K,D)>0$ such that the following property holds.
If $\delta\leq \eta$, then $\bigcup_{kl} B_{kl}=M$.
\end{Clm}
\begin{proof}[Proof of Claim \ref{c9}]
Suppose that $\bigcup_{kl} B_{kl}\neq M$.
We fix a point $y_0\notin \bigcup_{kl} B_{kl}$ and $i=0,\ldots,N$ of Claim \ref{c8}.
By Claim \ref{c3} and Lemma \ref{p2j}, we have
\begin{equation}\label{sa1}
\begin{split}
(-1)^{m_i}f_1(y_0)\leq \cos d(x_i,y_0)+ C\delta^\frac{1}{75n^2}
\leq& \cos\delta^\frac{1}{225n^2}+ C\delta^\frac{1}{75n^2}\\
\leq&1-\frac{1}{9}\delta^\frac{2}{225n^2} + C\delta^\frac{1}{75n^2}.
\end{split}
\end{equation}
Define
\begin{equation*}
\begin{split}
F:=\Big\{y\in M: d(y,x_j)&\geq \delta^\frac{1}{210n^2} \text{ for $j$ such that $(-1)^{m_j}=(-1)^{m_i}$, and } \\
d(y,x_j)&\geq \pi+\delta^\frac{1}{210n^2} \text{ for $j$ such that $(-1)^{m_j}=-(-1)^{m_i}$}
\Big\}.\\
\widetilde{F}:=\Big\{y\in M: d(y,x_j)&\geq \delta^\frac{1}{215n^2} \text{ for $j$ such that $(-1)^{m_j}=(-1)^{m_i}$, and } \\
d(y,x_j)&\geq \pi+\delta^\frac{1}{215n^2} \text{ for $j$ such that $(-1)^{m_j}=-(-1)^{m_i}$}
\Big\}.
\end{split}
\end{equation*}
Then, we have $\widetilde{F}\subset \text{interior} (F)$ and $y_0\in \widetilde{F}$.

In the following, we take points $y_1,y_2,y_3\in M$.
For the reader's convenience, we list how we take them and give a brief sketch of the proof of the claim here.
\begin{itemize}
\item We take $y_1\in \widetilde{F}$ so that $(-1)^{m_i}f_1(y_1)=\min_{y\in\widetilde{F}}(-1)^{m_i}f_1(y)$.
\item We take $y_2\in F\setminus \text{interior}(F)$ so that $((-1)^{m_i}f_1-(-1)^{m_i}f_1(y_1)+\delta^\frac{1}{6n})^{-2}|\nabla f_1|^2$ takes its maximum value at $y_2$ in $F$ when the function does not take its maximum in $\text{interior} (F)$.
\item We take $y_3\in Q\cap \text{interior}(F)$ so that $d(y_1,y_3)\leq  \left(2C_{\Ck}\delta^\frac{1}{6}/C_{\Cm}\right)^\frac{1}{n}$.
\end{itemize}
We first show that $\min_{y\in F}(-1)^{m_i}f_1(y)=\min_{y\in \widetilde{F}}(-1)^{m_i}f_1(y)$, and so $y_1$ is the local minimum point of $f_1$.
Then, we show that $|\nabla f|(y_3)$ is small by using Lemma \ref{p2d}.
Similarly to (\ref{2d}) in Proposition \ref{p2i}, we get that
$f_1$ is close to $f_1(y_3)\cos d(y_3,\cdot)$ in $L^\infty$ sense.
This gives that $|f_1(y_3)|$ is close to $1$, and $y_3$ is close to $x_j$ for some $j=0,\ldots,N$.
This contradicts to $y_3\in F$.

We return to the proof.
For any $y\in F\setminus \widetilde{F}$, we have either
\begin{equation*}
\begin{split}
 \delta^\frac{1}{210n^2} \leq d(y,x_j)&< \delta^\frac{1}{215n^2}  \text{ for some $j$ with $(-1)^{m_j}=(-1)^{m_i}$, or } \\
 \pi+\delta^\frac{1}{210n^2} \leq d(y,x_j)&<\pi+\delta^\frac{1}{215n^2}\text{ for some $j$ with $(-1)^{m_j}=-(-1)^{m_i}$},
\end{split}
\end{equation*}
and so, by Claim \ref{c3} and Lemma \ref{p2j},
\begin{equation}\label{sa2}
\begin{split}
(-1)^{m_i}f_1(y)\geq& \cos\delta^\frac{1}{215n^2} -C\delta^\frac{1}{75n^2}
\geq1 -C\delta^\frac{2}{215n^2}.
\end{split}
\end{equation}
We can assume that $\frac{1}{9}\delta^\frac{2}{225n^2}
\geq C\delta^\frac{2}{215n^2}  + C\delta^\frac{1}{75n^2}+\delta^\frac{1}{75n^2}$.
Then, by (\ref{sa1}) and (\ref{sa2}),
\begin{equation}\label{d5}
(-1)^{m_i}f_1(y)-(-1)^{m_i}f_1(y_0)\geq \delta^\frac{1}{75n^2}
\end{equation} for all $y\in F\setminus \widetilde{F}$.
In particular, we get 
\begin{equation*}
\min_{y\in F}(-1)^{m_i}f_1(y)=\min_{y\in \widetilde{F}}(-1)^{m_i}f_1(y).
\end{equation*}
Take a point $y_1\in \widetilde{F}$ such that $(-1)^{m_i}f_1(y_1)=\min_{y\in\widetilde{F}}(-1)^{m_i}f_1(y).$
We first suppose that the function on $F$, $((-1)^{m_i}f_1-(-1)^{m_i}f_1(y_1)+\delta^\frac{1}{6n})^{-2}|\nabla f_1|^2$ takes its maximum value at some point in $\text{interior}(F)$.
By Lemma \ref{p2d}, we have
\begin{equation*}
|\nabla f_1|^2\leq \frac{C}{\delta^\frac{1}{6n}}\left((-1)^{m_i}f_1-(-1)^{m_i}f_1(y_1)+\delta^\frac{1}{6n}\right)^2
\end{equation*}
on $\text{interior}(F)$.
We next suppose that $((-1)^{m_i}f_1-(-1)^{m_i}f_1(y_1)+\delta^\frac{1}{6n})^{-2}|\nabla f_1|^2$ takes its maximum value at some point $y_2\in F\setminus \text{interior}(F)$.
By (\ref{d5}), we have
\begin{equation*}
(-1)^{m_i}f_1(y_2)-(-1)^{m_i}f_1(y_1)+\delta^\frac{1}{6n}\geq (-1)^{m_i}f_1(y_2)-(-1)^{m_i}f_1(y_0)\geq\delta^\frac{1}{75n^2}
\end{equation*}
and so, we get
\begin{equation*}
|\nabla f_1|^2\leq \frac{C}{\delta^\frac{2}{75n^2}}\left((-1)^{m_i}f_1-(-1)^{m_i}f_1(y_1)+\delta^\frac{1}{6n}\right)^2
\end{equation*}
on $F$ by $\|\nabla f\|_\infty\leq C$.
Thus, we get 
\begin{equation*}
|\nabla f_1|^2\leq \frac{C}{\delta^\frac{1}{6n}}\left((-1)^{m_i}f_1-(-1)^{m_i}f_1(y_1)+\delta^\frac{1}{6n}\right)^2
\end{equation*}
on $\text{interior}(F)$ for both cases.
There exists a point $y_3\in Q$ with $d(y_1,y_3)\leq  \left(2C_{\Ck}\delta^\frac{1}{6}/C_{\Cm}\right)^\frac{1}{n}$.
We can assume that 
$\delta^\frac{1}{215n^2}-\left(2C_{\Ck}\delta^\frac{1}{6}/C_{\Cm}\right)^\frac{1}{n}> \delta^\frac{1}{210n^2}$.
Then, we have $y_3\in \text{interior}(F)$.
Thus, we get 
\begin{equation*}
|\nabla f_1|(y_3)\leq C \delta^\frac{1}{12n}.
\end{equation*}
Therefore, similarly to (\ref{2d}) in Proposition \ref{p2i}, for all $x\in M$,
\begin{equation*}
\left|f_1(x)-f_1(y_3)\cos d(y_3,x)\right|\leq C\delta^\frac{1}{12n},
\end{equation*}
and so
\begin{equation*}
\left| \cos d(p,x)-f_1(y_3)\cos d(y_3,x)\right|\leq C\delta^\frac{1}{48n}.
\end{equation*}
Putting 
$x=p$, we get
\begin{equation}\label{d6}
\left|1-f_1(y_3)\cos d(p,y_3)\right|\leq C\delta^\frac{1}{48n},
\end{equation}
and so
\begin{equation*}
1 \leq|f_1(y_3)|+ C\delta^\frac{1}{48n}.
\end{equation*}
Putting $x=y_3$, we get
\begin{equation*}
\left|\cos d(p,y_3)-f_1(y_3)\right|\leq C\delta^\frac{1}{48n},
\end{equation*}
and so
\begin{equation*}
|f_1(y_3)|\leq1+C\delta^\frac{1}{48n}.
\end{equation*}
Thus, we get 
\begin{equation}\label{d7}
\left|1-|f_1(y_3)|\right|\leq C\delta^\frac{1}{48n}.
\end{equation}
Take $\tau\in\{0,1\}$ such that $|f_1(y_3)|=(-1)^\tau f_1(y_3)$.
By (\ref{d6}) and (\ref{d7}), we get
\begin{equation*}
\left|1-\cos(d(p,y_3)+\tau\pi)\right|\leq C\delta^\frac{1}{48n}.
\end{equation*}
Take an integer $k\in \mathbb{Z}$ such that $d(p,y_3)+\tau\pi-2k\pi\in [-\pi,\pi]$.
Then, $k\geq \tau$.
By Lemma \ref{p2j},
\begin{equation*}
\left|d(p,y_3)-(2k-\tau)\pi\right|\leq C\delta^\frac{1}{96n}.
\end{equation*}
We can assume that $C\delta^\frac{1}{96n}\leq \delta^\frac{1}{100n}$.
If $2k=\tau$, then $\tau=0$ and $d(p,y_3)\leq \delta^\frac{1}{100n}$.
This contradicts to $y_3\in F$.
If $2k>\tau$, then $y_3\in A_{2k-\tau}$, and so $d(y_3,x_j)\leq \delta^\frac{1}{200n^2}$ for some $j=1,\ldots, N$.
This also contradicts to $y_3\in F$.
Therefore, we get $\bigcup_{kl} B_{kl}=M$.
\end{proof}
The purpose of the following two claims is to prove the remaining part of (iv).
\begin{Clm}\label{c10}
There exists a positive constant $\delta_{\Db}(n,K,D)>0$ such that the following property holds.
Suppose that $\delta\leq \delta_{\Db}$.
For any $x\in M$ with $d(x,x_i)\leq \pi -\delta^\frac{1}{250n^2}$ for some $i=0,\ldots, N$, we have $B_{\delta^\frac{1}{200n^2}}(x)\cap B_{kl}=\emptyset$ for any $k,l=0,\ldots, N$ with $i\neq k$ and $i\neq l$.
\end{Clm}
\begin{proof}[Proof of Claim \ref{c10}]
We can assume that $5\delta^\frac{1}{200^2}\leq \delta^\frac{1}{250n^2}$.

Take arbitrary $y\in B_{\delta^\frac{1}{200n^2}}(x)$ and $k,l=0,\ldots, N$ with $i\neq k$, $i\neq l$ and $|d(x_k,x_l)-\pi|\leq \delta^\frac{1}{200n^2}$.
Then,
\begin{equation*}
\begin{split}
d(x_k,y)+d(y,x_l)\geq &d(x_k,x_i)+d(x_i,x_l)-2d(y,x_i)\\
> &\pi+2\delta^\frac{1}{250n^2}-4\delta^\frac{1}{200n^2}\geq d(x_k,x_l)+\delta^\frac{1}{250n^2}
\end{split}
\end{equation*}
by (iii),
and so $y\notin  B_{kl}$.
Thus, we get the claim.
\end{proof}
\begin{Clm}\label{c11}
We have that $\widetilde{B}_{ij}$ is an open subset of $M$ for each $i,j$, and $\widetilde{B}_{ij}\cap B_{kl}=\emptyset$ holds if $\{k,l\}\neq \{i,j\}$.
\end{Clm}
\begin{proof}[Proof of Claim \ref{c11}]
Take arbitrary
$x\in\widetilde{B}_{ij}$.
Take $0<r<\delta^\frac{1}{200n^2}$ such that
$d(x,x_i)-r>3\delta^\frac{1}{250n^2}$ and  $d(x,x_j)-r> 3\delta^\frac{1}{250n^2}$.
Then, 
\begin{equation*}
d(x,x_i)\leq d(x_i,x_j)-d(x_j,x)+\delta^\frac{1}{250n^2}\leq\pi -\delta^\frac{1}{250n^2}.
\end{equation*}
Similarly, $d(x,x_j)\leq\pi -\delta^\frac{1}{250n^2}$.
Thus, by Claim \ref{c10}, 
\begin{equation}\label{d8}
B_r(x)\cap B_{kl}=\emptyset
\end{equation}
for any $k,l=0,\dots,N$ with $\{k,l\}\neq \{i,j\}$, and so $B_r(x)\subset B_{ij}$ by Claim \ref{c9}.
Since $d(y,x_i)>3\delta^\frac{1}{250n^2}$ and $d(y,x_j)>3\delta^\frac{1}{250n^2}$ for all $y\in B_r(x)$, we have $B_r(x)\subset \widetilde{B}_{ij}$. Thus, $\widetilde{B}_{ij}$ is open.
Moreover, (\ref{d8}) implies $\widetilde{B}_{ij}\cap B_{kl}=\emptyset$ for any $k,l=0,\dots,N$ with $\{k,l\}\neq \{i,j\}$.
\end{proof}
Finally, we show (v).
\begin{Clm}\label{c12}
Suppose that numbers $i,j,k=0,\ldots, N$ satisfy $j\neq k$, $\left|d(x_i,x_j)-\pi\right|\leq \delta^\frac{1}{200n^2}$ and $\left|d(x_i,x_k)-\pi\right|\leq \delta^\frac{1}{200n^2}$.
Then, for any $x\in B_{ij}$ and $y\in B_{ik}$, we have $d(x,y)\geq d(x,x_i)+d(x_i,y)-5\delta^\frac{1}{250n^2}$.
\end{Clm}
\begin{proof}[Proof of Claim \ref{c12}]
We have
\begin{equation*}
\begin{split}
d(x_j,x_k)\geq &2\pi-\delta^\frac{1}{200n^2},\\
d(x_j,x)\leq &d(x_i,x_j)-d(x_i,x)+\delta^\frac{1}{250n^2}\leq \pi-d(x_i,x)+2\delta^\frac{1}{250n^2},\\
d(x_k,y)\leq &d(x_i,x_k)-d(x_i,y)+\delta^\frac{1}{250n^2}\leq \pi-d(x_i,y)+2\delta^\frac{1}{250n^2}.
\end{split}
\end{equation*}
Thus, we get
\begin{equation*}
d(x,y)\geq d(x_j,x_k)-d(x_j,x)-d(x_k,y)\geq d(x,x_i)+d(x_i,y)-5\delta^\frac{1}{250n^2}.
\end{equation*}
This gives the claim.
\end{proof}
These claims imply Proposition \ref{p2k}.
\end{proof}
We consider the case $N=1$ in Proposition \ref{p2k} and give the diameter estimate.
\begin{Prop}\label{p4a}
Take an integer $n\geq 2$ and positive real numbers $K>0$, $D>0$ and $0<\delta\leq \delta_{\Db}$.
Let $(M,g)$ be an $n$-dimensional closed Riemannian manifold with $\Ric\geq -Kg$ and $\diam(M)\leq D$.
Suppose that a non-zero function $f\in C^\infty(M)$ satisfies $\|\nabla^2 f+f g\|_2\leq \delta\|f\|_2$.
Take a point $p\in M$ of Proposition \ref{p2i} and suppose that $N=1$ in Proposition \ref{p2k}.
Then, 
$\left|\diam(M)-\pi\right|\leq 2\delta^\frac{1}{250n^2}$ and
there exists a point $q\in M$ such that
\begin{equation*}
\begin{split}
\left|d(p,q)-\pi\right|\leq &\delta^\frac{1}{200n^2},\\
d(p,x)+d(x,q)\leq &d(p,q)+\delta^\frac{1}{250n^2} \text{ for all $x\in M$}.
\end{split}
\end{equation*}
\end{Prop}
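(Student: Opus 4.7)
The plan is to translate the hypothesis $N=1$ into the statement that, in the decomposition furnished by Proposition \ref{p2k}(iv), there is essentially only one piece $B_{pq}$, and then to read off both conclusions by elementary triangle-inequality manipulations.

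First, I would unpack $N=1$. Proposition \ref{p2k}(i) guarantees $A_1 \neq \emptyset$, and the chosen points $x_1,\dots,x_N$ are in bijection with the equivalence classes of $\bigcup_{m\geq 1} A_m/\sim$. With $N=1$, there is a single such class; since it must meet $A_1$, we may take $x_1\in A_1$, i.e.\ $m_1=1$. Setting $q:=x_1$, the definition of $A_1$ gives $|d(p,q)-\pi|\leq \delta^{1/(100n)}$. For $n\geq 2$ and $\delta\leq 1$ we have $1/(100n)\geq 1/(200n^2)$, so $\delta^{1/(100n)}\leq \delta^{1/(200n^2)}$, and the first displayed inequality in the conclusion is established.

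Second, I would appeal to Proposition \ref{p2k}(iv). The only candidate pair with $i\neq j$ in $\{0,1\}$ is $(0,1)$, and by the previous step it does satisfy $|d(x_0,x_1)-\pi|\leq \delta^{1/(200n^2)}$, so $B_{01}$ is indeed defined. The equality $M=\bigcup_{ij}B_{ij}$ therefore forces $M=B_{01}$, which unwinds to
\[
d(p,x)+d(x,q)\leq d(p,q)+\delta^{1/(250n^2)}\qquad\text{for every }x\in M,
\]
giving the second displayed inequality.

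Finally, for the diameter estimate, the lower bound $\diam(M)\geq d(p,q)\geq \pi-\delta^{1/(200n^2)}\geq \pi-2\delta^{1/(250n^2)}$ is immediate. For the upper bound, fix any $x,y\in M$, apply the small-excess inequality at both points, and add the two triangle inequalities $d(x,y)\leq d(x,p)+d(p,y)$ and $d(x,y)\leq d(x,q)+d(q,y)$: the sum is bounded by $(d(p,x)+d(x,q))+(d(p,y)+d(y,q))\leq 2d(p,q)+2\delta^{1/(250n^2)}$, hence $d(x,y)\leq d(p,q)+\delta^{1/(250n^2)}\leq \pi+2\delta^{1/(250n^2)}$. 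The entire argument is a direct consequence of Proposition \ref{p2k} specialized to $N=1$; there is no substantial obstacle beyond bookkeeping the exponents.
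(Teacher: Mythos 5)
Your proposal is correct and follows essentially the same route as the paper: take $q=x_1$ (noting that $N=1$ together with $A_1\neq\emptyset$ forces $m_1=1$), read the distance and excess bounds off Proposition \ref{p2k}, and then deduce the diameter estimate. The only difference is cosmetic: for the upper diameter bound the paper splits into cases according to whether $d(x,q)+d(y,q)\geq \pi+2\delta^{\frac{1}{250n^2}}$, whereas you average the two triangle inequalities through $p$ and through $q$, which yields the same (in fact slightly sharper) bound $d(x,y)\leq d(p,q)+\delta^{\frac{1}{250n^2}}$.
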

\begin{proof}
Putting $q=x_1$, we have $|d(p,q)-\pi|\leq \delta^\frac{1}{200n^2}$ and $d(p,x)+d(x,q)\leq d(p,q)+\delta^\frac{1}{250n^2}$ for all $x\in M$ by Proposition \ref{p2k}.
Take arbitrary $x,y\in M$.
Then,
\begin{equation*}
\begin{split}
d(x,y)\leq d(x,p)+d(p,y)\leq &2d(p,q)-d(x,q)-d(y,q)+2\delta^\frac{1}{250n^2}\\
\leq& 2\pi-d(x,q)-d(y,q)+4\delta^\frac{1}{250n^2}.
\end{split}
\end{equation*}
If $d(x,q)+d(y,q)\geq \pi+2\delta^\frac{1}{250n^2}$, then
$d(x,y)\leq \pi+2\delta^\frac{1}{250n^2}$.
If $d(x,q)+d(y,q)< \pi+2\delta^\frac{1}{250n^2}$, then
$d(x,y)< \pi+2\delta^\frac{1}{250n^2}$.
For both cases, we get $d(x,y)\leq \pi+2\delta^\frac{1}{250n^2}$, and so
$\diam(M)\leq  \pi+2\delta^\frac{1}{250n^2}$.
Since $d(p,q)\geq \pi-\delta^\frac{1}{200n^2}$, we get
$\left|\diam(M)-\pi\right|\leq 2\delta^\frac{1}{250n^2}$.
\end{proof}

We can apply the Cheeger-Colding result \cite{CC2} to each $B_{ij}$ of Proposition \ref{p2k}.
Then, we get that each $B_{ij}$ is close to the spherical suspension.
Moreover, we can show that we always have $N=1$.
See Appendix A for details.
Note that our proofs of the main theorems does not rely on that result, and we give easier proof of $N=1$ under the pinching condition of Main Theorem 1' (see the proof of Lemma \ref{p32c}). 

\section{Pinching condition on a subspace of $C^\infty(M)$}
The goal of this section is to show that if an $n$-dimensional closed Riemannian manifold $(M,g)$ with $\Ric\geq -Kg$ and $\diam(M)\leq D$ ($K,D>0$) admits an $(n+1)$-dimensional subspace $V\subset C^\infty(M)$ such that $\|\nabla^2 f+f g\|_2\leq \delta\|f\|_2$ holds for all $f\in V$,
then the map $\Psi\colon M\to S^n\subset \mathbb{R}^{n+1}$ defined by
$$
\Psi:=\frac{1}{\left(\sum_{s=1}^{n+1}f_s^2\right)^{1/2}}(f_1,\ldots,f_{n+1})
$$
is an approximation map, where $f_1,\ldots,f_{n+1}\in T_{(n,\delta)}(V) $ satisfy
\begin{equation*}
\|f_s\|_2^2=\frac{1}{n+1},\quad \int_M f_s f_t \,d\mu_g=0\quad \text{for $s,t=1,\ldots,n+1$ with $s\neq t$}.
\end{equation*}
Our assumption in this section is stronger than that of Main Theorem 1', and we weaken the assumption ``an $(n+1)$-dimensional subspace $V\subset C^\infty(M)$'' to ``an $n$-dimensional subspace $V\subset C^\infty(M)$'' in section 4.

\subsection{The $\delta$-pinching condition}
Let us give the definition of the pinching condition that we deal with in this section.
\begin{Def}\label{dpin}
For a closed Riemannian manifold $(M,g)$, a positive real number $\delta>0$ and a subspace $V \subset C^\infty(M)$, we say that $V$ satisfies the $\delta$-pinching condition if $\|\nabla^2 f+f g\|_2\leq \delta\|f\|_2$ holds for all $f\in V$.
\end{Def}
\begin{Rem}
Let $(M,g)$ be an $n$-dimensional closed Riemannian manifold.
If a subspace $V \subset C^\infty(M)$ satisfies the $\delta$-pinching condition and $0<\delta\leq \frac{1}{8n^3}$, then $T_{(n,\delta)}$ is injective on $V$ by Lemma \ref{p2c}.
If a subspace $V\subset C^\infty(M)$ satisfies the $\delta$-pinching condition, for all $f_1\in T_{(n,\delta)}(V)$, there exists $f\in V$ such that $f_1=T_{(n,\delta)}(f)$ holds, and we can apply Proposition \ref{p2i} and Proposition \ref{p2k} for $f$.
To express this procedure, we simply say ``Proposition \ref{p2i} (or Proposition \ref{p2k}) for $f_1$''.
\end{Rem}

\begin{Lem}\label{p31c}
Take an integer $n\geq 2$ and positive real numbers $K>0$, $D>0$ and $0<\delta\leq \delta_{\Db}$.
Let $(M,g)$ be an $n$-dimensional closed Riemannian manifold with $\Ric\geq -Kg$ and $\diam(M)\leq D$.
Suppose that a 2-dimensional subspace $V\subset C^\infty(M)$ satisfies the $\delta$-pinching condition.
Take non-zero elements  $f_1,f_2\in V$.
For $f_s$ $(s=1,2)$, we use the notation $N^s$ of Proposition \ref{p2k}.
Then, if $N^1=1$, we have $N^2=1$.
\end{Lem}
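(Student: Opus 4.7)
The plan is to show that the assumption $N^1=1$ forces $\diam(M)$ to be essentially at most $\pi$, and then to exploit the parity constraint in Proposition \ref{p2k}(iii) to rule out $N^2\geq 2$.

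First, I would apply Proposition \ref{p4a} to the non-zero function $f_1\in V$: since $V$ satisfies the $\delta$-pinching condition, so does $f_1$, and the hypothesis $N^1=1$ gives
\begin{equation*}
\bigl|\diam(M)-\pi\bigr|\leq 2\delta^{\frac{1}{250n^2}}.
\end{equation*}
Next I apply Proposition \ref{p2k} to $f_2$, obtaining a base point $p_2=x_0^{(2)}$ and representatives $x_1^{(2)},\dots,x_{N^2}^{(2)}$ with $x_i^{(2)}\in A_{m_i}^{(2)}$. Because $d(p_2,x_i^{(2)})\geq m_i\pi-\delta^{\frac{1}{100n}}$ and $d(p_2,x_i^{(2)})\leq \diam(M)\leq \pi+2\delta^{\frac{1}{250n^2}}$, assuming $\delta_{\Db}$ small enough so that $\delta^{\frac{1}{100n}}+2\delta^{\frac{1}{250n^2}}<\pi$, we must have $m_i=1$ for every $i=1,\dots,N^2$.

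Now I would argue by contradiction: suppose $N^2\geq 2$, and pick two distinct classes represented by $x_1^{(2)},x_2^{(2)}\in A_1^{(2)}$. By definition of $\sim$ and inequivalence, $d(x_1^{(2)},x_2^{(2)})\geq \delta^{\frac{1}{200n^2}}$, while Proposition \ref{p2k}(iii) gives an integer $k_{12}\geq 1$ with $|d(x_1^{(2)},x_2^{(2)})-k_{12}\pi|\leq \delta^{\frac{1}{200n^2}}$. Since $(-1)^{m_1}=(-1)^{m_2}=-1$, the parity clause of (iii) forces $k_{12}$ to be even, hence $k_{12}\geq 2$ and
\begin{equation*}
d(x_1^{(2)},x_2^{(2)})\geq 2\pi-\delta^{\frac{1}{200n^2}}.
\end{equation*}
For $\delta\leq \delta_{\Db}$ sufficiently small, this exceeds $\pi+2\delta^{\frac{1}{250n^2}}\geq \diam(M)$, a contradiction. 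Therefore $N^2=1$.

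This is a short combinatorial argument once the two propositions are available; the only care required is to check that $\delta_{\Db}$ is small enough to make all three comparisons (the two diameter comparisons and the separation $2\pi-\delta^{\frac{1}{200n^2}}>\pi+2\delta^{\frac{1}{250n^2}}$) strict, possibly shrinking $\delta_{\Db}$ further if needed. There is no essential obstacle: we are not using any additional structure of the subspace $V$ beyond the fact that both $f_1$ and $f_2$ individually satisfy the $\delta$-pinching hypothesis of Propositions \ref{p2k} and \ref{p4a}.
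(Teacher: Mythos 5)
Your proof is correct and follows essentially the same route as the paper: Proposition \ref{p4a} applied to $f_1$ pins $\diam(M)$ near $\pi$, while $N^2\geq 2$ would force two points at distance roughly $2\pi$ apart, a contradiction. The paper states the second implication in one line ("$N^2\geq 2$ implies $\diam(M)\geq 2\pi-\delta^{1/200n^2}$"), and your argument via $m_i=1$ and the parity clause of Proposition \ref{p2k}(iii) is just the explicit justification of that same step.
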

\begin{proof}
If $N^1=1$, then $\diam(M)\leq \pi+2\delta^\frac{1}{250n^2}$ by Proposition \ref{p4a}.
If $N^2\geq 2$, then $\diam(M)\geq 2\pi- \delta^\frac{1}{200n^2}$.
Thus, we get $N^2=1$.
\end{proof}
\begin{Def}
Take an integer $n\geq 2$ and positive real numbers $K>0$, $D>0$ and $0<\delta\leq \delta_{\Db}$.
Let $(M,g)$ be an $n$-dimensional closed Riemannian manifold with $\Ric\geq -Kg$ and $\diam(M)\leq D$.
Suppose that  a subspace $V\subset C^\infty(M)$ satisfies the $\delta$-pinching condition.
We say that $V$ satisfies $N(V)=1$ if $N=1$ holds for some (and so, for all) non-zero element of $V$.
\end{Def}


\subsection{The approximation map}
In this subsection, we construct an approximation map from the manifold to the standard sphere under our pinching condition.
\begin{notation}
Let $d_{S^n}$ denotes the intrinsic distance function on $S^n$ with radius $1$.
We consider the standard embedding $S^n\subset \mathbb{R}^{n+1}$.
Let $d_{\mathbb{R}^{n+1}}$ denotes the standard distance function on $\mathbb{R}^{n+1}$.
\end{notation}
The following lemma is standard.
\begin{Lem}\label{32a}
For all $x,y\in S^n\subset\mathbb{R}^{n+1}$, we have $d_{\mathbb{R}^{n+1}}(x,y)\leq d_{S^n}(x,y)\leq 3 d_{\mathbb{R}^{n+1}}(x,y)$.
\end{Lem}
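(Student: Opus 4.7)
The plan is to reduce everything to a single variable by setting $\theta := d_{S^n}(x,y) \in [0,\pi]$, which is exactly the angle between the unit vectors $x$ and $y$. A direct computation from $|x-y|^2 = 2 - 2\langle x, y\rangle = 2 - 2\cos\theta$ gives the chord formula
\begin{equation*}
d_{\mathbb{R}^{n+1}}(x,y) = |x-y| = 2\sin(\theta/2).
\end{equation*}
So both inequalities in the lemma become one-variable trigonometric estimates on $\theta \in [0,\pi]$.

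First I would establish the left inequality $d_{\mathbb{R}^{n+1}}(x,y) \leq d_{S^n}(x,y)$, which via the chord formula reads $2\sin(\theta/2) \leq \theta$, equivalent to $\sin s \leq s$ for $s = \theta/2 \in [0,\pi/2]$. This is the standard inequality (check that $s - \sin s$ has non-negative derivative $1 - \cos s$ and vanishes at $0$). Intuitively it just says that the straight-line chord is no longer than any curve joining the endpoints, in particular the great-circle arc.

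Next I would prove the right inequality $d_{S^n}(x,y) \leq 3\, d_{\mathbb{R}^{n+1}}(x,y)$, i.e. $\theta \leq 6\sin(\theta/2)$, equivalently $\sin s \geq s/3$ for $s \in [0,\pi/2]$. The cleanest route is concavity of $\sin$ on $[0,\pi/2]$: since $\sin 0 = 0$ and $\sin(\pi/2) = 1$, the chord from $(0,0)$ to $(\pi/2, 1)$ gives $\sin s \geq (2/\pi)s$, and $2/\pi > 1/3$. Combining the two estimates yields the lemma.

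There is essentially no obstacle here; this is a one-paragraph trigonometric exercise, and the only minor point is verifying that the constant $3$ is actually achievable (it is not tight, but it suffices and matches the paper's usage). The whole argument can be written as two short displayed inequalities with a one-line justification each.
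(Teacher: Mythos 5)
Your proof is correct and complete. The paper itself offers no proof for this lemma (it simply labels it ``standard''), so there is nothing to compare against; the chord formula $d_{\mathbb{R}^{n+1}}(x,y)=2\sin(\theta/2)$ together with $\sin s\leq s$ and the concavity bound $\sin s\geq (2/\pi)s\geq s/3$ on $[0,\pi/2]$ is exactly the kind of one-paragraph verification the authors had in mind, and your observation that $3$ is not sharp (the optimal constant is $\pi/2$) but suffices is a fair remark.
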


Let us show that if an $(n+1)$-dimensional subspace $V\subset C^\infty(M)$ satisfies the $\delta$-pinching condition, then the image of the map $\widetilde{\Psi}\colon M\to \mathbb{R}^{n+1}$
 defined by $\widetilde{\Psi}:=(f_1,\ldots, f_{n+1})$
is close to $S^n$, where $\{f_1,\ldots,f_{n+1}\}$ is a orthonormal basis of $T_{(n,\delta)}(V)$ in $L^2$ sense.
The following lemma corresponds to \cite[Lemma 5.2]{Pe1}.
\begin{Lem}\label{32b}
Given an integer $n\geq 2$ and positive real numbers $K>0$ and $D>0$, there exists a positive constant $C(n,K,D)>0$ such that the following property holds.
Take a positive real number $0<\delta\leq \delta_{\Db}$.
Let $(M,g)$ be an $n$-dimensional closed Riemannian manifold with $\Ric\geq -Kg$ and $\diam(M)\leq D$.
Suppose that an $(n+1)$-subspace $V\subset C^\infty(M)$ satisfies the $\delta$-pinching condition.
Take elements $f_s\in T_{(n,\delta)}(V) $ $(s=1,\ldots,n+1)$ such that
\begin{equation*}
\|f_s\|_2^2=\frac{1}{n+1},\quad \int_M f_s f_t \,d\mu_g=0\quad \text{for $s,t=1,\ldots,n+1$ with $s\neq t$}.
\end{equation*}
Define $\widetilde{\Psi}\colon M\to \mathbb{R}^{n+1}$ by $\widetilde{\Psi}(x):=(f_1(x),\ldots,f_{n+1}(x))$.
Then, we have $\left|1-|\widetilde{\Psi}(x)|\right|\leq C \delta^\frac{1}{48n^2}$ for all $x\in M$.
\end{Lem}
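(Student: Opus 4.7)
The idea is to view $(E_{st})_{s,t}:=(\langle\nabla f_s,\nabla f_t\rangle+f_sf_t)_{s,t}$ as a symmetric $(n+1)\times(n+1)$ matrix-valued function on $M$, show that it is $L^2$-close to the identity matrix as a consequence of the pinching, and then exploit its rank-one-plus-PSD decomposition to obtain a pointwise one-sided bound $u-1\leq|E-I|$ for $u:=|\widetilde{\Psi}|^2$. Combined with the $L^2$-orthonormality constraint $\int u\,d\mu_g/\Vol(M)=1$ and a Lipschitz-plus-Bishop--Gromov reduction, this will yield the $L^\infty$ bound.

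\emph{Paragraph 1 (Setup).} By Lemma \ref{p2c}, each $f_s$ satisfies $\|f_s\|_\infty+\|\nabla f_s\|_\infty\leq C$, $\|\omega_s\|_2\leq C\sqrt{\delta}$ where $\omega_s:=\nabla^2 f_s+f_sg$, and $\|\Delta f_s-nf_s\|_2\leq C\delta$. Setting $u:=|\widetilde{\Psi}|^2=\sum_s f_s^2$, we have $\int_M u\,d\mu_g/\Vol(M)=\sum_s\|f_s\|_2^2=1$ and $|\nabla u|\leq 2\sum_s|f_s||\nabla f_s|\leq C$, so $u$ is $C$-Lipschitz.

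\emph{Paragraph 2 (Matrix close to the identity).} A direct computation using $\nabla_i\nabla_j f_s=-f_s g_{ij}+\omega_{s,ij}$ gives
\begin{equation*}
\nabla_j E_{st}=\omega_{s,ji}\nabla^i f_t+\omega_{t,ji}\nabla^i f_s,
\end{equation*}
so $\|\nabla E_{st}\|_2\leq C(\|\omega_s\|_2+\|\omega_t\|_2)\leq C\sqrt{\delta}$ by Paragraph 1. Since $\lambda_1(g)\geq C(n,K,D)>0$ by the Li--Yau estimate, the Poincaré inequality yields $\|E_{st}-\overline{E_{st}}\|_2\leq C\sqrt{\delta}$. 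On the other hand, integration by parts and $\|\Delta f_t-nf_t\|_2\leq C\delta$ together with $\int f_sf_t\,d\mu_g/\Vol(M)=\delta_{st}/(n+1)$ give $\overline{E_{st}}=\int f_s(\Delta f_t+f_t)\,d\mu_g/\Vol(M)=\delta_{st}+O(\delta)$. Therefore, for all $s,t$,
\begin{equation*}
\|E_{st}-\delta_{st}\|_2\leq C\sqrt{\delta}.
\end{equation*}

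\emph{Paragraph 3 (Rank-one inequality and $L^1$ bound).} At each $x\in M$ the matrix $(E_{st}(x))$ decomposes as $\widetilde{\Psi}(x)\widetilde{\Psi}(x)^T+(\langle\nabla f_s,\nabla f_t\rangle(x))_{s,t}$, and the second summand is positive semi-definite as a Gram matrix. If $\widetilde{\Psi}(x)\neq 0$, testing against $\hat{v}:=\widetilde{\Psi}(x)/|\widetilde{\Psi}(x)|$ gives $\hat{v}^T E(x)\hat{v}\geq|\widetilde{\Psi}(x)|^2=u(x)$, while $\hat{v}^T E(x)\hat{v}=1+\hat{v}^T(E(x)-I)\hat{v}\leq 1+\bigl(\sum_{s,t}(E_{st}(x)-\delta_{st})^2\bigr)^{1/2}$. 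Hence pointwise $(u(x)-1)_+\leq\bigl(\sum_{s,t}(E_{st}(x)-\delta_{st})^2\bigr)^{1/2}$. Squaring and integrating,
\begin{equation*}
\|(u-1)_+\|_2^2\leq\sum_{s,t}\|E_{st}-\delta_{st}\|_2^2\leq C\delta.
\end{equation*}
Since $\int(u-1)\,d\mu_g/\Vol(M)=0$ we have $\|(u-1)_-\|_1=\|(u-1)_+\|_1\leq\|(u-1)_+\|_2$, so $\|u-1\|_1\leq C\sqrt{\delta}$.

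\emph{Paragraph 4 ($L^\infty$ by Lipschitz + Bishop--Gromov; main obstacle).} Let $M_0:=\|u-1\|_\infty$, attained at some $x_0$. The Lipschitz bound gives $|u-1|\geq M_0/2$ on $B_{M_0/(2C)}(x_0)$, and Theorem \ref{p2h} yields $\Vol(B_{M_0/(2C)}(x_0))\geq C_{\Cm}(M_0/(2C))^n\Vol(M)$, so $\|u-1\|_1\geq C M_0^{n+1}$. Combined with the bound from Paragraph 3, $M_0^{n+1}\leq C\sqrt{\delta}$, whence $\|u-1\|_\infty\leq C\delta^{1/(2(n+1))}\leq C\delta^{1/(48n^2)}$. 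Finally, $\bigl||\widetilde{\Psi}(x)|-1\bigr|\cdot(|\widetilde{\Psi}(x)|+1)=|u(x)-1|$ and $|\widetilde{\Psi}(x)|+1\geq 1$, giving the claimed bound. The main obstacle is Paragraph 3: extracting the pointwise one-sided inequality $(u-1)_+\leq|E-I|$ from the rank-one-plus-PSD structure of $(E_{st}(x))$, and matching the Frobenius norm that controls $|E-I|$ at a point to the $L^2$-norm of the entries that the pinching controls.
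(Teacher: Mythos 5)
Your proof is correct, and it takes a genuinely different route from the paper. The paper first proves the one-sided bound $|\widetilde{\Psi}(a)|\leq 1+C\delta^{1/48n}$ by applying Proposition \ref{p2i} to the linear combination $\sum_s f_s(a)f_s$ (Claim \ref{c32a}), then computes $\Delta|\widetilde{\Psi}|^2$ and bounds its $L^n$-norm using the $L^\infty$/$L^2$ closeness of $f_s$ and $\nabla f_s$ to the model cosine functions from Proposition \ref{p2i}, and finally invokes a Moser-type elliptic estimate (Claim \ref{c32b}, from \cite[Theorem 7.1]{Pe1}) to upgrade to $L^\infty$. You instead introduce the matrix $E_{st}=\langle\nabla f_s,\nabla f_t\rangle+f_sf_t$, observe that the Hessian identity $\nabla_jE_{st}=\omega_{s,ji}\nabla^if_t+\omega_{t,ji}\nabla^if_s$ together with the $L^\infty$ gradient bound, the Poincar\'e inequality, and integration by parts force $\|E_{st}-\delta_{st}\|_2\leq C\sqrt\delta$, then exploit the rank-one-plus-PSD decomposition $E=\widetilde{\Psi}\widetilde{\Psi}^T+(\text{Gram})$ to get the pointwise one-sided bound $(u-1)_+\leq|E-I|_{\mathrm{Frob}}$, feed in the exact mean constraint $\frac{1}{\Vol M}\int(u-1)=0$ to control $\|u-1\|_1$, and close with the elementary Lipschitz-plus-Bishop--Gromov reduction from $L^1$ to $L^\infty$ rather than elliptic iteration. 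The comparison: your route bypasses Proposition \ref{p2i} entirely, relies only on Lemma \ref{p2c} plus basic spectral facts, and yields a quantitatively stronger exponent $\delta^{1/(2(n+1))}$ (which of course implies the stated $\delta^{1/(48n^2)}$ for $\delta\leq 1$ since $48n^2\geq 2(n+1)$); the paper's route reuses machinery (Proposition \ref{p2i}, the Moser estimate) that is needed elsewhere in the argument, so it fits more economically into the overall exposition even though it gives a weaker exponent here. Both the testing of $E$ against $\hat v=\widetilde{\Psi}(x)/|\widetilde{\Psi}(x)|$ (your Paragraph~3) and the paper's Claim \ref{c32a} are ``rank-one test'' arguments giving a one-sided bound on $|\widetilde{\Psi}|$, but yours realizes it through linear algebra and the $L^2$ matrix estimate, whereas the paper realizes it through the $L^\infty$ almost-cosine representation.
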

\begin{proof}
By Proposition \ref{p2i}, for each $s=1,\ldots,n+1$, there exists $p_s\in M$ such that
\begin{equation*}
\begin{split}
\|f_s-h_s\|_{\infty}&\leq C \delta^\frac{1}{48n},\\
\|\nabla f_s-\nabla h_s\|_2&\leq  C \delta^\frac{1}{48n},
\end{split}
\end{equation*}
where we put $h_s:=\cos d(p_s,\cdot)$.

We first show the following claim.
\begin{Clm}\label{c32a}
For all $a\in M$, we have $|\widetilde{\Psi}(a)|\leq 1+C\delta^\frac{1}{48n}$.
\end{Clm}
\begin{proof}[Proof of Claim \ref{c32a}]
Take arbitrary $a\in M$.
Put $F:=\sum_{s=1}^{n+1}f_s(a) f_s$.
Then, we have $\|F\|_2^2=\frac{|\widetilde{\Psi}(a)|^2}{n+1}$ and $F\in T_{(n,\delta)}(V)$.
Thus, by Proposition \ref{p2i}, there exists $p\in M$ such that
\begin{equation*}
\|F-|\widetilde{\Psi}(a)|\cos d(p,\cdot)\|_{\infty}\leq |\widetilde{\Psi}(a)|C\delta^\frac{1}{48n}.
\end{equation*}
Since $F(a)=|\widetilde{\Psi}(a)|^2$, we get  $|\widetilde{\Psi}(a)|\leq 1+C\delta^\frac{1}{48n}$.
\end{proof}
We need the following claim \cite[Theorem 7.1]{Pe1}. Note that our sign convention of the Laplacian is different from \cite{Pe3}.
\begin{Clm}\label{c32b}
For a smooth functions $u\in C^\infty(M)$ and a non-negative continuous function $f$ with $\Delta u\leq f$, we have
\begin{equation*}
u\leq C(n,K,D)\|f\|_n+\frac{1}{\Vol M}\int_M u\,d\mu_g.
\end{equation*}
\end{Clm}
We use this claim for $u=|\widetilde{\Psi}|^2$ and $u=-|\widetilde{\Psi}|^2$.
We compute $\Delta|\widetilde{\Psi}|^2$.
\begin{equation}\label{a3a}
\begin{split}
\Delta|\widetilde{\Psi}|^2=&\Delta \sum_{s=1}^{n+1} f_s^2\\
=&2\sum_{s=1}^{n+1} f_s \Delta f_s-2\sum_{s=1}^{n+1} |\nabla f_s|^2\\
=&2\sum_{s=1}^{n+1}  (\Delta f_s-n f_s)f_s+2(n+1)(|\widetilde{\Psi}|^2-1)-2\sum_{s=1}^{n+1} (|\nabla f_s|^2+f_s^2 -1).
\end{split}
\end{equation}
We estimate each component.

We first estimate $\|(\Delta f_s-n f_s)f_s\|_{\infty}$.
Put $f_s=\sum_{t=1}^\infty \alpha_{st}\phi_t$.
Then, for each $s$, we have
\begin{equation*}
\begin{split}
\|\Delta f_s-n f_s\|_{\infty}\leq& \sqrt{\delta}\sum_{t=1}^{\infty} |\alpha_{st}|\|\phi_t\|_{\infty}\leq C \sqrt{\delta}.
\end{split}
\end{equation*}
Thus, we get
\begin{equation}\label{a3b}
 \|(\Delta f_s-n f_s)f_s\|_{\infty}\leq C\sqrt{\delta}.
\end{equation}

We next estimate
$\||\widetilde{\Psi}|^2-1\|_n$.
For $x\in M$ with $|\widetilde{\Psi}(x)|^2-1< 0$, we have $||\widetilde{\Psi}(x)|^2-1|=1-|\widetilde{\Psi}(x)|^2$.
For $x\in M$ with $|\widetilde{\Psi}(x)|^2-1\geq 0$, by Claim \ref{c32a}, we have $||\widetilde{\Psi}(x)|^2-1|=|\widetilde{\Psi}(x)|^2-1 \leq C\delta^\frac{1}{48n}\leq 1-|\widetilde{\Psi}(x)|^2+C\delta^\frac{1}{48n}$.
For both cases, we have $||\widetilde{\Psi}(x)|^2-1|\leq 1-|\widetilde{\Psi}(x)|^2+C\delta^\frac{1}{48n}$. Combining this and $\|\widetilde{\Psi}\|_2=1$, we get
\begin{equation*}
\||\widetilde{\Psi}|^2-1\|_1 \leq C \delta^\frac{1}{48n}.
\end{equation*}
Since $\||\widetilde{\Psi}|^2-1\|_\infty \leq C$, we get
\begin{equation*}
\||\widetilde{\Psi}|^2-1\|_n^n\leq C\||\widetilde{\Psi}|^2-1\|_1\leq C \delta^\frac{1}{48n}.
\end{equation*}
Thus, we get 
\begin{equation}\label{a3c}
\||\widetilde{\Psi}|^2-1\|_n\leq C\delta^\frac{1}{48n^2}.
\end{equation}

Finally, we estimate $\||\nabla f_s|^2+f_s^2 -1\|_n$.
For almost all point in $M$, we have
\begin{equation*}
\begin{split}
||\nabla f_s|^2+f_s^2 -1|
\leq &||\nabla f_s|^2 - |\nabla h_s|^2| +|f_s^2-h_s^2|\\
\leq& C|\nabla f_s-\nabla h_s| +C|f_s-h_s|
\leq C.
\end{split}
\end{equation*}
Thus, we get 
\begin{equation*}
\begin{split}
\||\nabla f_s|^2+f_s^2 -1\|_n^n
\leq &C \||\nabla f_s|^2+f_s^2 -1\|_1
\leq C\delta^\frac{1}{48n}.
\end{split}
\end{equation*}
Therefore, we get
\begin{equation}\label{a3d}
\||\nabla f_s|^2+f_s^2 -1\|_n\leq C\delta^\frac{1}{48n^2}.
\end{equation}

By (\ref{a3a}), (\ref{a3b}), (\ref{a3c}) and (\ref{a3d}), we get
\begin{equation*}
\|\Delta|\widetilde{\Psi}|^2\|_n\leq C \delta^\frac{1}{48n^2}.
\end{equation*}
Thus, by Claim \ref{c32b}, we get $||\widetilde{\Psi}(x)|-1|\leq ||\widetilde{\Psi}(x)|^2-1|\leq C\delta^\frac{1}{48n^2}$.
\end{proof}

Let us show the normalized map $\Psi:=\widetilde{\Psi}/|\widetilde{\Psi}|\colon M\to S^n$ is a Hausdorff approximation map. 
We first get an approximation of the cosine of the distance, i.e.,
$|\cos d_{S^n}(\Psi(a_1),\Psi(a_2))-\cos d(a_1,a_2)|$ is small for all $a_1,a_2\in M$.
By showing that $N=1$ holds in Proposition \ref{p2k} under our pinching condition, we get $\diam (M)\leq \pi +2\delta^{\frac{1}{250n^2}}$ and an approximation of the distance.
The following lemma corresponds to \cite[Lemma 6.1]{Pe1}.

\begin{Lem}\label{p32c}
Given an integer $n\geq 2$ and positive real numbers $K>0$ and $D>0$, there exist  positive constants $0<\delta_{\Dp}(n,K,D)\leq \delta_{\Db}$ and $C(n,K,D)>0$ such that the following properties hold.
Take a positive real number $0<\delta\leq\delta_{\Dp}$.
Let $(M,g)$ be an $n$-dimensional closed Riemannian manifold with $\Ric\geq -Kg$ and $\diam(M)\leq D$.
Suppose that an $(n+1)$-subspace $V\subset C^\infty(M)$ satisfies the $\delta$-pinching condition.
Take elements $f_s\in T_{(n,\delta)}(V) $ $(s=1,\ldots,n+1)$ such that
\begin{equation*}
\|f_s\|_2^2=\frac{1}{n+1},\quad \int_M f_s f_t \,d\mu_g=0\quad \text{for $s,t=1,\ldots,n+1$ with $s\neq t$}.
\end{equation*}
Define $\widetilde{\Psi}\colon M\to \mathbb{R}^{n+1}$ by $\widetilde{\Psi}(x):=(f_1(x),\ldots,f_{n+1}(x))$.
\begin{itemize}
\item[(i)] For all $x\in M$, we have $|\widetilde{\Psi}(x)|>0$.
\item[(ii)] Define $\Psi \colon M\to S^n$ by $\Psi:=\widetilde{\Psi}/|\widetilde{\Psi}|$. Then, $\Psi$ is a $C\delta^{\frac{1}{250n^2}}$-Hausdorff approximation map. In particular, we get that $d_{GH}(M,S^n)\leq C\delta^{\frac{1}{250n^2}}$.
\end{itemize}
\end{Lem}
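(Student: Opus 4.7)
Part (i) follows immediately from Lemma \ref{32b}: choosing $\delta_{\Dp}$ small enough that $C(n,K,D)\delta^{1/48n^2} < 1/2$ gives $|\widetilde{\Psi}(x)| > 1/2 > 0$, so $\Psi = \widetilde{\Psi}/|\widetilde{\Psi}|$ is well-defined and maps $M$ into $S^n$. The pointwise bound $|\,|\widetilde{\Psi}| - 1| \leq C\delta^{1/48n^2}$ also lets me transfer $\mathbb{R}^{n+1}$-estimates between $\widetilde{\Psi}$ and $\Psi$ at the same order. For (ii) I must establish both an approximation of distances and density of the image $\Psi(M)$ in $S^n$.

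The central calculation is the inner-product identity $\langle \widetilde{\Psi}(a), \widetilde{\Psi}(b)\rangle_{\mathbb{R}^{n+1}} = F_a(b)$, where the auxiliary function $F_a := \sum_{s=1}^{n+1} f_s(a)\, f_s$ lies in $T_{(n,\delta)}(V)$ and satisfies $\sqrt{n+1}\,\|F_a\|_2 = |\widetilde{\Psi}(a)|$ by the orthonormality of $\{f_s\}$. Applying Proposition \ref{p2i} to $F_a$ produces $p_a \in M$ with $\|F_a - |\widetilde{\Psi}(a)|\cos d(p_a,\cdot)\|_\infty \leq C\delta^{1/48n}$. Evaluating at $a$ forces $\cos d(p_a, a) \geq 1 - C\delta^{1/48n^2}$, so by Lemma \ref{p2j} either $d(p_a, a) \leq C\delta^{1/96n^2}$ or $d(p_a, a) \geq 2\pi - C\delta^{1/96n^2}$; once the diameter bound $\diam(M) \leq \pi + 2\delta^{1/250n^2}$ of Proposition \ref{p4a} is in hand, the second branch is excluded and $d(p_a, a) \leq C\delta^{1/96n^2}$. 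Consequently $F_a(b) \approx |\widetilde{\Psi}(a)|\cos d(p_a, b) \approx \cos d(a, b)$, yielding $|\langle \Psi(a), \Psi(b)\rangle - \cos d(a, b)| \leq C\delta^{1/96n^2}$.

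The main obstacle is establishing $N(V) = 1$, which unlocks Proposition \ref{p4a}. Suppose for contradiction $N(V) \geq 2$. Then Proposition \ref{p2k} applied to some non-zero $f \in V$ supplies $a, b \in M$ with $d(a, b) \geq 2\pi - \delta^{1/200n^2}$, and since each $f_s \approx \cos d(p_s, \cdot)$ and $\cos(2\pi - t) = \cos t$, tracking coordinates yields $\widetilde{\Psi}(a) \approx \widetilde{\Psi}(b)$. I plan to extract the contradiction by combining the cosine identity with the density construction below: the relation $F_c(a) \approx F_c(b)$ for every $c \in M$ forces $\cos d(p_c, a) \approx \cos d(p_c, b)$; since the map $c \mapsto p_c$ hits essentially every point of $M$ modulo the $2\pi$-ambiguity (as $F_c(c) \approx 1$ puts $c$ at a near-maximum of $F_c$), this produces the rigidity $\cos d(q, a) \approx \cos d(q, b)$ for essentially all $q$. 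The full $(n+1)$-dimensional structure then enters via the gradient identity $\sum_s |\nabla f_s|^2 \approx n$ (Bochner plus orthonormality): combined with the Bishop-Gromov volume comparison for the disjoint balls $B_\epsilon(a)$ and $B_\epsilon(b)$, and a change-of-variables computation for $\widetilde{\Psi}$ which would otherwise register a degree-$2$ contribution incompatible with the $\Ric \geq -K$ volume bound on $M$, the argument is expected to close. This step is the most delicate.

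For density, given $y \in S^n$, the function $F_y := \sum_s y_s f_s \in T_{(n,\delta)}(V)$ has $\|F_y\|_2^2 = 1/(n+1)$ by orthonormality, and Proposition \ref{p2i} produces $q_y \in M$ with $F_y(q_y) \geq 1 - C\delta^{1/48n}$; that is, $\langle y, \widetilde{\Psi}(q_y)\rangle \geq 1 - C\delta^{1/48n}$, which together with $|\widetilde{\Psi}(q_y)| \approx 1$ gives $|y - \Psi(q_y)|_{\mathbb{R}^{n+1}} \leq C\delta^{1/96n^2}$ and hence $d_{S^n}(y, \Psi(q_y)) \leq C\delta^{1/96n^2}$ by Lemma \ref{32a}. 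To upgrade the cosine estimate to a distance estimate, Lemma \ref{p2j} applied with both $d(a, b)$ and $d_{S^n}(\Psi(a), \Psi(b))$ in an interval near $[0, \pi]$ (secured by the diameter bound) yields $|d(a, b) - d_{S^n}(\Psi(a), \Psi(b))| \leq C\delta^{1/250n^2}$, completing the verification that $\Psi$ is a $C\delta^{1/250n^2}$-Hausdorff approximation map.
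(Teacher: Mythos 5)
Your part (i), the density argument via $F_y=\sum_s y_sf_s$, and the cosine-of-distance estimate via $F_a=\sum_s f_s(a)f_s$ all match the paper's route (Claims \ref{eas1}--\ref{eas3}). The genuine gap is the step you yourself flag as "most delicate": establishing $N(V)=1$. Everything else in your argument leans on it (in particular you invoke the diameter bound of Proposition \ref{p4a}, which presupposes $N=1$, to kill the $d(p_a,a)\approx 2\pi$ branch, so the order of logic is circular until $N(V)=1$ is proved; the paper avoids this by using Proposition \ref{p2k}~(ii) and the membership $a\in B_{ij}$, which caps $d(x_i,a)$ by $\pi+2\delta^{1/250n^2}$ without any global diameter bound). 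Your sketch for $N(V)\geq2$ leading to a contradiction consists of three unproven assertions: (1) that two points $a,b$ with $d(a,b)\approx2\pi$ satisfy $\widetilde{\Psi}(a)\approx\widetilde{\Psi}(b)$ --- plausible, but each $f_s$ has its own decomposition into sets $B^s_{ij}$ with its own poles, and relating the positions of $a,b$ to all these poles simultaneously is precisely the work to be done; (2) that $c\mapsto p_c$ is essentially surjective "modulo the $2\pi$-ambiguity", which again cannot be controlled without the very diameter bound you do not yet have; and (3) the final "degree-$2$ contribution incompatible with the $\Ric\geq-K$ volume bound". Point (3) has no quantitative target to violate: at this stage there is no lower volume bound and no volume convergence (that only becomes available in Lemma \ref{p43a}, after $d_{GH}(M,S^n)$ is already known to be small), so a hypothetical degree-$2$ behaviour of $\widetilde{\Psi}$ contradicts nothing. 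As written, the heart of the lemma is asserted, not proved.

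For comparison, the paper's proof of $N(V)=1$ is purely metric and avoids any degree or volume input: assuming $N\geq2$, it places $N_4=\binom{N_3+1}{2}+1$ points $X_1,\dots,X_{N_4}$ on the equator $\{u_1=0\}\subset S^n$ with mutual $d_{S^n}$-distance $\geq2\pi/N_4$, lifts them by almost surjectivity (your Claim-\ref{eas1}-type step, which does not need $N=1$) to points $y_k\in M$, pigeonholes two of them into one set $B_{ij}$ of the decomposition attached to $f_1$, shows $d(x_i,y_k)\approx d(x_j,y_k)\approx\pi/2$, then uses Claim \ref{3c1} (the set $B_{ij}$ is not isolated) and Proposition \ref{p2k}~(v) to produce a point $y=\gamma_{x_j,x_k}(\pi/2)$ with $d(y_k,y)\approx\pi$ for both lifted points, whence $\Psi(y_1)\approx-\Psi(y)\approx\Psi(y_2)$, contradicting the $2\pi/N_4$ separation. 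If you want to salvage your outline, you need either to supply complete proofs of (1)--(3) above (with (3) replaced by an actual inequality that fails), or to adopt a pigeonhole-type argument of this kind.
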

\begin{proof}
By Lemma \ref{32b}, we get (i) immediately.

We prove (ii).
We first show that $\Psi$ is almost surjective.
\begin{Clm}\label{eas1}
For any $u\in S^n$, there exists $x\in M$ such that $d_{S^n}(\Psi(x),u)\leq C\delta^{\frac{1}{96n^2}}$.
\end{Clm}
\begin{proof}[Proof of Claim \ref{eas1}]
Take arbitrary $u\in S^n$, and set $f=\sum_{s=1}^{n+1}u_s f_s$.
Then, we have $f\in T_{(n,\delta)}(V)$ and $\|f\|_2^2=\frac{1}{n+1}$.
Take $p$ of Proposition \ref{p2i} for $f$.
Since $f=u\cdot \widetilde{\Psi}$, we get
$$
|u\cdot \widetilde{\Psi}(x)-\cos d(p,x)|\leq C\delta^{\frac{1}{48n}}
$$
for all $x\in M$
by Proposition \ref{p2i}.
Thus, by Lemma \ref{32b}, we get
$$
|\cos d_{S^n}(u,\Psi(x))-\cos d(p,x)|\leq C\delta^{\frac{1}{48n^2}}.
$$
Putting $x=p$, we get
$$
|\cos d_{S^n}(u,\Psi(p))-1|\leq C\delta^{\frac{1}{48n^2}}.
$$
This gives
$d_{S^n}(u,\Psi(p))\leq C\delta^{\frac{1}{96n^2}}$ by Lemma \ref{p2j}, and we get the claim.
\end{proof}
\begin{Clm}\label{eas2}
We have the following:
\begin{itemize}
\item[(a)] For any $a\in M$, there exists $q_a\in M$ such that
$\|\cos d_{S^n}(\Psi(a),\Psi(\cdot))-\cos d(q_a, \cdot)\|_\infty \leq C\delta^{\frac{1}{75n^2}}$ and $d(q_a,a)\leq C\delta^{\frac{1}{150n^2}}$ hold.
\item[(b)] If we have $N(V)=1$, for any $a\in M$, there exists $q_a\in M$ such that
$\|\cos d_{S^n}(\Psi(a),\Psi(\cdot))-\cos d(q_a, \cdot)\|_\infty\leq C\delta^{\frac{1}{48n^2}}$ and $d(q_a,a)\leq C\delta^{\frac{1}{96n^2}}$ hold.
\end{itemize}
\end{Clm}
\begin{proof}[Proof of Claim \ref{eas2}]
Take arbitrary $a\in M$, and set $f:=\sum_{s=1}^{n+1}\Psi_s(a) f_s$.
Then, $f\in T_{(n,\delta)}(V)$ and $\|f\|_2^2=\frac{1}{n+1}$.
Take $x_0(:=p),x_1,\ldots,x_N\in M$ of Proposition \ref{p2k} for $f$.
Then, there exist $i,j$ such that
$|d(x_i,x_j)-\pi|\leq C\delta^{\frac{1}{200n^2}}$ and $a\in B_{ij}$ by Proposition \ref{p2k} (iv).
By Proposition \ref{p2k} (iii), we have $(-1)^{m_i}=-(-1)^{m_j}$.
Thus, we can assume that
$$
\|f-\cos d(x_i,\cdot)\|_\infty\leq C\delta^{\frac{1}{75n^2}}
$$
by Proposition \ref{p2k} (ii).
Since we have $f=\Psi(a)\cdot \widetilde{\Psi}$ by the definition of $f$, we get
$$
|\cos d_{S^n}(\Psi(a),\Psi(x))-\cos d(x_i,x)|\leq C\delta^{\frac{1}{75n^2}}.
$$
by Lemma \ref{32b}.
This gives
$|\cos d(x_i,a)|\leq C\delta^{\frac{1}{75n^2}}$.
Since we have $d(x_i,a)\leq \pi +2 \delta^{\frac{1}{250n^2}}$ by the definition of $B_{ij}$,
we get $d(x_i,a)\leq C\delta^{\frac{1}{150n^2}}$ by Lemma \ref{p2j}.
Putting $q_a:=x_i$, we get (a).

If $N=1$, then we can take $q_a:=p$, and so we get (b) by Proposition \ref{p2i} similarly to (a).
\end{proof}
\begin{Clm}\label{eas3}
We have the following:
\begin{itemize}
\item[(a)] We have $|\cos d_{S^n}(\Psi(a_1),\Psi(a_2))-\cos d(a_1,a_2)|\leq C\delta^{\frac{1}{150n^2}}$ for all $a_1,a_2\in M$.
\item[(b)] If $N(V)=1$ holds, we have $|\cos d_{S^n}(\Psi(a_1),\Psi(a_2))-\cos d(a_1,a_2)|\leq C\delta^{\frac{1}{96n^2}}$ for all $a_1,a_2\in M$.
\end{itemize}
\end{Clm}
\begin{proof}[Proof of Claim \ref{eas3}]
By Claim \ref{eas2} (a), we get
\begin{align*}
&|\cos d_{S^n}(\Psi(a_1),\Psi(a_2))-\cos d(a_1,a_2)|\\
\leq& |\cos d_{S^n}(\Psi(a_1),\Psi(a_2))-\cos d(q_{a_1},a_2)|+d(a_1, q_{a_1})
\leq C\delta^{\frac{1}{150n^2}}.
\end{align*}
Thus, we get (a).

Similarly, we get (b) by Claim \ref{eas2} (b).
\end{proof}

Let us assume that $N(V)=1$ holds, and complete the proof in this case.
Take arbitrary $a_1,a_2\in M$.
Then, we have $d(a_1,a_2)\leq \pi +2\delta^\frac{1}{250n^2}$ by Proposition \ref{p4a}.
If $d(a_1,a_2)\leq \pi$, we get $| d_{S^n}(\Psi(a_1),\Psi(a_2))-d(a_1,a_2)|\leq C \delta^\frac{1}{192n^2}$
by Claim \ref{eas3} (b) and Lemma \ref{p2j}.
If $\pi\leq d(a_1,a_2)\leq \pi +2\delta^\frac{1}{250n^2}$, we get
$|\cos d_{S^n}(\Psi(a_1),\Psi(a_2))+1|\leq C \delta^\frac{1}{125n^2}$.
Thus, by Lemma \ref{p2j}, we get
$|d_{S^n}(\Psi(a_1),\Psi(a_2))- d(a_1,a_2)|\leq |d_{S^n}(\Psi(a_1),\Psi(a_2))- \pi|+2\delta^\frac{1}{250n^2}\leq C\delta^\frac{1}{250n^2}$.
Therefore, we get $|d_{S^n}(\Psi(a_1),\Psi(a_2))- d(a_1,a_2)|\leq C\delta^\frac{1}{250n^2}$ for all $a_1,a_2\in M$.
Combining this and Claim \ref{eas1}, we get that $\Psi$ is a $C\delta^{\frac{1}{250n^2}}$-Hausdorff approximation map, and so we get (ii).

Let us show that we have $N(V)=1$.
In the following, we use the notation of Proposition \ref{p2k} for $f_1$.
We suppose that $N\geq 2$ holds and show a contradiction.
Note that $N\leq N_3(n,K,D)$.
Set 
\begin{equation*}
N_4(n,K,D):=\begin{pmatrix}
N_3+1\\
2
\end{pmatrix}+1
=\frac{N_3(N_3+1)}{2}+1.
\end{equation*}
We can take points 
$$X_1,\ldots,X_{N_4}\in \{(0,u_2.\ldots,u_{n+1}): u_2^2+\ldots +u_{n+1}^2=1\}\subset S^n$$
such that
$d_{S^n}(X_i,X_j)\geq 2\pi/N_4$ for all $i,j=1,\ldots,N_4$ with $i\neq j$.
By Claim \ref{eas1}, there exists a point $y_i\in M$ with
$d_{S^n}(X_i,\Psi(y_i))\leq C\delta^{\frac{1}{96n^2}}$ for each $i$.
By Proposition \ref{p2k} (iv) and the definition of $N_4$,
there exist $i,j=1,\ldots, N$ such that $|d(x_i,x_j)-\pi|\leq C\delta^{\frac{1}{200n^2}}$ and
$\Card \{k\in\{1,\ldots,N_4\}: y_k\in B_{i j}\}\geq 2$.
Fix such $i$ and $j$.
Without loss of generality, we can assume $y_1,y_2\in B_{i j}$.
We define a unit vector $e_1$ in $\mathbb{R}^{n+1}$ by $e_1:=(1,0,\ldots,0)$.
Since $f_1=e_1\cdot \widetilde{\Psi}$,
we have
$$
|\cos d(e_1,\Psi(x_i))-(-1)^{m_i}|\leq C\delta^{\frac{1}{75n^2}}
$$
by Proposition \ref{p2k} (ii) and Lemma \ref{32b}.
Thus, we get
$d_{S^n}(\Psi(x_i),(-1)^{m_i}e_1)\leq C\delta^{\frac{1}{150n^2}}$
by Lemma \ref{p2j}.
Therefore, for each $k=1,2$, we have
\begin{align*}
&|\cos d_{S^n}(\Psi(x_i),\Psi(y_k))|\\
\leq& |\cos d_{S^n}((-1)^{m_i}e_1,X_k)|+d_{S^n}(\Psi(x_i),(-1)^{m_i}e_1)+d_{S^n}(X_k,\Psi(y_k))\leq C\delta^{\frac{1}{150n^2}}.
\end{align*}
By Claim \ref{eas3}, we get
$|\cos d(x_i,y_k)|\leq C\delta^{\frac{1}{150n^2}}$.
Since we have $d(x_i,y_k)\leq \pi+2\delta^{\frac{1}{250n^2}}$ by the definition of $B_{ij}$, we get
$|d(x_i,y_k)-\frac{\pi}{2}|\leq C\delta^{\frac{1}{300n^2}}$ by Lemma \ref{p2j}.
Similarly, we have
$|d(x_j,y_k)-\frac{\pi}{2}|\leq C\delta^{\frac{1}{300n^2}}$.

The following claim asserts that $B_{ij}$ is not isolated from the others.
\begin{Clm}\label{3c1}
There exists $k=0,\ldots,N$ such that we have $k\neq i,j$ and either $\left|d(x_i,x_k)-\pi\right|\leq \delta^\frac{1}{200n^2}$ or $\left|d(x_j,x_k)-\pi\right|\leq \delta^\frac{1}{200n^2}$.
\end{Clm}
\begin{proof}[Proof of Claim \ref{3c1}]
Suppose that for all $k=0,\ldots,N$ with $k\neq i,j$, we have $\left|d(x_i,x_k)-\pi\right|> \delta^\frac{1}{200n^2}$ and $\left|d(x_j,x_k)-\pi\right|>\delta^\frac{1}{200n^2}$.
For any $x\in M$ with $d(x,x_i)<4 \delta^\frac{1}{250n^2}$, we have $x\in B_{ij}$ by Claim \ref{c10} and the assumption.
Similarly, for $x\in M$ with $d(x,x_j)<4 \delta^\frac{1}{250n^2}$, we have $x\in B_{ij}$.
Thus, for any $x\in B_{ij}\setminus \widetilde{B}_{ij}$, we get $B_{\delta^\frac{1}{250n^2}}(x)\subset B_{ij}$.
Since $\widetilde{B}_{ij}$ is an open subset of $M$, we get that $B_{ij}$ is an open subset of $M$.
On the other hand,  $B_{ij}$ is a closed subset of $M$.
Thus, $B_{ij}=M$.
Since $x_k\notin B_{ij}$  for all $k=0,\ldots,N$ with $k\neq i,j$ by Proposition \ref{p2k} (ii) and the definition of $B_{ij}$, this contradicts to $N\geq 2$.
\end{proof}
Without loss of generality, we can assume that there exists $k=0,\ldots,N$ such that we have $k\neq i,j$ and $\left|d(x_j,x_k)-\pi\right|\leq \delta^\frac{1}{200n^2}$.
Put
$y:=\gamma_{x_j,x_k}\left(\frac{\pi}{2}\right)$.
Then, we have
$|d(y_k,y)-\pi|\leq C\delta^{\frac{1}{300n^2}}$ by Proposition \ref{p2k} (v).
Thus, by Claim \ref{eas3}, we get
$|\cos d_{S^n}(\Psi(y_k),\Psi(y))+1|\leq C\delta^{\frac{1}{300n^2}}$,
and so $d_{S^n}(\Psi(y_k),-\Psi(y))\leq C\delta^{\frac{1}{600n^2}}$ for each $k=1,2$ by Lemma \ref{p2j}.
Therefore, we get $d_{S^n}(\Psi(y_1),\Psi(y_2)))\leq C\delta^{\frac{1}{600n^2}}$.
Thus,
$$d_{S^n}(X_1,X_2)\leq d_{S^n}(\Psi(y_1),\Psi(y_2)))+C\delta^{\frac{1}{96n^2}}\leq C\delta^{\frac{1}{600n^2}}.
$$
If we take $\delta$ sufficiently small, this contradicts to $d_{S^n}(X_1,X_2)\geq 2\pi/N_4$.
Thus, we get $N=1$ and the proposition.
\end{proof}

\section{Proof of main theorems}

The purpose of this section is to complete the proofs of main theorems except for Main Theorem 3.
In subsection 4.1, we show the equivalence of our pinching condition in the previous sections and the pinching condition on the eigenvalues of a certain Laplacian $\bar{\Delta}^E$ on the vector bundle $E$ (see Definition \ref{def1} below).
As a consequence, in subsection 4.2, for the orientable case, we show that if there exists an $n$-dimensional subspace that satisfies our pinching condition, then there exists such an $(n+1)$-dimensional subspace, where $n$ denotes the dimension of the manifold.
Combining this with the result in the previous section, we show Main Theorem 1 in subsection 4.3.
In subsection 4.4, we see that the weaker condition ``$\|\nabla^2 f+\frac{\Delta f}{n} g\|_2$ is small'' implies our pinching condition ``$\|\nabla^2 f+ f g\|_2$ is small'' under the Ricci curvature bound and the assumption that the scalar curvature is equal to the constant $n (n-1)$, and give the proof of Main Theorem 2.
\subsection{Fiber argument}
In this subsection, we explain why Main Theorem 1 is equivalent to Main Theorem 1'.
More precisely, we show that for any closed Riemannian manifold $(M,g)$ with $\Ric\geq -Kg$ and $\diam(M)\leq D$ ($K,D>0$),
the following two conditions are equivalent:
\begin{itemize}
\item $\lambda_k (\bar{\Delta}^E)$ (see Definition \ref{def1} below) is small,
\item There exists a $k$-dimensional subspace $V\subset C^\infty(M)$ such that $\|\nabla^2 f+fg\|_2$ is small for all $f\in V$.
\end{itemize}
\begin{Def}\label{def1}
Take a Riemannian manifold $(M,g)$.
Consider the rank $1$ trivial bundle $M\times \mathbb{R}$ and its section $e\colon M\to M\times \mathbb{R}$ with $e(x)=(x,1)$.
We write $\mathbb{R} e=M\times \mathbb{R}$ and define $E=E_M:=TM \oplus \mathbb{R}e$.
We consider the product metric $\langle \cdot,\cdot \rangle_E$ on $E$:
\begin{equation*}
\langle X+f e, Y+h e\rangle_E:=g(X,Y)+f h,
\end{equation*}
for any $X,Y\in \Gamma(TM)$ and $f,h\in C^\infty(M)$.
We consider the following connection $\nabla^E$ on $E$:
\begin{equation*}
\nabla^E_Z (X+f e):=\nabla_Z X+ f Z+(Zf-g(Z,X))e,
\end{equation*}
for any  $X,Z \in \Gamma(TM)$ and $f\in C^\infty(M)$.
We define $(\nabla^E)^\ast\colon \Gamma(T^\ast M\otimes E)\to \Gamma(E)$ by
\begin{equation*}
(\nabla^E)^\ast (\alpha\otimes S):=-\tr_{T^\ast M} \nabla^{T^\ast M\otimes E} (\alpha\otimes S)=-\sum_{i=1}^n \left(\nabla_{e_i}\alpha\right)(e_i)S-\sum_{i=1}^n \alpha(e_i)\nabla^E_{e_i}S,
\end{equation*}
for any $\alpha\otimes S\in \Gamma(T^\ast M\otimes E)$, where $n=\dim M$ and $\{e_1,\ldots,e_n\}$ is an orthonormal basis of $TM$.
Define $\bar{\Delta}^E \colon \Gamma(E)\to \Gamma(E)$ by $\bar{\Delta}^E:=(\nabla^E)^\ast \nabla^E$.
For a function $f\in C^\infty(M)$, we define $S_f:=\nabla f+fe\in \Gamma(E)$.
When $M$ is closed, we consider the eigenvalues of $\bar{\Delta}^E$:
\begin{equation*}
0\leq \lambda_1(\bar{\Delta}^E)\leq\lambda_2(\bar{\Delta}^E)\leq \cdots \to \infty.
\end{equation*}
\end{Def}
Note that if $(M,g)$ is a closed Riemannian manifold,
then we have
$$
\int_M \langle\nabla^E S,T\rangle_{T^\ast M\otimes E}\,d\mu_g=\int_M \langle S,(\nabla^E)^\ast T\rangle_E\,d\mu_g
$$
for any $S\in\Gamma(E)$ and $T\in\Gamma(T^\ast M\otimes E)$ by the divergence theorem.

By a straightforward calculus, we have the following lemma.
\begin{Lem}\label{p41a}
For any $n$-dimensional Riemannian manifold $(M,g)$ and any section $X+f e\in \Gamma(E)$, we have
\begin{equation*}
\bar{\Delta}^E (X+fe)=\nabla^\ast \nabla X-2\nabla f +X+(\Delta f -2\nabla^\ast X +nf)e.
\end{equation*}
In particular, for any function $f\in C^\infty(M)$, we have
\begin{equation*}
\bar{\Delta} S_f=\nabla^\ast \nabla \nabla f-\nabla f +(n f-\Delta f)e.
\end{equation*}
\end{Lem}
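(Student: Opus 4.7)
The proof is a direct calculation in a local orthonormal frame. I would fix an arbitrary point $p \in M$ and choose a local orthonormal frame $\{e_i\}_{i=1}^n$ of $TM$ that is parallel at $p$, i.e., $\nabla_{e_i} e_j(p) = 0$ for all $i,j$. Since the formula to be proved is tensorial, verifying it at an arbitrary $p$ suffices, and this choice of frame kills all the Christoffel-type terms.

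First, by the defining formula for $\nabla^E$, we have
$$\nabla^E_{e_i}(X + fe) = \nabla_{e_i} X + fe_i + \bigl(e_i f - g(e_i, X)\bigr)e.$$
Writing this as $Y_i + h_i e$ with $Y_i := \nabla_{e_i} X + f e_i$ and $h_i := e_i f - g(e_i, X)$, I would apply $\nabla^E_{e_i}$ once more to get
$$\nabla^E_{e_i}(Y_i + h_i e) = \nabla_{e_i} Y_i + h_i e_i + \bigl(e_i h_i - g(e_i, Y_i)\bigr)e,$$
and expand each term at $p$, using $\nabla_{e_i}e_j(p) = 0$ to drop the terms involving $\nabla_{e_i}e_i$. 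In particular, $\nabla_{e_i}Y_i|_p = \nabla_{e_i}\nabla_{e_i}X + (e_i f)e_i$, $e_i h_i|_p = e_i e_i f - g(e_i, \nabla_{e_i}X)$, and $g(e_i, Y_i) = g(e_i, \nabla_{e_i}X) + f$.

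Next, at $p$ in the chosen frame, the operator $(\nabla^E)^\ast$ applied to $\nabla^E S = \sum_i e^i \otimes \nabla^E_{e_i}S$ reduces, by the defining formula together with $\nabla_{e_j}e^i(p) = 0$, to
$$\bar{\Delta}^E S|_p = -\sum_{i=1}^n \nabla^E_{e_i}\bigl(\nabla^E_{e_i}S\bigr)\big|_p.$$
Summing the expressions from the previous step and identifying pieces via the standard formulas $-\sum_i\nabla_{e_i}\nabla_{e_i}X|_p = \nabla^\ast\nabla X|_p$, $\sum_i (e_i f)e_i = \nabla f$, $\sum_i g(e_i,X)e_i = X$, $-\sum_i e_i e_i f|_p = \Delta f|_p$, and $\sum_i g(e_i, \nabla_{e_i}X)|_p = -\nabla^\ast X|_p$ (the last under the standard identification of $X$ with the one-form $g(X,\cdot)$), and then collecting the $TM$-component and the $\mathbb{R}e$-component separately, I would obtain the claimed identity
$$\bar{\Delta}^E(X+fe) = \nabla^\ast\nabla X - 2\nabla f + X + \bigl(\Delta f - 2\nabla^\ast X + nf\bigr)e.$$

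For the second assertion, I would substitute $X = \nabla f$ into this formula. Since $\nabla^\ast X = \nabla^\ast \nabla f = \Delta f$, the $TM$-component becomes $\nabla^\ast\nabla\nabla f - 2\nabla f + \nabla f = \nabla^\ast\nabla\nabla f - \nabla f$, and the $\mathbb{R}e$-component becomes $\Delta f - 2\Delta f + nf = nf - \Delta f$, which is exactly the stated expression for $\bar{\Delta}^E S_f$. The only genuine obstacle is bookkeeping — carefully tracking the sign conventions for $\nabla^\ast$ on both vector and covector fields and for $\Delta = -\sum_i e_ie_i$ at $p$; there is no analytic content.
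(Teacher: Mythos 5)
Your computation is correct, and it matches the paper's intent exactly: the paper simply asserts the lemma ``by a straightforward calculus'' and omits the proof, and your normal-frame-at-a-point calculation (apply $\nabla^E$ twice, contract, identify $\nabla^\ast\nabla X$, $\nabla f$, $X$, $\Delta f$, and $-\nabla^\ast X = \operatorname{div}X$) is precisely that straightforward calculus, with all sign conventions handled correctly.
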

We show that if there exists a $k$-dimensional subspace $V\subset C^\infty(M)$ such that $\|\nabla^2 f+fg\|_2$ is small for all $f\in V$, then $\lambda_k (\bar{\Delta}^E)$ is small.
\begin{Lem}\label{p41b}
Take integers $n\geq 2$ and $k\geq 1$ and a positive real number $0<\delta \leq\frac{1}{2\sqrt{n}}$.
Let $(M,g)$ be an $n$-dimensional closed Riemannian manifold.
If there exists a $k$-dimensional subspace $V\subset C^\infty(M)$ that satisfies the $\delta$-pinching condition, then for all $f\in V$, we have $\|\nabla^E S_f\|_2^2\leq \delta^2\|S_f\|_2^2$.
In particular, we have $\lambda_k(\bar{\Delta}^E)\leq \delta^2$.
\end{Lem}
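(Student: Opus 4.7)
The plan is to compute $\nabla^E S_f$ directly from the definition of $\nabla^E$ and observe that the construction is rigged so that $|\nabla^E S_f|^2$ agrees pointwise with $|\nabla^2 f + fg|^2$. For any vector field $Y$,
\begin{equation*}
\nabla^E_Y S_f = \nabla^E_Y(\nabla f + f e) = \nabla_Y \nabla f + f Y + \bigl(Y f - g(Y,\nabla f)\bigr) e = \nabla_Y \nabla f + f Y,
\end{equation*}
since $Y f = g(Y, \nabla f)$ cancels the $e$-component exactly. Thus $\nabla^E S_f$ is purely tangential, and corresponds under the identification $T^\ast M \otimes TM \simeq$ (symmetric part of) $T^\ast M \otimes T^\ast M$ to the tensor $\nabla^2 f + f g$.

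Taking pointwise norms with respect to the product metric on $E$ and any local orthonormal frame $\{e_i\}$,
\begin{equation*}
|\nabla^E S_f|^2 = \sum_{i=1}^n |\nabla_{e_i} \nabla f + f e_i|^2 = |\nabla^2 f + f g|^2.
\end{equation*}
Integrating and using the $\delta$-pinching hypothesis on $f \in V$, together with the trivial bound $\|f\|_2^2 \leq \|\nabla f\|_2^2 + \|f\|_2^2 = \|S_f\|_2^2$, we obtain
\begin{equation*}
\|\nabla^E S_f\|_2^2 = \|\nabla^2 f + f g\|_2^2 \leq \delta^2 \|f\|_2^2 \leq \delta^2 \|S_f\|_2^2,
\end{equation*}
which is the first assertion.

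For the eigenvalue estimate, note that the linear map $f \mapsto S_f$ from $V$ to $\Gamma(E)$ is injective, since the $e$-component of $S_f$ is $f$ itself. Hence $W := \{S_f : f \in V\} \subset \Gamma(E)$ is a $k$-dimensional subspace. The Rayleigh--Ritz min-max characterization
\begin{equation*}
\lambda_k(\bar{\Delta}^E) = \min_{\dim W' = k}\ \max_{0 \neq S \in W'} \frac{\|\nabla^E S\|_2^2}{\|S\|_2^2},
\end{equation*}
applied with $W' = W$, then gives $\lambda_k(\bar{\Delta}^E) \leq \delta^2$.

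There is no real obstacle: the entire content of the lemma is the identity $|\nabla^E S_f| = |\nabla^2 f + f g|$ coming from the tailored cancellation in the definition of $\nabla^E$, after which the pinching hypothesis and min-max close the argument. The technical bound $\delta \leq \tfrac{1}{2\sqrt n}$ is not actually needed for this statement and is presumably carried along only for compatibility with the other lemmas of this subsection (e.g.\ Lemma \ref{p2a}).
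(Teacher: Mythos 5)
Your proof is correct. The paper also reduces to the identity $\|\nabla^E S_f\|_2^2 = \|\nabla^2 f + fg\|_2^2$, but gets there by integration by parts: it writes $\|\nabla^E S_f\|_2^2 = \frac{1}{\Vol(M)}\int_M \langle\bar{\Delta}^E S_f, S_f\rangle_E\,d\mu_g$, plugs in the formula $\bar{\Delta}^E S_f = \nabla^\ast\nabla\nabla f - \nabla f + (nf - \Delta f)e$ from Lemma \ref{p41a}, and expands. You instead compute $\nabla^E_Y S_f = \nabla_Y\nabla f + fY$ pointwise and observe that the $e$-component cancels, giving the pointwise identity $|\nabla^E S_f|^2 = |\nabla^2 f + fg|^2$. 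Your route is more direct and in fact proves something slightly stronger (a pointwise rather than only an $L^2$ equality), and it does not rely on the computation of $\bar{\Delta}^E$. You are also right that the hypothesis $\delta \leq \tfrac{1}{2\sqrt n}$ is not used anywhere in the argument; the paper's proof does not use it either, so it is indeed just inherited from the surrounding lemmas.
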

\begin{proof}
Take arbitrary $f\in V$.
We have
\begin{equation*}
\begin{split}
\|\nabla^E S_f\|_2^2=\frac{1}{\Vol(M)}\int_M \langle \bar{\Delta}^E S_f, S_f\rangle_E\, d\mu_g
=&\frac{1}{\Vol(M)}\int_M |\nabla^2 f|^2-2f\Delta f +n f^2 d\mu_g\\
=&\|\nabla^2 f +f g\|_2^2\leq \delta^2 \|f\|_2^2\leq  \delta^2 \|S_f\|_2^2.
\end{split}
\end{equation*}
By the Rayleigh principle
$$\lambda_k(\bar{\Delta}^E)=\inf\left\{\sup_{S\in W\setminus\{0\}} \frac{\|\nabla^E S\|_2^2}{\|S\|_2^2}: \text{$W$ is a $k$-dimensional subspace of $\Gamma(E)$}\right\},$$
we get the lemma.
\end{proof}
The converse is also true under the assumptions on the Ricci curvature and the diameter.
The following proposition corresponds to Proposition 10 in \cite{Au}, which asserts that if $\lambda_k(\bar{\Delta}^E)$ is close to 0, then $\lambda_k(g)$ is close to $n$ under the assumption $\Ric \geq (n-1)g$ for closed $n$-dimensional Riemannian manifolds.
In that case, it is enough to consider the first derivative of functions to get the estimate of $\lambda_k(g)$ by the Rayleigh principle, and the functions defined by $\langle S,e\rangle_E$ have the required property, where $S$ is one of the eigensections of $\bar{\Delta}^E$.
However, we need to get the information about the Hessian of functions for our pinching condition, and the functions $\langle S,e\rangle_E$ are not enough for our purpose.
To overcome this point, we use the Hodge decomposition.
\begin{Prop}\label{p41c}
Given integers $n\geq 2$ and $k\geq 1$ and positive real numbers $K>0$ and $D>0$, there exist positive constants $ C(n,K,D)>0$ and $\epsilon_1(n,K,D)>0$ such that the following properties hold.
Take a positive real number $0<\epsilon\leq \epsilon_1$.
Let $(M,g)$ be an $n$-dimensional closed Riemannian manifold with $\Ric\geq -Kg$ and $\diam(M)\leq D$.
If $\lambda_k(\bar{\Delta}^E)\leq \epsilon$, then there exists a $k$-dimensional subspace $V\subset C^\infty(M)$ that satisfies the $C\epsilon^\frac{1}{2}$-pinching condition.
\end{Prop}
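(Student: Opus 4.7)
Let $S_1,\ldots,S_k\in\Gamma(E)$ be $L^2$-orthonormal eigensections of $\bar\Delta^E$ with eigenvalues $\lambda_1,\ldots,\lambda_k\le\epsilon$, and decompose each as $S_i=X_i+f_ie$ with $X_i\in\Gamma(TM)$ and $f_i\in C^\infty(M)$. A direct computation from the definition of $\nabla^E$ yields the pointwise orthogonal decomposition
\begin{equation*}
|\nabla^E S_i|^2 \;=\; |\nabla X_i+f_ig|^2+|df_i-X_i^\flat|^2,
\end{equation*}
so the Rayleigh bound $\|\nabla^E S_i\|_2^2\le\epsilon$ gives $\|\nabla X_i+f_ig\|_2\le\sqrt\epsilon$ and $\|\xi_i\|_2\le\sqrt\epsilon$, where $\xi_i:=X_i^\flat-df_i$. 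Taking the trace of $\nabla X_i+f_ig$ against $g$ produces $\|\mathrm{div}(X_i)+nf_i\|_2\le\sqrt n\sqrt\epsilon$, and isolating the antisymmetric part produces $\|dX_i^\flat\|_2\le C\sqrt\epsilon$.

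The plan is to show that $V:=\Span\{f_1,\ldots,f_k\}$ works. Since $X_i^\flat=df_i+\xi_i$, one has $\nabla X_i^\flat=\nabla^2 f_i+\nabla\xi_i$ as $(0,2)$-tensors, so the goal reduces to establishing $\|\nabla\xi_i\|_2\le C\sqrt\epsilon$. For this I exploit the \emph{pointwise} scalar part of the eigenvalue equation $\bar\Delta^E S_i=\lambda_i S_i$ given by Lemma~\ref{p41a}, namely
\begin{equation*}
\Delta f_i + 2\,\mathrm{div}(X_i) + n f_i \;=\; \lambda_i f_i.
\end{equation*}
Combined with the trace bound this yields $\|\Delta f_i-nf_i\|_2\le C\sqrt\epsilon$, hence
\begin{equation*}
d^*\xi_i \;=\; -\mathrm{div}(X_i)-\Delta f_i \;=\; -\tfrac12(\Delta f_i-nf_i)-\tfrac{\lambda_i}{2}f_i
\end{equation*}
satisfies $\|d^*\xi_i\|_2\le C\sqrt\epsilon$, while $d\xi_i=dX_i^\flat$ is already controlled. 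The Bochner formula on $1$-forms together with $\Ric\ge-Kg$ then gives
\begin{equation*}
\|\nabla\xi_i\|_2^2 \;\le\; \|d\xi_i\|_2^2+\|d^*\xi_i\|_2^2+K\|\xi_i\|_2^2 \;\le\; C\epsilon,
\end{equation*}
and the key estimate $\|\nabla^2 f_i+f_ig\|_2\le C\sqrt\epsilon$ follows by the triangle inequality.

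To conclude, I need the uniform lower bound $\|f_i\|_2\ge c(n,K,D)>0$, so that the estimate above becomes a genuine pinching condition. From $\|\nabla f_i\|_2^2\le\|f_i\|_2\|\Delta f_i\|_2\le\|f_i\|_2(n\|f_i\|_2+C\sqrt\epsilon)$, the bound $\|X_i\|_2\le\|\nabla f_i\|_2+\|\xi_i\|_2$, and the normalization $\|S_i\|_2^2=\|X_i\|_2^2+\|f_i\|_2^2=1$, one readily extracts such a lower bound once $\epsilon$ is small. The same chain of estimates applied to an arbitrary linear combination $S=\sum_ia_iS_i$---for which $\|\bar\Delta^E S\|_2\le\epsilon\|S\|_2$ still holds by orthonormality of the eigensections---shows that the scalar component of $S$ has $L^2$-norm $\ge c\|S\|_2$. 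This simultaneously gives the linear independence of $f_1,\ldots,f_k$ and the $C\sqrt\epsilon$-pinching for every $u\in V$.

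The main obstacle is the Bochner step: $\|d^*\xi_i\|_2$ cannot be bounded from $\|\nabla^E S_i\|_2$ alone, because a uniform spectral gap for the Hodge Laplacian on coexact $1$-forms is not available under the sole hypotheses $\Ric\ge-Kg$ and $\diam\le D$. The point is to circumvent this by using the pointwise eigenvalue equation, which is why the hypothesis $\lambda_k(\bar\Delta^E)\le\epsilon$ (an actual eigenvalue statement) is used rather than only a Rayleigh bound on some $k$-dimensional subspace of $\Gamma(E)$.
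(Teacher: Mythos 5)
Your argument is correct, and it takes a genuinely different route from the paper's. The paper never uses the scalar components $f_i$ as the pinched functions: it Hodge-decomposes the $1$-form part $\alpha_i=h_i+dF_i+d^\ast\beta_i$, shows via the Bochner formula that the harmonic and coexact pieces are of size $\sqrt{\epsilon}$ in $W^{1,2}$, and takes $V$ to be spanned by the potentials $F_i$; comparing $F_i$ with $f_i$ and $\alpha_i$ and proving injectivity of $S\mapsto F_i$ then requires the Li--Yau bound $\lambda_1(g)\geq C(n,K,D)>0$, which is the only place the diameter enters. You instead take $V=\Span\{f_1,\ldots,f_k\}$ and control the discrepancy $\xi_i=X_i^\flat-df_i$ in $W^{1,2}$: $\|\xi_i\|_2$ and $\|d\xi_i\|_2=\|dX_i^\flat\|_2$ come from $\|\nabla^E S_i\|_2^2\leq\epsilon$, $\|d^\ast\xi_i\|_2$ comes from the $e$-component of $\bar{\Delta}^E S_i=\lambda_i S_i$ together with the trace bound, and the Bochner formula on $1$-forms gives $\|\nabla\xi_i\|_2\leq C\sqrt{\epsilon}$; the triangle inequality plus your normalization argument (which also forces $\|f\|_2\geq c(n)\|S\|_2$ on the span of the eigensections, hence injectivity of $S\mapsto f$ and $\dim V=k$) finishes the proof. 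This buys something: no Hodge decomposition, no Li--Yau input, and constants depending only on $(n,K)$ rather than $(n,K,D)$; and since the $\delta$-pinching condition asks nothing structural of $V$, your subspace plugs into the later sections exactly as the paper's $\widetilde{P}(\mathcal{E})$ does (the equivariance under deck transformations used in Theorem \ref{p43b} holds equally for the scalar-part map). Two small caveats, neither affecting correctness: the factor-of-two conventions relating $\|dX_i^\flat\|_2$ to the antisymmetric part of $\nabla X_i^\flat+f_i g$ and appearing in the identity $\|\nabla\xi\|_2^2=\|d\xi\|_2^2+\|d^\ast\xi\|_2^2-\frac{1}{\Vol(M)}\int_M\Ric(\xi^\sharp,\xi^\sharp)\,d\mu_g$ only change $C$; and your closing claim that the pointwise eigenvalue equation is essential is overstated, since all you actually use is $\|\bar{\Delta}^E S\|_2\leq\epsilon\|S\|_2$ on the span of the eigensections (as you yourself note for linear combinations), and a Rayleigh bound on any $k$-dimensional subspace gives $\lambda_k(\bar{\Delta}^E)\leq\epsilon$ by min--max anyway, so one could always pass back to eigensections.
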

\begin{proof}
Let $S_1,\ldots,S_k$ be the eigensections corresponding to $\lambda_1(\bar{\Delta}^E),\ldots,\lambda_k(\bar{\Delta}^E)$.
Define $P\colon \Gamma(E)\to C^\infty(M)$ as $P(S):=\langle S,e\rangle_E$ and $\mathcal{E}:=\Span\{S_1,\ldots,S_k\}\subset \Gamma(E)$.

Note that $\bar{\Delta}^E e=ne$.
If we assume $\epsilon<n$,
we have $\int_M P(S)\,d\mu_g=\int_M\langle S,e\rangle_E\,d\mu_g=0$ for all $S\in \mathcal{E}$.
\begin{Clm}\label{c41a}
If $\epsilon<1$, then $P|_{\mathcal{E}}$ is injective.
\end{Clm}
\begin{proof}[Proof of Claim \ref{c41a}]
Take arbitrary $S\in \ker P\cap \mathcal{E}$.
Then, there exists $X\in \Gamma(TM)$ with $S=X$.
Since $\bar{\Delta}^E S=\nabla^\ast \nabla X+X-2\nabla^\ast X e$,
we get
\begin{equation*}
\epsilon \int_M|X|^2\,d\mu_g \geq \int_M\langle \bar{\Delta}^E S,S\rangle_E\,d\mu_g\geq \int_M |X|^2\,d\mu_g. 
\end{equation*}
Thus, $X=0$, and so $S=0$.
\end{proof}

Define $\widetilde{P}\colon \Gamma(E)\to C^\infty(M)$ as follows.
For arbitrary $S=\alpha + f e \in \Gamma(E)$, there exist a harmonic $1$-form $h$, a function $F\in C^\infty(M)$ with $\int_M F\,d\mu_g=0$ and a closed $2$-form $\beta$ such that $\alpha=h+d F+d^\ast \beta$ by the Hodge decomposition, where we regard $\alpha$ as a $1$-form on $M$.
Then, we define $\widetilde{P}(S):=F$. Note that $F$ is uniquely determined.
Let us show that elements in $\widetilde{P}(F)$ satisfy our pinching condition.
\begin{Clm}\label{c41b}
There exists a positive constant $\epsilon_1(n,K,D)>0$ such that
if $\epsilon\leq \epsilon_1$, then $\widetilde{P}|_\mathcal{E}$ is injective and $\|\nabla^2 F+ Fg\|_2\leq C\epsilon^\frac{1}{2}\|F\|_2$ for all $F\in \widetilde{P}(\mathcal{E})$.
\end{Clm}

\begin{proof}[Proof of Claim \ref{c41b}]

Take arbitrary $S=\alpha +f e\in \mathcal{E}$.
Note that we have 
$$
\int_M f \,d\mu_g=\int_M \langle S,e\rangle_E \,d\mu_g=0.
$$
We decompose $\alpha$ as $\alpha=h+d F+d^\ast \beta$.
Since $|S|^2=|\alpha|^2+f^2$ and $|\nabla^E S|^2=|\nabla \alpha +f g|^2+|d f-\alpha|^2$,
we get
\begin{align}
\label{411a} \|\nabla \alpha +f g\|_2^2&\leq \epsilon (\|\alpha\|_2^2+\|f\|_2^2),\\
\label{411b} \|d f-\alpha\|_2^2&\leq \epsilon (\|\alpha\|_2^2+\|f\|_2^2).
\end{align}
By the orthogonal decomposition of $T^\ast M\otimes T^\ast M$, we have 
\begin{equation}\label{411c}
\|\nabla \alpha +f g\|_2^2\geq \frac{1}{2}\|d\alpha\|_2^2+n\|f-\frac{1}{n}d^\ast \alpha \|_2^2.
\end{equation}
Since $h$, $d(f-F)$ and $d^\ast \beta$ are orthogonal to each other in $L^2$ sense, we have
\begin{equation}\label{411d}
\|d f-\alpha\|_2^2= \|h\|_2^2+\|\nabla (f-F)\|_2^2+\|d^\ast \beta\|_2^2.
\end{equation}
By the Bochner formula, (\ref{411a}), (\ref{411b}), (\ref{411c}) and (\ref{411d}), we get
\begin{align}
\label{411e} \|\nabla h\|_2^2=&-\frac{1}{\Vol(M)}\int_M \Ric(h,h)\,d\mu_g\leq K\|h\|_2^2\leq K\epsilon (\|\alpha\|_2^2+\|f\|_2^2),\\
\label{411f} \|\nabla d^\ast \beta\|_2^2\leq&\|d \alpha \|_2^2+K\|d^\ast \beta\|_2^2\leq (K+2)\epsilon(\|\alpha\|_2^2+\|f\|_2^2).
\end{align}
By the Li-Yau estimate $\lambda_1\geq C(n,K,D)>0$ \cite[p.116]{SY}, (\ref{411b}) and (\ref{411d}), we get
\begin{equation}\label{411g}
\|f-F\|_2^2\leq C\epsilon(\|\alpha\|_2^2+\|f\|_2^2).
\end{equation}
Thus, by (\ref{411a}), (\ref{411e}), (\ref{411f}) and (\ref{411g}), we get
\begin{equation}\label{411h}
\begin{split}
\|\nabla^2 F+F g\|_2&\leq \|\nabla^2 F-\nabla \alpha \|_2+\|\nabla \alpha+f g\|_2+\sqrt{n}\|f-F\|_2\\
&\leq \|\nabla h\|_2+\|\nabla d^\ast \beta \|_2+\|\nabla \alpha+f g\|_2+\sqrt{n}\|f-F\|_2\\
&\leq C \epsilon^\frac{1}{2}(\|\alpha\|_2+\|f\|_2).
\end{split}
\end{equation}
By (\ref{411b}), we have
$\|\alpha \|_2\leq \|\nabla f\|_2+\epsilon^\frac{1}{2}(\|\alpha\|_2+\|f\|_2)$. If $\epsilon \leq \frac{1}{4}$, we get
\begin{equation}\label{411i}
\|\alpha\|_2\leq 2(\|\nabla f\|_2+\|f\|_2) \leq C\|\nabla f\|_2
\end{equation}
by the Li-Yau estimate.
By (\ref{411b}), (\ref{411d}) and (\ref{411i}), we get
\begin{equation*}
\|\nabla f\|_2\leq \|\nabla F\|_2+\epsilon^\frac{1}{2}(\|\alpha\|_2+\|f\|_2)
\leq \|\nabla F\|_2+C \epsilon^\frac{1}{2} \|\nabla f\|_2.
\end{equation*}
Thus, taking $\epsilon_1$ sufficiently small, we get $\|\nabla f\|_2\leq 2\|\nabla F\|_2$.
Therefore, by Claim \ref{c41a}, we get $\widetilde{P}|_{\mathcal{E}}$ is injective and, by (\ref{411i}) and the Li-Yau estimate,
\begin{equation*}
\|\alpha \|_2+\|f\|_2\leq C \|\nabla F\|_2\leq C \|\Delta F\|_2.
\end{equation*}
Thus, by (\ref{411h}), we get
$\|\nabla^2 F+F g\|_2\leq C\epsilon^\frac{1}{2}\|\Delta F\|_2$,
and we get the claim by Lemma \ref{add}.
\end{proof}
By Claim \ref{c41b}, the subspace defined by $V:=\widetilde{P}(\mathcal{E})$ satisfies the desired property.
\end{proof}
Lemma \ref{p41b} and Proposition \ref{p41c} show the equivalence of Main Theorem 1 and Main Theorem 1'.


\subsection{Almost parallel section}
In this subsection, we show that if $\lambda_n(\bar{\Delta}^E)$ is close to $0$, then $\lambda_{n+1}(\bar{\Delta}^E)$ is close to $0$ for the orientable case under the assumption $\Ric\geq -Kg$ and $\diam(M)\leq D$.

The following proposition, which asserts that the eigensections $S_1,\ldots,S_k$ are almost orthogonal to each other for most points in $M$ if $\lambda_k(\bar{\Delta}^E)$ is small, corresponds to \cite[Lemma 11]{Au}.
\begin{Lem}\label{p42a}
Given an integer $n\geq 2$ and positive real numbers $K>0$ and $D>0$, there exist positive constants $ C(n,K,D)>0$ and $0<\delta_{\Dq}(n,K,D)\leq\delta_{\Dp}$ such that the following properties hold.
Take a positive real number $0<\delta\leq\delta_{\Dq}$ and an integer $1\leq k\leq n+2$.
Let $(M,g)$ be an $n$-dimensional closed Riemannian manifold with $\Ric\geq -Kg$ and $\diam(M)\leq D$.
Suppose that a $k$-dimensional subspace $V\subset C^\infty(M)$ satisfies the $\delta$-pinching condition.
Take elements $f_i \in T_{(n,\delta)}(V)$ $(i=1,\ldots,k)$ such that
\begin{equation*}
\|f_i\|_2^2=\frac{1}{n+1},\quad \int_M f_i f_j \,d\mu_g=0\quad \text{for $i,j=1,\ldots,k$ with $i\neq j$}.
\end{equation*}
Define $S_i:=S_{f_i}$ and
\begin{equation*}
\begin{split}
G=G(f_1,\ldots,f_k):=\Big\{x\in M: & ||S_i(x)|^2-1|\leq\delta^\frac{1}{96n}\text{ for all $i=1,\ldots,k$, and}\\
&\left|\frac{1}{2}|S_i(x)+S_j(x)|^2-1\right|\leq \delta^\frac{1}{96n},\\
&\left|\frac{1}{2}|S_i(x)-S_j(x)|^2-1\right|\leq \delta^\frac{1}{96n}\text{ for all $i\neq j$}
\Big\}.
\end{split}
\end{equation*}
Then, we have the following properties.
\begin{itemize}
\item[(i)] For all $f\in T_{(n,\delta)}(V)$ with $\|f\|_2^2=\frac{1}{n+1}$, we have $\|S_f\|_{\infty}\leq C$.
\item[(ii)] We have $\Vol(M\setminus G)\leq C\delta^\frac{1}{96n}\Vol(M)$.
\item[(iii)] For all $x\in G$ and $i\neq j$, we have $\left|\langle S_i(x),S_j(x)\rangle_E\right|\leq\delta^\frac{1}{96n}$.
\item[(iv)] For all $x\in G$, $S_1(x),\ldots,S_k(x)$ are linearly independent.
In particular, $k\leq n+1$.
\end{itemize}
\end{Lem}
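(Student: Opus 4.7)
The plan is to reduce each of the four items to the $L^\infty$ and $L^2$ approximation of the projected functions $f_i$ by cosine functions (Proposition \ref{p2i}), combined with the identity $|S_f|^2 = |\nabla f|^2 + f^2$.

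First, for (i) I would apply Lemma \ref{p2c}(ii) to $f \in T_{(n,\delta)}(V)$ with $\|f\|_2^2 = \tfrac{1}{n+1}$. Decomposing $f = \sum_{n-\sqrt{\delta} \le \lambda_j \le n+\sqrt{\delta}} \alpha_j \phi_j$, the bounds $\|\phi_j\|_\infty, \|\nabla \phi_j\|_\infty \le C(n,K,D)$ together with the cardinality bound $N_1(n,K,D)$ yield $\|S_f\|_\infty \le C$ by Cauchy–Schwarz.

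For (ii), the key observation is that $|S_{\cos d(p,\cdot)}|^2 = \sin^2 d(p,\cdot) + \cos^2 d(p,\cdot) = 1$ almost everywhere. Applying Proposition \ref{p2i} to $f_i$ produces $h_i = \cos d(p_i,\cdot)$ with $\|f_i - h_i\|_\infty \le C\delta^{1/(48n)}$ and $\|\nabla f_i - \nabla h_i\|_2 \le C\delta^{1/(48n)}$. Using (i) to control $\|f_i + h_i\|_\infty$ and $\|\nabla f_i + \nabla h_i\|_2$, one gets $\||S_i|^2 - 1\|_1 \le C\delta^{1/(48n)}$. To handle the $S_i \pm S_j$ terms, I would apply exactly the same argument to the normalized function $g_{ij}^\pm := (f_i \pm f_j)/\sqrt{2} \in T_{(n,\delta)}(V)$, noting $\|g_{ij}^\pm\|_2^2 = \tfrac{1}{n+1}$ (by orthogonality of $f_i, f_j$) and $|S_{g_{ij}^\pm}|^2 = \tfrac{1}{2}|S_i \pm S_j|^2$. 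The resulting $L^1$ bounds combined with Chebyshev's inequality give $\Vol(M \setminus G) \le C\delta^{1/(96n)}\Vol(M)$ after choosing the threshold $\delta^{1/(96n)}$.

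For (iii), I use the polarization identity
\[
2\langle S_i(x), S_j(x)\rangle_E = \left(\tfrac{1}{2}|S_i(x)+S_j(x)|^2 - 1\right) - \left(\tfrac{1}{2}|S_i(x)-S_j(x)|^2 - 1\right),
\]
so on $G$ each term on the right is bounded by $\delta^{1/(96n)}$, giving the claim directly (up to a harmless constant). Finally, for (iv), at any $x \in G$ the Gram matrix $\bigl(\langle S_i(x), S_j(x)\rangle_E\bigr)_{i,j}$ is within $C\delta^{1/(96n)}$ of the $k \times k$ identity in operator norm, hence invertible for $\delta \le \delta_{\Dq}$ small enough; therefore $S_1(x), \ldots, S_k(x)$ are linearly independent in the rank-$(n+1)$ fiber $E_x$, forcing $k \le n+1$.

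The only mild subtlety is bookkeeping the constants so that the single exponent $\delta^{1/(96n)}$ controls all three defects in the definition of $G$ simultaneously; since Proposition \ref{p2i} delivers $\delta^{1/(48n)}$ in $L^\infty$ and $L^2$ for each normalized function, the Chebyshev step with threshold $\delta^{1/(96n)}$ is exactly tuned to give the stated volume bound.
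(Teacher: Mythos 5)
Your proposal is correct and follows essentially the same route as the paper: (i) from the eigenfunction $L^\infty$/gradient bounds of Lemma \ref{p2c}, (ii) from Proposition \ref{p2i} applied to $f_i$ and to the normalized combinations $(f_i\pm f_j)/\sqrt{2}$ together with $|S_f|^2=f^2+|\nabla f|^2$ and a Chebyshev-type volume estimate, (iii) by polarization, and (iv) by a diagonal-dominance argument on the Gram matrix (the paper phrases it via the maximal coefficient, which is the same estimate). No gaps.
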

\begin{proof}
Take arbitrary $f\in T_{(n,\delta)}(V)$ with $\|f\|_2^2=\frac{1}{n+1}$ and take $p\in M$ of Proposition \ref{p2i}.
Then, we get (i) by Lemma \ref{p2c}.
Put $h:=\cos d(p,\cdot)$.
For almost all point in $M$, we have
\begin{equation*}
|f^2+|\nabla f|^2-1|\leq |f+h||f-h|+(|\nabla f|+|\nabla h|)|\nabla f-\nabla h|\leq C\left(|f-h|+|\nabla f-\nabla h|\right).
\end{equation*}
By Proposition \ref{p2i} and $|S_f|^2=f^2+|\nabla f|^2$, we get 
\begin{equation}\label{422a}
\||S_f|^2-1\|_1\leq C\delta^\frac{1}{48n}.
\end{equation}

Since $f_1,\ldots,f_k$ is orthogonal to each other, we get
\begin{equation*}
\begin{split}
\left\|\frac{1}{\sqrt{2}}(f_i+f_j)\right\|_2^2=\frac{1}{n+1},&\quad \left\|\frac{1}{\sqrt{2}}(f_i-f_j)\right\|_2^2=\frac{1}{n+1},\\
\frac{1}{\sqrt{2}}(S_i+S_j)=S_{\frac{1}{\sqrt{2}}(f_i+f_j)},&\quad \frac{1}{\sqrt{2}}(S_i-S_j)=S_{\frac{1}{\sqrt{2}}(f_i-f_j)},
\end{split}
\end{equation*}
for all $i\neq j$.
Thus, by (\ref{422a}), we get
\begin{equation*}
\begin{split}
 \||S_i|^2-1\|_1\leq&C\delta^\frac{1}{48n},\\
\left\|\frac{1}{2}|S_i+S_j|^2-1\right\|_1\leq &C\delta^\frac{1}{48n},\\
\left\|\frac{1}{2}|S_i-S_j|^2-1\right\|_1\leq &C\delta^\frac{1}{48n},
\end{split}
\end{equation*}
for all $i\neq j$.
Therefore, we get
\begin{align*}
&\Vol\left(\left\{x\in M: \left||S_i(x)|^2-1\right|>\delta^{\frac{1}{96n}}\right\}\right)\\
\leq& \delta^{-\frac{1}{96n}}\int_M \left||S_i|^2-1\right|\,d\mu_g
= \delta^{-\frac{1}{96n}}\||S_i|^2-1\|_1\Vol(M)\leq C\delta^{\frac{1}{96n}}\Vol(M)
\end{align*}
for all $i$.
Similarly, we have
\begin{align*}
\Vol\left(\left\{x\in M: \left|\frac{1}{2}|S_i(x)+S_j(x)|^2-1\right|>\delta^{\frac{1}{96n}}\right\}\right)&\leq C\delta^{\frac{1}{96n}}\Vol(M),\\
\Vol\left(\left\{x\in M: \left|\frac{1}{2}|S_i(x)-S_j(x)|^2-1\right|>\delta^{\frac{1}{96n}}\right\}\right)&\leq C\delta^{\frac{1}{96n}}\Vol(M)
\end{align*}
for all $i\neq j$.
Thus, we get (ii).

For all $x\in G$ and $i,j$ with $i\neq j$, we have
\begin{equation*}
\left|\langle S_i(x),S_j(x)\rangle_E\right|
=\frac{1}{4}\left||S_i(x)+S_j(x)|^2-|S_i(x)-S_j(x)|^2\right|\leq\delta^\frac{1}{96n}.
\end{equation*}
Thus, we get (iii).

Finally, we prove (iv).
Take arbitrary $x\in G$ and $a_1,\ldots, a_k\in \mathbb{R}$ with
$a_1 S_1(x)+\cdots+a_k S_k(x)=0$.
Take $i$ with $|a_i|=\max\{|a_1|,\ldots,|a_k|\}$.
Since we have $\langle a_1 S_1(x)+\cdots+a_k S_k(x),S_i(x)\rangle_E=0$, we get
\begin{equation*}
\begin{split}
0\geq |a_i||S_i(x)|^2-\sum_{i\neq j}\left|a_j\langle S_i(x), S_j(x)\rangle_E\right|
\geq&  |a_i|(1-\delta^\frac{1}{96n})-\sum_{i\neq j}|a_i|\delta^\frac{1}{96n}\\
\geq& |a_i|(1-(n+2)\delta^\frac{1}{96n}).
\end{split}
\end{equation*}
Thus, $|a_i|=0$, and so $a_1,\ldots,a_k=0$.
This implies the linearly independence of $S_1(x),\ldots, S_k(x)$.
\end{proof}
By Proposition \ref{p41c} and Lemma \ref{p42a}, we get the following corollary immediately.
\begin{Cor}\label{p42b}
Given an integer $n\geq 2$ and positive real numbers $K>0$ and $D>0$,
there exists $\epsilon_2(n,K,D)>0$ such that 
if $(M,g)$ is an $n$-dimensional closed Riemannian manifold with $\Ric\geq -Kg$ and $\diam(M)\leq D$, then $\lambda_{n+2}(\bar{\Delta}^E)\geq \epsilon_2$.
\end{Cor}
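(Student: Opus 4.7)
The plan is to argue by contradiction, leveraging the fact that each fiber of $E = TM \oplus \mathbb{R}e$ has dimension only $n+1$, which conflicts with the pointwise linear independence produced by Lemma~\ref{p42a} if an $(n+2)$-dimensional pinched subspace were available.

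First, I would suppose toward contradiction that $\lambda_{n+2}(\bar{\Delta}^E) \leq \epsilon$ for a small parameter $\epsilon > 0$ to be chosen. By Proposition~\ref{p41c}, provided $\epsilon \leq \epsilon_1(n,K,D)$, there exists an $(n+2)$-dimensional subspace $V \subset C^\infty(M)$ satisfying the $\delta$-pinching condition with $\delta = C(n,K,D)\epsilon^{1/2}$. Choose $\epsilon$ so small that $\delta \leq \delta_{\Dq}(n,K,D)$; in particular $\delta \leq \frac{1}{8n^3}$, so by the remark following Definition~\ref{dpin}, the projection $T_{(n,\delta)}$ is injective on $V$, and thus $T_{(n,\delta)}(V)$ is $(n+2)$-dimensional.

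Next, I would choose an $L^2$-orthogonal basis $f_1,\ldots,f_{n+2}$ of $T_{(n,\delta)}(V)$ normalized so that $\|f_i\|_2^2 = \tfrac{1}{n+1}$ for each $i$. Lemma~\ref{p42a} then applies with $k = n+2$, yielding the subset $G = G(f_1,\ldots,f_{n+2}) \subset M$ with
\begin{equation*}
\Vol(M \setminus G) \leq C(n,K,D)\delta^{1/(96n)}\Vol(M).
\end{equation*}
Shrinking $\epsilon$ further so that $C\delta^{1/(96n)} < 1$, we have $G \neq \emptyset$; pick any $x \in G$.

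Finally, Lemma~\ref{p42a}(iv) asserts that $S_1(x),\ldots,S_{n+2}(x)$ are linearly independent vectors in the fiber $E_x = T_x M \oplus \mathbb{R}e_x$. But $\dim E_x = n+1$, which is a direct contradiction. Consequently the lower bound $\lambda_{n+2}(\bar{\Delta}^E) \geq \epsilon_2$ must hold for some $\epsilon_2(n,K,D) > 0$ determined by the thresholds $\epsilon_1$ and $\delta_{\Dq}$ above. There is no real obstacle here beyond bookkeeping the dependence of $\epsilon_2$ on the constants supplied by Proposition~\ref{p41c} and Lemma~\ref{p42a}; the whole corollary is a clean dimensional count once those two ingredients are in place.
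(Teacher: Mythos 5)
Your proposal is correct and follows exactly the paper's intended argument: Proposition \ref{p41c} converts the eigenvalue bound into an $(n+2)$-dimensional $\delta$-pinched subspace, and Lemma \ref{p42a} (parts (ii) and (iv)) then gives $n+2$ pointwise linearly independent sections of the rank-$(n+1)$ bundle $E$ on the nonempty set $G$, which is impossible. The paper states the corollary as an immediate consequence of these two results, so your write-up just makes the same deduction explicit.
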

The following corollary asserts that if $\lambda_n(\bar{\Delta}^E)$ is small, then $\lambda_{n+1}(\bar{\Delta}^E)$ is small for the orientable case.
\begin{Cor}\label{p42c}
Given an integer $n\geq 2$ and positive real numbers $K>0$ and $D>0$, there exist positive constants $ C(n,K,D)>0$ and $0<\epsilon_3(n,K,D)\leq \epsilon_1$ such that the following properties hold.
Take a positive real number $0<\epsilon\leq \epsilon_3$.
Let $(M,g)$ be an oriented $n$-dimensional closed Riemannian manifold with $\Ric\geq -Kg$ and $\diam(M)\leq D$.
If $\lambda_n(\bar{\Delta}^E)\leq \epsilon$, then $\lambda_{n+1}(\bar{\Delta}^E)\leq C \epsilon^\frac{1}{2}$.
\end{Cor}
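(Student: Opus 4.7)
The proof exploits the fibrewise isomorphism $\bigwedge^{n} E\cong E$ afforded by the Hodge star, which needs the orientability assumption to globalize.

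First, I would apply Proposition \ref{p41c} to the hypothesis $\lambda_n(\bar\Delta^E)\le\epsilon$ to extract an $n$-dimensional subspace $V\subset C^\infty(M)$ satisfying the $\delta'$-pinching condition with $\delta'=C\epsilon^{1/2}$. Pick $f_1,\ldots,f_n\in T_{(n,\delta')}(V)$ that are $L^2$-orthogonal with $\|f_i\|_2^2=1/(n+1)$, and form the sections $S_i:=S_{f_i}=\nabla f_i+f_ie\in\Gamma(E)$. A direct computation (already used in the proof of Lemma \ref{p41b}) shows that $\nabla^E S_{f_i}$ has vanishing $e$-component and $TM$-component equal to $\nabla^2 f_i+f_ig$, so by Lemma \ref{p2c} applied to $T_{(n,\delta')}(V)$ we obtain
\[
\|\nabla^E S_i\|_2\le C\sqrt{\delta'}\,\|f_i\|_2\le C\epsilon^{1/4}.
\]
Lemma \ref{p42a}(i) supplies the pointwise bound $\|S_i\|_\infty\le C$, and Lemma \ref{p42a}(ii)--(iii) provides a set $G\subset M$ with $\Vol(M\setminus G)/\Vol(M)\le C\epsilon^{1/(192n)}$ on which the $S_i$ are almost pointwise orthonormal.

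Next I would use orientability to fix a global orientation on $E=TM\oplus\mathbb Re$, so that the Hodge star $*\colon\bigwedge^{n}E\to E$ is globally defined. Since $\nabla^E$ is compatible with the product metric (a straightforward check from the definition) and preserves the chosen orientation, the volume element of $E$ is $\nabla^E$-parallel and $*$ commutes with $\nabla^E$. Set $\omega:=S_1\wedge\cdots\wedge S_n$ and $S_{n+1}:=*\omega\in\Gamma(E)$. The Leibniz rule together with the bound $\|S_j\|_\infty\le C$ gives the pointwise estimate
\[
|\nabla^E\omega|\le\sum_{i=1}^{n}|S_1|\cdots|\nabla^E S_i|\cdots|S_n|\le C\sum_{i=1}^n|\nabla^E S_i|,
\]
and hence $\|\nabla^E S_{n+1}\|_2=\|\nabla^E\omega\|_2\le C\epsilon^{1/4}$.

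The crucial feature of $S_{n+1}$ is that it is \emph{pointwise} orthogonal to every $S_i$ with $i\le n$: by the defining identity of the Hodge star,
\[
\langle S_{n+1},S_i\rangle_E\cdot\mathrm{vol}_E=S_1\wedge\cdots\wedge S_n\wedge S_i=0
\]
thanks to the repeated factor, so $\int_M\langle S_i,S_{n+1}\rangle_E\,d\mu_g=0$ identically. Moreover $|S_{n+1}|^2=|\omega|^2=\det(\langle S_i,S_j\rangle_E)_{i,j=1}^{n}$, which is close to $1$ on $G$ by Lemma \ref{p42a}. Combined with the smallness of $\Vol(M\setminus G)$, this yields $\|S_{n+1}\|_2\ge c(n,K,D)>0$.

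Finally, the $(n+1)$-dimensional space $W:=\Span\{S_1,\ldots,S_{n+1}\}$ has $L^2$-Gram matrix close to a positive-definite diagonal matrix: the diagonal entries are each $\approx 1$, the off-diagonal entries among $S_1,\ldots,S_n$ are controlled by $|\Delta f_i-nf_i|_2\le C\epsilon^{1/2}$, and the last row and column vanish exactly by the argument above. The Rayleigh principle then gives
\[
\lambda_{n+1}(\bar\Delta^E)\le\sup_{S\in W\setminus\{0\}}\frac{\|\nabla^E S\|_2^2}{\|S\|_2^2}\le C\epsilon^{1/2}.
\]
The main obstacle is the careful verification that $\nabla^E$ preserves both the metric and the orientation on $E$, which is what permits transporting the smallness of $\|\nabla^E S_i\|_2$ through the wedge product and Hodge star to $S_{n+1}$; the secondary subtlety is tracking the $\sqrt{\cdot}$-loss produced by Lemma \ref{p2c} when one projects $V$ onto $T_{(n,\delta')}(V)$, which is responsible for the final exponent $\epsilon^{1/2}$ rather than $\epsilon$.
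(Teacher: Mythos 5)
Your proposal is correct and follows essentially the same route as the paper: pass from the $\lambda_n$ bound to an $n$-dimensional $\delta'$-pinched subspace via Proposition \ref{p41c}, project onto $T_{(n,\delta')}(V)$, form $S_1,\dots,S_n$, and use orientability to realize $\bigwedge^n E\cong E$ (Hodge star) so that $S_{n+1}:=S_1\wedge\cdots\wedge S_n$ becomes a genuine section of $E$ with small $\nabla^E$ by the Leibniz rule, then lower-bound $\|S_{n+1}\|_2$ via Lemma \ref{p42a} and invoke Rayleigh. The only cosmetic difference is that the paper concludes by combining Lemma \ref{p41b} for the span of the $S_{f_i}$ with the separate estimate on $S_{n+1}$, whereas you verify directly that the $L^2$-Gram matrix of $\{S_1,\dots,S_{n+1}\}$ is nearly diagonal and positive, but this is the same estimate packaged differently.
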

\begin{proof}
By Proposition \ref{p41c}, there exists an $n$-dimensional subspace $V\subset C^\infty(M)$ that satisfies th $C\epsilon^\frac{1}{2}$-pinching condition.
Put $\delta:=C\epsilon^\frac{1}{2}$.
We can assume $\delta\leq\delta_{\Dq}$.
Take elements $f_i \in T_{(n,\delta)}(V)$ $(i=1,\ldots,n)$ such that
\begin{equation*}
\|f_i\|_2^2=\frac{1}{n+1},\quad \int_M f_i f_j \,d\mu_g=0\quad \text{for $i,j=1,\ldots,n$ with $i\neq j$}.
\end{equation*}
Define $S_i:=S_{f_i}$.

By the orientation, we can identify $E$ and $\bigwedge^n E$.
Note that this identification preserves the metric and the connection.
Define $S_{n+1}:=S_1\wedge\cdots\wedge S_n\in \Gamma(E)$.
By Lemma \ref{p2c}, $T_{(n,\delta)}(V)$ satisfies the $C\sqrt{\delta}$-pinching condition, and so, by Lemma \ref{p41b}, we have
\begin{equation}\label{422d}
\begin{split}
\|\nabla^E S_{n+1}\|_2^2
=&\frac{1}{\Vol(M)}\int_M \left|\sum_{i=1}^n S_1\wedge\cdots\wedge \nabla^E S_i\wedge\cdots \wedge S_n \right|^2\,d\mu_g
\leq C\delta.
\end{split}
\end{equation}
By Lemma \ref{p42a}, for all $x\in G=G(f_1,\ldots,f_n)$, we have
\begin{equation*}
\left||S_{n+1}(x)|^2-1\right|
\leq C\delta^\frac{1}{96n}.
\end{equation*}
Thus, we get
\begin{equation}\label{422e}
\|S_{n+1}\|_2^2\geq \frac{1}{\Vol(M)}\int_G (1-C\delta^\frac{1}{96n})\,d\mu_g
\geq 1-C\delta^\frac{1}{96n}\geq \frac{1}{2}.
\end{equation}
By (\ref{422d}) and (\ref{422e}), we get
\begin{equation}\label{422ea}
\|\nabla^E S_{n+1}\|_2^2\leq C\epsilon^\frac{1}{2}\|S_{n+1}\|_2^2.
\end{equation}
Since $S_{n+1}$ is orthogonal to $S_i$ ($i=1,\ldots,n$), we get
$\lambda_{n+1}(\bar{\Delta}^E)\leq C\epsilon^\frac{1}{2}$ by (\ref{422ea}) and Lemma \ref{p41b} for $T_{(n,\delta)}(V)$.
\end{proof}

\subsection{Completion of the proofs}
In this subsection, we show that if $\lambda_n(\bar{\Delta}^E)$ is close to $0$, then the unorientable case cannot occur under the assumption $\Ric\geq -Kg$ and $\diam(M)\leq D$ by a mapping degree argument, and complete the proof of Main Theorem 1.

We first calculate the degree of the approximation map defined in Lemma \ref{p32c}. The following lemma corresponds to the discussion of the page 397--398 in \cite{Au}.
\begin{Lem}\label{p43a}
Given an integer $n\geq 2$ and positive real numbers $K>0$ and $D>0$, there exist positive constants $0<\delta_{\Dr}(n,K,D)\leq \delta_{\Dq}$ and $C(n,K,D)>0$ such that the following properties hold.
Take a positive real number $0<\delta\leq \delta_{\Dr}$.
Let $(M,g)$ be an oriented $n$-dimensional closed Riemannian manifold with $\Ric\geq -Kg$ and $\diam(M)\leq D$.
Suppose that an $(n+1)$-subspace $V\subset C^\infty(M)$ satisfies the $\delta$-pinching condition.
Take elements $f_s\in T_{(n,\delta)}(V)$ $(s=1,\ldots,n+1)$ such that
\begin{equation*}
\|f_s\|_2^2=\frac{1}{n+1},\quad \int_M f_s f_t \,d\mu_g=0\quad \text{for $s,t=1,\ldots,n+1$ with $s\neq t$}.
\end{equation*}
Then, $d_{GH}(M,S^n)\leq C\delta^\frac{1}{250n^2}$, $M$ is diffeomorphic to $S^n$ and the degree of the map $\Psi\colon M\to S^n$ defined by $\Psi=(f_1,\ldots,f_{n+1})/|(f_1,\ldots,f_{n+1})|$ is equal to either $1$ or $-1$.
\end{Lem}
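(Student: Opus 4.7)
The estimate $d_{GH}(M,S^n)\leq C\delta^{1/(250n^2)}$ is just Lemma \ref{p32c}(ii), and the diffeomorphism $M\approx S^n$ follows from the Cheeger-Colding diffeomorphism theorem cited in the introduction (by contradiction: from any sequence violating the claim one extracts a Gromov-Hausdorff limit which is $S^n$, while keeping $\Ric\geq-Kg$). Only the degree calculation requires work.

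The plan is to exploit the orientation of $M$ to trivialize $\bigwedge^{n+1}E\cong\det(TM)\otimes\mathbb{R}e$, and view the wedge
$$
\psi := S_1\wedge\cdots\wedge S_{n+1}
$$
as a smooth function on $M$. Expanding each $S_s=\nabla f_s + f_s e$ in an oriented local orthonormal frame $\{e_1,\ldots,e_n\}$ of $TM$ and using that the $e$-factor can occur in only one slot, a direct computation gives
$$
\psi = \sum_{s=1}^{n+1}(-1)^{n+1-s}\,f_s\,\det\!\bigl(\nabla f_1,\ldots,\widehat{\nabla f_s},\ldots,\nabla f_{n+1}\bigr),
$$
which is exactly the numerator of the classical pullback formula for $\Psi^*\omega_{S^n}$. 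Identifying the two yields
$$
\Psi^*\omega_{S^n} = \pm\frac{\psi}{|\widetilde\Psi|^{n+1}}\,d\mu_g
$$
as top-degree forms on $M$, with the sign depending only on the ambient orientation of $\mathbb{R}^{n+1}$.

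Next, I estimate $\psi$. Lemma \ref{p2c} upgrades the $\delta$-pinching on $V$ to a $C\sqrt\delta$-pinching on $T_{(n,\delta)}(V)$, so Lemma \ref{p41b} gives $\|\nabla^E S_s\|_2^2\leq C\sqrt\delta$; combined with the $L^\infty$ bound $\|S_s\|_\infty\leq C$ from Lemma \ref{p42a}(i), the Leibniz rule in $\bigwedge^{n+1}E$ produces $\|\nabla\psi\|_2\leq C\delta^{1/4}$. On the good set $G$ of Lemma \ref{p42a} the sections $S_1(x),\ldots,S_{n+1}(x)$ are almost orthonormal in $E_x$, so $\bigl||\psi|-1\bigr|\leq C\delta^{1/(96n)}$ there. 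The Poincar\'e inequality (legitimate by the Li-Yau estimate $\lambda_1(g)\geq C(n,K,D)>0$) combined with the smallness of $\|\nabla\psi\|_2$ forces $\|\psi-\bar\psi\|_2\leq C\delta^{1/4}$ where $\bar\psi$ is the mean value. This prevents $\psi$ from taking values near $+1$ and near $-1$ each on sets of definite measure; hence for $\delta$ small a single sign dominates $G$, and
$$
\int_M \psi\,d\mu_g = c_0\,\Vol(M) + o(\Vol(M))\qquad\text{for some }c_0\in\{+1,-1\}.
$$

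Integrating the form identity and using $|\widetilde\Psi|=1+O(\delta^{1/(48n^2)})$ from Lemma \ref{32b}, we conclude
$$
\deg(\Psi)\cdot\Vol(S^n) \;=\; \int_M \Psi^*\omega_{S^n} \;=\; \pm\, c_0\,\Vol(M)\;+\;o(\Vol(M)).
$$
Volume convergence under Gromov-Hausdorff convergence with a uniform lower Ricci bound to a smooth non-collapsed limit (Cheeger-Colding) forces $\Vol(M)/\Vol(S^n)\to 1$ as $\delta\to 0$; hence the right-hand side tends to $\pm c_0\in\{\pm1\}$, and since $\deg(\Psi)\in\mathbb{Z}$ we obtain $\deg(\Psi)=\pm1$ for $\delta$ small enough. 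The main obstacle is the sign dichotomy for $\psi$: a priori $\psi$ is only known to have small gradient and to be close to $\pm1$ on $G$, so one must rule out mean-cancellation; the Poincar\'e argument above does this, and together with the volume comparison it pins the otherwise free integer $\deg(\Psi)$ down to $\pm 1$.
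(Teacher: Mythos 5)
Your proposal follows essentially the same route as the paper. Both proofs identify the degree integrand with the scalar $h:=\langle S_1\wedge\cdots\wedge S_{n+1},\omega_g\wedge e\rangle$ (your $\psi$), show that $|h|$ is close to $1$ on the good set of Lemma \ref{p42a} while $\|\nabla h\|_2$ is small, invoke Poincar\'e (Li--Yau) to conclude $|\bar h|$ is close to $1$, and finish with the Colding volume convergence to pin $|\deg\Psi|$ to $1$; the $d_{GH}$ bound and the diffeomorphism are both taken from Lemma \ref{p32c}(ii) and Theorem 1.5 exactly as you do. The only differences are cosmetic: the paper computes $\det L_x=|\widetilde\Psi|^{n+1}\det{}^t d_x\Psi$ by a frame-free argument with the auxiliary linear map $L_x\colon\mathbb{R}^{n+1}\to E_x$, while you expand $S_1\wedge\cdots\wedge S_{n+1}$ in coordinates and match it to the classical pullback formula for $\Psi^*\omega_{S^n}$ --- the same identity; and the paper replaces your sign-dichotomy discussion by the Pythagorean identity $\|h\|_2^2=\bar h^2+\|h-\bar h\|_2^2$, which yields $\bar h^2\geq \|h\|_2^2-C\|\nabla h\|_2^2\geq 1-C\delta^{1/(96n)}$ in one line. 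One small slip: Lemma \ref{p41b} applied to the $C\sqrt\delta$-pinching of $T_{(n,\delta)}(V)$ gives $\|\nabla^E S_s\|_2^2\leq C\delta$, not $C\sqrt\delta$; this costs you an unnecessary power but does not affect the conclusion.
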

\begin{proof}
We define $\widetilde{\Psi}=(f_1,\ldots,f_{n+1})$.
Put $S_i:=S_{f_i}$ for each $i$.
For each $x\in M$, the differential of $\Psi$ defines the map $d_x \Psi \colon T_x M\to T_{\Psi(x)}S^n$.
Since $T_{\Psi(x)}S^n\subset \mathbb{R}^{n+1}$, we can regard $d_x \Psi$ as a linear map
$T_x M\to \mathbb{R}^{n+1}$.
Then, take a transpose ${}^t d_x \Psi\colon \mathbb{R}^{n+1}\to T_x M$.
The restriction of this map to $T_{\Psi(x)} S^n$ coincides with the transpose of $d_x \Psi$ regarded as a map $T_x M\to T_{\Psi(x)}S^n$.
The determinant of the map ${}^t d_x \Psi\colon T_{\Psi(x)}S^n\to T_x M$ satisfies
$$
(\det {}^t d_x \Psi) \omega_g={}^t d_x \Psi(\bar{e}_1)\wedge\ldots\wedge {}^t d_x \Psi(\bar{e}_n),
$$
where $\omega_g$ denotes the volume form of $(M,g)$ and $\{\bar{e}_1,\ldots,\bar{e}_n\}$ is the orthonormal basis of $T_{\Psi(x)}S^n$ such that $\{\bar{e}_1,\ldots,\bar{e}_n,\Psi(x)\}$ has a positive orientation in $\mathbb{R}^{n+1}$.
The degree of the map $\Psi$ satisfies
$$
\deg \Psi=\frac{1}{\Vol(S^n)}\int_M \det {}^t d_x \Psi\,d\mu_g.
$$

Let us show
the map ${}^t d_x \Psi\colon \mathbb{R}^{n+1}\to T_x M$ satisfies
\begin{equation}\label{rv4}
{}^t d_x \Psi (e_i)=\frac{S_i(x)-\Psi_i(x)\sum_{j=1}^{n+1} \Psi_j(x)S_j(x)}{|\widetilde{\Psi}(x)|}
\end{equation}
for all $i=1,\ldots, n+1$, where $\{e_1,\ldots,e_{n+1}\}$ denotes the standard basis of $\mathbb{R}^{n+1}$ and $\Psi_i(x)=f_i(x)/|\widetilde{\Psi}(x)|$.
Note that the right hand side of (\ref{rv4}) defines an element of $T_x M$ by cancellation
although it is originally an element of the fiber $E_x=T_x M\oplus \mathbb{R}e$.
For all $X\in T_x M$ and $i$, we have
\begin{equation*}
\langle X, {}^t d_x \Psi (e_i) \rangle=
\langle d_x \Psi(X),e_i\rangle
=\left\langle\nabla\left(\frac{f_i}{(\sum_{j=1}^{n+1} f_j^2)^{1/2}}\right)(x),X\right\rangle.
\end{equation*}
Thus, we get
\begin{equation*}
\begin{split}
{}^t d_x \Psi (e_i)
=&\nabla\left(\frac{f_i}{(\sum_{j=1}^{n+1} f_j^2)^{1/2}}\right)(x)\\
=&\frac{\nabla f_i(x)}{|\widetilde{\Psi}(x)|}-\frac{f_i(x)\sum_{j=1}^{n+1}f_j(x)\nabla f_j(x)}{|\widetilde{\Psi}(x)|^3}\\
=&\frac{\nabla f_i(x)-\Psi_i(x)\sum_{j=1}^{n+1}\Psi_j(x)\nabla f_j(x)}{|\widetilde{\Psi}(x)|}.
\end{split}
\end{equation*}
Since $f_i(x)-\Psi_i(x)\sum_{j=1}^{n+1}\Psi_j(x) f_j(x)=0$ and $S_i=\nabla f_i+f_i e$ by the definition, we get (\ref{rv4}).

Define $L_x \colon \mathbb{R}^{n+1}\to E_x$ by $L_x(v):=\sum_{i=1}^{n+1} v_i S_i(x)$ for any $v=\sum_{i=1}^{n+1} v_i e_i\in \mathbb{R}^{n+1}$.
We have 
\begin{equation}\label{rv5}
L_x(v)=|\widetilde{\Psi}(x)| {}^t d_x \Psi (v)\in T_x M
\end{equation}
for all $v\in \mathbb{R}^{n+1}$ with $v\cdot \Psi(x)=0$ by (\ref{rv4}), and we have
\begin{equation}\label{rv6}
\begin{split}
L_x(\Psi(x))=&|\widetilde{\Psi}(x)|^{-1}\sum_{j=1}^{n+1}(\nabla f_j (x)+f_j(x) e) f_j(x)\\
=&|\widetilde{\Psi}(x)|^{-1}\sum_{j=1}^{n+1} f_j(x)\nabla f_j (x)+|\widetilde{\Psi}(x)|e.
\end{split}
\end{equation}
Let $\{\bar{e}_1(x),\cdots,\bar{e}_n(x),\Psi(x)\}$ be a positively oriented orthonormal basis of $\mathbb{R}^{n+1}$.
Define $\omega_E:=\omega_g\wedge e$, where $\omega_g$ denotes the volume form of $(M,g)$.
Then, the determinant of the map $L_x \colon \mathbb{R}^{n+1}\to E_x$ satisfies
\begin{align*}
(\det L_x )\omega_E=&L_x(e_1)\wedge\cdots\wedge L_x(e_{n+1})\\
=&L_x(\bar{e}_1)\wedge\cdots\wedge L_x(\bar{e}_n)\wedge L_x(\Psi(x))
\end{align*}
By the definition of $L$, we have
\begin{equation}\label{rv7}
(\det L_x )\omega_E=S_1(x)\wedge\cdots\wedge S_{n+1}(x).
\end{equation}
Since $L_x(\bar{e}_i(x))\in T_x M$ by (\ref{rv5}),
we have
\begin{equation}\label{rv8}
(\det L_x )\omega_E=|\widetilde{\Psi}(x)|L_x(\bar{e}_1)\wedge\cdots\wedge L_x(\bar{e}_n)\wedge e
\end{equation}
by (\ref{rv6}).
Moreover, we have
\begin{equation}\label{rv9}
(\det {}^t d_x\Psi(x))\omega_g=|\widetilde{\Psi}(x)|^{-n} L_x(\bar{e}_1)\wedge\cdots\wedge L_x(\bar{e}_n)
\end{equation}
by (\ref{rv5}).
By (\ref{rv8}) and (\ref{rv9}), we get
\begin{equation}\label{rv10}
\det L_x =|\widetilde{\Psi}(x)|^{n+1}\det {}^t d_x\Psi(x).
\end{equation}

Put $h:=\det L_x$.
Since $\omega_E$ is parallel and $|\omega_E|=1$, we have $|h|=|S_1\wedge\cdots\wedge S_{n+1}|$ and $|\nabla h|=|\sum_{i=1}^{n+1} S_1\wedge\cdots\wedge \nabla S_i\wedge \cdots\wedge S_{n+1}|$
by (\ref{rv7}).
For $x\in G=G(f_1,\ldots,f_{n+1})$ (see Lemma \ref{p42a}),
we have $||h|^2-1|\leq C\delta^\frac{1}{96n}$.
By Lemma \ref{p42a} (i), we have $\|h\|_{\infty}\leq C$.
Thus, we get $\||h|^2-1\|_1\leq C\delta^\frac{1}{96n}$ by Lemma \ref{p42a} (ii).
This gives 
\begin{equation}\label{hl2b}
|\|h\|_2^2-1|\leq C\delta^\frac{1}{96n}.
\end{equation}
Since $T_{(n,\delta)}(V)$ satisfies the $C\sqrt{\delta}$-pinching condition, by Lemma \ref{p41b} and Lemma \ref{p42a},
we have $\|\nabla h\|_2^2\leq C\delta$.
Thus, we get
\begin{equation*}
0\leq \|h\|_2^2 -\left(\frac{1}{\Vol(M)}\int_M h \,d\mu_g\right)^2\leq C\|\nabla h\|_2^2 \leq C\delta
\end{equation*}
by the Li-Yau estimate \cite[p.116]{SY}, and so
\begin{equation*}
\left|\left|\frac{1}{\Vol(M)}\int_M h\,d\mu_g \right|-1\right|\leq C \delta^\frac{1}{96n}.
\end{equation*}
by (\ref{hl2b}).
Therefore, by (\ref{rv10}) and Lemma \ref{32b},
\begin{equation}\label{deg}
\begin{split}
\left||\deg \Psi |\frac{\Vol(S^n)}{\Vol(M)}-1\right|
=&\left|\left|\frac{1}{\Vol(M)}\int_M \det {}^t d_x\Psi \,d\mu_g\right|-1\right|\\
\leq&\left|\left|\frac{1}{\Vol(M)}\int_M h\,d\mu_g \right|-1\right|+\left|1-\|\widetilde{\Psi}\|_{\infty}^{-(n+1)}\right|\|h\|_{\infty}\\
\leq & C\delta^\frac{1}{96n}.
\end{split}
\end{equation}
In particular $\deg\Psi\neq 0$.
Thus, we get $\Psi$ is surjective.
Therefore, $\Psi$ is a $C\delta^\frac{1}{250n^2}$-Hausdorff approximation map by Lemma \ref{p32c}, and so $d_{GH}(M,S^n)\leq C\delta^\frac{1}{250n^2}$.
Taking $\delta_{\Dr}$ sufficiently small, we get that $M$ is diffeomorphic to $S^n$ by Theorem 1.5.
By the volume convergence theorem \cite[Theorem 0.1]{Co3}, we can assume $\Vol(M)\leq \frac{3}{2}\Vol(S^n)$.
Thus, we get $|\deg \Psi|=1$ by (\ref{deg}).
\end{proof}

Let us show that the unorientable case cannot occur under our pinching condition, and complete the proof of Main Theorem 1.
The proof of the following theorem corresponds to \cite[Proposition 19]{Au}.
\begin{Thm}\label{p43b}
Given an integer $n\geq 2$ and positive real numbers $K>0$ and $D>0$, there exist positive constants $\eta(n,K,D)>0$ and $C(n,K,D)>0$ such that the following property holds.
Take a positive real number $0<\epsilon\leq \eta$.
Let $(M,g)$ be an $n$-dimensional closed Riemannian manifold with $\Ric\geq -Kg$ and $\diam(M)\leq D$.
If $\lambda_n(\bar{\Delta}^E)\leq \epsilon$, then $d_{GH}(M,S^n)\leq C \epsilon^\frac{1}{1000n^2}$.
\end{Thm}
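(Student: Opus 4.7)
The strategy is to bootstrap $\lambda_n(\bar{\Delta}^E) \leq \epsilon$ up to an $(n+1)$-dimensional pinched subspace of $C^\infty(M)$ and conclude via Lemma \ref{p43a}. Proposition \ref{p41c} first converts the eigenvalue bound into an $n$-dimensional subspace $V \subset C^\infty(M)$ satisfying the $(C\epsilon^{1/2})$-pinching condition. In the orientable case, Corollary \ref{p42c}---whose proof uses the orientation isomorphism $\bigwedge^n E \cong E$ to build the almost-parallel section $S_{n+1} := S_1 \wedge \cdots \wedge S_n$---gives $\lambda_{n+1}(\bar{\Delta}^E) \leq C\epsilon^{1/2}$, and a second application of Proposition \ref{p41c} yields an $(n+1)$-dimensional subspace $W \subset C^\infty(M)$ satisfying $\delta$-pinching with $\delta = C\epsilon^{1/4}$. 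Lemma \ref{p43a} applied to $W$ then produces $d_{GH}(M, S^n) \leq C\delta^{1/(250n^2)} = C\epsilon^{1/(1000n^2)}$, matching the desired exponent.

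To rule out the unorientable case, the plan is to pass to the orientation double cover $\pi\colon \tilde{M} \to M$ with fixed-point-free, orientation-reversing deck involution $\sigma$, and derive a contradiction via an equivariant mapping-degree argument. The pullback $\tilde{V} := \pi^* V$ is an $n$-dimensional subspace of $\sigma$-invariant functions on $\tilde{M}$ satisfying the same pinching. Running the orientable construction on $\tilde{M}$ produces an $(n+1)$-dimensional pinched subspace $\tilde{W} \subset C^\infty(\tilde{M})$: the first $n$ members are the $\sigma$-invariant pullbacks $\tilde{f}_i = f_i \circ \pi$, while the $(n+1)$-th member $\tilde{f}_{n+1}$, arising from the orientation-dependent wedge product, is $\sigma$-\emph{anti}-invariant. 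Lemma \ref{p43a} then yields an approximation map $\tilde{\Psi}\colon \tilde{M} \to S^n$ of degree $\pm 1$ satisfying the equivariance $\tilde{\Psi} \circ \sigma = \rho \circ \tilde{\Psi}$, where $\rho$ denotes the reflection in the last coordinate of $\mathbb{R}^{n+1}$.

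The contradiction comes from computing $\deg \tilde{\Psi}$ at a regular value $y$ lying on the $\rho$-fixed equator $S^{n-1} \subset S^n$, which exists by Sard's theorem. By the equivariance, $\sigma$ acts freely on $\tilde{\Psi}^{-1}(y)$, pairing the preimages as $(\tilde{x}, \sigma\tilde{x})$. Differentiating the relation $\tilde{\Psi} \circ \sigma = \rho \circ \tilde{\Psi}$ at $\tilde{x}$ gives $\det d_{\sigma \tilde{x}}\tilde{\Psi} = \det(d\rho) \cdot \det(d_{\tilde{x}}\tilde{\Psi}) \cdot \det((d_{\tilde{x}}\sigma)^{-1}) = \det(d_{\tilde{x}}\tilde{\Psi})$, since the two orientation-reversing factors cancel. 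Both preimages in each $\sigma$-orbit therefore contribute with the \emph{same} sign, forcing $\deg\tilde{\Psi}$ to be even---contradicting $\deg\tilde{\Psi} = \pm 1$. The principal technical obstacle is controlling the geometry of the double cover $\tilde{M}$ (in particular, bounding $\diam \tilde{M}$) uniformly in $n, K, D$ so that the orientable-case constants apply on $\tilde{M}$; one expects this to follow from first extracting preliminary geometric information about $M$ via Propositions \ref{p2i} and \ref{p2k}.
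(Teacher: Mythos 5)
Your orientable case follows the paper's exact route (Proposition~\ref{p41c}, Corollary~\ref{p42c}, Proposition~\ref{p41c} again, then Lemma~\ref{p43a}), with the same exponent arithmetic. For the unorientable case, both you and the paper pass to the orientation double cover and derive a contradiction from an equivariant degree argument, but the arguments are genuinely different at the crux, and your version has two gaps.

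First, you assert that the $(n+1)$-th member $\tilde{f}_{n+1}$ of the pinched subspace on $\tilde{M}$ is $\sigma$-anti-invariant because it ``arises from the orientation-dependent wedge product.'' This is not what the cited machinery delivers. Corollary~\ref{p42c} uses $S_1\wedge\cdots\wedge S_n$ only as a Rayleigh test section to bound $\lambda_{n+1}(\bar\Delta^E,\tilde M)$; the $(n+1)$-dimensional pinched subspace then produced by Proposition~\ref{p41c} comes from actual \emph{eigensections} of $\bar\Delta^E$, not from the wedge itself. The paper proves only that $\Pi^*\tilde{S}_{n+1}=\pm\tilde{S}_{n+1}$, and both signs can occur; it therefore works with an arbitrary involution $T\in O(n+1)$ satisfying $T\Psi=\Psi\circ\Pi$ and locates an explicit $T$-fixed point by choosing $x$ with $f_{n+1}(x)=0$. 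Your argument needs $T$ to be specifically the reflection $\rho$ in the last coordinate, which would require re-running the pinching argument of Claim~\ref{c41b} on the span $\{\tilde S_1,\ldots,\tilde S_n, \tilde S_1\wedge\cdots\wedge\tilde S_n\}$ directly (the wedge is almost-parallel but not an eigensection); you would need to state and carry out that modification.

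Second, your degree computation requires a regular value of $\Psi$ on the $\rho$-fixed equator $S^{n-1}\subset S^n$. Sard's theorem says the critical values of $\Psi$ have measure zero in $S^n$, but the equator itself has measure zero, so the two could coincide; the existence of an equatorial regular value is not automatic. Fixing this requires either a $\sigma$-equivariant transversality perturbation of $\Psi$ relative to $S^{n-1}$, or an argument that works without regular values. The paper avoids this entirely by using Aubry's Claim~\ref{cAu}, which is a purely topological statement (degree $\pm1$ intertwining a free $\mathbb{Z}/2$-action with another action forces the second action to be free) and does not appeal to Sard. Either patching your regular-value step via equivariant transversality, or replacing your final step with the paper's Claim~\ref{cAu} plus the explicit fixed point, would close this gap.
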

\begin{proof}
For the orientable case, we get the theorem by Proposition \ref{p41c}, Corollary \ref{p42c} and Lemma \ref{p43a}.

Let us show that $M$ is orientable.
Suppose that $M$ is not orientable. Take the orientable Riemannian covering $\pi \colon(\widetilde{M},\tilde{g})\to (M,g)$ with two sheets.
Then, $(\widetilde{M},\tilde{g})$ satisfies $\Ric_{\tilde{g}}\geq -K\tilde{g}$ and $\diam(\widetilde{M})\leq 2D$.
Take orthonormal eigensections $S_1,\ldots,S_n\in \Gamma(E_M)$ corresponding to $\lambda_i(\bar{\Delta}^E,M)$: $\bar{\Delta}^E S_i=\lambda_i(\bar{\Delta}^E,M)S_i$.
Write $S_i=\alpha_i+f_i e$.
Define $\widetilde{S}_i\in \Gamma(E_{\widetilde{M}})$ as $\widetilde{S}_i:=\pi^\ast S_i=\pi^\ast\alpha_i +(f_i\circ\pi) e$.
Then, $\bar{\Delta}^{E}\widetilde{S}_i=\lambda_i(\bar{\Delta}^E,M)\widetilde{S}_i$, and so $\lambda_n(\bar{\Delta}^{E},\widetilde{M})\leq \epsilon$. 
Thus, we obtain $\lambda_{n+1}(\bar{\Delta}^{E},\widetilde{M})\leq C \epsilon^\frac{1}{2}$ by Corollary \ref{p42c}. 
Then, there exists $\tilde{\lambda}\in \mathbb{R}$ with $0\leq \tilde{\lambda}\leq C\epsilon^\frac{1}{2}$ and an eigensection $\widetilde{S}_{n+1}\in \Gamma(E_{\widetilde{M}})$ such that $\bar{\Delta}^{E}\widetilde{S}_{n+1}=\tilde{\lambda}\widetilde{S}_{n+1}$ and $\int_{\widetilde{M}}\langle\widetilde{S}_{n+1},\widetilde{S}_{i}\rangle_{E}\,d\mu_{\tilde{g}}=0$ for all $i=1,\ldots,n$.
Define a subspace $\mathcal{E}\in C^\infty(M)$ by
$$
\mathcal{E}=\Span\{\widetilde{S}_1,\ldots,\widetilde{S}_n,\widetilde{S}_{n+1}\}.
$$
By Claim \ref{c41b}, we have that $\widetilde{P}(\mathcal{E})\subset C^\infty(\widetilde{M})$ satisfies the $C\epsilon^\frac{1}{4}$-pinching condition.
Put $\delta:=C\epsilon^\frac{1}{4}$.

We use the following claim (see \cite[Lemma 17]{Au}).
\begin{Clm}\label{cAu}
Suppose that a map $\Phi\colon S^n\to S^n$ has degree $1$ or $-1$.
Let
$$
\alpha_i\colon \mathbb{Z}/(2\mathbb{Z}) \times S^n\to S^n
$$
be actions $($$i=1,2$$)$ such that
$\Phi(\alpha_1,(a,x))=\alpha_2(a,\Phi(x))$ holds for all $(a,x)\in\mathbb{Z}/(2\mathbb{Z}) \times S^n$.
If the action $\alpha_1$ is free, then $\alpha_2$ is also free.
\end{Clm}
Consider the covering transformation $\Pi\colon\widetilde{M}\to\widetilde{M}$, which satisfies $\pi\circ \Pi=\pi$ and $\Pi^2=\Id$.
We use $\Pi$ for $\alpha_1$ of Claim \ref{cAu}.
Let us find the action $\alpha_2$ and its fixed point to get the contradiction.

The covering transformation $\Pi$ defines an isomorphism $\Pi^\ast\colon E_{\Pi(x)}\to E_x$, $\Pi^\ast(\alpha+f e)=\Pi^\ast \alpha +(f\circ \Pi) e$.
We have 
\begin{equation}\label{rv11}
\Pi^\ast \widetilde{S}_i=\widetilde{S}_i
\end{equation}
for any $i=1,\ldots,n$.
Since $\lambda_{n+2}(\bar{\Delta}^{E},\widetilde{M})\geq \epsilon_2$ by Corollary \ref{p42b}, we obtain
$\Pi^\ast \widetilde{S}_{n+1}\in \mathcal{E}$, and $\mathcal{E}$ is invariant under the action of $\Pi$.
Since $\Pi^\ast \widetilde{S}_{n+1}$ is orthogonal to $\Pi^\ast \widetilde{S}_i=\widetilde{S}_i$ in $L^2$ sense for all $i=1,\ldots,n$, we get 
\begin{equation}\label{rv12}
\Pi^\ast \widetilde{S}_{n+1}=\pm\widetilde{S}_{n+1}.
\end{equation}
Take $f_1,\ldots,f_{n+1}\in T_{(n,\delta)}\widetilde{P}(\mathcal{E})\subset C^\infty(\widetilde{M})$ such that
\begin{equation*}
\begin{split}
\|f_s\|_2^2=\frac{1}{n+1},\quad \int_M f_s f_t \,d\mu_g=&0\quad \text{for $s,t=1,\ldots,n+1$ with $s\neq t$}, \text{ and}\\
f_{n+1}=&\frac{T_{(n,\delta)}\widetilde{P}(\widetilde{S}_{n+1})}{\sqrt{n+1}\|T_{(n,\delta)}\widetilde{P}(\widetilde{S}_{n+1})\|_2}
\end{split}
\end{equation*}
hold.
By the definition of $T_{(n,\delta)}$ and $\widetilde{P}$, we have
\begin{equation}\label{rv13}
T_{(n,\delta)}\widetilde{P}(\Pi^\ast S)=T_{(n,\delta)}\widetilde{P}(S)\circ \Pi.
\end{equation}
for all $S\in \Gamma(E_{\widetilde{M}})$.
Thus, we have
\begin{equation}\label{rv14}
f_{n+1}\circ \Pi=\pm f_{n+1}
\end{equation}
by (\ref{rv12}).
Since the subspace $T_{(n,\delta)}\widetilde{P}(\mathcal{E})\subset C^\infty(\widetilde{M})$ is invariant under the action of $\Pi$ by (\ref{rv13}),
there exists $T\in O(n+1)$ with 
\begin{equation}\label{rv15}
f_i\circ\Pi=\sum_{j=1}^{n+1} T_{ij} f_j
\end{equation}
for all $i=1,\ldots,n+1$ and $T^2=\Id$.
Take  $a_1,\ldots, a_n\in \mathbb{R}$ and
$\tilde{f}_1,\ldots,\tilde{f}_n\in T_{(n,\delta)}\widetilde{P}(\Span\{\widetilde{S}_1,\ldots,\widetilde{S}_n\})$  with
\begin{equation}\label{rv16}
f_i=\tilde{f}_i+a_i f_{n+1}.
\end{equation}
Then, we have 
\begin{equation}\label{rv17}
\tilde{f}_i\circ \Pi=\tilde{f}_i
\end{equation}
for all $i=1,\ldots,n$ by (\ref{rv11}) and (\ref{rv13}).
Define $\widetilde{\Psi}\colon \widetilde{M}\to \mathbb{R}^{n+1}$ and $\Psi\colon \widetilde{M}\to S^n$ by $\widetilde{\Psi}:=(f_1,\ldots, f_{n+1})$ and
 $\Psi:=\widetilde{\Psi}/|\widetilde{\Psi}|$.
Then, $T \widetilde{\Psi}=\widetilde{\Psi}\circ\Pi$  by (\ref{rv15}).
Since $T$ is orthogonal, we have $|\widetilde{\Psi}\circ\Pi|=|T \widetilde{\Psi}|=| \widetilde{\Psi}|$.
Thus, we get 
\begin{equation}\label{rv18}
T \Psi=\Psi\circ\Pi.
\end{equation}
Since $\int_M f_{n+1} \,d\mu_g=0$, we can take a point $x\in \widetilde{M}$ such that $f_{n+1}(x)=0$ holds.
Then, we get $f_i(x)=f_i(\Pi(x))$ for all $i=1,\ldots,n+1$ by (\ref{rv14}), (\ref{rv16}) and (\ref{rv17}).
Thus, we get
\begin{equation}\label{rv19}
T \Psi(x)=\Psi(\Pi(x))=\Psi(x)
\end{equation}
by (\ref{rv18}).
By Lemma \ref{p43a}, $|\deg \Psi|=1$, and $\widetilde{M}$ is diffeomorphic to $S^n$.
Thus, by Claim \ref{cAu} and (\ref{rv18}), the action of $T$ on $S^n$ is free.
This contradicts to (\ref{rv19}).
Therefore, $M$ is orientable, and so we get the theorem.
\end{proof}

Theorem \ref{p43b} immediately implies Main Theorem 1 and Main Theorem 1'.

Finally, we write down the statement of unnormalized version of Main Theorem 1'.
By Corollary \ref{p2b} and Main Theorem 1', we get the following theorem.
\begin{Thm}
Given an integer $n\geq 2$ and positive real numbers $\epsilon>0$, $K>0$, $D>0$ and $\mu>0$, there exists $\delta(n,K,D,\mu,\epsilon)>0$ such that the following property holds.
Let $(M,g)$ be an $n$-dimensional closed Riemannian manifold with $\Ric \geq-K g$ and $\diam(M)\leq D$ and take $c\in\mathbb{R}$ with $c\leq\mu$.
 If there exists an $n$-dimensional subspace $V$ of $C^\infty(M)$ such that  $\|\nabla^2 f+c fg\|_2\leq \delta\|\Delta f\|_2$ holds for all $f\in V$, then we have $ c>0$ and
$d_{GH}(M,S^n(r))\leq \epsilon$, where we put $r=c^{-\frac{1}{2}}$.
\end{Thm}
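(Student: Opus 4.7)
The plan is to reduce the statement to Main Theorem 1' by the homothetic rescaling provided in Corollary \ref{p2b}. Fix $\delta \leq \frac{1}{2\sqrt{n}}$ to be specified later. Since $V$ is $n$-dimensional with $n\geq 2$ and the subspace of constant functions has dimension $\leq 1$, we may pick some non-constant $f_0\in V$. Applying Lemma \ref{p2a} (iii), (iv) to $(f_0,c,\delta)$ gives $c\geq C_{\Ca}(n,K,D)>0$.

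Next, consider the rescaled metric $\tilde g = cg$. Applied to each $f\in V\setminus\{0\}$ that is non-constant (which is all but a one-dimensional subspace), Corollary \ref{p2b} yields $\Ric_{\tilde g}\geq -(K/C_{\Ca})\tilde g$ and $\diam(M,\tilde g)\leq \mu^{1/2}D$, together with the scale-invariant pinching
\begin{equation*}
\|(\nabla^{\tilde g})^2 f + f\tilde g\|_{L^2(M,\tilde g)} \leq \delta\, \|\Delta^{\tilde g} f\|_{L^2(M,\tilde g)}.
\end{equation*}
Since constant functions (if present in $V$) trivially satisfy the analogous condition only when $c=0$, and we have already excluded that case, every $f\in V$ is non-constant and the rescaled pinching holds for all of $V$. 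By Lemma \ref{add} (i), the condition above upgrades to $\|(\nabla^{\tilde g})^2 f + f\tilde g\|_{L^2(M,\tilde g)}\leq 2n\delta\,\|f\|_{L^2(M,\tilde g)}$, so $V$ satisfies the $(2n\delta)$-pinching condition on $(M,\tilde g)$ in the sense of Definition \ref{dpin}.

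Now invoke Main Theorem 1' on $(M,\tilde g)$ with parameters $n$, $K' := K/C_{\Ca}$, $D' := \mu^{1/2}D$, and target error $\epsilon' := \sqrt{C_{\Ca}}\,\epsilon$. This produces a threshold $\delta_0(n,K,D,\mu,\epsilon)>0$ with the property that $2n\delta\leq\delta_0$ forces $d_{GH}((M,\tilde g),S^n)\leq \epsilon'$. Define the claimed threshold by $\delta := \min\{\delta_0/(2n),\, 1/(2\sqrt{n})\}$.

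Finally, undo the rescaling. Since $\tilde g = (\sqrt c)^2 g$, all distances scale by $\sqrt c$, and with $r=c^{-1/2}$ one has $cr^2=1$; hence $d_{GH}((M,g),S^n(r)) = c^{-1/2}\,d_{GH}((M,\tilde g),S^n) \leq c^{-1/2}\epsilon' = c^{-1/2}\sqrt{C_{\Ca}}\,\epsilon \leq \epsilon$, using $c\geq C_{\Ca}$ in the last step. This closes the argument. There is no serious obstacle; the only care needed is in bookkeeping the dependence of $\epsilon'$ on $c$ through the factor $\sqrt{C_{\Ca}}\leq \sqrt c$ so that the rescaling of the Gromov-Hausdorff bound absorbs cleanly.
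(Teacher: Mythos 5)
Your proposal is correct and is essentially the paper's own argument: the paper proves this theorem by exactly the reduction you describe, namely Corollary \ref{p2b} (rescaling $\tilde g = cg$ after using Lemma \ref{p2a} to get $c\geq C_{\Ca}(n,K,D)>0$) combined with Main Theorem 1', with Lemma \ref{add} converting between the $\|\Delta f\|_2$ and $\|f\|_2$ normalizations. Your bookkeeping of the Gromov--Hausdorff rescaling (choosing $\epsilon'=\sqrt{C_{\Ca}}\,\epsilon$ and using $c\geq C_{\Ca}$) is the right way to absorb the factor $c^{-1/2}$, so no gap remains.
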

\subsection{Constant scalar curvature metrics}
In this subsection, we show Main Theorem 2.
We first give an easy calculation.
\begin{Lem}\label{p6a}
Let $(M,g)$ be an $n$-dimensional Riemannian manifold with constant scalar curvature.
For any smooth function $f\in C^\infty(M)$, we have
\begin{equation*}
\nabla^\ast(\Ric(\nabla f))= -g\left(\Ric,\nabla^2 f+\frac{\Delta f}{n}g\right)+\frac{\Scal_g}{n} \Delta f,
\end{equation*}
where $\Scal_g$ denotes the scalar curvature of $(M,g)$.
\end{Lem}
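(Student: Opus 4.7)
The proof will be a short, direct computation. I would unpack $\nabla^{\ast}$ using the definition on a 1-form, apply the contracted second Bianchi identity, and then split off the trace part of the Hessian.

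First I would compute $\nabla^{\ast}(\Ric(\nabla f))$ by letting $\{e_1,\ldots,e_n\}$ be a local orthonormal frame and viewing $\Ric(\nabla f)$ as the 1-form $\eta(X):=\Ric(\nabla f,X)$. By the definition of $\nabla^{\ast}$ acting on 1-forms and the Leibniz rule,
\begin{equation*}
\nabla^{\ast}\eta=-\sum_{i=1}^{n}(\nabla_{e_i}\eta)(e_i)=-\sum_{i=1}^{n}(\nabla_{e_i}\Ric)(\nabla f,e_i)-\sum_{i=1}^{n}\Ric(\nabla_{e_i}\nabla f,e_i).
\end{equation*}

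Next I would handle each piece. For the first sum, the contracted second Bianchi identity gives $\sum_{i}(\nabla_{e_i}\Ric)(X,e_i)=\tfrac{1}{2}X(\Scal_g)$ for every vector field $X$; applied with $X=\nabla f$ and combined with the assumption that $\Scal_g$ is constant, this term vanishes. For the second sum, using the symmetry of $\Ric$ and the definition of $g(\cdot,\cdot)$ on $(0,2)$-tensors,
\begin{equation*}
\sum_{i=1}^{n}\Ric(\nabla_{e_i}\nabla f,e_i)=\sum_{i=1}^{n}\Ric(e_i,\nabla_{e_i}\nabla f)=g(\Ric,\nabla^{2}f).
\end{equation*}

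Finally I would absorb the trace. Writing $\nabla^{2}f=\bigl(\nabla^{2}f+\tfrac{\Delta f}{n}g\bigr)-\tfrac{\Delta f}{n}g$ and using $g(\Ric,g)=\tr_{g}\Ric=\Scal_g$ yields
\begin{equation*}
g(\Ric,\nabla^{2}f)=g\!\left(\Ric,\nabla^{2}f+\tfrac{\Delta f}{n}g\right)-\tfrac{\Scal_g}{n}\Delta f,
\end{equation*}
which substituted into the previous display gives the claimed identity. There is no real obstacle here; the only thing to double-check is the sign convention ($\Delta=-\tr_g\nabla^{2}$ in this paper, so $\tr_g \nabla^{2}f=-\Delta f$, and the identity $\sum_i (\nabla^2 f)(e_i,e_i)=-\Delta f$ is consistent with the signs above).
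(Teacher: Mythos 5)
Your proof is correct and follows essentially the same route as the paper: Leibniz rule for $\nabla^\ast$ applied to the $1$-form $\Ric(\nabla f,\cdot)$, the contracted second Bianchi identity (which kills the divergence term since $\Scal_g$ is constant), and splitting off the trace part of the Hessian using $g(\Ric,g)=\Scal_g$. The only difference is that you carry out the computation in an orthonormal frame while the paper uses compact operator notation, and your sign checks (including $\Delta=-\tr_g\nabla^2$) are consistent.
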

\begin{proof}
By the second Bianchi identity \cite[p.88]{Pe3}, we have
\begin{equation*}
\begin{split}
&\nabla^\ast(\Ric(\nabla f))\\
=&(\nabla^\ast \Ric)(\nabla f)-g\left(\Ric,\nabla^2 f\right)\\
=&-\frac{1}{2}  g(\nabla \Scal_g, \nabla f) -g\left(\Ric,\nabla^2 f+\frac{\Delta f}{n} g\right) +\frac{\Scal_g}{n}\Delta f\\
=&-g\left(\Ric,\nabla^2 f+\frac{\Delta f}{n} g\right) +\frac{\Scal_g}{n}\Delta f
\end{split}
\end{equation*}
for all $f\in C^\infty (M)$.
\end{proof}

By the Bochner formula, we get the following lemma immediately.
\begin{Lem}\label{p6b}
Let $(M,g)$ be an $n$-dimensional closed Riemannian manifold.
For any smooth function $f\in C^\infty(M)$, we have
\begin{equation*}
\|\nabla^2 f +\frac{\Delta f}{n} g\|^2_2
=\frac{n-1}{n}\|\Delta f\|^2_2 -\frac{1}{\Vol(M)}\int_M \Ric(\nabla f,\nabla f)\,d\mu_g.
\end{equation*}
\end{Lem}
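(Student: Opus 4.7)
The plan is a short computation that combines a pointwise algebraic identity with the integrated Bochner formula recorded earlier in the section.

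First I would expand the squared norm pointwise. Under the paper's sign convention $\Delta = -\tr_g \nabla^2$ on functions, one has $\langle \nabla^2 f, g\rangle = \tr_g \nabla^2 f = -\Delta f$, while $|g|^2 = n$. Hence the cross term contributes $-2(\Delta f)^2/n$ and the $|g|^2$ term contributes $(\Delta f)^2/n$, giving the pointwise identity
\begin{equation*}
\Bigl|\nabla^2 f + \tfrac{\Delta f}{n}\, g\Bigr|^2 = |\nabla^2 f|^2 - \tfrac{1}{n}(\Delta f)^2.
\end{equation*}

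Next I would integrate this equality over $M$, normalize by $\Vol(M)$, and invoke the integrated Bochner identity stated immediately after the Bochner theorem to rewrite $\|\nabla^2 f\|_2^2$ in terms of $\|\Delta f\|_2^2$ and $\frac{1}{\Vol(M)}\int_M \Ric(\nabla f, \nabla f)\,d\mu_g$. The two contributions of $\|\Delta f\|_2^2$ combine as $1 - \tfrac{1}{n} = \tfrac{n-1}{n}$, producing the claimed identity.

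There is no genuine obstacle — the paper itself flags the lemma as an immediate consequence of Bochner. The only care needed is keeping track of the sign of $\tr_g \nabla^2 f$ correctly under the chosen Laplacian convention when computing the cross term in the pointwise expansion.
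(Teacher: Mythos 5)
Your proposal is correct and is exactly the argument the paper intends: the pointwise expansion $|\nabla^2 f+\frac{\Delta f}{n}g|^2=|\nabla^2 f|^2-\frac{1}{n}(\Delta f)^2$ (with the cross term's sign handled as you do), followed by the integrated Bochner identity; the paper offers no further proof, calling the lemma immediate. One caution about your citation: the display printed right after the Bochner theorem reads $\int_M(\Delta f)^2\,d\mu_g=\int_M|\nabla^2 f|^2\,d\mu_g-\int_M\Ric(\nabla f,\nabla f)\,d\mu_g$, which has the Ricci term on the wrong side (a typo in the paper); the correct relation, and the one actually used elsewhere (e.g.\ in (\ref{1g}) and in the proof of Lemma \ref{p2a}(v)), is $\|\nabla^2 f\|_2^2=\|\Delta f\|_2^2-\frac{1}{\Vol(M)}\int_M\Ric(\nabla f,\nabla f)\,d\mu_g$, and quoting the printed display verbatim would produce $+\frac{1}{\Vol(M)}\int_M\Ric(\nabla f,\nabla f)\,d\mu_g$ in your final formula rather than the minus sign you (correctly) claim. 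So derive the integrated identity directly, by pairing the Bochner formula $\Delta\,df=\nabla^\ast\nabla\,df+\Ric(\nabla f,\cdot)$ with $df$ and integrating, instead of invoking that display as stated.
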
 

We show that the pinching condition ``$\|\nabla^2 f+\frac{\Delta f}{n}g\|_2$ is small'' implies more strong pinching condition ``$\|\nabla^2 f+\frac{\Scal_g}{n(n-1)} f g\|_2$ is small'' for the Riemannian manifolds with constant scalar curvature.
The following theorem and Main Theorem 1' immediately imply Main Theorem 2.
\begin{Thm}\label{p6c}
Given an integer $n\geq 2$ and positive real numbers $K>0$ and $D>0$, there exist positive constants $\eta(n,K,D)>0$ and $C(n,K,D)>0$ such that the following property holds.
Take a positive real number $0<\delta\leq \eta$.
Let $(M,g)$ be an $n$-dimensional closed Riemannian manifold with constant scalar curvature, $-Kg\leq \Ric\leq Kg$ and $\diam(M)\leq D$.
For all smooth function $f\in C^\infty(M)$ such that $\int_M f\,d\mu_g=0$ and
\begin{equation*}
\|\nabla^2 f+\frac{\Delta f}{n} g\|_2 \leq \delta \|\Delta f\|_2,
\end{equation*}
we have
\begin{equation*}
\|\nabla^2 f+\frac{\Scal_g}{n(n-1)} f g\|_2 \leq C \delta^\frac{1}{2} \|\Delta f\|_2.
\end{equation*}
\end{Thm}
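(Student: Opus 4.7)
My plan is to reduce the theorem to an $L^2$-estimate on $\psi := \Delta f - \mu f$, where $\mu := \Scal_g/(n-1)$. Setting $T := \nabla^2 f + \tfrac{\Delta f}{n}g$, the pointwise identity
\[
\nabla^2 f + \tfrac{\Scal_g}{n(n-1)}fg = T - \tfrac{\psi}{n}g
\]
holds. Since $\tr_g T = 0$, the cross term vanishes upon squaring and $|\nabla^2 f + \tfrac{\Scal_g}{n(n-1)}fg|^2 = |T|^2 + \tfrac{1}{n}\psi^2$. Integrating and using the hypothesis $\|T\|_2 \leq \delta\|\Delta f\|_2$, the task reduces to showing $\|\psi\|_2^2 \leq C(n,K,D)\,\delta\,\|\Delta f\|_2^2$.

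Next I combine Lemma \ref{p6a} (multiplied by $f$ and integrated) with Lemma \ref{p6b}. Writing $L := \|\Delta f\|_2^2$, $N := \|\nabla f\|_2^2$, $F := \|f\|_2^2$, this yields the key identity
\[
\tfrac{n-1}{n}(L - \mu N) = \|T\|_2^2 + \tfrac{1}{\Vol(M)}\int_M f\langle\Ric, T\rangle\,d\mu_g.
\]
The last term is bounded in absolute value by $\sqrt{n}K\delta\sqrt{FL}$ via Cauchy--Schwarz. The Li--Yau estimate $\lambda_1(g) \geq C(n,K,D)>0$, together with the Poincar\'e inequality $\lambda_1 F \leq N$ and the Cauchy--Schwarz bound $N \leq \sqrt{FL}$, gives $F \leq L/\lambda_1^2 \leq C L$, from which
\[
|L - \mu N| \leq C(n,K,D)\,\delta\,L.
\]

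Finally I expand $\|\psi\|_2^2 = (L - \mu N) + \mu(\mu F - N)$. Using $\mu F - N = -\tfrac{1}{\Vol(M)}\int_M f\psi\,d\mu_g$ and Cauchy--Schwarz, $|\mu F - N| \leq \sqrt{F}\|\psi\|_2$, yielding the quadratic inequality $\|\psi\|_2^2 \leq C\delta L + \mu\sqrt{F}\|\psi\|_2$. The main obstacle is that this inequality alone only forces $\|\psi\|_2 \leq C\mu\sqrt{F}$, which can be comparable to $\sqrt{L}$ when $\lambda_1(g)$ is below $\mu$. To close the bootstrap with the claimed $\sqrt{\delta}$ power, I expect to need a finer spectral argument: applying Lemma \ref{p6a} to individual eigenfunctions $\phi_j$ shows that the pinching $\|T\|_2 \leq \delta\sqrt{L}$ already forces $|\lambda_j - \mu| \leq C(n,K,D)\delta$ for pure eigencomponents, which combined with a spectral-gap argument (using the discreteness of the spectrum below $\mu$ controlled by $K$ and $D$) should eliminate substantial Fourier mass of $f$ on eigenspaces with $\lambda_j$ far from $\mu$, thereby completing the proof.
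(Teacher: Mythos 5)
Your reduction to bounding $\|\psi\|_2$ with $\psi := \Delta f - \tfrac{\Scal_g}{n-1}f$ is correct, and so is the identity
\[
\tfrac{n-1}{n}(L-\mu N) = \|T\|_2^2 \mp \tfrac{1}{\Vol(M)}\int_M f\langle\Ric,T\rangle\,d\mu_g,
\]
obtained by pairing Lemma \ref{p6a} against $f$ and combining with Lemma \ref{p6b}. You also correctly diagnose that this yields only $\|\psi\|_2^2 \leq C\delta L + \mu\sqrt{F}\|\psi\|_2$, which does not close. (In the paper's notation, $\psi = \Delta h$ for the auxiliary function $h$ with $\int_M h\,d\mu_g = 0$ and $\Delta h = \Delta f - \tfrac{\Scal_g}{n-1}f$.)

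The spectral-gap repair you propose, however, is not a valid argument and cannot be made to work in the form stated. The hypothesis $\|T\|_2\leq\delta\|\Delta f\|_2$ is a condition on the single function $f=\sum_j\alpha_j\phi_j$; since $T=\sum_j\alpha_j\left(\nabla^2\phi_j+\tfrac{\lambda_j}{n}\phi_j g\right)$ and the individual tensors $\nabla^2\phi_j+\tfrac{\lambda_j}{n}\phi_j g$ are \emph{not} mutually $L^2$-orthogonal (their cross inner product equals $-\int_M\Ric(\nabla\phi_i,\nabla\phi_j)\,d\mu_g$, which need not vanish), the pinching does not decouple across eigenmodes. Thus ``applying Lemma \ref{p6a} to individual eigenfunctions $\phi_j$'' is not justified by the hypothesis, and the conclusion $|\lambda_j-\mu|\leq C\delta$ for modes with $\alpha_j\neq0$ does not follow. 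Nor is there a uniform spectral-gap estimate for the Laplacian coming only from $-Kg\leq\Ric\leq Kg$ and $\diam\leq D$.

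The ingredient you are missing is a variational (Cauchy--Schwarz) argument on the positive-semidefinite bilinear form $Q(u,v):=\tfrac{n-1}{n}\langle\Delta u,\Delta v\rangle_{L^2}-\langle\Ric\nabla u,\nabla v\rangle_{L^2}$, whose diagonal $Q(u,u)=\|\nabla^2 u+\tfrac{\Delta u}{n}g\|_2^2\geq 0$ is exactly Lemma \ref{p6b}. Pairing $Q$ against $f$ alone gives your quantity $L-\mu N$; the crucial extra piece is a bound on the cross term $Q(f,h)$. The paper obtains it by applying Lemma \ref{p6b} to $f+th$ and setting $t=-\delta$, which gives (6b); then Lemma \ref{p6a} paired against $h$ (the paper's (6c)) identifies $Q(f,h)$ with $\tfrac{n-1}{n}\|\Delta h\|_2^2$ up to an error of size $O(\delta\|\Delta f\|_2\|\Delta h\|_2)$. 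Combining yields $\|\Delta h\|_2\leq C\sqrt{\delta}\,\|\Delta f\|_2$, which your approach cannot extract from $|L-\mu N|\leq C\delta L$ alone.
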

\begin{proof}
Note that we have $|\Ric|\leq n^\frac{1}{2}K$ and the Li-Yau estimate $\lambda_1\geq C(n,K,D)>0$ \cite[p.116]{SY}.

By Lemma \ref{p6b}, we have
\begin{equation}\label{6a}
\frac{1}{\Vol(M)}\int_M \Ric(\nabla f,\nabla f)\,d\mu_g\geq \left(\frac{n-1}{n}-\delta^2\right)\|\Delta f\|^2_2.
\end{equation}
Take a smooth function $h\in C^\infty(M)$ such that $\int_M h\,d\mu_g=0$ and $\Delta h=\Delta f-\frac{\Scal_g}{n-1}f$.
By Lemma \ref{p6b} and (\ref{6a}), for all $t\in \mathbb{R}$, we have
\begin{equation*}
\begin{split}
&\frac{n-1}{n}\left(\|\Delta f\|_2^2+\frac{2t}{\Vol(M)}\int_M \Delta f\Delta h\,d\mu_g +t^2\|\Delta h\|_2^2\right)\\
=&\frac{n-1}{n}\|\Delta( f+th)\|_2^2\\
\geq&\frac{1}{\Vol(M)}\int_M\Ric(\nabla f,\nabla f)\,d\mu_g\\
\qquad \qquad &+\frac{2t}{\Vol(M)}\int_M\Ric(\nabla f,\nabla h)\,d\mu_g+\frac{t^2}{\Vol(M)}\int_M\Ric(\nabla h,\nabla h)\,d\mu_g\\
\geq&\left(\frac{n-1}{n}-\delta^2\right)\|\Delta f\|^2_2 \\
\qquad \qquad &+\frac{2t}{\Vol(M)}\int_M\Ric(\nabla f,\nabla h)\,d\mu_g+\frac{t^2}{\Vol(M)}\int_M\Ric(\nabla h,\nabla h)\,d\mu_g.
\end{split}
\end{equation*}
Putting $t=-\delta$, we get
\begin{equation}\label{6b}
\begin{split}
&\frac{n-1}{n}\frac{1}{\Vol(M)}\int_M \Delta f\Delta h\,d\mu_g-\frac{1}{\Vol(M)}\int_M\Ric(\nabla f,\nabla h)\,d\mu_g\\
\leq& \frac{\delta}{2} \|\Delta f\|_2^2+\frac{\delta}{2}\frac{n-1}{n}\|\Delta h\|_2^2-\frac{\delta}{2}\frac{1}{\Vol(M)}\int_M\Ric(\nabla h,\nabla h)\,d\mu_g\\
\leq& C\delta\|\Delta f\|_2^2+C\delta\|\Delta h\|_2^2.
\end{split}
\end{equation}
By Lemma \ref{p6a}, we have
\begin{equation}\label{6c}
\begin{split}
&\frac{1}{\Vol(M)}\int_M\Ric(\nabla f,\nabla h)\,d\mu_g\\
=&\frac{1}{\Vol(M)}\int_M -g\left(\Ric,\nabla^2 f+\frac{\Delta f}{n}g\right)h+\frac{\Scal_g}{n} h\Delta f\,d\mu_g\\
\leq&\frac{1}{\Vol(M)}\int_M \frac{\Scal_g}{n} h\Delta f \,d\mu_g+n^\frac{1}{2}K\|\nabla^2 f+\frac{\Delta f}{n} g\|_2\|h\|_2\\
\leq&\frac{1}{\Vol(M)}\int_M \frac{\Scal_g}{n} h\Delta f\,d\mu_g+C\delta\|\Delta f\|_2\|\Delta h\|_2\\
\leq&\frac{1}{\Vol(M)}\int_M \frac{\Scal_g}{n} h\Delta f\,d\mu_g+C\delta\|\Delta f\|_2^2+C\delta\|\Delta h\|_2^2 .
\end{split}
\end{equation}
By (\ref{6b}), (\ref{6c}) and
\begin{equation*}
\|\Delta h\|_2^2=
\frac{1}{\Vol(M)}\int_M \Delta f\Delta h\,d\mu_g-\frac{1}{\Vol(M)}\int_M \frac{\Scal_g}{n-1} h\Delta f \,d\mu_g,
\end{equation*}
we get
\begin{equation*}
\left(\frac{n-1}{n}-C \delta\right)\|\Delta h\|_2^2
\leq C\delta \|\Delta f\|_2^2.
\end{equation*}
Thus, we get
\begin{equation*}
\begin{split}\|\nabla^2 f+\frac{\Scal_g}{n(n-1)} f g\|_2 
\leq &\|\nabla^2 f+\frac{\Delta f}{n}g\|_2 + \frac{1}{\sqrt{n}}\|\Delta h\|_2
\leq C \delta^\frac{1}{2}\|\Delta f\|_2.
\end{split}
\end{equation*}
\end{proof}
\begin{Rem}
If in addition $C\delta^\frac{1}{2}\leq \frac{1}{2\sqrt{n}}$ holds in Theorem \ref{p6c}, we have $\Scal_g\geq n(n-1)C_{\Ca}>0$ by Lemma \ref{p2a}.
Thus, we normalize the metric so that $\Scal_g=n(n-1)$.
Theorem \ref{p6c} shows that Main Theorem 1' implies Main Theorem 2.
Note that if $\Scal_g=n(n-1)$ and $\Ric\geq -Kg$, then $\Ric\leq (n+K)(n-1)$.
\end{Rem}

By Lemma \ref{p6b}, Theorem \ref{p6c} and Main Theorem 1, we get Main Theorem 2.
Finally, we write down the statement of unnormalized version of Main Theorem 2.
\begin{Thm}\label{unno}
Given an integer $n\geq 2$ and positive real numbers $\epsilon>0$, $K>0$ and $D>0$, there exists $\delta(n,K,D,\epsilon)>0$ such that if $(M,g)$ is an $n$-dimensional closed Riemannian manifold with constant scalar curvature, $-K g\leq \Ric \leq Kg$, $\diam(M)\leq D$ and $\Omega_{n}(g)\geq \frac{n-1}{n}-\delta^2$, then we have $\Scal_g>0$ and
$d_{GH}(M,S^n(r))\leq \epsilon$, where we put $r=\left(n(n-1)/\Scal_g\right)^\frac{1}{2}$.
\end{Thm}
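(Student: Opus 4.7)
\textbf{Proof plan for Theorem \ref{unno}.}
The plan is to reduce Theorem \ref{unno} to the already-proved Main Theorem 2 by a constant conformal rescaling, after first establishing that $\Scal_g>0$ with quantitative bounds depending only on $n,K,D$.

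First, from the hypothesis $\Omega_n(g)\geq\frac{n-1}{n}-\delta^2$ I would pick any non-zero $f$ in an $n$-dimensional subspace of $H$ (approximately) realising the infimum; by Lemma \ref{p6b} it satisfies
\[
\|\nabla^2 f+\tfrac{\Delta f}{n}g\|_2\leq\delta\|\Delta f\|_2.
\]
Using the hypotheses $-Kg\leq\Ric\leq Kg$, $\diam(M)\leq D$ and constant scalar curvature, Theorem \ref{p6c} (applied with $\delta\leq\eta(n,K,D)$) upgrades this to
\[
\|\nabla^2 f+c f g\|_2\leq C(n,K,D)\,\delta^{1/2}\|\Delta f\|_2,\qquad c:=\tfrac{\Scal_g}{n(n-1)}.
\]
Taking $\delta$ small enough that $C\delta^{1/2}\leq\tfrac{1}{2\sqrt{n}}$, Lemma \ref{p2a}(iii)--(v) then yields $0<C_{\Ca}(n,K,D)\leq c\leq C_{\Cb}(n,K)$; in particular $\Scal_g>0$.

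Next I would rescale to $\tilde g:=c\,g$. Since $c$ is a constant, $\Scal_{\tilde g}=c^{-1}\Scal_g=n(n-1)$, $\Ric_{\tilde g}\geq-(K/C_{\Ca})\tilde g$, and $\diam(M,\tilde g)\leq\sqrt{C_{\Cb}}\,D$. Moreover $\Omega_n$ is manifestly scale-invariant under constant conformal change: both $\int_M\Ric(\nabla f,\nabla f)\,d\mu$ and $\int_M(\Delta f)^2\,d\mu$ transform by the same factor $c^{-2+n/2}$, and the mean-zero condition defining $H$ is unaffected. Hence $\Omega_n(\tilde g)=\Omega_n(g)\geq\frac{n-1}{n}-\delta^2$.

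Given the target $\epsilon>0$, set $\epsilon':=C_{\Ca}^{1/2}\epsilon$ and let $\delta_0$ be the threshold furnished by Main Theorem 2 for the parameters $(n,\,K/C_{\Ca},\,\sqrt{C_{\Cb}}D,\,\epsilon')$. Choosing $\delta$ smaller than $\delta_0$ and the previous thresholds, Main Theorem 2 applied to $(M,\tilde g)$ yields $d_{GH}((M,\tilde g),S^n)\leq\epsilon'$. Writing $r:=c^{-1/2}=(n(n-1)/\Scal_g)^{1/2}$, the elementary rescaling identity $d_{GH}((M,g),S^n(r))=r\,d_{GH}((M,\tilde g),S^n)$ together with $r\leq C_{\Ca}^{-1/2}$ gives
\[
d_{GH}(M,S^n(r))\leq r\,\epsilon'\leq C_{\Ca}^{-1/2}\cdot C_{\Ca}^{1/2}\epsilon=\epsilon,
\]
as required. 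The argument contains no substantive obstacle: all the hard analytic input is encapsulated in Theorem \ref{p6c} and the normalized Main Theorem 2, and the only new ingredients are the scale-invariance bookkeeping for $\Omega_n$ and the quantitative lower bound on $c$ coming from Lemma \ref{p2a}(iv).
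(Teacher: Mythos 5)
Your proposal is correct and is essentially the paper's own (implicit) argument: convert the $\Omega_n$-hypothesis into Hessian pinching via Lemma \ref{p6b} and Theorem \ref{p6c}, get $0<C_{\Ca}\leq c=\Scal_g/(n(n-1))\leq C_{\Cb}$ from Lemma \ref{p2a} (iii)--(v), and rescale to $\tilde g=cg$ so that the normalized Main Theorem 2 (equivalently the unnormalized Main Theorem 1' via Corollary \ref{p2b}) applies, using the scale invariance of $\Omega_n$ and of the Gromov--Hausdorff distance under simultaneous rescaling. The only small slip is the phrase ``pick any non-zero $f$'': with the inf--sup definition of $\Omega_n$ as printed one should take $f$ maximizing the Rayleigh-type quotient within the chosen $n$-dimensional subspace (such a maximizer exists since $\Delta f\neq 0$ for non-zero mean-free $f$), which is all that is needed for the bounds on $c$.
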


\section{Almost umbilical manifolds}
In this section, we consider the relationship between the pinching condition on $\lambda_k(\bar{\Delta}^E)$ and almost umbilical manifolds, and show Main Theorem 3, which asserts that the almost umbilical manifold is close to the standard sphere under some geometrical assumption.
\subsection{Estimates by the extrinsic constants}
In this subsection, we investigate the relationship between the Riemannian invariants $\lambda_k(\bar{\Delta}^E)$ or $\Omega_k(g)$ and some extrinsic constants defined using the second fundamental form $B$ or the mean curvature $H$, e.g., $\|H\|_2$.
In particular Proposition \ref{um4}, which asserts that $\lambda_{n+1}(\bar{\Delta}^E)$ is small for $n$-dimensional almost umbilical manifolds, plays an important role to prove Main Theorem 3.
In this subsection we do not assume neither $\Ric\geq -Kg$ nor $\diam(M)\leq D$.

\begin{notation}
Let \((M,g)\) be an $n$-dimensional Riemannian manifold and $\iota \colon (M,g)\to \mathbb{R}^{n+k}$ an isometric immersion.
Let $\nabla$ and $\nabla^0$ be the Levi-Civita connection  on $(M,g)$ and $\mathbb{R}^{n+1}$ respectively.
We define the second fundamental form $B\colon \Gamma(TM)\times \Gamma(TM)\to \Gamma(TM^\perp)$ to be
\begin{equation*}
\nabla ^0 _X Y= \nabla_X Y +B(X,Y)
\end{equation*}
holds for any vector fields $X,Y\in\Gamma(T M)$.
Note that the second fundamental form defines a section of $T^\ast M\otimes T^\ast M\otimes TM^\perp$, i.e., $B\in\Gamma(T^\ast M\otimes T^\ast M\otimes TM^\perp)$.
Define $H:=-\frac{1}{n}\tr_g B\in\Gamma(TM^\perp)$.
Let $\langle\cdot,\cdot\rangle_0$ denotes the standard inner product on $\mathbb{R}^{n+k}$.
The immersion $\iota \colon (M,g)\to \mathbb{R}^{n+k}$ is called totally umbilical if $B+g\otimes H=0$ holds.
\end{notation}

The following lemma is standard.
\begin{Lem}\label{um1}
Let \((M,g)\) be an $n$-dimensional Riemannian manifold and $\iota \colon (M,g)\to \mathbb{R}^{n+k}$ an isometric immersion.
Take a vector $v\in \mathbb{R}^{n+k}$ and define a function
$f_v\colon M\to \mathbb{R}$ by
\begin{equation*}
f_v(x):=\langle \iota(x), v \rangle_0.
\end{equation*}
Then, we have the following equations:
\begin{itemize}
\item[(i)] $\nabla^2 f_v = \langle B(\cdot,\cdot), v\rangle_0$.
\item[(ii)] $\Delta f_v=n\langle H, v\rangle_0$.
\end{itemize}
\end{Lem}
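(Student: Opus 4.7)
The plan is to compute both identities directly from the Gauss formula $\nabla^0_X Y = \nabla_X Y + B(X,Y)$, treating $\iota$ implicitly as an embedding of $TM$ into $T\mathbb{R}^{n+k}$ via $d\iota$. Both statements are local, so I will work in a neighborhood of an arbitrary point $x \in M$ and extend tangent vectors to local vector fields whenever convenient.

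For (i), the first step is to identify $Y f_v$ for any vector field $Y \in \Gamma(TM)$. Since $f_v(x) = \langle \iota(x), v\rangle_0$, differentiating in the ambient space gives $Y f_v = \langle d\iota(Y), v\rangle_0$, which I will simply write as $\langle Y, v\rangle_0$. Next I use the definition of the Hessian, $\nabla^2 f_v(X,Y) = X(Y f_v) - (\nabla_X Y) f_v$. The first term expands to $X\langle Y, v\rangle_0 = \langle \nabla^0_X Y, v\rangle_0$ because $v$ is parallel in $\mathbb{R}^{n+k}$, and the second term is $\langle \nabla_X Y, v\rangle_0$. Subtracting and applying the Gauss formula yields $\nabla^2 f_v(X,Y) = \langle B(X,Y), v\rangle_0$, which is (i).

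For (ii), I take the $g$-trace of (i) and recall the sign convention fixed in Section 2.1, namely $\Delta = -\tr_g \nabla^2$ on functions. By definition $\tr_g B = -n H$, so
\begin{equation*}
\Delta f_v = -\tr_g \nabla^2 f_v = -\langle \tr_g B, v\rangle_0 = n\langle H, v\rangle_0,
\end{equation*}
giving (ii).

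There is no real obstacle here: the only point that requires care is keeping the sign convention consistent (both the paper's $\Delta = -\tr_g \nabla^2$ and the definition $H = -\frac{1}{n}\tr_g B$ carry minus signs, which must cancel correctly) and being explicit about the identification of $TM$ with a subbundle of $T\mathbb{R}^{n+k}$ so that writing $\langle Y, v\rangle_0$ for $\langle d\iota(Y), v\rangle_0$ is unambiguous. The proof will be short, just a few lines.
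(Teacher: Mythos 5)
Your proof is correct: the paper gives no proof for this lemma (it is stated as standard), and your direct computation via the Gauss formula $\nabla^0_X Y=\nabla_X Y+B(X,Y)$ together with $\nabla^2 f_v(X,Y)=X(Yf_v)-(\nabla_X Y)f_v$ is exactly the expected argument. You also handle the paper's sign conventions ($\Delta=-\tr_g\nabla^2$ and $H=-\frac{1}{n}\tr_g B$) correctly, so the two minus signs cancel as they should.
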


We first consider the relationship between $\Omega_1(g)$ and the extrinsic properties.

\begin{Prop}\label{um2}
Let \((M,g)\) be an $n$-dimensional closed Riemannian manifold and $\iota \colon (M,g)\to \mathbb{R}^{n+k}$ an isometric immersion.
Then, we have
\begin{equation*}
n^2\left(\frac{n-1}{n}-\Omega_1(g)\right)\|H\|_2^2\leq \|B+g\otimes H\|_2^2.
\end{equation*}

\end{Prop}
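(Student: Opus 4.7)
The plan is to apply the defining inequality of $\Omega_1(g)$ to the family of coordinate functions $f_v(x)=\langle\iota(x),v\rangle_0$ arising from the immersion, and then sum the resulting bounds over an orthonormal basis of $\mathbb{R}^{n+k}$ via a Parseval-type identity.

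As a preliminary reduction I would translate $\iota$ so that $\int_M \iota\,d\mu_g=0$; this does not alter $B$ or $H$, and it ensures $f_v\in H$ for every $v\in\mathbb{R}^{n+k}$. Fix an orthonormal basis $\{e_\alpha\}_{\alpha=1}^{n+k}$ of $\mathbb{R}^{n+k}$ and write $f_\alpha:=f_{e_\alpha}$. Lemma \ref{um1} then gives pointwise
\begin{equation*}
\nabla^2 f_\alpha + \tfrac{\Delta f_\alpha}{n} g = \langle B+g\otimes H,\,e_\alpha\rangle_0,\qquad \Delta f_\alpha = n\langle H,e_\alpha\rangle_0.
\end{equation*}
The pointwise Parseval identity $|T|^2=\sum_\alpha|\langle T,e_\alpha\rangle_0|^2$ for $\mathbb{R}^{n+k}$-valued tensors $T$, applied to $T=B+g\otimes H$ and $T=H$ and integrated over $M$, yields
\begin{equation*}
\sum_{\alpha=1}^{n+k}\bigl\|\nabla^2 f_\alpha+\tfrac{\Delta f_\alpha}{n}g\bigr\|_2^2=\|B+g\otimes H\|_2^2,\qquad \sum_{\alpha=1}^{n+k}\|\Delta f_\alpha\|_2^2=n^2\|H\|_2^2.
\end{equation*}

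The analytic input is the variational bound: by the definition of $\Omega_1(g)$, for each $\alpha$ with $\Delta f_\alpha\not\equiv 0$ one has
\begin{equation*}
\frac{\int_M \Ric(\nabla f_\alpha,\nabla f_\alpha)\,d\mu_g}{\int_M(\Delta f_\alpha)^2\,d\mu_g}\leq \Omega_1(g),
\end{equation*}
so Lemma \ref{p6b} gives
\begin{equation*}
\bigl\|\nabla^2 f_\alpha+\tfrac{\Delta f_\alpha}{n}g\bigr\|_2^2\geq \Bigl(\tfrac{n-1}{n}-\Omega_1(g)\Bigr)\|\Delta f_\alpha\|_2^2.
\end{equation*}
When $\Delta f_\alpha\equiv 0$, the mean-zero condition forces $f_\alpha\equiv 0$ (on a closed connected manifold) and the inequality is $0\geq 0$.

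Summing this inequality over $\alpha$ and inserting the two Parseval identities from Step~2 produces exactly
\begin{equation*}
\|B+g\otimes H\|_2^2\geq n^2\Bigl(\tfrac{n-1}{n}-\Omega_1(g)\Bigr)\|H\|_2^2,
\end{equation*}
as desired. I do not anticipate any serious obstacle: the proposition is essentially the Bochner/Reilly identity applied to linear-in-ambient-space test functions, with the Parseval summation converting a single-function variational bound into an extrinsic curvature inequality. The only care needed is the centering step (so that $f_\alpha\in H$) and the trivial book-keeping of the case $\Delta f_\alpha\equiv 0$.
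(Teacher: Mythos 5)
Your proof is correct and follows essentially the same approach as the paper: apply the definition of $\Omega_1(g)$ together with Lemma \ref{p6b} and Lemma \ref{um1} to the coordinate functions $f_v$ to get a per-direction inequality, then sum over an orthonormal basis of $\mathbb{R}^{n+k}$ via the pointwise Parseval identity. Your version is slightly more careful than the paper's in spelling out the centering step (so that $f_\alpha\in H$) and the degenerate case $\Delta f_\alpha\equiv 0$, both of which the paper leaves implicit.
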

\begin{proof}
By the definition of $\Omega_1(g)$, Lemma \ref{p6b} and Lemma \ref{um1}, we have
\begin{equation*}
\begin{split}
n^2\left(\frac{n-1}{n}-\Omega_1(g)\right)\|\langle H, v \rangle_0\|_2^2&=
\left(\frac{n-1}{n}-\Omega_1(g)\right)\|\Delta f_v\|_2^2\\
&\leq\|\nabla^2 f_v+\frac{\Delta f_v}{n} g\|_2= \|\langle B+g\otimes H,v\rangle_0\|_2^2,
\end{split}
\end{equation*}
for all $v\in \mathbb{R}^{n+k}$.
Let $\{v_1,v_2,\ldots,v_{n+k}\}$ be the orthonormal basis on $\mathbb{R}^{n+k}$.
Since, we have
\begin{equation*}
\begin{split}
\sum_{i=1}^{n+k} \|\langle H, v_i \rangle_0\|_2^2&=\|H\|_2^2,\\
\sum_{i=1}^{n+k} \|\langle B + g\otimes H,v_i\rangle_0\|_2^2&=\|B + g\otimes H\|_2^2,
\end{split}
\end{equation*}
we get the proposition.
\end{proof}

We get the following corollary immediately.
\begin{Cor}\label{um3}
Let \((M,g)\) be an $n$-dimensional closed Riemannian manifold and $\iota \colon (M,g)\to \mathbb{R}^{n+k}$ a isometric immersion.
If $\|B+g\otimes H\|_2^2\leq \delta \|H\|_2^2$ for some $\delta>0$, then we have
\begin{equation*}
\Omega_1(g)\geq \frac{n-1}{n}-\frac{\delta}{n^2}.
\end{equation*}
\end{Cor}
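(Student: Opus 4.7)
The proof will be essentially immediate from Proposition \ref{um2}. The plan is to simply substitute the hypothesis into the inequality
$$
n^2\left(\frac{n-1}{n}-\Omega_1(g)\right)\|H\|_2^2\leq \|B+g\otimes H\|_2^2
$$
supplied by that proposition. Combining with $\|B+g\otimes H\|_2^2\leq \delta\|H\|_2^2$ gives $n^2(\frac{n-1}{n}-\Omega_1(g))\|H\|_2^2\leq \delta\|H\|_2^2$, and dividing through by $n^2\|H\|_2^2$ yields the stated lower bound on $\Omega_1(g)$.

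The only point requiring care is ensuring $\|H\|_2>0$, so that the division is legitimate. Here I would appeal to Lemma \ref{um1}(ii): if $H\equiv 0$, then $\Delta f_v=0$ for every linear coordinate function $f_v=\langle \iota(\cdot),v\rangle_0$, which on the closed manifold $M$ forces each $f_v$ to be constant, contradicting the existence of the immersion $\iota$ into $\mathbb{R}^{n+k}$. Hence $H\not\equiv 0$ and $\|H\|_2>0$. (In the edge case where $\|H\|_2=0$, the hypothesis would collapse to $B\equiv 0$, and the same coordinate-function argument rules this out.) I do not expect any real obstacle in carrying this out; the corollary is a one-line deduction from Proposition \ref{um2}, with the nondegeneracy of $\|H\|_2$ being the only technicality.
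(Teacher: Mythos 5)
Your proposal is correct and matches the paper, which obtains the corollary immediately from Proposition \ref{um2} by exactly this substitution and division. Your additional remark that $\|H\|_2>0$ (since $H\equiv 0$ would make every coordinate function $f_v$ harmonic, hence constant on the closed manifold, contradicting the immersion) is a sound way to justify the division, which the paper leaves implicit.
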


We next consider the case of immersed hypersurfaces, i.e., $k=1$.
\begin{notation}
Let \((M,g)\) be an $n$-dimensional oriented closed Riemannian manifold and $\iota \colon (M,g)\to \mathbb{R}^{n+1}$ an isometric immersion.
Fix the unit normal vector field $\nu\in\Gamma(TM^\perp)$.
Define a section $A\in\Gamma(T^\ast M\otimes TM)$ to be
\begin{equation*}
g(A(X),Y)=-\langle B(X,Y),\nu_x \rangle_0
\end{equation*}
for any $x\in M$ and $X,Y\in T_x M$.
Define $h:=\frac{1}{n}\tr A\in C^\infty(M)$.
The immersion $\iota \colon (M,g)\to \mathbb{R}^{n+1}$ is totally umbilical if and only if
$A- h\Id =0$ holds.
\end{notation}

Let us show that $\lambda_{n+1}(\bar{\Delta}^E)$ is small for the almost umbilical manifold in the sense that the quantity $\|A-\Id \|_2$ is small.
\begin{Prop}\label{um4}
Let \((M,g)\) be an $n$-dimensional oriented closed Riemannian manifold and $\iota \colon (M,g)\to \mathbb{R}^{n+1}$ an isometric immersion.
Then, we have
\begin{equation*}
\lambda_{n+1}(\bar{\Delta}^E)\leq 2\|A- \Id \|_2^2.
\end{equation*}
\end{Prop}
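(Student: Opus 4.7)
The plan is to apply the min-max principle for $\lambda_{n+1}(\bar{\Delta}^E)$ to an $(n+1)$-dimensional subspace of $\Gamma(E)$ built from the ambient $\mathbb{R}^{n+1}$. Fixing a global unit normal $\nu$ along $\iota$ (available by orientability), I associate to each constant vector $v\in\mathbb{R}^{n+1}$ the section
\[
S_v:=v^T+\langle v,\nu\rangle_0\, e\in\Gamma(E),
\]
where $v^T$ is the tangential component of $v$ at each point. The pointwise decomposition $\mathbb{R}^{n+1}=T_xM\oplus\mathbb{R}\nu(x)$ identifies $v\mapsto S_v(x)$ with a linear isometry onto $E_x=T_xM\oplus\mathbb{R}e$; hence for any orthonormal basis $\{v_1,\dots,v_{n+1}\}$ of $\mathbb{R}^{n+1}$, the sections $\{S_{v_i}\}$ are pointwise orthonormal, and therefore span an $(n+1)$-dimensional subspace $W\subset\Gamma(E)$ with $\|S_v\|_2^2=|v|^2$ for every $v$.

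The core of the proof is the computation of $\nabla^E S_v$. Using $\nabla^0 v=0$ together with the Weingarten identity $\nabla^0_X\nu=A(X)$, decomposing $0=\nabla^0_X(v^T+\langle v,\nu\rangle_0\nu)$ into tangential and normal parts yields the two relations
\[
\nabla_X v^T=-\langle v,\nu\rangle_0\, A(X),\qquad X\langle v,\nu\rangle_0=g(A(X),v^T).
\]
Substituting these into the defining formula $\nabla^E_X(Z+fe)=\nabla_X Z+fX+(Xf-g(X,Z))e$, the two correction terms $fX$ and $-g(X,Z)e$ shift $A$ to $A-\Id$ in the tangential and normal components respectively, giving (after using self-adjointness of $A$)
\[
\nabla^E_X S_v=-\langle v,\nu\rangle_0\,(A-\Id)X+g(X,(A-\Id)v^T)\,e.
\]
Summing $|\nabla^E_{e_i}S_v|^2$ over an orthonormal frame of $TM$ produces the identity $|\nabla^E S_v|^2=\langle v,\nu\rangle_0^2|A-\Id|^2+|(A-\Id)v^T|^2$, and the crude pointwise bounds $\langle v,\nu\rangle_0^2\leq|v|^2$ and $|(A-\Id)v^T|^2\leq|A-\Id|^2|v|^2$ yield the estimate $|\nabla^E S_v|^2\leq 2|A-\Id|^2|v|^2$.

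Integrating this over $M$ and dividing by $\|S_v\|_2^2=|v|^2$ gives $\|\nabla^E S_v\|_2^2/\|S_v\|_2^2\leq 2\|A-\Id\|_2^2$ uniformly in nonzero $v\in\mathbb{R}^{n+1}$, and applying the Rayleigh (min-max) characterization to the subspace $W$ delivers the proposition. The only non-mechanical step is the derivation of the formula for $\nabla^E_X S_v$: it is not a priori obvious that the $-g(X,Z)e$ and $fX$ corrections in the definition of $\nabla^E$ should combine with the Weingarten relation so as to replace $A$ by $A-\Id$ in both components, thereby exposing the umbilical defect $A-\Id$ (rather than $A$ itself) as the natural obstruction to $S_v$ being parallel. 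Once this identity is in hand, the remainder is routine linear algebra and an application of the min-max principle.
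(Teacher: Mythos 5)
Your proof is correct and is essentially the paper's argument: the test sections $S_v=v^T+\langle v,\nu\rangle_0 e$ are exactly the paper's $S_v=\nabla f_v+\langle\nu,v\rangle_0 e$ (since $v^T=\nabla f_v$ for $f_v=\langle\iota(\cdot),v\rangle_0$), and the computation of $\nabla^E S_v$, the bound $\|\nabla^E S_v\|_2^2\leq 2\|A-\Id\|_2^2\,|v|^2$ with $\|S_v\|_2^2=|v|^2$, and the application of the Rayleigh principle to the $(n+1)$-dimensional image all coincide with the paper's proof. The only cosmetic difference is that you derive the key identity by splitting $\nabla^0 v=0$ via the Weingarten relation rather than quoting the Hessian formula for $f_v$ (Lemma \ref{um1}), which is the same computation in different packaging.
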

\begin{proof}
For each vector $v\in \mathbb{R}^{n+1}$, we define a section $S_v\in \Gamma(E_M)$ by
\begin{equation*}
S_v:= \nabla f_v + \langle \nu, v \rangle_0 e.
\end{equation*}
Since we have $\nabla \langle \nu, v \rangle_0=\langle A(\cdot), v \rangle_0$ and $\nabla^2 f_v=-\langle \nu, v \rangle_0 A$,
we have
\begin{equation*}
\nabla^E S_v=-\langle \nu, v \rangle_0 (A-\Id)+\langle A(\cdot)-\Id(\cdot), v \rangle_0\otimes e.
\end{equation*}
Thus, we get
$\|\nabla^E S_v\|^2_2\leq 2|v|^2\|A-\Id\|^2_2$.
Since $|S_v|^2=|\nabla f_v|^2+\langle \nu, v \rangle_0^2=|\nabla^0 f_v|^2=|v|^2$,
we have $\|S_v\|_2^2=|v|^2$.
Therefore, we get
\begin{equation*}
\|\nabla^E S_v\|_2^2\leq 2\|A-\Id\|^2_2\|S_v\|_2^2.
\end{equation*}

The map $\mathbb{R}^{n+1}\to \Gamma(E_M),\, v\mapsto S_v$ is linear, and the image of the map is $(n+1)$-dimensional.
Thus, by the Rayleigh principle
$$\lambda_{n+1}(\overline{\Delta}^E)=\inf\left\{\sup_{S\in W\setminus\{0\}} \frac{\|\nabla^E S\|_2^2}{\|S\|_2^2}: \text{$W$ is an $(n+1)$-dimensional subspace of $\Gamma(E)$}\right\},$$
we get the proposition.
\end{proof}

\subsection{Application to the almost umbilical manifolds}
In this subsection, we show that the almost umbilical manifold is close to the standard sphere in several senses under some assumptions.

We first show the stability result under the condition that $\|A-\Id\|_2$ is small.
The proof of following theorem was inspired by \cite{RS}.
\begin{Thm}\label{um4.5}
Given an integer $n\geq 2$ and positive real numbers $\epsilon>0$ and $K>0$, there exists a positive constant $\delta(n,K,\epsilon)>0$ such that the following property holds.
Let $(M,g)$ be an $n$-dimensional oriented closed Riemannian manifold with $\Ric\geq -Kg$, and $\iota \colon (M,g)\to \mathbb{R}^{n+1}$ an isometric immersion.
If
\begin{equation*}
\|A- \Id \|_2\leq \delta,
\end{equation*}
then we have the following:
\begin{itemize}
\item[(i)] $d_{GH}(M,S^n)\leq \epsilon$ and $M$ is diffeomorphic to the $n$-dimensional standard sphere.
\item[(ii)] $d_{H}(\iota(M),S_M)\leq \epsilon$, where $S_M$ is defined by
\begin{equation*}
S_M:=\left\{a+\frac{1}{\Vol(M)}\int_M \iota(x) \,d\mu_g(x) \in\mathbb{R}^{n+1}\colon a\in \mathbb{R}^{n+1} \text{ with } |a|=1\right\}.
\end{equation*}
\item[(iii)] There exists a closed subset $E\subset M$ such that $\Vol(M\backslash E)\leq \epsilon \Vol(M)$ and $\iota$ is injective on $E$.
\end{itemize}
\end{Thm}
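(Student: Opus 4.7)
The plan is to exhibit an explicit $(n+1)$-dimensional pinched subspace of $C^\infty(M)$ built from the coordinate functions of $\iota$ and then apply the approximation machinery of sections 2--4 in parallel with the proof of Main Theorem 3, using the fact that the mean curvature is almost constant in $L^2$ to bypass the need for an a priori diameter bound.

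First I would decompose the hypothesis via the pointwise identity
$$|A - \Id|^2 = |A - h\Id|^2 + n(h-1)^2,$$
which yields $\|A - h\Id\|_2 \leq \delta$ and $\|h - 1\|_2 \leq \delta/\sqrt{n}$. In particular $\|h\|_2$ is within $\delta/\sqrt{n}$ of $1$, so $h$ is almost constant in $L^2$ and $\Ric \geq -Kg \geq -2K\|h\|_2 g$ holds. Moreover, Proposition \ref{um4} already gives $\lambda_{n+1}(\bar{\Delta}^E) \leq 2\delta^2$. Next I would verify a $C(n,K)\delta$-pinching condition for the subspace $V := \Span\{\tilde\iota_1, \ldots, \tilde\iota_{n+1}\} \subset C^\infty(M)$, where $\tilde\iota_j := \langle \iota - c, e_j\rangle_0$ and $c$ is the center of mass of $\iota$. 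A direct computation from Lemma \ref{um1} gives, for $\tilde f_v := \langle \iota - c, v\rangle_0$,
$$\nabla^2 \tilde f_v + \tilde f_v g \;=\; (\tilde f_v - \langle \nu, v\rangle_0)\, g + \langle \nu, v\rangle_0 \bigl(g - g(A\cdot, \cdot)\bigr).$$
Taking $L^2$ norms reduces the pinching to controlling $\|A - \Id\|_2 \leq \delta$ and $\|\iota - c - \nu\|_2$; the latter should be bounded by $C(n)\delta$ by integrating by parts against the standard identity $\Delta\iota = nh\nu$, using $\|h - 1\|_2 \leq \delta/\sqrt{n}$ and the fact that $\iota - c$ and $\nu$ both have almost vanishing mean.

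With the pinching on $V$ in hand, I would run the arguments of sections 2--4 (Proposition \ref{p2i}, Proposition \ref{p32c}, Lemma \ref{p43a}) on $V$, using the near-constancy of $h$ as a substitute for the diameter bound needed in Main Theorem 3. This produces a diameter estimate $\diam(M) \leq \pi + o(1)$ as a by-product of Proposition \ref{p4a}, together with the Gromov-Hausdorff approximation $d_{GH}(M,S^n) \leq C(n,K)\delta^{1/(1000n^2)}$; Cheeger-Colding's Theorem 1.5 then upgrades the latter to a diffeomorphism, proving (i). For (ii), the approximation map $\Psi$ built from $V$ in Lemma \ref{p43a} is, up to normalization, precisely $(\iota - c)/|\iota - c|$, and the sup-norm cosine estimates of Proposition \ref{p2i} applied to each $\tilde\iota_j$ force $|\iota - c|$ to be uniformly within $o(1)$ of $1$, placing $\iota(M)$ in a thin annulus around $S_M$. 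For (iii), the degree-$\pm 1$ conclusion of Lemma \ref{p43a} combined with the uniform closeness of $\iota - c$ to $\Psi$ forces $\iota$ to be injective off a small bad set where the Jacobian of $\Psi$ degenerates.

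The main obstacle is that Main Theorem 1' as stated requires $\diam(M) \leq D$, which is not available a priori here. Circumventing this requires the bootstrap via almost-constant $h$ alluded to in remark (v) following Main Theorem 3, together with the delicate $L^2$ estimate $\|\iota - c - \nu\|_2 = O(\delta)$ derived from the Ricci lower bound and $\|h-1\|_2 \leq \delta/\sqrt{n}$ alone, without any extrinsic or intrinsic diameter hypothesis; once that estimate is in place, the remaining arguments of sections 2--4 transfer with only minor modifications.
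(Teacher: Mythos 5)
There is a genuine gap, and it sits exactly at the point you flag as the ``main obstacle.'' The paper's proof does not run the pinching machinery on a hand-built subspace of coordinate functions at all; after invoking Proposition \ref{um4} ($\lambda_{n+1}(\bar{\Delta}^E)\leq 2\|A-\Id\|_2^2$), its one essential new step is to produce the missing diameter bound: from $\|A-h\Id\|_2\leq\delta$, $\|h-1\|_2\leq\delta/\sqrt{n}$ and the Gauss equation one gets the pointwise inequality $\Ric-(n-1)g\geq -(n-1)|h^2-1|-|A-h\Id|^2-(n-2)|h||A-h\Id|$, hence $\|(\Ric-(n-1)g)_-\|_n^n\leq C(n,K)\delta$ (using $\Ric\geq -Kg$ to interpolate $L^1$ against $L^\infty$), and then Aubry's almost-positive-Ricci theorem \cite[Theorem 1.2]{Au2} gives $\diam(M)\leq\pi+C(n,K)\delta^{1/10}$. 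Only after that is Main Theorem 1 applicable, since its $\delta$ depends on $D$. Your proposal replaces this by a ``bootstrap via almost-constant $h$'' and the assertion that sections 2--4 ``transfer with only minor modifications'' without any diameter hypothesis; that is not tenable, because essentially every quantitative input there (Li--Yau, Gromov's eigenvalue bound, the gradient estimate for eigenfunctions, Bishop--Gromov, the segment inequality) is quantified in terms of $D$, and Remark (v) after Main Theorem 3 concerns weakening the Ricci hypothesis, not removing the diameter bound.

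The second unsupported step is the claimed estimate $\|\iota-c-\nu\|_2=O(\delta)$ ``from the Ricci lower bound and $\|h-1\|_2\leq\delta/\sqrt{n}$ alone.'' Integrating by parts against $\Delta\iota=nh\nu$ gives Minkowski-type identities but not this bound; in the paper the analogous extrinsic estimates (used only later, for (ii) and (iii)) come from $\lambda_1(g)\geq n-C(n,K)\delta^{1/n}$ \cite[Proposition 1.5]{Au2} together with \cite[Lemmas 4.1, 4.2]{AG0}, and these again rest on the integral curvature pinching obtained from the Gauss equation. So your construction of the pinched subspace $V$ of coordinate functions (the pointwise identity for $\nabla^2\tilde f_v+\tilde f_v g$ is correct) is circular as written: the estimate it needs is a consequence of the very spectral/diameter control you have not yet established. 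Your outlines of (ii) via $\Psi=\iota/|\iota|$ and of (iii) via a degree and area-formula argument do match the paper's strategy, but they, too, consume the quantitative inputs from \cite{Au2} and \cite{AG0} that the proposal leaves unproven.
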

\begin{proof}
By Proposition \ref{um4}, we have $\lambda_{n+1}(\bar{\Delta}^E)\leq 2\delta^2$.

To apply Main Theorem 1, we estimate the diameter of $(M,g)$.
Since we have
\begin{equation*}
\|A-\Id\|_2^2=\|A-h\Id\|_2^2+n \|h-1\|_2^2,
\end{equation*}
we get
\begin{align}
\label{1umb} \|A-h\Id\|_2&\leq \delta,\\
\label{2umb} \|h-1\|_2&\leq \frac{1}{\sqrt{n}}\delta.
\end{align}
By the Gauss equation, we have
\begin{equation*}
\begin{split}
&\Ric(X,Y)-(n-1)h^2g(X,Y)\\
=& -g(A(X)-hX,A(Y)-hY)+(n-2)h g(A(X)-h X,Y)
\end{split}
\end{equation*}
for any $X,Y\in \Gamma(TM)$.
Thus, we get
\begin{equation}\label{3umb}
\Ric-(n-1)g\geq -(n-1)|h^2-1|-|A-h\Id|^2-(n-2)|h||A-h\Id|.
\end{equation}
Define a map $\rho\colon M\to \mathbb{R}_{\geq0}$ by
\begin{equation*}
\begin{split}
\rho(x):=&(\Ric-(n-1)g)_{-}(x)\\
:=&\inf \left\{r \in\mathbb{R}_{\geq 0}: \Ric(X,X)-(n-1)\geq - r \text{ for all } X\in T_x M \text{ with } |X|=1\right\}
\end{split}
\end{equation*}
for all $x\in M$.
We have $\rho\leq K+n-1$ by the assumption, and
\begin{equation*}
\rho\leq  (n-1)|h^2-1|+|A-h\Id|^2+(n-2)|h||A-h\Id|
\end{equation*}
by (\ref{3umb}).
Therefore, we get
\begin{equation}\label{4umb}
\begin{split}
\|\rho\|_n^n&\leq (K+n-1)^{n-1} \|\rho\|_1\\
&\leq C(n,K)\left( \|h-1\|_2\|h+1\|_2+\|A-h\Id\|_2^2+\|h\|_2\|A-h\Id\|_2\right)\\
&\leq C\delta
\end{split}
\end{equation}
by (\ref{1umb}) and (\ref{2umb}).

By the Aubry's theorem \cite[Theorem 1.2]{Au2}, taking $\delta$ small enough, we get
\begin{equation*}
\diam(M)\leq\left(\pi+C(n,K)\delta^\frac{1}{10}\right)\leq C(n,K).
\end{equation*}
Thus, taking $\delta$ sufficient small, we get that $M$ is diffeomorphic to $S^n$ and that $d_{GH}(M,S^n)\leq \tau(n,K|\delta)$ by Main Theorem 1 and Proposition \ref{um4}, where $\lim_{\delta\to 0}\tau(n,K|\delta)=0$. Thus, we get (i).

We next prove (ii).
Without loss of generality, we can assume that
\begin{equation*}
\int_M \iota(x) \,d\mu_g(x)=0.
\end{equation*}
By \cite[Proposition 1.5]{Au2}, (\ref{2umb}) and (\ref{4umb}), we have
\begin{equation}\label{rvumb}
\lambda_1(g)\geq n-C(n,K)\delta^\frac{1}{n}\geq n\|h\|_2-C(n,K)\delta^\frac{1}{n}.
\end{equation}
By \cite[Lemma 4.2]{AG0}, we have
\begin{equation*}
\Vol\left(M\setminus A_{C \delta^\frac{1}{8n}}\right)\leq C(n,K)\delta^\frac{1}{8n}\Vol(M),
\end{equation*}
where we put 
\begin{equation*}
A_\eta:= \{x\in M:\frac{1}{\|h\|_2}(1-\eta)\leq |\iota(x)|\leq \frac{1}{\|h\|_2}(1+\eta) \}
\end{equation*}
for $\eta>0$.
For all $x\in M$, there exists $y\in A_{C\delta^\frac{1}{8n}}$ such that $d(x,y)\leq C(n,K)\delta^\frac{1}{8n^2}$ by the Bishop-Gromov inequality (Theorem \ref{p2h}), where $d$ denotes the intrinsic metric on $M$.
Since we have $d(x,y)\geq d_{\mathbb{R}^{n+1}}(\iota(x),\iota(y))$ for all $x,y\in M$ and $|\|h\|_2-1|\leq C\delta$ by (\ref{2umb}), we get
\begin{equation}\label{5umb}
\left||\iota(x)|-1\right|\leq C\delta^\frac{1}{8n^2}
\end{equation}
for all $x\in M$.
In particular, we have $|\iota(x)|>0$ for all $x\in M$.
Define a map $\Psi\colon M\to S^n$ by
$\Psi:=\iota/|\iota|$. Let us calculate the degree of $\Psi$.
Define an orientation on $M$ to be
$(e_1,\ldots,e_n)$ ($e_i\in T_x M$) is positive if and only if $(\iota_\ast e_1,\ldots,\iota_\ast e_n,\nu(x))$ is positive in $\mathbb{R}^{n+1}$ for all $x\in M$.
Similarly, define an orientation on $S^n$ to be
$(e_1,\ldots,e_n)$ ($e_i\in\mathbb{R}^{n+1}$ is tangent to $S^n$ at $x\in S^n$) is positive if and only if $(e_1,\ldots,e_n,x)$ is positive in $\mathbb{R}^{n+1}$.
Take $x\in M$ and a positive orthonormal basis $(e_1,\ldots,e_n)$ on  $T_x M$.
Then, we have
\begin{equation*}
\begin{split}
\Psi_\ast e_1\wedge\ldots\wedge \Psi_\ast e_n\wedge \Psi(x)=&
\det d \Psi (x) \frac{\partial}{\partial x_1}\wedge\ldots\wedge\frac{\partial}{\partial x_{n+1}}.
\end{split}
\end{equation*}
Let $\iota(x)^T$ denotes the orthogonal projection of $\iota(x)$ on $\iota_\ast (T_x M)$.
Since we have
\begin{equation*}
\Psi_\ast e_i= \frac{\iota_\ast e_i}{|\iota(x)|}-\frac{\langle\iota_\ast e_i,\iota(x) 
\rangle_0}{|\iota(x)|^3}\iota(x)
\end{equation*}
and $|\iota(x)^T|\leq |\iota(x)|$, we get
\begin{equation*}
\begin{split}
&\left|\det d \Psi (x)-1 \right|\\
\leq& \left|\frac{1}{|\iota(x)|^n}\iota_\ast e_1\wedge\ldots\wedge \iota_\ast e_n\wedge \Psi(x)-\iota_\ast e_1\wedge\ldots\wedge \iota_\ast e_n\wedge \nu(x)\right| + C|\iota(x)^T|\\
\leq& |\Psi(x)-\nu(x)|+C\delta^\frac{1}{8n^2}+ C|\iota(x)^T|
\end{split}
\end{equation*}
by (\ref{5umb}).
Thus, by \cite[Lemma 4.1]{AG0}, (\ref{2umb}) and (\ref{5umb}), we get
\begin{equation}\label{6umb}
\begin{split}
\|\det d \Psi -1 \|_2\leq& \|\Psi-\nu\|_2+ C\delta^\frac{1}{8n^2}+C\|\iota^T\|_2\\
\leq &\|\Psi-\iota\|_2+\left\|\iota-\frac{h}{\|h\|_2^2}\nu\right\|_2+
\left\|\frac{h}{\|h\|_2^2}-1\right\|_2+C\delta^\frac{1}{8n^2}\\
\leq & C(n,K) \delta^\frac{1}{8n^2}.
\end{split}
\end{equation}
Since
\begin{equation*}
\deg \Psi 
=\frac{1}{\Vol(S^n)}\int_M \det d\Psi \,d\mu_g,
\end{equation*}
we get
\begin{equation*}
\begin{split}
|\deg \Psi -1|
\leq& \left|\deg \Psi -\frac{\Vol(M)}{\Vol(S^n)}\right|
+\left|\frac{\Vol(M)}{\Vol(S^n)}-1\right|\\
\leq & \frac{\Vol(M)}{\Vol(S^n)}\|\det d \Psi -1\|_2
+\left|\frac{\Vol(M)}{\Vol(S^n)}-1\right|.
\end{split}
\end{equation*}
By the volume convergence theorem \cite{Co3},
we have $|\Vol(M)-\Vol(S^n)|\leq \tau(n,K|\delta)$.
Thus, we get
$|\deg \Psi -1|\leq \tau(n,K|\delta)$, and so we get
$\deg \Psi=1$.
In particular, $\Psi$ is surjective.
Take arbitrary $y\in S^n$. Then, there exists $x\in M$ with $\Psi(x)=y$.
Since $|\Psi(x)-\iota(x)|\leq C\delta^\frac{1}{8n^2}$,
we get
$d_{\mathbb{R}^{n+1}}(y,\iota(x))\leq C\delta^\frac{1}{8n^2}$.
Thus, we get $d_{H}(\iota(M),S_M)\leq C(n,K)\delta^\frac{1}{8n^2}$.

Finally, we show (iii). 
By the area formula, we get
\begin{equation}\label{7umb}
\int_M|\det d\Psi|\,d\mu_g=\int_{S^n} H^0(\Psi^{-1}(y))\,d\mu_{S^n}(y),
\end{equation}
where $H^0$ denotes the counting measure.
Put
\begin{equation*}
\begin{split}
E_1:=&\{y\in S^n : H^0(\Psi^{-1}(y))=1\},\\
E_2:=&\{y\in S^n : H^0(\Psi^{-1}(y))\geq 2\}.
\end{split}
\end{equation*}
Then, we have
\begin{equation}\label{8umb}
\int_{S^n} H^0(\Psi^{-1}(y))\,d\mu_{S^n}
= \Vol(S^n)+\int_{E_2} \left(H^0(\Psi^{-1}(y))-1 \right)\,d\mu_{S^n}(y).
\end{equation}
Since we have
\begin{equation*}
\begin{split}
\int_M|\det d\Psi|\,d\mu_g
\leq \int_M|\det d\Psi-1|\,d\mu_g+\Vol(M)
\leq &\Vol(M)(1+\|\det d\Psi-1\|_2)\\
\leq& \Vol(S^n)+\tau(n,K|\delta)
\end{split}
\end{equation*}
by (\ref{6umb}), we get 
\begin{equation*}
\Vol(E_2)\leq \int_{E_2} \left(H^0(\Psi^{-1}(y))-1 \right)\,d\mu_{S^n}(y)\leq
\tau(n,K|\delta)
\end{equation*}
by (\ref{7umb}), (\ref{8umb}).
Thus, we get
\begin{equation*}
\int_{E_2} H^0(\Psi^{-1}(y))\,d\mu_{S^n}(y)=\int_{E_2} \left(H^0(\Psi^{-1}(y))-1 \right)\,d\mu_{S^n}(y)+\Vol(E_2)\leq \tau(n,K|\delta).
\end{equation*}
There exists a open subset $V\subset S^n$ such that $E_2\subset V$ and $\Vol(V\setminus E_2)\leq \delta$.
Then, we have
\begin{equation}\label{10umb}
\begin{split}
\int_{V} H^0(\Psi^{-1}(y))\,d\mu_{S^n}(y)=&\int_{E_2} H^0(\Psi^{-1}(y)) \,d\mu_{S^n}(y)+\Vol(V\setminus E_2)\\
\leq &\tau(n,K|\delta).
\end{split}
\end{equation}
By the area formula, we have
\begin{equation}\label{11umb}
\begin{split}
\int_{V} H^0(\Psi^{-1}(y))\,d\mu_{S^n}(y)
=&\int_{\Psi^{-1}(V)}|\det d\Psi|\,d\mu_g\\
\geq &\Vol(\Psi^{-1}(V))-\Vol(M)\|\det d\Psi-1\|_2.
\end{split}
\end{equation}
Thus, we get $\Vol(\Psi^{-1}(V))\leq \tau(n,K|\delta)$ by (\ref{6umb}), (\ref{10umb}) and (\ref{11umb}).
Since the map $\iota$ is injective on $E:= M\setminus \Psi^{-1}(V)$, we get (iii).
\end{proof}

The following lemma, which asserts that the mean curvature $h$ is almost constant in $L^2$ sense for the almost umbilical manifold in the sense that $\|A-h\Id\|_2$ is small under the assumption on Ricci curvature, was proved in \cite{CZ}.
We write down the proof here because it is short.
\begin{Lem}\label{um5}
Given an integer $n\geq 2$ and a positive real number $K>0$, there exists a positive constant $ C(n,K)>0$ such that the following property holds.
Let $(M,g)$ be an $n$-dimensional oriented closed Riemannian manifold with $\Ric\geq -K\lambda_1(g) g$, and $\iota \colon (M,g)\to \mathbb{R}^{n+1}$ an isometric immersion.
Put
\begin{equation*}
\overline{h}=\frac{1}{\Vol(M)}\int_M h\,d\mu_g.
\end{equation*}
Then, we have
\begin{itemize}
\item[(i)] $\|h-\overline{h} \|_2\leq C \|A-h\Id \|_2$.
\item[(ii)] $\|A-\overline{h}\Id\|_2\leq C \|A-h\Id \|_2$.
\end{itemize}
\end{Lem}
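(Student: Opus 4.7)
The plan is to deduce (ii) from (i) via the pointwise expansion $|A - \overline{h}\Id|^2 = |T|^2 + n(h - \overline{h})^2$, where $T := A - h\Id$ is traceless (so $\tr T = 0$ and $|\Id|^2 = n$); integrating yields $\|A - \overline{h}\Id\|_2^2 = \|T\|_2^2 + n\|h - \overline{h}\|_2^2$, and combining with (i) gives (ii).

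For (i), the starting point is the Codazzi equation $(\nabla_X A)(Y) = (\nabla_Y A)(X)$, valid since the ambient space is flat. Contracting yields $\mathrm{div}(A) = n\nabla h$, hence $\mathrm{div}(T) = (n-1)\nabla h$ as a $1$-form. The naive attempt---pairing this identity with $\nabla h$ itself, integrating by parts, and applying Bochner---would require a separate bound on $\|\Delta h\|_2$ in terms of $\|T\|_2$, which is not directly available.

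To sidestep this, I would introduce the dual test function $v \in C^\infty(M)$ characterized by $\int_M v\,d\mu_g = 0$ and $\Delta v = h - \overline{h}$, and then write
\begin{equation*}
\|h - \overline{h}\|_2^2 = \frac{1}{\Vol(M)}\int_M (h-\overline{h})\Delta v\,d\mu_g = \frac{1}{\Vol(M)}\int_M \nabla h\cdot\nabla v\,d\mu_g.
\end{equation*}
Substituting the Codazzi identity and integrating by parts, and using that $T$ is traceless so that $\nabla^2 v$ may be replaced by its traceless part $\mathring{\nabla^2 v} := \nabla^2 v + \tfrac{\Delta v}{n}g$, Cauchy--Schwarz gives $(n-1)\|h - \overline{h}\|_2^2 \leq \|T\|_2 \, \|\mathring{\nabla^2 v}\|_2$. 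The integrated Bochner formula together with $\Ric \geq -K\lambda_1 g$ yields
\begin{equation*}
\|\mathring{\nabla^2 v}\|_2^2 \leq \tfrac{n-1}{n}\|\Delta v\|_2^2 + K\lambda_1\|\nabla v\|_2^2 = \tfrac{n-1}{n}\|h-\overline{h}\|_2^2 + K\lambda_1\|\nabla v\|_2^2,
\end{equation*}
and a double application of Poincar\'e's inequality to the mean-zero function $v$ gives $\lambda_1\|\nabla v\|_2^2 \leq \|h - \overline{h}\|_2^2$. Combining these estimates yields $\|h - \overline{h}\|_2 \leq \tfrac{1}{n-1}\sqrt{\tfrac{n-1}{n}+K}\,\|T\|_2$, proving (i).

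The conceptual point---and the main obstacle to overcome---is precisely this choice of dual test function $v = \Delta^{-1}(h - \overline{h})$. It replaces the uncontrolled $\|\Delta h\|_2$ by $\|\Delta v\|_2 = \|h - \overline{h}\|_2$, which is exactly the quantity we are trying to estimate; and the Poincar\'e inequality for the mean-zero $v$ exactly cancels the $\lambda_1$ factor generated by the Ricci term in Bochner. This cancellation is why the scale-invariant hypothesis $\Ric \geq -K\lambda_1 g$ is the natural one for the estimate to close with constants depending only on $n$ and $K$.
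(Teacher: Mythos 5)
Your proposal is correct and follows essentially the same route as the paper: the same Codazzi identity $\nabla^\ast(A-h\,g)=-(n-1)\nabla h$, the same dual test function $v$ with $\Delta v=h-\overline{h}$, the same traceless-Hessian/Cauchy--Schwarz step, the same Bochner estimate combined with the Poincar\'e inequality to absorb the $K\lambda_1$ term, and the same pointwise identity $|A-\overline{h}\Id|^2=|A-h\Id|^2+n(h-\overline{h})^2$ to deduce (ii), with matching constants.
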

\begin{proof}
Take a function $f\in C^\infty(M)$ with $\int_M f\,d\mu_g=0$ and $\Delta f= h-\overline{h}$.
Since we have $\tr (\nabla_X A)=n X h$ for all $X\in \Gamma(TM)$, we get
\begin{equation*}
n \nabla h= -\nabla^\ast A
\end{equation*}
by the Codazzi equality, and so we get
\begin{equation*}
\nabla^\ast (A- hg)=-(n-1)\nabla h.
\end{equation*} 
Therefore, we have
\begin{equation}\label{12umb}
\begin{split}
\|h-\overline{h}\|_2^2
=&\frac{1}{\Vol(M)}\int_M (h-\overline{h})\Delta f\,d\mu_g\\
=&\frac{1}{\Vol(M)}\int_M \langle\nabla h,\nabla f\rangle\,d\mu_g\\
=&-\frac{1}{n-1} \frac{1}{\Vol(M)}\int_M \langle \nabla^\ast (A-h g), \nabla f\rangle\,d\mu_g\\
=&-\frac{1}{n-1} \frac{1}{\Vol(M)}\int_M \langle (A-h g), \nabla^2 f+\frac{\Delta f}{n} g\rangle\,d\mu_g\\
\leq&\frac{1}{n-1}\|A-h g\|_2\|\nabla^2 f+\frac{\Delta f}{n}g\|_2.
\end{split}
\end{equation}
By the Bochner formula, we have
\begin{equation}\label{13umb}
\begin{split}
\|\nabla^2 f+\frac{\Delta f}{n}g\|_2^2=&\frac{n-1}{n}\|\Delta f\|_2^2-\frac{1}{\Vol(M)}\int_M\Ric (\nabla f,\nabla f)\,d\mu_g\\
\leq &\frac{n-1}{n}\|\Delta f\|_2^2+K\lambda_1(g)\|\nabla f\|_2^2\\
\leq &\left(K+\frac{n-1}{n}\right)\|\Delta f\|_2^2=\left(K+\frac{n-1}{n}\right)\|h-\overline{h}\|_2^2.
\end{split}
\end{equation}
By (\ref{12umb}) and (\ref{13umb}), we get
\begin{equation*}
\|h-\overline{h}\|_2\leq\frac{1}{n-1}\sqrt{K+\frac{n-1}{n}}\|A-hg\|_2.
\end{equation*}
Therefore, we get
\begin{equation*}
\|A-\overline{h}\Id\|_2^2
= \|A-h Id\|_2^2+n\|h-\overline{h}\|_2^2
\leq \frac{n}{n-1}\left(\frac{K}{n-1}+1\right)\|A-h\Id\|_2^2.
\end{equation*}
\end{proof}

Let us normalize the mean curvature so that $\|h\|_2=1$ for simplicity.
\begin{Cor}\label{um6}
Given an integer $n\geq 2$ and positive real number $K>0$, there exists a positive constant $ C(n,K)>0$ such that the following property holds.
Let $(M,g)$ be an $n$-dimensional oriented closed Riemannian manifold with $\Ric\geq -K\lambda_1(g)g$, and $\iota \colon (M,g)\to \mathbb{R}^{n+1}$ an isometric immersion.
If $\|h\|_2=1$, we have
either
$\|A-\Id\|_2\leq C \|A-h g\|_2$ or $\|A+\Id\|_2\leq C \|A-h g\|_2$.
\end{Cor}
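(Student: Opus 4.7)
The plan is to reduce directly to Lemma \ref{um5}, which already gives both $\|h-\overline{h}\|_2 \leq C\|A-hg\|_2$ and $\|A-\overline{h}\Id\|_2\leq C\|A-hg\|_2$. The only issue is to show that $\overline{h}$ itself is close to $\pm 1$; once this is established, a triangle inequality transports the bound on $\|A-\overline{h}\Id\|_2$ to a bound on $\|A \mp \Id\|_2$.

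First I would observe that, since $h - \overline{h}$ is $L^2$-orthogonal to the constants, the normalization $\|h\|_2 = 1$ gives $\overline{h}^2 + \|h-\overline{h}\|_2^2 = 1$, hence in particular $|\overline{h}|\leq 1$ and
\begin{equation*}
1 - \overline{h}^2 = \|h-\overline{h}\|_2^2 \leq C^2 \|A-hg\|_2^2
\end{equation*}
by Lemma \ref{um5}. Setting $\epsilon := +1$ if $\overline{h}\geq 0$ and $\epsilon := -1$ otherwise, the factorization $1 - \overline{h}^2 = (1 - \epsilon\overline{h})(1 + \epsilon\overline{h})$ together with $1 + \epsilon\overline{h}\geq 1$ yields
\begin{equation*}
|\overline{h} - \epsilon| = 1 - \epsilon\overline{h} \leq C^2 \|A-hg\|_2^2.
\end{equation*}

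Next, by the triangle inequality and Lemma \ref{um5},
\begin{equation*}
\|A - \epsilon \Id\|_2 \leq \|A - \overline{h}\Id\|_2 + \sqrt{n}\,|\overline{h} - \epsilon| \leq C\|A-hg\|_2 + \sqrt{n}\,C^2\|A-hg\|_2^2.
\end{equation*}
To convert this into a linear bound I would split on the size of $\|A-hg\|_2$: if $\|A-hg\|_2 \leq 1$, the quadratic term is dominated by $\sqrt{n}C^2\|A-hg\|_2$; if $\|A-hg\|_2 > 1$, I instead use the crude bound $|\overline{h}-\epsilon|\leq 2$, which gives $\|A-\epsilon\Id\|_2 \leq (C + 2\sqrt{n})\|A-hg\|_2$. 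Combining the two regimes produces a single constant $C(n,K)$ for which either $\|A-\Id\|_2$ or $\|A+\Id\|_2$ is bounded by $C\|A-hg\|_2$, according to the sign of $\overline{h}$.

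The argument is entirely mechanical once Lemma \ref{um5} is in hand, so there is no genuine obstacle; the only point requiring any care is the bookkeeping between the quadratic bound on $|\overline{h}-\epsilon|$ and the desired linear bound on $\|A \mp \Id\|_2$, which is handled by the dichotomy on whether $\|A-hg\|_2$ is smaller or larger than $1$.
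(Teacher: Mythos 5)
Your proof is correct and takes essentially the same route as the paper: both use Lemma \ref{um5}(i) together with the orthogonality identity $\overline{h}^2 + \|h-\overline{h}\|_2^2 = \|h\|_2^2 = 1$ to show $\overline{h}$ is close to $\pm 1$, then apply Lemma \ref{um5}(ii) and a triangle inequality. Your dichotomy on $\|A-hg\|_2$ is avoidable — since $\|h-\overline{h}\|_2 \le \|h\|_2 = 1$ one already has $\|h-\overline{h}\|_2^2 \le \|h-\overline{h}\|_2$, giving the linear bound directly — but it is a harmless variation.
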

\begin{proof}
Since we have $|1-|\overline{h}||\leq C \|A-h \Id\|_2$ by Lemma \ref{um5} (i),
we get
\begin{equation*}
\|A-\Id \|_2\leq C \|A-h \Id\|_2.
\end{equation*}
if $\overline{h}\geq 0$, and
\begin{equation*}
\|A+\Id \|_2\leq C \|A-h \Id\|_2.
\end{equation*}
if $\overline{h}< 0$ by Lemma \ref{um5} (ii).
Thus, we get the corollary.
\end{proof}
\begin{notation}
Let $(M,g)$ be a Riemannian manifold.
In this subsection, let $(\widetilde{M},\tilde{g})$ be the orientable Riemannian covering of $(M,g)$ with two sheets if $M$ is not orientable, and
 $(\widetilde{M},\tilde{g})=(M,g)$ if $M$ is orientable.
\end{notation}

Let us show that the unorientable case cannot occur for the almost umbilical manifold in sense that $\|B+ g\otimes H\|_2$ is small under the assumption on the Ricci curvature.
\begin{Cor}\label{umb8}
Given an integer $n\geq 2$ and a positive real number and $K>0$, there exists $\delta(n,K)>0$ such that the following property holds.
Let $(M,g)$ be an $n$-dimensional closed Riemannian manifold and $\iota \colon (M,g)\to \mathbb{R}^{n+1}$ an isometric immersion.
If $\Ric\geq -K\lambda_1(\tilde{g})g$ and $\|B+ g\otimes H\|_2\leq \delta\|H\|_2$ holds,
then $M$ is orientable.
\end{Cor}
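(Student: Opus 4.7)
The plan is to argue by contradiction: assume $M$ is non-orientable and lift everything to the two-sheeted orientable cover $(\widetilde{M},\tilde{g})$, on which a chosen unit normal exists, and then exploit the fact that the deck transformation $\Pi\colon \widetilde{M}\to\widetilde{M}$ reverses the normal. Since $\pi\colon\widetilde{M}\to M$ is a local isometry, Ricci curvature and normalized $L^2$-norms are preserved, so the pulled-back immersion $\tilde\iota=\iota\circ\pi$ still satisfies $\|B+\tilde g\otimes H\|_2\leq\delta\|H\|_2$ and $\Ric_{\tilde g}\geq -K\lambda_1(\tilde g)\tilde g$ on $\widetilde{M}$.

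On $\widetilde{M}$, fix a unit normal $\tilde\nu$ and define the shape operator $\tilde A$ and mean curvature $\tilde h=\tfrac{1}{n}\tr\tilde A$ in the usual way, so that $H=\tilde h\tilde\nu$ and $|B+\tilde g\otimes H|=|\tilde A-\tilde h\,\Id|$ pointwise. The pinching hypothesis becomes $\|\tilde A-\tilde h\,\Id\|_2\leq \delta\|\tilde h\|_2$. After rescaling the metric (a scale-invariant operation for both the curvature assumption and for the pinching quantity) so that $\|\tilde h\|_2=1$, Corollary \ref{um6} gives the dichotomy
\begin{equation*}
\|\tilde A-\Id\|_2\leq C\delta\quad\text{or}\quad \|\tilde A+\Id\|_2\leq C\delta.
\end{equation*}

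Now I use the deck transformation. Since $\tilde\iota\circ\Pi=\tilde\iota$ and $\Pi$ interchanges the two sheets of the orientation cover, one has $\tilde\nu\circ\Pi=-\tilde\nu$. A direct calculation from $\tilde\iota_*\tilde A(X)=-d\tilde\nu(X)$ using $\tilde\iota_*|_{\Pi(x)}\circ d\Pi_x=\tilde\iota_*|_x$ and $d(\tilde\nu\circ\Pi)=-d\tilde\nu$ yields $d\Pi_x^{-1}\circ\tilde A_{\Pi(x)}\circ d\Pi_x=-\tilde A_x$, i.e.\ $\Pi^*\tilde A=-\tilde A$ as endomorphisms of $T\widetilde{M}$. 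Because $\Pi$ is an isometry of $(\widetilde{M},\tilde g)$ preserving the volume measure, for any tensor $T$ we have $\|\Pi^*T\|_2=\|T\|_2$, and in particular
\begin{equation*}
\|\tilde A-\Id\|_2=\|\Pi^*(\tilde A-\Id)\|_2=\|-\tilde A-\Id\|_2=\|\tilde A+\Id\|_2.
\end{equation*}

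Combining the last identity with the dichotomy from Corollary \ref{um6}, both $\|\tilde A-\Id\|_2$ and $\|\tilde A+\Id\|_2$ are bounded by $C\delta$. By the triangle inequality,
\begin{equation*}
2\sqrt{n}=\|2\Id\|_2\leq \|\tilde A-\Id\|_2+\|\tilde A+\Id\|_2\leq 2C\delta,
\end{equation*}
which is a contradiction once $\delta=\delta(n,K)$ is chosen smaller than $\sqrt{n}/C$. Hence $M$ must be orientable. The only subtle step is justifying $\Pi^*\tilde A=-\tilde A$ cleanly—everything else is bookkeeping—so that is where I would be most careful with signs and with the identification of $\tilde A$ via the ambient differential of the Gauss map.
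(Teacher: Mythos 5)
Your proof is correct and follows the same overall strategy as the paper: pass to the orientable double cover, observe that the deck transformation $\Pi$ reverses the unit normal and hence satisfies $\Pi^*\widetilde{A}=-\widetilde{A}$, and derive a contradiction from the quantitative almost-umbilicity estimates. The paper's endgame is shorter: instead of invoking Corollary \ref{um6}, it takes the trace of $\Pi^*\widetilde{A}=-\widetilde{A}$ to get $\tilde h\circ\Pi=-\tilde h$, hence $\int_{\widetilde M}\tilde h\,d\mu_{\tilde g}=0$, and then Lemma \ref{um5} (i) gives $\|\tilde h\|_2\le C\|\widetilde A-\tilde h\,\Id\|_2\le C\delta\|\tilde h\|_2$ directly, which is impossible for small $\delta$. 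Your detour through the normalization $\|\tilde h\|_2=1$, the dichotomy of Corollary \ref{um6}, and the $L^2$-invariance of tensor norms under the measure-preserving isometry $\Pi$ is valid (Corollary \ref{um6} does apply on the rescaled cover, since both the curvature hypothesis with $\lambda_1(\tilde g)$ and the pinching quantity are scale-invariant and pull back), and the contradiction $2\sqrt n=\|2\Id\|_2\le 2C\delta$ is sound; note only that both your normalization and the paper's final inequality tacitly use $\|\tilde h\|_2\neq 0$, which is harmless because otherwise $H\equiv 0$ and, by Lemma \ref{um1}, the coordinate functions of $\iota$ would be harmonic on a closed manifold, hence constant, contradicting that $\iota$ is an immersion.
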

\begin{proof}
Suppose that $M$ is not orientable and let $\pi\colon \widetilde{M}\to M$ be a covering map.
Then, $\iota\circ \pi \colon \widetilde{M}\to \mathbb{R}^{n+1}$ is also isometric immersion.
Fix the unit normal vector field $\tilde{\nu}\in\Gamma(T\widetilde{M}^\perp)$.
Let $\widetilde{B}\in \Gamma( T^\ast \widetilde{M}\otimes T^\ast \widetilde{M}\otimes T \widetilde{M}^\perp)$ be the second fundamental form of $\widetilde{M}$ and $\widetilde{H}:=-\frac{1}{n}\tr\widetilde{B}$ the mean curvature.
Define a section $\widetilde{A}\in\Gamma(T^\ast \widetilde{M}\otimes T\widetilde{M})$ to be
\begin{equation*}
\tilde{g}(\widetilde{A}(X),Y)=-\langle \widetilde{B}(X,Y),\tilde{\nu}_x \rangle_0
\end{equation*}
for any $x\in \widetilde{M}$ and $X,Y\in T_x \widetilde{M}$.
Put $\tilde{h}:=\frac{1}{n}\tr \widetilde{A}\in C^\infty(\widetilde{M})$.
Consider the covering transformation $\Pi\colon\widetilde{M}\to\widetilde{M}$, which satisfies $\pi\circ \Pi=\pi$ and $\Pi^2=\Id$.
Then, we have $\tilde{\nu}(\Pi(x))=-\tilde{\nu}(x)\in \mathbb{R}^{n+1}$, and
$\widetilde{B}(X, Y)=\widetilde{B}(\Pi_\ast X,\Pi_\ast Y)\in \mathbb{R}^{n+1}$ for all $X,Y\in T_x \widetilde{M}$.
Thus, we have $\tilde{g}(\widetilde{A}(X),Y)=-\tilde{g}(\widetilde{A}(\Pi_\ast X),\Pi_\ast Y)$, and so $\tilde{h}\circ \Pi=-\tilde{h}$.
Therefore, we get
\begin{equation}\label{14umb}
\int_{\widetilde{M}} \tilde{h}\,d\mu_{\tilde{g}}=0.
\end{equation}
By the assumption,
we have $\|\widetilde{A}-\tilde{h}\Id\|_2\leq\delta \|\tilde{h}\|_2$.
Thus, by Lemma \ref{um5} and (\ref{14umb}), we get
\begin{equation*}
\|\tilde{h}\|_2\leq C\|\widetilde{A}-\tilde{h}\Id\|_2\leq C\delta \|\tilde{h}\|_2.
\end{equation*}
If $\delta>0$ is sufficiently small, this inequality cannot occur.
Therefore, we get the corollary.
\end{proof}

We prove Main Theorem 3 under slightly weaker assumptions (without assuming orientability).
We show that the almost umbilical manifold in sense that $\|B+ g\otimes H\|_2$ is small is close to the standard sphere under the assumption on the Ricci curvature.
\begin{Thm}\label{um7}
Given an integer $n\geq 2$ and positive real numbers $\epsilon>0$ and $K>0$, there exists $\delta(n,K,\epsilon)>0$ such that the following property holds.
Let $(M,g)$ be an $n$-dimensional closed Riemannian manifold and $\iota \colon (M,g)\to \mathbb{R}^{n+1}$ an isometric immersion.
If $\Ric\geq -K\lambda_1(\tilde{g})g$ and $\|B+ g\otimes H\|_2\leq \delta\|H\|_2$ holds,
then we have the following:
\begin{itemize}
\item[(i)] $d_{GH}(M,S^n(1/\|H\|_2))\leq \epsilon/\|H\|_2$ and $M$ is diffeomorphic to the $n$-dimensional standard sphere.
\item[(ii)] $d_{H}(\iota(M),S_M)\leq \epsilon/\|H\|_2$, where $S_M$ is defined by
\begin{equation*}
S_M:=\left\{a+\frac{1}{\Vol(M)}\int_M \iota(x) \,d\mu_g(x) \in\mathbb{R}^{n+1}\colon a\in \mathbb{R}^{n+1} \text{ with } |a|=\frac{1}{\|H\|_2} \right\}.
\end{equation*}
\item[(iii)] There exists a closed subset $E\subset M$ such that $\Vol(M\backslash E)\leq \epsilon \Vol(M)$ and $\iota$ is injective on $E$.
\end{itemize}
\end{Thm}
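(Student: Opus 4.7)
The plan is to reduce Theorem \ref{um7} directly to Theorem \ref{um4.5}, using Corollary \ref{umb8} to eliminate the non-orientable case, Corollary \ref{um6} to upgrade the hypothesis $\|A-h\Id\|_2\leq \delta$ to $\|A-\Id\|_2\leq C(n,K)\delta$, and a scaling argument to normalize $\|H\|_2=1$.

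First I would verify the extrinsic identities. For an oriented hypersurface with unit normal $\nu$, we have $B(X,Y)=-g(A(X),Y)\nu$ and $H=h\nu$, so
\[
B(X,Y)+g(X,Y)H=-g(A(X)-hX,Y)\nu,
\]
giving $\|B+g\otimes H\|_2=\|A-h\Id\|_2$ and $\|H\|_2=\|h\|_2$. Thus the hypothesis $\|B+g\otimes H\|_2\leq \delta\|H\|_2$ is exactly $\|A-h\Id\|_2\leq\delta\|h\|_2$ in the oriented case. For the unoriented case, the analogous statement holds on the orientable double cover $(\widetilde{M},\tilde g)$, so Corollary \ref{umb8} applies and forces $M$ to be orientable once $\delta$ is smaller than the threshold provided there.

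Next I would normalize. Rescale the ambient Euclidean metric by the factor $\|H\|_2$, equivalently replace $g$ by $\tilde g=\|H\|_2^{2}\,g$ and $\iota$ by $\|H\|_2\iota$. Under this conformal change all distances and Hausdorff/Gromov--Hausdorff distances scale by $\|H\|_2$, the Ricci tensor satisfies $\Ric_{\tilde g}\geq -K\lambda_1(\tilde g)\tilde g$ (scale-invariant), and the new mean curvature satisfies $\|\widetilde H\|_2=1$. Conclusions (i)--(iii) for the rescaled data give conclusions (i)--(iii) in the statement after multiplying $\epsilon$ by $1/\|H\|_2$ in each inequality. So from this point on I may assume $\|H\|_2=\|h\|_2=1$ and $\|A-h\Id\|_2\leq\delta$.

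Now apply Corollary \ref{um6}: possibly reversing the orientation (which sends $\nu\mapsto-\nu$, hence $A\mapsto -A$ and $h\mapsto -h$, leaving $\|A-h\Id\|_2$ unchanged while swapping $\|A-\Id\|_2$ and $\|A+\Id\|_2$), we may assume
\[
\|A-\Id\|_2\leq C(n,K)\|A-h\Id\|_2\leq C(n,K)\delta.
\]
To feed Theorem \ref{um4.5} I need a lower Ricci bound of the form $\Ric\geq -K'g$ for a constant $K'$ depending only on $n,K$. By the Reilly inequality $\lambda_1(g)\leq n\|h\|_2=n$, so the hypothesis $\Ric\geq -K\lambda_1(g)g$ gives $\Ric\geq -nKg$. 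Now Theorem \ref{um4.5} applied with parameters $(n,nK)$ and $\delta'=C(n,K)\delta$ yields assertions (i), (ii), (iii) in the normalized setting, for $\delta$ sufficiently small depending only on $n,K,\epsilon$; undoing the rescaling recovers the theorem as stated.

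The argument is essentially bookkeeping: the genuine content was built up previously, in Corollary \ref{umb8} (ruling out non-orientability), Corollary \ref{um6} (rigidifying $\bar h$), and Theorem \ref{um4.5} (giving (i)--(iii) under $\|A-\Id\|_2\leq\delta$). Consequently there is no real technical obstacle here; the only point requiring minor care is checking that every constant in the chain depends only on $n$ and $K$, which holds because the Reilly bound is dimensional and Corollary \ref{um6} depends only on $(n,K)$.
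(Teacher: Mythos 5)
Your proposal is correct and follows essentially the same route as the paper's own proof: establish orientability via Corollary \ref{umb8}, rescale so $\|h\|_2=\|H\|_2=1$, use Corollary \ref{um6} (flipping $\nu$ if needed) to get $\|A-\Id\|_2\leq C(n,K)\delta$, bound $\lambda_1(g)\leq n$ by the Reilly inequality so that $\Ric\geq -nKg$, and conclude with Theorem \ref{um4.5}. The only difference is expository (your explicit scaling bookkeeping), not mathematical.
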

\begin{proof}
By Corollary \ref{umb8}, $M$ is orientable.
By the scaling, we can assume $\|h\|_2=1$.
Note that $\|H\|_2=\|h\|_2$ and $\|B+ g\otimes H\|_2=\|A-h\Id\|_2$.
Considering $-\nu$ if necessary, we have
\begin{equation*}
\|A-\Id \|_2\leq C \|A-h g\|_2\leq C\delta
\end{equation*}
by Corollary \ref{um6}.
Moreover, by the Reilly inequality \cite{Rei}, we have $\lambda_1(g)\leq n$, and so $\Ric\geq -n K g$.
Thus, we get the theorem by Theorem \ref{um4.5}.
\end{proof}

\begin{Rem}
If $\Ric_g \geq -Kg$ and $\diam(M)\leq D$, then we have $\Ric_{\tilde{g}}\geq -K \tilde{g}$ and $\diam(\widetilde{M})\leq 2D$, and so $\Ric_{\tilde{g}}\geq -C(n,K,D)\lambda_1(\tilde{g}) \tilde{g}$ by the Li-Yau estimate $\lambda_1(\tilde{g})\geq C(n,K,D)>0$ \cite[p.116]{SY}.
\end{Rem}

\section{Examples and the converse of Main Theorem 1}
In this section, we give some examples of the collapsing sequence of Riemannian manifolds such that the values of $\lambda_k(\bar{\Delta}^E)$ are not continuous, and consider the converse of Main Theorem 1 for non-collapsing cases.
\begin{Example}
Take positive integers $n_1, n_2\in \mathbb{Z}_{>0}$ and positive real numbers $K>0$ and $D>0$. Put $n=n_1+n_2$.
Let $(N_i,g_i)$ $(i=1,2)$ be closed Riemannian manifolds of dimension $n_i$ with $\Ric_{g_1}\geq -Kg_1$, $\diam (N_1,g_1)\leq D$ and $\Ric_{g_2}\geq 0$.
For each $r>0$, we consider the metric $G_r=g_1+r^2 g_2$ on $M=N_1\times N_2$.
Then, we have $\Ric_{G_r}\geq -K G_r$ for any $r>0$, and $(M,G_r)$ converges to $(N_1,g_1)$ as $r\to 0$ in the measured Gromov-Hausdorff sense.
By \cite[Proposition 2.14]{Ai}, we have 
\begin{equation*}
\Omega_1(G_r)\leq \max\left\{\frac{n_1-1}{n_1},\frac{n_2-1}{n_2}\right\}\leq \frac{n-2}{n-1}.
\end{equation*}
Thus, by Lemma \ref{p6b} and (\ref{1a}), we have
\begin{equation*}
\|(\nabla^{G_r})^2 f+ f G_r\|_2^2\geq \|(\nabla^{G_r})^2 f+ \frac{\Delta^{G_r} f}{n} G_r\|_2^2\geq \frac{1}{n(n-1)}\|\Delta^{G_r} f\|_2^2
\end{equation*}
for all $f\in C^\infty (M)$.
Therefore, by Proposition \ref{p41c}, $\liminf_{r\to 0} \lambda_1(\bar{\Delta}^{E},(M,G_r))\geq C(n,K,D)>0$ holds.
In particular, if $(N_1,g_1)$ is isometric to the standard sphere of radius 1, then
\begin{equation*}
\liminf_{r\to 0} \lambda_1(\bar{\Delta}^{E},(M,G_r))\geq C(n)>\lambda_1(\bar{\Delta}^{E},(N_1,g_1))=0.
\end{equation*}
\end{Example}
\begin{Example}
Take an integers $n\geq 2$ and a positive real number $K>0$.
Let $\{(M_i,g_i)\}_{i\in \mathbb{N}}$ be arbitrary sequence of $n$-dimensional closed Riemannian manifolds such that $\Ric_{g_i}\geq - Kg_i$ and $(M_i,g_i)$ converges to $S^{n-1}$ in the measured Gromov-Hausdorff sense.
Then, by Main Theorem 1, we have
\begin{equation*}
\liminf_{r\to 0} \lambda_n(\bar{\Delta}^{E},M_i)\geq C(n,K)>\lambda_n(\bar{\Delta}^{E},S^{n-1})=0.
\end{equation*}
\end{Example}
The above examples tells us that the continuity of the value of $\lambda_k(\bar{\Delta}^E)$  in the measured Gromov-Hausdorff topology does not hold in general.
However, under the non-collapsing assumption, we have the following property using the methods in \cite{Ho2} (see Appendix B for details).
For any non-collapsed sequence of $n$-dimensional closed Riemannian manifolds $\{(M_i,g_i)\}_{i\in \mathbb{N}}$ with $|\Ric_{g_i}|\leq K$ and $\diam(M_i)\leq D$, and its Gromov-Hausdorff limit $X$, we can define $\lambda_k(\bar{\Delta}^{E},X)$, and we have $\lim_{i\to \infty}\lambda_k(\bar{\Delta}^{E},M_i)=\lambda_k(\bar{\Delta}^{E},X)$ for all $k\in \mathbb{N}$.
In particular, we get the following.
\begin{Prop}\label{p51a}
Given an integer $n\geq 2$ and positive real numbers $\epsilon>0$ and $K>0$, there exists a positive constant $\delta(n,K,\epsilon)>0$ such that if $(M,g)$ is an $n$-dimensional closed Riemannian manifold with $|\Ric_g|\leq K$ and $d_{GH}(M,S^n)\leq \delta$, then $\lambda_{n+1}(\bar{\Delta}^{E},M)\leq \epsilon$.
\end{Prop}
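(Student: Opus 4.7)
The plan is to prove this by contradiction, invoking the spectral continuity result under non-collapsed Gromov--Hausdorff convergence that is deferred to Appendix B. Suppose the proposition fails. Then there exist $n\geq 2$, $K>0$, $\epsilon_0>0$ and a sequence of $n$-dimensional closed Riemannian manifolds $\{(M_i,g_i)\}_{i\in\mathbb{N}}$ with $|\Ric_{g_i}|\leq K$ and $d_{GH}(M_i,S^n)\to 0$, but $\lambda_{n+1}(\bar{\Delta}^E,M_i)>\epsilon_0$ for every $i$. Since $d_{GH}(M_i,S^n)\to 0$, we automatically obtain $\diam(M_i)\leq \pi+1$ for all large $i$, so the sequence has uniformly bounded Ricci curvature and uniformly bounded diameter. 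Moreover the limit $S^n$ is itself $n$-dimensional, so the sequence is non-collapsed.

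Next I would apply the continuity statement quoted just before Proposition~\ref{p51a}: for a non-collapsed sequence with $|\Ric_{g_i}|\leq K$, $\diam(M_i)\leq D$ and $M_i\to X$ in Gromov--Hausdorff topology, one has $\lim_{i\to\infty}\lambda_k(\bar{\Delta}^E,M_i)=\lambda_k(\bar{\Delta}^E,X)$ for each $k$. Applied with $k=n+1$ and $X=S^n$, this forces $\lambda_{n+1}(\bar{\Delta}^E,S^n)\geq \epsilon_0>0$.

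The final step is to compute $\lambda_{n+1}(\bar{\Delta}^E,S^n)=0$, which produces the contradiction. Writing the coordinate functions $x_1,\ldots,x_{n+1}$ of the standard embedding $S^n\subset\mathbb{R}^{n+1}$, each satisfies the Obata equation $\nabla^2 x_s+x_s g_{S^n}=0$. A direct computation from the definition of $\nabla^E$ (as in the proof of Lemma~\ref{p41b}) gives
\begin{equation*}
\nabla^E_Y S_{x_s}=\nabla_Y\nabla x_s+x_s Y=(\nabla^2 x_s+x_s g)(Y,\cdot)^\sharp=0,
\end{equation*}
so $S_{x_1},\ldots,S_{x_{n+1}}$ are $n+1$ parallel sections of $E_{S^n}$ that are linearly independent in $L^2$ (they are the $n+1$ independent solutions of the Obata equation). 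Hence $\lambda_{n+1}(\bar{\Delta}^E,S^n)=0$, contradicting $\lambda_{n+1}(\bar{\Delta}^E,S^n)\geq\epsilon_0$.

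The substantive difficulty is entirely contained in the continuity statement for $\lambda_k(\bar{\Delta}^E,\cdot)$ under non-collapsed convergence with two-sided Ricci bounds; this is where Honda's structure theory (smooth convergence of the tangent bundle on the regular set, together with uniform $L^2$--$L^\infty$ and gradient estimates for eigensections) is needed, and is precisely what Appendix B supplies. Once that tool is in hand, the argument above is immediate: non-collapse of $M_i\to S^n$ together with the existence of $n+1$ parallel sections on $S^n$ gives the desired $\lambda_{n+1}(\bar{\Delta}^E,M_i)\to 0$.
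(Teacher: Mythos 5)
Your proposal is correct and takes the same route as the paper: run a compactness/contradiction argument, invoke the spectral continuity theorem of Appendix B (Theorem \ref{ap7}) — using that $d_{GH}(M_i,S^n)\to 0$ gives a uniform diameter bound and, by Colding's volume convergence theorem for smooth limits, non-collapse — and then verify directly that $\lambda_{n+1}(\bar{\Delta}^E,S^n)=0$ since the $n+1$ coordinate functions on $S^n$ solve the Obata equation and hence give $n+1$ linearly independent parallel sections of $E_{S^n}$. The paper leaves all of these details implicit (``In particular, we get the following.''), so your write-up is a faithful expansion of the intended argument rather than a different one.
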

Since the standard sphere is smooth, the above proposition also follows from \cite[Theorem 7.2]{CC1}.
The question whether we need to assume the upper bound of Ricci curvature in Proposition \ref{p51a} is related to (Q5.4) and (Q5.5) in \cite{Ho2}.
We give the following conjecture.
\begin{Conj}
Given an integer $n\geq 2$ and positive real numbers $\epsilon>0$ and $K>0$, there exists a positive constant $\delta(n,K,\epsilon)>0$ such that if $(M,g)$ is an $n$-dimensional closed Riemannian manifold with $\Ric_g \geq -Kg$ and $d_{GH}(M,S^n)\leq \delta$, then $\lambda_{n+1}(\bar{\Delta}^{E},M)\leq \epsilon$.
\end{Conj}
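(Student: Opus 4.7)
The plan is to argue by contradiction. Suppose the conjecture fails, so there is $\epsilon_0 > 0$ and a sequence of closed $n$-manifolds $(M_i, g_i)$ with $\Ric_{g_i} \geq -Kg_i$, $d_{GH}(M_i, S^n) \to 0$ and $\lambda_{n+1}(\bar{\Delta}^E, M_i) \geq \epsilon_0$. By Colding's volume convergence theorem (which only requires a lower Ricci bound), $\Vol(M_i) \to \Vol(S^n)$, so the convergence is non-collapsed. By the spectral convergence theorem for the Laplacian on non-collapsed $\mathrm{RCD}^*(-K,n)$ sequences (Ambrosio--Gigli--Savar\'e, Gigli--Mondino--Savar\'e, Honda), $\lambda_k(g_i) \to \lambda_k(S^n)$ for every $k$; since $\lambda_1(S^n) = \cdots = \lambda_{n+1}(S^n) = n$, we may pick orthonormal eigenfunctions $\phi_{k,i}$ on $M_i$ ($1 \leq k \leq n+1$) with $\Delta \phi_{k,i} = \lambda_{k,i}\phi_{k,i}$, $\lambda_{k,i} \to n$, $\|\phi_{k,i}\|_2^2 = 1/(n+1)$, and $\phi_{k,i} \to x_k|_{S^n}$ in $L^2$ under a measured Gromov--Hausdorff identification.

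The associated sections $S_{\phi_{k,i}} = \nabla \phi_{k,i} + \phi_{k,i} e \in \Gamma(E_{M_i})$ are mutually orthogonal by a one-line computation (for $k \neq j$, $\int \langle S_{\phi_k}, S_{\phi_j}\rangle_E\, d\mu_{g_i} = (\lambda_k + 1)\int \phi_k \phi_j\, d\mu_{g_i} = 0$), and $\|S_{\phi_{k,i}}\|_2^2 = (\lambda_{k,i}+1)\|\phi_{k,i}\|_2^2 \to 1$. By the Rayleigh principle it therefore suffices to prove $\|\nabla^E S_{\phi_{k,i}}\|_2^2 = \|\nabla^2 \phi_{k,i} + \phi_{k,i} g_i\|_2^2 \to 0$ for each $k$, which would contradict $\lambda_{n+1}(\bar{\Delta}^E, M_i) \geq \epsilon_0$. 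Applying the Bochner formula (as in Lemma \ref{p2a} and the identity displayed in Section 2) yields
\begin{equation*}
\|\nabla^2 \phi_{k,i} + \phi_{k,i} g_i\|_2^2 = \bigl((\lambda_{k,i} - 1)^2 + n - 1\bigr)\|\phi_{k,i}\|_2^2 - \frac{1}{\Vol(M_i)}\int_{M_i}\Ric(\nabla \phi_{k,i}, \nabla \phi_{k,i})\,d\mu_{g_i}.
\end{equation*}
The first term converges to $(n-1)n/(n+1)$, which equals the value of the Ricci integral on $S^n$ (where $\Ric = (n-1)g$ and $\int|\nabla x_k|^2 d\mu_{S^n}/\Vol(S^n) = n/(n+1)$). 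Hence the proof reduces to establishing the convergence
\begin{equation*}
\frac{1}{\Vol(M_i)}\int_{M_i}\Ric(\nabla \phi_{k,i}, \nabla \phi_{k,i})\,d\mu_{g_i} \longrightarrow \frac{n-1}{\Vol(S^n)}\int_{S^n} |\nabla x_k|^2\,d\mu_{S^n}.
\end{equation*}

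The main obstacle is precisely this last convergence under only a lower Ricci bound. Under the stronger hypothesis $|\Ric|\leq K$ it would follow from the $L^2$-convergence of Hessians of eigenfunctions established by Honda (as used in Appendix B), together with the Bochner identity $\int|\nabla^2 \phi|^2 = \int(\Delta \phi)^2 + \int\Ric(\nabla\phi,\nabla\phi)$. Without an upper Ricci bound we lose uniform $L^2$ control on $\nabla^2 \phi_{k,i}$ and cannot invoke this identity directly. A plausible route is to use the notion of Ricci tensor on non-collapsed $\mathrm{RCD}^*(-K,n)$ spaces developed by Han and by Ambrosio--Honda--Tewodrose: since the limit $S^n$ is smooth, one would like to prove a weak-continuity statement asserting that the ``Ricci measure'' $\Ric\,d\mu_{g_i}$ converges weakly (as a tensor-valued measure on the tangent bundle) to $(n-1)g_{S^n}\,d\mu_{S^n}$ near the regular point represented by the limit; combined with the $W^{1,2}$-convergence $\phi_{k,i} \to x_k$, this would give the required integral convergence. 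Establishing such rigidity/weak-continuity for the Ricci curvature near a smooth non-collapsed Ricci limit is the essential difficulty, and is closely related to the open questions (Q5.4) and (Q5.5) of Honda referred to in the excerpt.
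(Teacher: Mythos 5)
This statement is posed in the paper as a \emph{conjecture}; the paper offers no proof of it (Proposition \ref{p51a} is the analogous statement under the two-sided bound $|\Ric|\leq K$, proved via Honda's spectral convergence in Appendix B or \cite[Theorem 7.2]{CC1}), and the text explicitly relates the one-sided case to the open questions (Q5.4), (Q5.5) of \cite{Ho2}. Your proposal does not close this gap: it correctly reduces the problem, via the identity $\|\nabla^E S_{\phi}\|_2^2=\|\nabla^2\phi+\phi g\|_2^2=\bigl((\lambda-1)^2+n-1\bigr)\|\phi\|_2^2-\frac{1}{\Vol(M)}\int_M\Ric(\nabla\phi,\nabla\phi)\,d\mu_g$ and the spectral convergence $\lambda_{k,i}\to n$ for the function Laplacian (which does hold under only $\Ric\geq -Kg$ by Cheeger--Colding), to the single convergence statement
\begin{equation*}
\frac{1}{\Vol(M_i)}\int_{M_i}\Ric(\nabla\phi_{k,i},\nabla\phi_{k,i})\,d\mu_{g_i}\longrightarrow \frac{(n-1)n}{n+1},
\end{equation*}
but then, as you yourself acknowledge, you do not prove this. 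That step is precisely the essential difficulty: with only a lower Ricci bound one has no uniform $L^2$ bound on $\nabla^2\phi_{k,i}$ (the Bochner identity only gives $\int|\nabla^2\phi|^2=\int(\Delta\phi)^2-\int\Ric(\nabla\phi,\nabla\phi)$, which bounds the Hessian \emph{in terms of} the quantity you need), so the $W^{1,2}$-convergence $\phi_{k,i}\to x_k$ does not upgrade to the needed continuity of the Ricci energy, and the ``weak continuity of the Ricci measure near a smooth non-collapsed limit'' you invoke from the RCD literature is not an available theorem --- it is essentially a reformulation of the open problem. Note also that only the upper-semicontinuity direction of $\lambda_{n+1}(\bar{\Delta}^E)$ is at issue, and it requires exhibiting $(n+1)$ almost-$\nabla^E$-parallel test sections on $M_i$; your candidates $S_{\phi_{k,i}}$ are the natural ones, but certifying that their energies tend to zero is exactly the missing Hessian control. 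So the proposal is a clean and correct reduction, consistent with why the paper leaves the statement as a conjecture, but it is not a proof.
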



\appendix
\section{Spherical multi-suspension}

In this appendix we show the following multi-suspension theorem, which gives an approximation of the shape of the Riemannian manifold under the pinching condition on $\lambda_k(\bar{\Delta}^E)$, based on the methods of \cite{Ho}. 
As a consequence, we give another proof of Main Theorem 1.
\begin{Thm}\label{apa1}
Given integers $n\geq 2$ and $1\leq k\leq n+1$, and positive real numbers $\epsilon>0$, $K>0$ and $D>0$, there exists $\delta(n,K,D,\epsilon)>0$ such that if $(M,g)$ is an $n$-dimensional closed Riemannian manifold with  $\Ric \geq-K g$, $\diam(M)\leq D$ and $\lambda_k(\bar{\Delta}^E)\leq\delta$, then we have one of the following:
\begin{itemize}
\item[(i)] $d_{GH}(M,S^{k-1})\leq \epsilon$,
\item[(ii)] $d_{GH}(M,S^k)\leq\epsilon$,
\item[(iii)] There exists a compact geodesic space $Z$ such that
$d_{GH}(M,S^{k-1}\ast Z)\leq \epsilon$, where $S^{k-1}\ast Z$ denotes the $k$-fold spherical suspension of $Z$ $($see the definition below$)$.
\end{itemize}
\end{Thm}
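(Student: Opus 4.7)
The plan is to adapt the approach of Section 3 to the case of only $k$ (rather than $n+1$) pinched functions, then apply the Cheeger-Colding spherical suspension theorem on each piece of the Proposition \ref{p2k} decomposition, and finally patch via a compactness-and-contradiction argument in Gromov-Hausdorff topology.

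First, Proposition \ref{p41c} converts the spectral assumption $\lambda_k(\bar{\Delta}^E) \leq \delta$ into a $k$-dimensional subspace $V \subset C^\infty(M)$ satisfying the $C\delta^{1/2}$-pinching condition. I choose an $L^2$-orthonormal basis $f_1,\ldots,f_k \in T_{(n,\delta')}(V)$ with $\|f_s\|_2^2 = 1/(n+1)$, where $\delta' = C\delta^{1/2}$. Proposition \ref{p2i} then attaches a point $p_s \in M$ to each $f_s$ with $L^\infty$ and $W^{1,2}$ estimates identifying $f_s$ with $\cos d(p_s,\cdot)$. The orthogonality $\int_M f_s f_t\,d\mu_g = 0$ combined with the integral identity $|(f_1,\ldots,f_k)|^2 \approx 1$ (proved as in Lemma \ref{32b}, reducing to the $k$ coordinate case) forces $\cos d(p_s,p_t) \approx 0$ whenever $s \neq t$. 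Applying Proposition \ref{p2k} to each $f_s$ and accounting for the discrete ambiguity of poles described there produces a configuration of up to $2k$ "almost-pole" points whose mutual distances are approximately $\pi/2$ or $\pi$, together with a finite decomposition $M = \bigcup_\alpha B_\alpha$ into pieces on which every $f_s$ is uniformly close to a signed cosine of distance from one of its assigned poles.

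On each piece $B_\alpha$ the conclusions (\ref{f})-(\ref{hess}) of Proposition \ref{thm14}, now applied simultaneously to the collection $(f_1,\ldots,f_k)$, reproduce exactly the hypotheses (2.2), (2.3), (2.4) of \cite[p.~198]{CC2}. Their spherical suspension recognition theorem then provides an $o(1)$ Gromov-Hausdorff approximation of $B_\alpha$ to an open neighborhood in a $k$-fold join $S^{k-1} \ast Z_\alpha$ for some compact geodesic space $Z_\alpha$. The main obstacle is to upgrade these local approximations to a single global suspension: one must show that the local fiber factors $Z_\alpha$ all coincide up to $o(1)$ in Gromov-Hausdorff distance. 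I would prove this by contradiction via Gromov's compactness theorem, following the strategy of \cite{Ho}. A hypothetical counterexample sequence $(M_i,g_i)$ with $\delta_i \to 0$ would have a Gromov-Hausdorff subsequential limit $X$, on which the limits of the $f_s^{(i)}$ become genuine (weak) solutions of the Obata equation $\nabla^2 f_s + f_s g = 0$ thanks to the pinching (\ref{hess}) together with the Cheeger-Colding convergence theory for Hessians. The rigidity part of the Cheeger-Colding spherical suspension theorem then forces $X$ to be an exact $k$-fold join $S^{k-1} \ast Z$ for a single compact geodesic space $Z$, whose dimension is at most $n-k$. The three alternatives in the conclusion arise from the degenerations $Z = \emptyset$ giving case (i), $Z = S^0$ giving case (ii), and all remaining possibilities giving case (iii). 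Unwinding the contradiction turns this limiting rigidity into the quantitative statement of the theorem.
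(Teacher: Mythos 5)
Your overall scaffolding (Proposition \ref{p41c} to get a $k$-dimensional pinched subspace, Proposition \ref{p2i}/\ref{p2k} to get almost-cosine structure and a piecewise decomposition, then a compactness-and-contradiction argument in the limit) matches the paper's Appendix A, but the step you flag as ``the main obstacle'' is resolved incorrectly, and this is a genuine gap. In the limit $X$ of a counterexample sequence, having exact solutions of the Obata equation $\nabla^2 f_s+f_sg=0$ does \emph{not} force $X$ to be a single ($k$-fold) spherical suspension: if the decomposition of Proposition \ref{p2k} has $N\geq 2$ pieces, the limit can a priori be a chain of suspensions glued at poles --- in the extreme case an interval $[0,N\pi]$ or a circle --- and on such a space $f=\cos d(p,\cdot)$ satisfies the Obata equation exactly, so no ``rigidity part'' of the Cheeger--Colding suspension theorem can exclude it. The paper rules this out in Corollary \ref{CHCO2} by a measure argument that your proposal has no substitute for: one passes only $\Delta f=nf$ (not the Hessian) to the limit, uses the Kitabeppu--Lakzian description of the limit measure $\nu=\phi\, dH^1$ in the one-dimensional case together with the Bishop--Gromov-type concavity inequality (\ref{impe}) for $\phi$, derives $\phi(\theta)=\phi(\pi/2)\sin^{n-1}\theta$ on each $\pi$-interval, and gets a contradiction from $\phi(\pi)=0$ at an interior pole. (For $k=n+1$ the mapping-degree argument of Lemma \ref{p32c} gives $N=1$, but for general $k\leq n+1$, e.g.\ $k=1$, that route is unavailable, which is exactly why the appendix needs this measure argument.) Relatedly, your appeal to ``Cheeger--Colding convergence theory for Hessians'' to produce exact Obata solutions on the limit is not justified under only $\Ric\geq -Kg$ with possibly collapsed limits; the paper's Hessian-convergence machinery (Appendix B) assumes two-sided Ricci bounds and non-collapsing, and its proof of Theorem \ref{apa1} deliberately avoids limiting Hessians.

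Two further points would need repair even after $N=1$ is secured. First, your claim that $|(f_1,\ldots,f_k)|^2\approx 1$ can be proved ``as in Lemma \ref{32b}'' is false for $k<n+1$: that proof uses $\|\widetilde{\Psi}\|_2^2=\sum_s\|f_s\|_2^2=1$, which requires exactly $n+1$ functions (on $S^n$ with $k=1$ the sum of squares of a single height function is certainly not $\approx 1$). The needed fact $d(p_s,p_t)\approx \pi/2$ is true but is obtained differently, via the pole of $f_s+f_t$ (Lemma \ref{p31d}). Second, passing from $k$ individual suspension structures to the $k$-fold join and to the trichotomy (i)/(ii)/(iii) is not a direct consequence of the CC2 recognition theorem ``applied simultaneously''; it requires the iteration through the intersections $Z_1\cap\cdots\cap Z_i$ (Claims \ref{clap6}--\ref{clap7}) and, for the case distinction between (ii) and (iii), the tangent-cone splitting argument showing that a disconnected common fiber must consist of exactly two points (Claim \ref{clap8}). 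These are fixable with the paper's techniques, but as written your proposal is missing the decisive $N=1$ argument and substitutes for it a rigidity statement that does not hold.
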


In the following, we show that each $B_{ij}$ in Proposition \ref{p2k} is close to the spherical suspension, and prove Theorem \ref{apa1} by iteration.

We first recall some basic definitions.
\begin{Def}\label{dapa1}
Let $Z$ be a metric space.
\begin{itemize}
\item[(i)] We say $Z$ is a geodesic space if for each $z_1,z_2\in Z$, there exists a minimal geodesic $c\colon[0,d(z_1,z_2)]\to Z$ connecting them, i.e., $c(0)=z_1$, $c(d(z_1,z_2))=z_2$ and $d(c(t_1),c(t_2))=|t_1-t_2|$ holds for all $t_1,t_2\in[0,d(z_1,z_2)]$.
Given points $z_1,z_2\in Z$, $\gamma_{z_1,z_2}$ denotes one of minimal geodesics connecting them.
\item[(ii)] We define an equivalence relation $\sim$ on $[0,\pi]\times Z$ by $(0,z_1)\sim (0,z_2)$ and $(\pi,z_1)\sim (\pi,z_2)$ for all $z_1,z_2\in Z$.
Define a metric $d$ on $([0,\pi]\times Z)/\sim$ to be
\begin{equation*}
\cos d([t_1,z_1],[t_2,z_2])=\cos t_1\cos t_2+\sin t_1\sin t_2\cos\min\{d(z_1,z_2),\pi\}
\end{equation*}
and $0\leq d([t_1,z_1],[t_2,z_2])\leq\pi$ for all $[t_1,z_1],[t_2,z_2]\in ([0,\pi]\times Z)/\sim$.
The metric space $(([0,\pi]\times Z)/\sim,d)$ is called the spherical suspension of $Z$ and denoted by $S^0\ast Z$.
Define $0^\ast := [0,z]\in S^0\ast Z$ and $\pi^\ast:=[\pi,z]\in S^0\ast Z$.
\item[(iii)] For any positive integer $k\in \mathbb{Z}_{>0}$, we define $S^k\ast Z:=S^0\ast (S^{k-1}\ast Z)$ and $[t_1,\ldots,t_{k+1},z]:=[t_1,[t_2,\ldots, t_{k+1},z]]\in S^k\ast Z$ inductively.
We call $S^k\ast Z$ the $k$-fold spherical suspension of $Z$.
\item[(iv)] We define an equivalence relation $\sim'$ on $[0,\infty) \times Z$ by $(0,z_1)\sim' (0,z_2)$ for all $z_1,z_2\in Z$.
Define a metric $d$ on $([0,\infty)\times Z)/\sim'$ by
\begin{equation*}
d([t_1,z_1],[t_2,z_2]):=\left(t_1^2+t_2^2-2t_1 t_2 \cos \min\{d(z_1,z_2),\pi\}\right)^\frac{1}{2}.
\end{equation*}
The metric space $(([0,\infty)\times Z)/\sim',d)$ is called the metric cone of $Z$ and denoted by $C(Z)$.
Put $0^\ast =[0,z]\in C(Z)$.
\end{itemize}
\end{Def}

The spherical suspension is the generalization of the warped product metric with warping function $\sin t$.
For an $(n-1)$-dimensional Riemannian manifold $(Z,g_{n-1})$, the distance on $(0,\pi)\times Z$ induced by the warped product metric $g=d t^2+ \sin^2 t g_{n-1}$ coincides with the spherical suspension metric. 
If the Riemannian metric extends smoothly to the endpoints $0^\ast$ and $\pi^\ast$ in $([0,\pi]\times Z)/\sim$, then $(Z,g_{n-1})$ is isometric to the $(n-1)$-dimensional standard sphere of radius $1$.
Note that the warped product metric on $([0,\pi]\times S^{n-1})/\sim$ with warping function $\sin t$ coincides with the metric on the $n$-dimensional standard sphere, and so we have $S^n=S^0\ast S^{n-1}$.
Inductively we have $S^n=S^{n-1}\ast \{0,\pi\}$.
\begin{Def}\label{dapa2}
Let $(X,x,d)$ be a pointed metric space. 
\begin{itemize}
\item[(i)] We say a sequence of pointed metric spaces $\{(X_i,x_i,d_i)\}_{i\in \mathbb{N}}$ converges to $(X,x,d)$ in the pointed Gromov-Hausdorff sense if
for all $R>0$, there exists a sequence of positive real numbers $\{\epsilon_i\}_{i\in \mathbb{N}}$ such that $\lim_{i\to\infty}\epsilon_i= 0$ and a sequence of $\epsilon_i$-Hausdorff approximation maps $\{\psi_i\colon B_R(x)\to B_R(x_i)\}_{i\in\mathbb{N}}$.
\item[(ii)] We say a pointed metric space $(Y,y,d')$ is the tangent cone of $X$ at $x$ if there exists a sequence of positive real numbers $\{r_i\}_{i\in\mathbb{N}}$ such that $\lim_{i\to\infty}r_i= 0$ and $\{(X,x,r_i^{-1} d)\}_{i\in\mathbb{N}}$ converges to $(Y,y,d')$ in the pointed Gromov-Hausdorff sense. 
\end{itemize}
\end{Def}
Note that if a (pointed) metric space $X$ (or $(X,x)$) is a (pointed) Gromov-Hausdorff limit of a sequence of geodesic spaces, then $X$ is also geodesic space.

We next recall some facts and definitions about Ricci limit spaces.
Let $\{(M_i,g_i,p_i)\}$ be a sequence of pointed closed Riemannian manifolds with $\Ric_i\geq -K g_i$ and $\diam(M_i)\leq D$ ($K,D>0$).
Then, there exist a subsequence (denote it again by $\{(M_i,g_i,p_i)\}$) and a pointed geodesic space $(X,p,\nu)$ with a Radon measure $\nu$ such that
$\{(M_i,g_i)\}$ converges to $(X,p)$ in the pointed Gromov-Hausdorff sense and
$$
\lim_{i\to\infty} \frac{\Vol(B_r(x_i))}{\Vol(B_1(p_i))}=\nu(B_r(x))
$$
for all $r>0$, $x_i\in M_i$ and $x\in X$ with $\lim_{i\to \infty} d(\psi_i(x_i),x)=0$, where $\psi_i$ is a Hausdorff approximation map that we used for the definition of the pointed Gromov-Hausdorff convergence
(see Theorem 1.6 and Theorem 1.10 of \cite{CC1}).
We call such $\nu$ a limit measure.
We can consider the cotangent bundle $\pi \colon T^\ast X \to X$ with a canonical inner product by \cite{Ch0} and \cite{CC3} (see also \cite[Section 2]{Ho1} for a short review).
We have $\nu(X\setminus \pi(T^\ast X))=0$ and $T^\ast_x X:=\pi^{-1}(x)$ is a finite dimensional vector space with an inner product for all $x\in \pi(T^\ast X)$.
For all Lipschitz function $f$ on $X$, we can define $d f(x)\in T_x^\ast X$ for almost all $x\in X$, and we have $d f\in L^\infty(T^\ast X)$.
Let $\LIP(X)$ be the set of the Lipschitz functions on $X$. For all $f\in \LIP(X)$, we define $\|f\|_{H^{1,2}}^2=\|f\|_2^2+\|d f\|_2^2$.
Let $H^{1,2}(X)$ be the completion of $\LIP(X)$ with respect to this norm.
Define
\begin{equation*}
\begin{split}
\mathcal{D}^2(\Delta,\nu):=\Big\{f\in H^{1,2}(X)& : \text{there exists $F\in L^2(X)$ such that}\\
&\int_X \langle df, dh \rangle\,d \nu=\int_X F h\,d \nu \text{ for all $h\in H^{1,2}(X)$} \Big\}.
\end{split}
\end{equation*}
For any $f\in \mathcal{D}^2(\Delta,\nu)$, the function $F\in L^2(X)$ is uniquely determined. Thus, we define $\Delta f:=F$.
For all sequence $f_i\in L^2(M_i)$ and $f\in L^2(X)$, we say that $f_i$ converges to $f$ weakly in $L^2$ (see \cite{Ho1}) if
\begin{equation*}
\sup_{i\in \mathbb{N}}\|f_i\|_2<\infty,
\end{equation*}
and for all $r>0$, $x_i\in M_i$ and $x\in X$ with $\lim_{i\to \infty}d(\psi_i(x_i),x)=0$, we have
\begin{equation*}
\lim_{i\to \infty}\frac{1}{\Vol(B_1(p_i))}\int_{B_r(x_i)} f_i \,d \mu_{g_i}=\int_{B_r(x)} f \,d \nu.
\end{equation*}
We say that $f_i$ converges to $f$ strongly in $L^2$ if $f_i$ converges to $f$ weakly in $L^2$, and
\begin{equation*}
\limsup_{i\to \infty} \|f_i\|_2\leq \|f\|_2
\end{equation*}
holds.
Note that if $f_i$ converges to $f$ weakly in $L^2$, then
\begin{equation*}
\liminf_{i\to \infty}\|f_i\|_2\geq \|f\|_2
\end{equation*}
by \cite[Proposition 3.29]{Ho1}.

We need the following easy lemma.
\begin{Lem}\label{apa2}
Let $(M,g)$ be a complete Riemannian manifold.
Take arbitrary $\epsilon>0$.
If points $p,a,b\in M$ satisfies $d(p,a)+d(a,b)\leq d(p,b)+\epsilon$ and $\gamma_{a,b}(s)\in I_p\setminus \{p\}$ for almost all $s\in [0,d(a,b)]$,
then we have $d(p,a)+d(a,\gamma_{a,b}(s))\leq d(p,\gamma_{a,b}(s))+\epsilon$ for all $s\in [0,d(a,b)]$, and
\begin{equation*}
\int_0^{d(a,b)}\left(1-\langle \dot{\gamma}_{a,b}(s),\frac{\partial}{\partial r}\rangle\right)\, d s \leq \epsilon,
\end{equation*}
where $\frac{\partial}{\partial r}$ denotes $\dot{\gamma}_{p,x}(d(p,x))\in T_x M$ for all $x\in I_p\setminus \{p\}$.
\end{Lem}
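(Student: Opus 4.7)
The statement decomposes into two independent conclusions, and I would handle them in order.

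For the first inequality, I would fix $s \in [0, d(a,b)]$ and use that the restriction of the minimizing geodesic $\gamma_{a,b}$ to $[0,s]$ is itself minimizing, so $d(a,\gamma_{a,b}(s)) = s$ and $d(\gamma_{a,b}(s),b) = d(a,b) - s$. Then the triangle inequality $d(p,b) \leq d(p,\gamma_{a,b}(s)) + d(\gamma_{a,b}(s),b)$ combined with the hypothesis $d(p,a) + d(a,b) \leq d(p,b) + \epsilon$ gives
$$d(p,a) + s \leq d(p,b) + \epsilon - (d(a,b) - s) \leq d(p,\gamma_{a,b}(s)) + \epsilon,$$
which is the desired bound.

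For the integral inequality, the plan is to apply the fundamental theorem of calculus to $\phi(s) := d(p,\gamma_{a,b}(s))$. Since $\gamma_{a,b}$ has unit speed, $\phi$ is $1$-Lipschitz and hence absolutely continuous; by the differentiability of $d(p,\cdot)$ at every point of $I_p\setminus\{p\}$ recalled in subsection 2.1, together with the identification $\nabla d(p,\cdot) = \partial/\partial r$, we have
$$\phi'(s) = \left\langle \frac{\partial}{\partial r}, \dot{\gamma}_{a,b}(s) \right\rangle$$
for a.e.\ $s \in [0,d(a,b)]$ (specifically, at those $s$ where $\gamma_{a,b}(s) \in I_p \setminus \{p\}$, which is a.e.\ by hypothesis). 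Integrating yields
$$d(p,b) - d(p,a) = \int_0^{d(a,b)} \left\langle \frac{\partial}{\partial r}, \dot{\gamma}_{a,b}(s)\right\rangle ds,$$
and subtracting this from $\int_0^{d(a,b)} 1\, ds = d(a,b)$ produces $d(p,a) + d(a,b) - d(p,b)$, which is at most $\epsilon$ by hypothesis.

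The only mildly delicate point is justifying that $\phi$ is absolutely continuous and that the almost-everywhere pointwise derivative formula suffices for the fundamental theorem of calculus; this is not really an obstacle since $1$-Lipschitz functions on an interval are automatically absolutely continuous, so no extra care is needed. All remaining steps are triangle inequalities and direct rearrangement, so I do not expect any genuine difficulty beyond keeping track of signs.
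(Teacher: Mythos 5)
Your proof is correct and follows essentially the same route as the paper: the paper dismisses the first inequality as trivial (your triangle-inequality argument is precisely the intended justification), and for the integral estimate the paper likewise notes that $s\mapsto d(p,\gamma_{a,b}(s))$ is Lipschitz with a.e.\ derivative $\langle \dot{\gamma}_{a,b}(s),\partial/\partial r\rangle$ and integrates to obtain $d(p,b)-d(p,a)$, exactly as you do.
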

\begin{Rem}
Since $\left|\dot{\gamma}_{a,b}(s)-\frac{\partial}{\partial r}\right|^2 =2\left(1-\langle \dot{\gamma}_{a,b}(s),\frac{\partial}{\partial r}\rangle\right)$, we have
\begin{equation*}
\int_0^{d(a,b)}\left|\dot{\gamma}_{a,b}(s)-\frac{\partial}{\partial r}\right|\, d s \leq (2 d(a,b)\epsilon)^{\frac{1}{2}}.
\end{equation*}
\end{Rem}
\begin{proof}
The first assertion is trivial.
The map $d(\gamma_{a,b}(\cdot),p)\colon [0,d(a,b)]\to \mathbb{R}_{>0}$ is Lipschitz continuous and $(d(\gamma_{a,b}(s),p))'=\langle \dot{\gamma}_{a,b}(s),{\partial}/{\partial r}\rangle$ for all $s$ with $\gamma_{a,b}(s)\in I_p\setminus \{p\}$.
Since
\begin{equation*}
\int_0^{d(a,b)} (d(\gamma_{a,b}(s),p))' \,d s=d(b,p)-d(a,p),
\end{equation*} 
we get the lemma.
\end{proof}
The following proposition asserts that each $B_{ij}$ in Proposition \ref{p2k} is close to the spherical suspension.
\begin{Prop}\label{apa3}
Given an integers $n\geq 2$, and positive real numbers $K>0$ and $D>0$, then there exists $\eta(n,K,D)>0$ such that the following property holds.
Take a positive real number $0<\delta\leq \eta$.
Let $(M,g)$ be an $n$-dimensional closed Riemannian manifold with $\Ric\geq -Kg$ and $\diam(M)\leq D$.
Suppose that a non-zero function $f\in C^\infty(M)$ satisfies $\|\nabla^2 f+f g\|_2\leq \delta\|f\|_2$.
Under the notation of Proposition \ref{p2k}, for all $i,j$ with $|d(x_i,x_j)-\pi|\leq \delta^\frac{1}{200n^2}$, we have $d_{GH}(B_{ij},S^0\ast Z_{ij})\leq C(n,K,D) \delta^{\frac{1}{4000n^2}}$, where we put $Z_{ij}:=\{x\in B_{ij}:d(x_i,x)=d(x_j,x)\}$.
\end{Prop}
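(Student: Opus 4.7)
The strategy is to build a Hausdorff approximation map $\Phi \colon B_{ij} \to S^0 \ast Z_{ij}$. For $x \in B_{ij}$ I set $t(x) := d(x_i, x)$, which lies in $[0, \pi + \delta^{1/(250n^2)}]$ by the definition of $B_{ij}$ together with Proposition~\ref{p2k}(iii). For $x$ away from the two poles $x_i$ and $x_j$, I define a projection $z(x) \in Z_{ij}$ by extending the minimal geodesic $\gamma_{x_i, x}$ past $x$ until it reaches distance $d(x_i, x_j)/2$ from $x_i$; the segment inequality (Theorem~\ref{p2f}) combined with Lemma~\ref{p2e} guarantees that for most $x$ this extension is approximately length-minimizing and lands inside $Z_{ij}$ with small error. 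Near $x_i$ I set $\Phi(x) := 0^\ast$, near $x_j$ I set $\Phi(x) := \pi^\ast$, and otherwise $\Phi(x) := [t(x), z(x)]$.

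The crux is an approximate spherical law of cosines for triangles $\{x_i, a, b\}$: I must show that for most pairs $(a, b) \in B_{ij} \times B_{ij}$,
\[
\cos d(a, b) \approx \cos t(a) \cos t(b) + \sin t(a) \sin t(b) \cos d(z(a), z(b)),
\]
which is precisely the defining relation for the metric on $S^0 \ast Z_{ij}$. To obtain it, I apply Lemma~\ref{p2e} to the tensor $\nabla^2 f_1 + f_1 g$ so that for most pairs $(a, b)$ the function $u(s) := f_1 \circ \gamma_{a,b}(s)$ satisfies $u'' + u \approx 0$ in the integrated sense along $\gamma_{a,b}$. Lemma~\ref{p2g} then gives $u(s) \approx f_1(a) \cos s + u'(0) \sin s$. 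Evaluating at $s = d(a,b)$ and substituting $f_1 \approx \cos t$ together with the $W^{1,2}$-approximation $\nabla f_1 \approx -\sin t \cdot \nabla d(x_i, \cdot)$ from Proposition~\ref{p2i} yields the spherical law of cosines with the angle $\angle_a(x_i, b)$ at $a$. Running the same argument at $b$ and applying standard spherical trigonometry (which relates the two side-angles to the apex angle at $x_i$) recovers the apex angle $\angle_{x_i}(a, b)$, which agrees with $d(z(a), z(b))$ up to a small error by construction of $z$.

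For surjectivity of $\Phi$ up to error $\delta^{1/(4000n^2)}$: given $[t, z] \in S^0 \ast Z_{ij}$, I flow from $x_i$ along $\gamma_{x_i, z}$ for parameter $\min\{t, d(x_i, z)\}$, and if $t$ exceeds $d(x_i, z)$ continue along a geodesic from $z$ toward $x_j$; the resulting point lies in $B_{ij}$ and maps close to $[t, z]$ under $\Phi$.

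The main obstacle will be ensuring that the angle $\angle_{x_i}(a, b)$ extracted from the ODE argument actually equals $d(z(a), z(b))$ up to the stated error. The control $\|\nabla f_1 - \nabla \cos d(x_i, \cdot)\|_2 \leq C \delta^{1/(48n)}$ from Proposition~\ref{p2i} is only an $L^2$ estimate, so pointwise angle comparisons must be averaged via a second application of the segment inequality, and the projections $z(a), z(b)$ must be chosen compatibly with both the ODE directions at $a$ and the ones at $b$. These compounded averaging steps, together with the passage between integrated and pointwise bounds, account for the drastic drop in exponent from $\delta^{1/(200n^2)}$ in Proposition~\ref{p2k} down to $\delta^{1/(4000n^2)}$ in the final Hausdorff estimate.
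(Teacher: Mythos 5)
Your outline reproduces the paper's first step (the ODE comparison along geodesics via Lemma \ref{p2g}, with the $L^2$ bound from Proposition \ref{p2i} upgraded through a second application of the segment inequality, exactly as in Claims \ref{clap1}--\ref{clap3}), but it has a genuine gap at the decisive step. From the law of cosines at $a$ and at $b$ you only control the angles $\angle_a(x_i,b)$ and $\angle_b(x_i,a)$ in terms of the three side lengths $d(x_i,a)$, $d(x_i,b)$, $d(a,b)$; ``standard spherical trigonometry'' relating these to an apex angle at $x_i$ is only valid for genuinely spherical triangles, and under $\Ric\geq -Kg$ alone there is no comparison theorem identifying any angle at $x_i$ with the quantity you actually need, namely the equatorial distance $d(z(a),z(b))$. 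Asserting that this identification holds ``by construction of $z$'' is circular: the relation
\[
\cos d(a,b)\approx\cos d(x_i,a)\cos d(x_i,b)+\sin d(x_i,a)\sin d(x_i,b)\cos d\bigl(\phi_{ij}(a),\phi_{ij}(b)\bigr)
\]
\emph{is} the content of the proposition, and it cannot be extracted from the two base angles alone. The paper's Claim \ref{clap5} supplies the missing mechanism: fixing $y_1$, it integrates the law-of-cosines defect $F_{y_1}$ along the geodesic from $y_2$ to its equatorial projection $\phi_{ij}(y_2)$, which is almost radial from $p$ by Claim \ref{clap4} and Lemma \ref{apa2}; the first variation formula rewrites the integrand as $(\sin(d(p,\tilde y_2)+s))^2\bigl(\tfrac{\cos l(s)-\cos d(p,\tilde y_1)\cos(d(p,\tilde y_2)+s)}{\sin(d(p,\tilde y_2)+s)}\bigr)'$, and dividing by $\sin^2\gtrsim\delta^{1/1000n^2}$ (this is why one stays $\delta^{1/2000n^2}$ away from the poles, and is the true source of the exponent loss, not merely ``compounded averaging'') shows the bracketed ratio is almost constant along the radial geodesic; evaluating at the equator, where $\cos d(p,\cdot)\approx 0$, trades the angle term for $\cos d(y_1,\phi_{ij}(y_2))$, and a second pass with the pair $(\phi_{ij}(y_2),y_1)$ yields the full suspension formula. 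Without an argument of this kind your proof does not close.

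A secondary defect: your projection $z(x)$, defined by extending the minimal geodesic $\gamma_{x_i,x}$ beyond $x$, need not remain minimizing, need not meet $Z_{ij}$ at all (so $\Phi$ would not map into $S^0\ast Z_{ij}$), and is undefined for $x$ with $d(x_i,x)>d(x_i,x_j)/2$. The paper instead follows $\gamma_{x,x_j}$ (resp. $\gamma_{x,x_i}$) from $x$ toward the far pole, where the intermediate value theorem guarantees an exact crossing of $\{d(x_i,\cdot)=d(x_j,\cdot)\}$, and the small excess defining $B_{ij}$ makes this path almost radial---which is precisely what the integration step above requires.
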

\begin{proof}
We first suppose that $\delta\leq \delta_{\Db}$.
Since we have either $m_i=m_j+1$ or $m_i=m_j-1$, 
we can assume that $x_i\in A_{m_i}$ and $x_j\in A_{m_i+1}$.

Let us apply Lemma \ref{p2g} to each $y_1\in Q$ and $y_2\in D(y_1)$.
\begin{Clm}\label{clap1}
For all $y_1\in Q$ and $y_2\in D(y_1)$ $($see Lemma \ref{p2e}$)$, we have
\begin{equation*}
\begin{split}
\big|\cos d(p,y_1) - &\cos d(p,y_2) \cos d(y_1,y_2) \\
 &+\langle \nabla f_1 (y_2),\dot{\gamma}_{y_1,y_2}\rangle \sin d(y_1,y_2)\big|\leq C(n,K,D)\delta^{\frac{1}{48n}}.
\end{split}
\end{equation*}
\end{Clm}
\begin{proof}[Proof of Claim \ref{clap1}]
Take arbitrary $y_1\in Q$ and $y_2\in D(y_1)$.
Since we have
\begin{equation*}
\int_0^{d(y_1,y_2)} \left|\frac{\partial^2}{\partial s^2} (f_1\circ \gamma_{y_1,y_2}(s)) + f_1 \circ \gamma_{y_1,y_2}(s)\right| \, d s \leq \delta^{\frac{1}{6}},
\end{equation*}
we get $\left|f_1\circ \gamma_{y_1,y_2}(d(y_1,y_2)-s) - f_1(y_2) \cos s + \langle \nabla f_1 (y_2),\dot{\gamma}_{y_1,y_2}\rangle \sin s\right|\leq \sinh D \delta^{\frac{1}{6}}$ for all $s\in [0,d(y_1,y_2)]$ by applying Lemma \ref{p2g} to $f_1\circ \gamma_{y_1,y_2}(d(y_1,y_2)-s)$.
Putting $s=d(y_1,y_2)$,
we get the claim by $\|f_1-\cos d(p,\cdot)\|_{\infty}\leq C\delta^{\frac{1}{48n}}$ (see Proposition \ref{p2i}).
\end{proof}
Let us replace $\nabla f$ by $\nabla \cos d(p,\cdot)$ in Claim \ref{clap1} using $\|\nabla f_1-\nabla\cos d(p,\cdot)\|_2\leq C\delta^{\frac{1}{48n}}$ (see Proposition \ref{p2i}),
and get the integral pinching condition.
\begin{Clm}\label{clap2}
For all $y_1\in Q$, we define $F_{y_1}\colon M\to \mathbb{R}_{\geq0}$ by
\begin{equation*}
F_{y_1}(y_2):=\Big|\cos d(p,y_1) - \cos d(p,y_2) \cos d(y_1,y_2) - \langle \dot{\gamma}_{y_1,y_2},\frac{\partial}{\partial r} \rangle \sin d(p,y_2) \sin d(y_1,y_2)\Big|
\end{equation*} for all $y_2 \in I_p\cap I_{y_1}\setminus\{p,y_1\}$ and $F_{y_1}(y_2)=0$ otherwise, where $\frac{\partial}{\partial r}$ denotes $\dot{\gamma}_{p,x}(d(p,x))\in T_x M$ for all $x\in I_p\setminus \{p\}$.
Then, we have
\begin{equation*}
\frac{1}{\Vol(M)}\int_M F_{y_1} \,d\mu_g\leq C(n,K,D)\delta^{\frac{1}{48n}}.
\end{equation*}
\end{Clm}
\begin{proof}[Proof of Claim \ref{clap2}]
Take arbitrary $y_1\in Q$.
Since have $\Vol(M\setminus D(y_1))\leq \delta^\frac{1}{6}\Vol(M)$ and $F_{y_1}\leq 3$, we get
\begin{equation*}
\begin{split}
\frac{1}{\Vol(M)}\int_M F_{y_1} \,d\mu_g
\leq&\frac{1}{\Vol(M)}\int_{D(y_1)} \Big|\cos d(p,y_1) - \cos d(p,y_2) \cos d(y_1,y_2)\\
& \quad\qquad\quad\qquad\qquad\quad+ \langle \nabla f_1 (y_2),\dot{\gamma}_{y_1,y_2} \rangle \sin d(y_1,y_2)\Big|\, d y_2\\
&\quad+\|\nabla f_1+\sin d(p,\cdot)\frac{\partial}{\partial r}\|_1+3 \delta^\frac{1}{6}\\
\leq &C\delta^{\frac{1}{48n}}
\end{split}
\end{equation*}
by Claim \ref{clap1} and Proposition \ref{p2i}.
Thus, we get the claim.
\end{proof}
Similarly to Lemma \ref{p2e}, let us get the pinching condition on the segments using the integral pinching condition on the manifold.
\begin{Clm}\label{clap3}
For each $y_1\in Q$ and $y_2\in M$, we define
\begin{equation*}
\begin{split}
E_{y_1}(y_2):=\Big\{y_3\in I_{y_2}&\setminus\{y_2\}:  \int_0^{d(y_2,y_3)} F_{y_1} \circ\gamma_{y_2,y_3}(s)\, d s\leq\delta^\frac{1}{144n},\text{ and we have} \\
&\gamma_{y_2,y_3}(s)\in I_p\cap I_{y_1}\setminus \{p,y_1\} \text{ for almost all $s\in[0,d(y_2,y_3)]$}
\Big\},
\end{split}
\end{equation*}
and
\begin{equation*}
R_{y_1}:=\{y_2\in M: \Vol(E_{y_1}(y_2))\geq(1-\delta^\frac{1}{144n})\Vol(M)\}.
\end{equation*}
Then, we have
\begin{equation*}
\Vol(R_{y_1})\geq(1- C(n,K,D)\delta^\frac{1}{144n})\Vol(M).
\end{equation*}
\end{Clm}
\begin{proof}[Proof of Claim \ref{clap3}]
By applying the segment inequality Theorem \ref{p2f} to functions $F_{y_1}$ and $1-\chi_{I_p\cap I_{y_1}\setminus \{p,y_1\}}$ (here $\chi_{I_p\cap I_{y_1}\setminus \{p,y_1\}}(x)=1$ for all $x\in I_p \cap I_{y_1}\setminus \{p,y_1\}$ and $\chi_{I_p\cap I_{y_1}\setminus \{p,y_1\}}(x)=0$ otherwise),
we get the claim by Claim \ref{clap2} similarly to Lemma \ref{p2e}.
\end{proof}
Now, we define an approximation map.
Define $\phi_{ij}\colon B_{ij}\to Z_{ij}$ as follows.
If $x\in B_{ij}$ satisfies $d(x,x_i)\leq d(x,x_j)$, then
\begin{align*}
d(x_i,\gamma_{x,x_j}(0))\leq& d(x_j,\gamma_{x,x_j}(0)),\\
d(x_i,\gamma_{x,x_j}(d(x,x_j)))\geq &d(x_j,\gamma_{x,x_j}(d(x,x_j)))=0.
\end{align*}
Thus, we can choose $s\in[0,d(x,x_j)]$ with $d(x_i,\gamma_{x,x_j}(s))= d(x_j,\gamma_{x,x_j}(s))$, and define $\phi_{ij}(x):=\gamma_{x,x_j}(s)\in \Imag \gamma_{x,x_j}\cap Z_{ij}$.
Similarly, if $d(x,x_i)> d(x,x_j)$, we define $\phi_{ij}(x)$ to be $\phi_{ij}(x)\in \Imag \gamma_{x,x_i}\cap Z_{ij}$. We define an approximation map $\psi_{ij}\colon B_{ij}\to S^0\ast Z_{ij}$ by
\begin{empheq}[left={\psi_{i j} (x) :=\empheqlbrace}]{align*}
&\qquad 0^\ast && (d(x,x_i)\leq\delta^\frac{1}{2000n^2}),\\
&\quad [d(x,x_i),\phi_{ij}(x)]&& (d(x,x_i)>\delta^\frac{1}{2000n^2} \text{ and } d(x,x_j)>\delta^\frac{1}{2000n^2}),\\
&\qquad\pi^\ast &&(d(x,x_j)\leq \delta^\frac{1}{2000n^2}).
\end{empheq}

We list some basic properties of the metric on $B_{i j}$.
\begin{Clm}\label{clap4}
We have the following properties.
\begin{itemize}
\item[(i)]
For all $x\in Z_{ij}$, we have $|d(x,x_i)-\pi/2|\leq C\delta^\frac{1}{250n^2}$ and $|d(x,x_j)-\pi/2|\leq C\delta^\frac{1}{250n^2}$.
\item[(ii)] For all $x\in B_{ij}$, we have $|d(p,x)-(m_i \pi +d(x,x_i))|\leq C\delta^\frac{1}{250n^2}$ and $|d(p,x)-((m_i +1)\pi -d(x,x_j))|\leq C\delta^\frac{1}{250n^2}$.
\item[(iii)] For all $x\in B_{ij}$ with $d(x,x_i)\leq d(x,x_j)$, we have $d(p,x)+d(x,\phi_{ij}(x))\leq d(p,\phi_{ij}(x))+ C\delta^\frac{1}{250n^2}$.
\item[(iv)] For all $x\in B_{ij}$ with $d(x,x_i)> d(x,x_j)$, we have $d(p,\phi_{ij}(x))+d(\phi_{ij}(x),x)\leq d(p,x)+ C\delta^\frac{1}{250n^2}$.
\end{itemize}
\end{Clm}
Claim \ref{clap4} is easy consequence of the inequalities $|d(p,x_i)-m_i\pi|\leq \delta^\frac{1}{100n}$, $|d(p,x_j)-(m_i+1)\pi|\leq \delta^\frac{1}{100n}$, $|d(x_i,x_j)-\pi|\leq \delta^\frac{1}{200n^2}$ and $d(x,x_i)+d(x,x_j)\leq d(x_i,x_j)+\delta^\frac{1}{250n^2}$ ($x\in B_{i j}$) in Proposition \ref{p2k}.

Let us compare the cosine of the metric on $B_{i j}$ and that of the spherical suspension metric (see Definition \ref{dapa1} (ii)).
\begin{Clm}\label{clap5}
Define $G_{i j}:=\{x\in B_{i j}: d(x,x_i)>\delta^\frac{1}{2000n^2}, d(x,x_j)>\delta^\frac{1}{2000n^2}\}$.
Then, we have
\begin{equation*}
\begin{split}
\Big|\cos d(y_1,y_2)&-\cos d(x_i,y_1)\cos d(x_i,y_2) \\
&-\sin d(x_i,y_1)\sin d(x_i,y_2)\cos d(\phi_{i j}(y_1),\phi_{i j}(y_2)) \Big|\leq C(n,K,D)\delta^\frac{1}{1000n^2}
\end{split}
\end{equation*}
for all $y_1,y_2\in G_{ij}$.
\end{Clm}
\begin{proof}[Proof of Claim \ref{clap5}]
Take arbitrary $y_1, y_2\in G_{i j}$.
Put $y_3:=\phi_{ij}(y_2)$.
By Lemma \ref{p2e}, Claim \ref{clap3} and the Bishop-Gromov inequality (Theorem \ref{p2h}), there exists points $\tilde{y}_1\in Q$, $\tilde{y}_2\in R_{\tilde{y}_1}$ and $\tilde{y}_3 \in E_{\tilde{y}_1}(\tilde{y}_2)$ with $d(y_k,\tilde{y}_k)\leq C\delta^\frac{1}{144n^2}$ for $k=1,2,3$.
Put $l(s)=d(\tilde{y}_1,\gamma_{\tilde{y}_2,\tilde{y}_3}(s))$.
Then, by the first variation formula, we have $l'(s)=\langle \dot{\gamma}_{\tilde{y}_1,\gamma_{\tilde{y_2},\tilde{y}_3}(s)}(l(s)), \dot{\gamma}_{\tilde{y_2},\tilde{y}_3}(s)\rangle$ for all $s\in I_{\tilde{y_1}}\setminus \{\tilde{y_1}\}$.
Suppose that $d(y_2,x_i)\leq d(y_2,x_j)$.
Then, we have $d(p,\tilde{y}_2)+d(\tilde{y}_2,\tilde{y}_3)\leq d(p,\tilde{y}_3)+C\delta^\frac{1}{250n^2}$ by Claim \ref{clap4} (iii).
Thus, by Lemma \ref{apa2} and the definition of $E_{\tilde{y}_1}(\tilde{y}_2)$, we get
\begin{equation}\label{apec1}
\begin{split}
\int_0^{d(\tilde{y}_2,\tilde{y}_3)} |\cos d(p,\tilde{y_1})&-\cos (d(p,\tilde{y}_2)+s)\cos l(s)\\
&+ \sin  (d(p,\tilde{y}_2)+s) (\cos l(s))'|\,ds\leq C\delta^\frac{1}{500n^2}.
\end{split}
\end{equation}
Here, we have
\begin{equation}\label{apec2}
\begin{split}
&\cos d(p,\tilde{y_1})-\cos (d(p,\tilde{y}_2)+s)\cos l(s)+ \sin  (d(p,\tilde{y}_2)+s) (\cos l(s))'\\
=&\sin (d(p,\tilde{y}_2)+s) ( \cos l(s)-\cos d(p,\tilde{y_1})\cos (d(p,\tilde{y}_2)+s) )'\\
&\qquad -(\sin (d(p,\tilde{y}_2)+s))' ( \cos l(s)-\cos d(p,\tilde{y_1})\cos (d(p,\tilde{y}_2)+s))\\
=&(\sin (d(p,\tilde{y}_2)+s))^2\left(\frac{\cos l(s)-\cos d(p,\tilde{y_1})\cos (d(p,\tilde{y}_2)+s)}{\sin (d(p,\tilde{y}_2)+s)}\right)'.
\end{split}
\end{equation}
Since we have  $m_i\pi+\delta^\frac{1}{2000n^2}-C\delta^\frac{1}{250n^2} \leq d(p,\tilde{y}_2)+s\leq (m_i+1/2)\pi+C \delta^\frac{1}{250n^2}$ for all $s\in[0,d(\tilde{y}_2,\tilde{y}_3)]$,
we can assume that $m_i\pi+\frac{1}{2}\delta^\frac{1}{2000n^2}\leq d(p,\tilde{y}_2)+s\leq (m_i+1)\pi-\frac{1}{2}\delta^\frac{1}{2000n^2}$ by taking $\delta$ sufficient small.
Thus, we have
\begin{equation}\label{apec3}
\frac{1}{(\sin (d(p,\tilde{y}_2)+s))^2}\leq C \delta^{-\frac{1}{1000n^2}}.
\end{equation}
By (\ref{apec1}), (\ref{apec2}), (\ref{apec3}) and Claim \ref{clap4}, we get
\begin{equation}\label{apec4}
\begin{split}
\Big|\cos d(y_1,y_2)&-\cos d(p,y_1)\cos d(p,y_2)\\
&-(-1)^{m_i}\sin d(p,y_2)\cos d(y_1,\phi_{i j}(y_2)) \Big|\leq C\delta^\frac{1}{1000n^2}.
\end{split}
\end{equation}
Similarly, we have (\ref{apec4}) for the case $d(y_2,x_i)> d(y_2,x_j)$.
Using (\ref{apec4}) for the pair $(\phi_{i j}(y_2),y_1)$, we get
\begin{equation}\label{apec5}
\begin{split}
\Big|\cos d(\phi_{i j}(y_2),y_1)&-\cos d(p,\phi_{i j}(y_2))\cos d(p,y_1) \\
&-(-1)^{m_i}\sin d(p,y_1)\cos d(\phi_{i j}(y_2),\phi_{i j}(y_1)) \Big|\leq C\delta^\frac{1}{1000n^2}.
\end{split}
\end{equation}
By Claim \ref{clap4} (i) we have $|\cos d(p,\phi_{i j}(y_2))|\leq C\delta^\frac{1}{250n^2}$.
Combining this, (\ref{apec4}) and (\ref{apec5}), we get the claim.
\end{proof}
The following claim shows that $\psi_{i j}$ satisfies one of the property of the Hausdorff approximation map (Definition \ref{hap} (i)).
\begin{Clm}\label{hapi}
We have
\begin{equation*}
|d(y_1,y_2)-d(\psi_{i j}(y_1),\psi_{i j}(y_2)) |\leq C(n,K,D)\delta^\frac{1}{2000n^2}
\end{equation*}
for all $y_1,y_2\in B_{i j}$.
\end{Clm}
\begin{proof}[Proof of Claim \ref{hapi}]
Since $d(y_1,y_2)\leq \pi +C\delta^\frac{1}{250n^2}$ and $d(\phi_{i j}(y_2),\phi_{i j}(y_1))\leq \pi +C\delta^\frac{1}{250n^2}$, we get the claim by Lemma \ref{p2j} and Claim \ref{clap5}.
\end{proof}
Finally, we show that $\psi_{i j}$ satisfies the other property of the Hausdorff approximation map.
\begin{Clm}\label{hapii}
$\psi_{ij}(B_{i j})$ is $C(n,K,D)\delta^\frac{1}{4000n^2}$-dense in $S^0\ast Z_{i j}$ $($see Definition \ref{hap}$)$.
\end{Clm}
\begin{proof}[Proof of Claim \ref{hapii}]
Take arbitrary $[t,z]\in S^0\ast Z_{i j}$ with $2\delta^\frac{1}{2000n^2}<t<\pi-2\delta^\frac{1}{2000n^2}$.
Suppose that $t\geq \pi/2$.
Put $x=\gamma_{z,x_j}(t-\pi/2)$.
Since we have $d(x,z)=t-\pi/2$, $|d(x_i,z)-\pi/2|\leq C\delta^\frac{1}{250n^2}$ and $|d(x_i,x)-t|\leq C\delta^\frac{1}{250n^2}$,
we get $x\in G_{i j}$ by taking $\delta$ sufficient small, and by Claim \ref{clap5},
\begin{equation*}
\left|\cos \left(t-\frac{\pi}{2}\right)-\sin t \cos d(\phi_{i j}(x),z) \right|\leq C\delta^\frac{1}{1000n^2}.
\end{equation*}
Since $\cos \left(t-\pi/2\right)=\sin t$ and $1/|\sin t|\leq C\delta^{-\frac{1}{2000n^2}}$, 
\begin{equation*}
\left|1- \cos d(\phi_{i j}(x),z) \right|\leq C\delta^\frac{1}{2000n^2}.
\end{equation*}
Thus, we get
\begin{equation*}
|\cos d(\psi_{i j}(x),[t,z])-1|\leq C\delta^\frac{1}{2000n^2}.
\end{equation*}
By Lemma \ref{p2j}, we get $d(\psi_{i j}(x),[t,z])\leq C\delta^\frac{1}{4000n^2}$.
Similarly, putting $x=\gamma_{z,x_i}(\pi/2-t)$, we have $d(\psi_{i j}(x),[t,z])\leq C\delta^\frac{1}{4000n^2}$ for the case $t<\pi/2$.
Since 
\begin{equation*}
\left\{[t,z]\in S^0\ast Z_{i j}: 2\delta^\frac{1}{2000n^2}<t<\pi-2\delta^\frac{1}{2000n^2} \text{ and } z\in Z_{i j}\right\}
\end{equation*} is $2\delta^\frac{1}{2000n^2}$-dense in $S^0\ast Z_{i j}$,
we get the claim.
\end{proof}
By Claim \ref{hapi} and Claim \ref{hapii}, we get that the map $\psi_{ij}\colon B_{ij}\to S^0\ast Z_{ij}$ is a $C(n,k,D)\delta^\frac{1}{4000n^2}$-Hausdorff approximation.
Thus, we get the proposition.
\end{proof}
Rewording Proposition \ref{p2k} and Proposition \ref{apa3} by using the limit space, we immediately get the following corollary.
\begin{Cor}\label{CHCO}
Take an integer $n\geq 2$, positive real numbers $K>0$ and $D>0$ and a sequence of positive real numbers $\{\delta_i\}$ $(i\in \mathbb{N})$ with $\delta_i\to 0$.
Suppose that a sequence of $n$-dimensional closed Riemannian manifolds $\{(M_i,g_i)\}$ $(i\in\mathbb{N})$ with $\Ric_i\geq -Kg_i$ and $\diam (M_i)\leq D$ converges to a geodesic space $(X,d)$ in the Gromov-Hausdorff topology, and there exists a non-zero function $f_i\in C^\infty(M_i)$ with $\|(\nabla^{g_i})^2 f_i+f_i g_i\|_2\leq \delta_i \|f_i\|_2$ for each $i$.
Then, there exist finite points $x_0,\ldots,x_N\in X$ $(N\leq N_3)$ such that the following properties holds.
\begin{itemize}
\item[(i)] For each $i,j=0,\ldots, N$, there exists an integer $k_{ij}\in\mathbb{Z}_{\geq0}$ with $d(x_i,x_j)=\pi k_{ij}$.
Moreover, $k_{ij}$ is even if and only if $(-1)^{k_{0i}}=(-1)^{k_{0j}}$ holds.
\item[(ii)] For each $i,j=0,\ldots, N$ with $d(x_i,x_j)=\pi$, we define
\begin{equation*}
\begin{split}
B_{ij}:=&\{x\in X: d(x_i,x)+d(x,x_j)= d(x_i,x_j)\}\\
\widetilde{B}_{ij}:=&B_{ij}\setminus \{x_i,x_j\}.
\end{split}
\end{equation*}
Then, $X=\bigcup_{ij} B_{ij}$ holds, and $\widetilde{B}_{ij}\cap B_{kl}=\emptyset$ if $\{k,l\}\neq \{i,j\}$.
\item[(iii)] Suppose that $i,j,k=0,\ldots, N$ satisfy $j\neq k$, $d(x_i,x_j)=\pi$ and $d(x_i,x_k)=\pi$.
Then, for any $x\in B_{ij}$ and $y\in B_{ik}$, we have $d(x,y)=d(x,x_i)+d(x_i,y)$.
\item[(iv)] For each $i,j=0,\ldots, N$ with $d(x_i,x_j)=\pi$, there exists a metric space $Z_{ij}$ such that $B_{ij}$ is isometric to the spherical suspension $S^0\ast Z_{ij}$.
\end{itemize}
\end{Cor}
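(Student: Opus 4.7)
The plan is to apply Proposition \ref{p2k} and Proposition \ref{apa3} to each pair $(M_i,f_i)$, extract the resulting combinatorial/geometric data, and pass to the limit using the Gromov--Hausdorff convergence $M_i\to X$.

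First, for each sufficiently large $i$ (so that $\delta_i\le \delta_{\Db}$), Proposition \ref{p2k} provides points $x_0^{(i)}=p^{(i)},x_1^{(i)},\ldots,x_{N^{(i)}}^{(i)}\in M_i$ with $N^{(i)}\le N_3(n,K,D)$, integers $k^{(i)}_{st}\in\mathbb{Z}_{>0}$ satisfying $|d_i(x_s^{(i)},x_t^{(i)})-\pi k^{(i)}_{st}|\le \delta_i^{1/(200n^2)}$ with the parity rule, together with the covering $M_i=\bigcup_{st} B^{(i)}_{st}$ of Proposition \ref{p2k} (iv) and the triangle-equality statement (v). Since $N^{(i)}$ and all $k^{(i)}_{st}$ are bounded uniformly (by $N_3$ and by $D/\pi+1$ respectively), passing to a subsequence I may assume $N^{(i)}=N$ and $k^{(i)}_{st}=k_{st}$ are independent of $i$. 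Using Gromov's precompactness I may also assume that each sequence $\{x^{(i)}_s\}$ converges to a point $x_s\in X$ via the Hausdorff approximations. Because distances are continuous under GH-convergence, the approximate relations $|d_i(x^{(i)}_s,x^{(i)}_t)-\pi k_{st}|\le \delta_i^{1/(200n^2)}$ pass to the exact equalities $d(x_s,x_t)=\pi k_{st}$, giving (i).

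For (ii) and (iii), I define $B_{st}$ and $\widetilde B_{st}$ as in the statement, whenever $k_{st}=1$. Continuity of the distance again lets me pass the approximate triangle equality defining $B^{(i)}_{st}$ (with error $\delta_i^{1/(250n^2)}$) to an exact equality in the limit: a limit point of any sequence $y^{(i)}\in B^{(i)}_{st}$ lies in $B_{st}$, and conversely any $y\in B_{st}$ is the limit of some $y^{(i)}\in B^{(i)}_{st}$ (take $y^{(i)}$ close in $M_i$ to a preimage of $y$ under the Hausdorff approximation; by the covering it lies in some $B^{(i)}_{s't'}$, and the approximate triangle equality then forces $\{s',t'\}=\{s,t\}$ in the limit once we rule out the other pairs using the triangle inequalities together with $d(x_s,x_t)=\pi$). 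This covering of $X$ together with the disjointness $\widetilde B_{ij}\cap B_{kl}=\emptyset$ and the small-excess property (v) of Proposition \ref{p2k} likewise pass to (iii) in the limit.

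Finally, (iv) is obtained from Proposition \ref{apa3}: for each pair $(s,t)$ with $k_{st}=1$, that proposition supplies compact metric spaces $Z^{(i)}_{st}$ and $C(n,K,D)\delta_i^{1/(4000n^2)}$-Hausdorff approximations $\psi^{(i)}_{st}\colon B^{(i)}_{st}\to S^0\ast Z^{(i)}_{st}$. By Gromov's precompactness applied to the uniformly bounded family $\{Z^{(i)}_{st}\}$ (which are GH-bounded and satisfy uniform Bishop--Gromov-type covering bounds inherited from $M_i$), a subsequence of $Z^{(i)}_{st}$ converges to a compact geodesic space $Z_{st}$, and the spherical-suspension operation is continuous in the GH topology, so $S^0\ast Z^{(i)}_{st}\to S^0\ast Z_{st}$. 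Composing the convergence $B^{(i)}_{st}\to B_{st}$ (established in the previous paragraph) with the approximations $\psi^{(i)}_{st}$ gives an isometry $B_{st}\cong S^0\ast Z_{st}$, proving (iv).

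The main obstacle I anticipate is not any single hard step but the bookkeeping in the previous paragraph: I must verify that the $B^{(i)}_{st}$ actually converge as subsets (not merely in Hausdorff distance inside the ambient $M_i$, but as pointed GH-limits with the induced length metric), and that the family $\{Z^{(i)}_{st}\}$ is GH-precompact. The first issue is handled by using the triangle-equality characterisation of $B_{st}$, which is stable under GH-limits as explained above. The second issue is handled by noting that $Z^{(i)}_{st}$ sits inside $B^{(i)}_{st}\subset M_i$ and therefore inherits a uniform diameter bound and uniform doubling constant; this makes the precompactness standard and justifies extracting a common subsequence for which all of the finitely many $Z^{(i)}_{st}$ converge simultaneously.
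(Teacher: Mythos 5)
Your overall strategy is exactly the one the paper intends (the paper itself only says the corollary follows by ``rewording'' Proposition \ref{p2k} and Proposition \ref{apa3} in the limit), and most of your steps are fine: extracting a subsequence with constant $N$ and constant $k_{st}$, converging the marked points, passing (i) with the parity rule to the limit, the covering $X=\bigcup B_{ij}$, and the precompactness/continuity-of-suspension argument for (iv). The gap is in how you justify the disjointness in (ii) and, with it, the identification $\{s',t'\}=\{s,t\}$ that you need in order to show that every point of $B_{st}$ is a limit of points of $B^{(i)}_{st}$ (which is what (iv) really requires). First, ``the triangle inequalities together with $d(x_s,x_t)=\pi$'' do not rule out the other pairs: a configuration in which $x_i,x_j,x_k,x_l$ are pairwise at distance $\pi$ and $y$ is at distance $\pi/2$ from all four is perfectly compatible with the triangle inequality, so nothing at that level forces the pair to be $\{s,t\}$. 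Second, you cannot simply assert that the finite-level disjointness $\widetilde B^{(i)}_{ij}\cap B^{(i)}_{kl}=\emptyset$ ``passes to the limit'': membership in $B^{(i)}_{kl}$ requires excess at most $\delta_i^{1/(250n^2)}$, while a point of the limit set $B_{kl}$ is only approximated in $M_i$ with errors of the size of the Gromov--Hausdorff approximation, which is unrelated to $\delta_i$; so an approximating sequence of a point of $B_{kl}$ need not lie in $B^{(i)}_{kl}$, and using that it does is circular with the very set-convergence you are trying to establish.

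The step is true, but it has to be proved in the limit space from the full strength of (i), namely the parity rule, which (together with the distinctness of the limit points, guaranteed by the lower bound of order $\pi/2$ on their mutual distances at the finite level) gives $d(x_a,x_b)\ge 2\pi$ whenever $a\ne b$ have the same parity. Indeed, if $y\in\widetilde B_{ij}\cap B_{kl}$ with $\{k,l\}\ne\{i,j\}$, say $k\notin\{i,j\}$, put $t:=d(y,x_i)\in(0,\pi)$, so $d(y,x_j)=\pi-t$ and $d(y,x_k)\le\pi$; if $x_k$ has the parity of $x_i$ then $2\pi\le d(x_i,x_k)\le t+\pi<2\pi$, and if $x_k$ has the parity of $x_j$ then $2\pi\le d(x_j,x_k)\le(\pi-t)+\pi<2\pi$, a contradiction in either case. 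The same observation yields (iii) directly in the limit: for $j\ne k$ with $d(x_i,x_j)=d(x_i,x_k)=\pi$ one has $d(x_j,x_k)\ge 2\pi$, hence $d(x,y)\ge d(x_j,x_k)-d(x_j,x)-d(x_k,y)=d(x_i,x)+d(x_i,y)$ for $x\in B_{ij}$, $y\in B_{ik}$, and equality follows from the triangle inequality. Once (ii) is known, your argument that any $y\in B_{ij}$ is a limit of points of $B^{(m)}_{ij}$ (via the finite-level covering and a constant-pair subsequence) does go through, and the remainder of your proof of (iv) is correct.
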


More detailed consideration enable us to exclude the case $N\geq 2$.
\begin{Cor}\label{CHCO2}
Take an integer $n\geq 2$, positive real numbers $K>0$ and $D>0$ and a sequence of positive real numbers $\{\delta_i\}$ $(i\in \mathbb{N})$ with $\delta_i\to 0$.
Suppose that a sequence of $n$-dimensional closed Riemannian manifolds $\{(M_i,g_i)\}$ $(i\in\mathbb{N})$ with $\Ric_i\geq -Kg_i$ and $\diam (M_i)\leq D$ converges to a geodesic space $(X,d)$ in the Gromov-Hausdorff topology, and there exists a non-zero function $f_i\in C^\infty(M_i)$ with $\|(\nabla^{g_i})^2 f_i+f_i g_i\|_2\leq \delta_i \|f_i\|_2$ for each $i$.
Then, we always have $N=1$ in Corollary \ref{CHCO}, and there exists a compact metric space $Z$ such that $X=S^0\ast Z$.
\end{Cor}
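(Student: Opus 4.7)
The plan is to argue by contradiction, assuming $N\geq 2$ in Corollary \ref{CHCO}, and to exploit the rigid wedge structure of $X$ together with the pinching on $M_i$ to produce a \emph{second} pinched function on each $M_i$ that is incompatible with the cosine rigidity of Proposition \ref{p2i}.

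First I would reduce to a convenient configuration. By the parity constraint in Corollary \ref{CHCO} (i), the indices $\{0,\ldots,N\}$ form a bipartite structure, and by (ii) the graph whose edges are the pairs $(i,j)$ with $d(x_i,x_j)=\pi$ must be connected since $X=\bigcup B_{ij}$. Hence, if $N\geq 2$, after relabelling so that $x_0$ is a vertex of degree at least $2$, there exist distinct $j,k\in\{1,\ldots,N\}$ with $d(x_0,x_j)=d(x_0,x_k)=\pi$. Lifting via Proposition \ref{p2k}, for all sufficiently large $i$ I obtain points $x_0^{(i)},x_j^{(i)},x_k^{(i)}\in M_i$ converging to $x_0,x_j,x_k$, together with the corresponding decomposition $M_i=\bigcup B_{rs}^{(i)}$ and the wedge estimate in (v): for $x\in B_{0j}^{(i)}$, $y\in B_{0k}^{(i)}$,
\[
d(x,y)\;\geq\;d(x,x_0^{(i)})+d(x_0^{(i)},y)-5\delta_i^{1/(250n^2)}.
\]
Moreover, by Proposition \ref{p2i} each $f_i$ is close in $L^\infty$ to $c_i\cos d(x_0^{(i)},\cdot)$ with $c_i$ bounded away from $0$.

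The main step is the construction, on each $M_i$, of a function $g_i\in C^\infty(M_i)$ satisfying $\|\nabla^2 g_i+g_i g_{M_i}\|_2\leq \eta_i\|g_i\|_2$ for some $\eta_i\to 0$, with $g_i$ $L^2$-orthogonal to both $f_i$ and the constants. The idea is to take $g_i$ to coincide approximately with $+\cos d(x_0^{(i)},\cdot)$ on $B_{0j}^{(i)}$, with $-\cos d(x_0^{(i)},\cdot)$ on $B_{0k}^{(i)}$, and with a smoothly interpolating value on all other $B_{rs}^{(i)}$. The interfaces $\partial B_{0j}^{(i)},\partial B_{0k}^{(i)}$ are concentrated near the poles $x_0^{(i)},x_j^{(i)},x_k^{(i)}$ by Proposition \ref{p2k} (iv), so the interpolation can be done on thin annuli of radius $\delta_i^{1/(250n^2)}$ about these points. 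Using the gradient bound $\|\nabla f_i\|_\infty\leq C$ of Lemma \ref{p2c} together with Cheeger-Colding-type cutoff functions on those annuli (whose $L^2$ Hessians are controlled), one verifies that the annular contributions to $\|\nabla^2 g_i+g_i g_{M_i}\|_2^2$ vanish in the limit, whereas $\|g_i\|_2^2$ stays bounded away from $0$ since the $\cos d(x_0^{(i)},\cdot)$ piece on $B_{0j}^{(i)}$ alone contributes a definite amount by the volume nondegeneracy coming from Theorem \ref{p2h}.

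Given the construction, $V_i:=\mathrm{Span}\{f_i,g_i\}$ is a $2$-dimensional $\eta_i$-pinched subspace. Applying Proposition \ref{p2i} to a suitable element of $T_{(n,\eta_i)}(V_i)$ forces $g_i$ itself to be $L^\infty$-close to $\pm\sqrt{n+1}\|g_i\|_2\cos d(q^{(i)},\cdot)$ for some $q^{(i)}\in M_i$. But by construction the positive and negative parts of $g_i$ are localized near $y_j^{(i)}:=\gamma_{x_0^{(i)},x_j^{(i)}}(\pi/2)$ and $y_k^{(i)}:=\gamma_{x_0^{(i)},x_k^{(i)}}(\pi/2)$ respectively, and $d(y_j^{(i)},y_k^{(i)})\to \pi$ by the wedge estimate. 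This geometry is incompatible with $g_i\approx \pm\cos d(q^{(i)},\cdot)$, whose positive and negative parts lie in antipodal balls of a \emph{single} basepoint, yielding the contradiction. Once $N=1$ is established, Corollary \ref{CHCO} (iv) immediately gives $X=B_{01}\cong S^0\ast Z$ for $Z=Z_{01}$.

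The main obstacle is the Hessian control in the construction of $g_i$: the cutoff near the poles must be sharp enough to disentangle $B_{0j}^{(i)}$ and $B_{0k}^{(i)}$ at scale $\delta_i^{1/(250n^2)}$, yet smooth enough that $\nabla^2 g_i$ remains $L^2$-small. Handling this requires precise annular cutoffs adapted to the points $x_0^{(i)},x_j^{(i)},x_k^{(i)}$, for which the volume estimate of Theorem \ref{p2h} and the gradient bound from Lemma \ref{p2c} should suffice, but the bookkeeping is the delicate part.
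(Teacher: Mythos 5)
Your reduction to a vertex $x_0$ of degree at least two is fine, but the heart of your argument --- the construction of a second pinched function $g_i$ --- does not go through, and this is exactly where the real difficulty of the statement sits. With the cosine gluing you describe, $g_i$ must pass from values close to $+1$ (on $B_{0j}^{(i)}$ near $x_0^{(i)}$) to values close to $-1$ (on $B_{0k}^{(i)}$ near $x_0^{(i)}$) across a region of radius $r_i=\delta_i^{1/(250n^2)}$ around the wedge point, because both sets contain $x_0^{(i)}$ and overlap only inside such a neighborhood. Any smooth interpolation then has Hessian of size about $r_i^{-2}$ on a region whose relative volume is at least $C r_i^{n}$ by Theorem \ref{p2h}, so the contribution to $\|\nabla^2 g_i+g_i\,g_{M_i}\|_2^2$ is at least of order $r_i^{\,n-4}$; this does not tend to $0$ for $n\le 4$, and in the situation actually relevant here the sequence collapses to a one-dimensional limit (this is what Claim \ref{imp1} and the splitting argument in the paper show), so the relative volume of the transition region is of order $r_i$ and the contribution blows up. Cheeger--Colding cutoffs do not rescue this: a cutoff at scale $r$ has Laplacian of order $r^{-2}$, so its normalized $L^2$ Hessian is not small at small scales. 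If instead you glue $\pm\sin d(x_0^{(i)},\cdot)$ --- which is what your final paragraph implicitly assumes, since you place the positive and negative parts near the equatorial points $y_j^{(i)},y_k^{(i)}$ --- the value jump disappears, but $\nabla^2\sin d(x_0,\cdot)$ is of order $1/d(x_0,\cdot)$ near the pole; already on the round sphere $\|\nabla^2\sin d(p,\cdot)+\sin d(p,\cdot)\,g\|_2$ is a definite constant (and diverges for $n=2$), so $\eta_i\to 0$ fails for this choice as well. There is no scale at which the gluing becomes $o(1)$-pinched.

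Even granting the construction, your closing contradiction is not one. Take the simplest case $N=2$ with $X$ an interval of length $2\pi$ and $x_0$ the middle vertex: the sin-type glued function is then exactly $\cos d(y_j,\cdot)$ for the single basepoint $y_j$ (on $[-\pi,\pi]$ one has $\sin\theta=\cos d(\pi/2,\theta)$ for every $\theta$), so its shape is perfectly compatible with the conclusion of Proposition \ref{p2i}, and no contradiction with the cosine rigidity arises from $L^\infty$ information alone. The true obstruction, and the paper's actual argument, is measure-theoretic rather than about the shape of a function: Claim \ref{imp1} (tangent cone at a degree-two vertex plus the splitting theorem) forces the cross-sections to be points, hence $X$ is an interval $[0,N\pi]$ or a circle; the limits of $T_{(n,\delta_i)}(f_i)$ give a function $f=\cos d(p,\cdot)$ with $\Delta f=nf$ on the limit space with density $\phi$, which yields the ODE $\sin\theta\,\phi'=(n-1)\cos\theta\,\phi$, hence $\phi\propto\sin^{n-1}$ on each $\pi$-segment and $\phi(\pi)=0$; this contradicts the Bishop--Gromov-type concavity inequality (\ref{impe}) for $\phi^{1/n}$, which forces $\phi(\pi)>0$. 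Your proposal contains no substitute for this step on the limit measure, and without it the case $N\ge 2$ cannot be excluded.
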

\begin{proof}
If $N=1$ holds in Corollary \ref{CHCO}, we get the corollary.

In the following, we suppose that $N\geq 2$ holds in Corollary \ref{CHCO} and show a contradiction.
\begin{Clm}\label{imp1}
Take arbitrary $i=0,\ldots,N$.
Take $j_1,\ldots,j_l=0,\ldots, N$ such that $j_a\neq j_b$ (if $a\neq b$) and
\begin{equation*}
\{x_{j_1},\ldots,x_{j_l}\}=\{x_k:d(x_i,x_k)=\pi\}.
\end{equation*}
If $l\geq 2$, then we have $l=2$ and $\Card Z_{i j_1}= \Card Z_{i j_2}=1$.
\end{Clm}
\begin{proof}[Proof of Claim \ref{imp1}]
The tangent cone $X_{x_i}$ at $x_i$ is isometric to $$(C(Z_{i j_1}) \coprod \cdots \coprod C(Z_{i j_l}))/\sim,$$ where $C(Z_{ij_1}),\ldots, C(Z_{i j_l})$ denote the metric cones, and $\sim$ identify the vertexes.
Let $0^\ast$ denotes the vertex.
The metric $d$ on $(C(Z_{i j_1}) \coprod \cdots \coprod C(Z_{i j_l}))/\sim$ satisfies $d(c_1,c_2)=d(0^\ast,c_1)+d(0^\ast,c_2)$ for all $c_1\in C(Z_{i j_a})$ and $c_2\in C(Z_{i j_b})$ ($a\neq b$).

Suppose that we have either $l\geq 3$ or $\Card Z_{i j_a}\geq 2$ for some $a$.
Then, there exist $a\in \{1,\ldots,l\}$ and points $x_1\in Z_{i j_1}$, $x_2\in Z_{i j_2}$, $x_3\in Z_{i j_a}$
such that $x_3\notin \{x_1,x_2\}$.
If necessary, we can exchange $j_1$ and $j_2$ and assume $a\neq 1$.
Consider a line $\gamma\colon \mathbb{R}\to X_{x_i}$ defined by
\begin{empheq}[left={\gamma(t)=\empheqlbrace}]{align*}
[-t, x_1&]\in C(Z_{i j_1}) &&\quad (t<0),\\
[t, x_2&]\in C(Z_{i j_2}) && \quad(t>0),\\
&0^\ast && \quad (t=0).
\end{empheq}
By the splitting theorem \cite[Theorem 9.27]{Ch}, $X_{x_i}$ splits into $\mathbb{R}\times Z$ for some geodesic space $Z$, and there exists a point $y_0\in Z$ such that $\gamma(t)$
corresponds to $(t,y_0)$ for each $t\in\mathbb{R}$.
In particular, under this isometry, we have
\begin{empheq}[left={\empheqlbrace}]{align*}
[1,x_1]\in C(Z_{i j_1}) &\mapsto (-1,y_0),\\
[1, x_2]\in C(Z_{i j_2}) &\mapsto (1,y_0),\\
0^\ast &\mapsto (0,y_0).
\end{empheq}
Take $(s,y)\in\mathbb{R}\times Z$ corresponding to $[1,x_3]$.
Then, we have
\begin{align*}
1=&d(0^\ast,[1,x_3])^2=s^2+d(y,y_0)^2,\\
4=&d([1,x_1],[1,x_3])^2=(s+1)^2+d(y,y_0)^2.
\end{align*}
Thus, we get $s=1$ and $d(y,y_0)=0$.
This contradicts to $x_2\neq x_3$.
Therefore, we get $l=2$ and $\Card Z_{i j_1}= \Card Z_{i j_2}$=1.
\end{proof}
By the connectedness, there exists $i$ such that $\Card\{x_k:d(x_i,x_k)=\pi\}=2$, and we can connect $x_i$ to $x_j$ for any $j$, i.e., there exists $x_{j_0},\ldots, x_{j_l}$ such that
$j_0=i$, $j_l=j$ and $d(x_{j_a},x_{j_{a+1}})=\pi$ for all $a=0,\ldots,l-1$.
Therefore, we get $X=[0,N\pi]$ or $X=S^1((N+1)/2)$ (c.f. \cite[Theorem 1.1]{Chen} and \cite[Theorem 1.1]{KL}).
Moreover, if $X=S^1((N+1)/2)$, then $N$ is odd by the following argument.
Suppose that $N$ is even. Then, $(N+1)/2$ is not a integer, and there exists $x,x_i,x_j\in X$ with $d(x_i,x_j)=\pi$, $d(x_0,x)=(N+1)\pi/2$ and $x\in B_{i j}$.
We get $d(x_0,x_i)=d(x_0,x_j)=N\pi/2$, and this contradicts to Corollary \ref{CHCO} (i).
Thus, we have that $N$ is odd.

For each $i\in\mathbb{N}$, take $p_i\in M_i$ of Proposition \ref{p2i} for $f_i$.
By taking a subsequence, we can assume that there exists a point $p\in X$ such that $p_i\stackrel{GH}{\to} p$, and $X$ has a limit measure $\nu$.
By \cite[Theorem 1.1]{KL} (see also \cite[Definition 2.1]{KL}), the limit measure $\nu$ on $X$ is of the form
$$\nu=\phi d H^1,$$
where $H^1$ denotes the $1$-dimensional Hausdorff measure and $\phi\colon X\to \mathbb{R}_{\geq 0}\cup \{\infty\}$ satisfies
\begin{equation}
\begin{split}\label{impe}
\phi^{\frac{1}{n}}(\gamma_{x,y}(t))
\geq \frac{1}{\sinh \left(d(x,y)\sqrt{\frac{K}{n}}\right)}
&\Biggr(\sinh \left(d(\gamma_{x,y}(t),y)\sqrt{\frac{K}{n}}\right)\phi^{\frac{1}{n}}(x)\\
&\qquad+\sinh\left(d(x,\gamma_{x,y}(t))\sqrt{\frac{K}{n}}\right)\phi^{\frac{1}{n}}(y)\Biggr)
\end{split}
\end{equation}
for all $x,y\in X$ and $0\leq t\leq d(x,y)$.
In particular, $\phi$ is a locally Lipschitz function on $(0,N \pi)$ if $X=[0,N\pi]$, and $\phi$ is a Lipschitz function if $X=S^1(m)$.
Set $\widetilde{f}_i:=T_{(n,\delta)}(f_i)\in C^\infty(M_i)$ and normalize it so that $\|\widetilde{f}_i\|_2^2=\frac{1}{n+1}$.
Taking a subsequence, we can assume that there exists $f\in\mathcal{D}(X,\nu)$ such that $\widetilde{f}_i$ converges to $f$ strongly in $L^2$, and $\Delta \widetilde{f}_i$ converges to $\Delta f$ weakly in $L^2$ by Theorem 1.3 and Theorem 4.9 of \cite{Ho1}.
Moreover, $\cos d_i(p_i,\cdot)$ converges to $\cos d(p,\cdot)$ strongly in $L^2$ because we can easily verify the property of \cite[Definition 3.7]{Ho1}(see also \cite[Proposition 3.32]{Ho1}). 
Thus, we get
$$
\|f-\cos d(p,\cdot)\|_2\leq \liminf_{i\to\infty}\|\widetilde{f}_i-\cos d(p_i,\cdot)\|_2=0,
$$
and so $f=\cos d(p,\cdot)$.
Since $\Delta \widetilde{f}_i-n \widetilde{f}_i$ converges to $\Delta f-n f$ weakly in $L^2$,
we get
$$
\|\Delta f-n f\|_2\leq \liminf_{i\to \infty}\|\Delta \widetilde{f}_i-n \widetilde{f}_i\|_2=0,
$$
and so $\Delta f=n f$.

If $X=[0,N\pi]$, we parameterize $(0,N\pi)\subset [0,N\pi]$ by the identity map $(0,N\pi)\to (0,N\pi),\, \theta\mapsto \theta$.
Since $0\in [0,N\pi]$ corresponds to some $x_i$ of Corollary \ref{CHCO}, we have $f(\theta)=\pm\sin \theta$ under the parameterization.
Considering $-f$, if necessary, we can assume that $f(\theta)=\sin\theta$.
If $X=S^1(m)$ and $p=m \exp(\sqrt{-1}\theta_0)$, we parameterize $S^1(m)\setminus \{p\}\subset S^1(m)$ by the map $$(0,2m\pi)\to S^1(m)\setminus \{p\},\,\theta\mapsto m \exp(\sqrt{-1}(\theta_0+\theta/m)).$$
For both cases, we have $f(\theta)=\sin \theta$ under our parameterization.

Take arbitrary $\psi\in C^\infty_0((0,N\pi))$ if $X=[0,N\pi]$ and $\psi\in C^\infty_0(S^1(m)\setminus\{p\})$ if $X=S^1(m)$, where $C^\infty_0$ denotes the space of smooth functions with compact support.
Then, we have
\begin{align*}
\int_X \langle d f, d\psi \rangle\,d \nu
=&-\int \sin \theta \psi' (\theta) \phi (\theta)\,d\theta\\
=&-\int (\sin \theta \psi (\theta) \phi (\theta))'\,d\theta
+\int (\sin \theta  \phi (\theta))'\psi(\theta)\,d\theta\\
=&\int (\cos \theta  \phi (\theta)+ \sin\theta \phi'(\theta))\psi(\theta)\,d\theta
\end{align*}
By the definition of the Laplacian $\Delta$, we have
\begin{align*}
\int_X \langle d f, d\psi \rangle\,d \nu
=\int_X \Delta f \psi\,d\nu
=\int n \cos \theta \phi (\theta)\psi (\theta) \,d\theta.
\end{align*}
Thus, we get
$$
\sin \theta\phi' (\theta)=(n-1)\cos \theta \phi(\theta)
$$
for almost all $\theta$.
This gives
$$
\sin \theta\left(\phi (\theta)-\phi\left(\frac{\pi}{2}\right)\sin^{n-1}\theta\right)'=(n-1)\cos \theta \left(\phi (\theta)-\phi\left(\frac{\pi}{2}\right)\sin^{n-1}\theta\right)
$$
for almost all $\theta\in (0,\pi)$.
Since $\sin \theta>0$ for any $\theta \in(0,\pi)$, we get
$\phi (\theta)=\phi\left(\frac{\pi}{2}\right)\sin^{n-1}\theta$ on $(0,\pi)$.
Similarly, we have
$\phi (\theta)=\phi\left(\frac{3\pi}{2}\right)\sin^{n-1}(\theta-\pi)$ on $(\pi,2\pi)$.
By the Lipschitz continuity of $\phi$, we get $\phi(\pi)=0$.
Putting $x=\pi/2$, $y=3\pi/2$ and $t=\pi$ into (\ref{impe}), we get
$$
\phi(\pi)\geq  \frac{1}{\sinh \left(\pi\sqrt{\frac{K}{n}}\right)}
\Biggr(\sinh \left(\frac{\pi}{2} \sqrt{\frac{K}{n}}\right)\phi^{\frac{1}{n}}\left(\frac{\pi}{2}\right)
+\sinh \left(\frac{\pi}{2} \sqrt{\frac{K}{n}}\right)\phi^{\frac{1}{n}}\left(\frac{3\pi}{2}\right)\Biggr)>0.
$$
This is a contradiction. Thus, we get $N=1$.
\end{proof}

By Corollary \ref{CHCO2}, we always have $N=1$ in Proposition \ref{p2k} if we take $\delta$ small enough.
\begin{Prop}\label{p2ki}
Given an integer $n\geq 2$ and positive real numbers $K>0$ and $D>0$, there exists a positive constant $\eta(n,K,D)>0$ such that the following property hold.
Take a positive real number $0<\delta\leq \eta$.
Let $(M,g)$ be an $n$-dimensional closed Riemannian manifold with $\Ric\geq -Kg$ and $\diam(M)\leq D$.
Suppose that a non-zero function $f\in C^\infty(M)$ satisfies $\|\nabla^2 f+f g\|_2\leq \delta\|f\|_2$.
Take a point $p\in M$ of Proposition \ref{p2i}.
Then, there exists a point $q\in M$ such that
$|d(p,q)-\pi|\leq \delta^{\frac{1}{200n^2}}$ and
$d(p,x)+d(x,q)\leq d(p,q)+\delta^\frac{1}{250n^2}$ holds for all $x\in M$.
\end{Prop}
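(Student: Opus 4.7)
The plan is to prove this by a Gromov--Hausdorff compactness and contradiction argument, leveraging Corollary \ref{CHCO2}. Specifically, I would assume the contrary: that there exists a sequence of positive reals $\delta_i \to 0$ and a sequence $\{(M_i,g_i,f_i)\}$ with $\Ric_{g_i} \geq -K g_i$, $\diam(M_i) \leq D$, and $\|\nabla^2 f_i + f_i g_i\|_2 \leq \delta_i \|f_i\|_2$, for which the conclusion of Proposition \ref{p2ki} fails. In view of Proposition \ref{p2k} and Proposition \ref{p4a}, failure of the conclusion for $(M_i,g_i,f_i)$ forces $N \geq 2$ in Proposition \ref{p2k} applied to $f_i$ for all sufficiently large $i$ (since $N=1$ immediately yields, via Proposition \ref{p4a}, a point $q$ with the desired properties).

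Next I would extract a Gromov--Hausdorff convergent subsequence $(M_i,g_i) \to (X,d)$ to a compact geodesic space, with $p_i \to p$ for the basepoints obtained from Proposition \ref{p2i}. This places us precisely in the setting of Corollary \ref{CHCO}, so one obtains limit points $x_0,\ldots,x_N \in X$ with $N \geq 2$ satisfying the cosine law, the spherical suspension decomposition $B_{ij} = S^0 * Z_{ij}$, and pairwise distances in $\pi\mathbb{Z}$. Corollary \ref{CHCO2} then shows such a configuration is impossible: first via a tangent cone and splitting theorem argument (Claim \ref{imp1}), one reduces $X$ to either $[0,N\pi]$ or $S^1((N+1)/2)$ with appropriate parity of $N$; then, using the strong $L^2$ convergence of $T_{(n,\delta_i)}(f_i)$ to $\cos d(p,\cdot)$ and the weak $L^2$ convergence of the corresponding Laplacians (which follows because the $(n,\delta_i)$-projections satisfy $\|\Delta \widetilde{f}_i - n\widetilde{f}_i\|_2 \to 0$), the limit function $\cos d(p,\cdot)$ satisfies $\Delta f = nf$ on $(X,\nu)$; this yields the ODE $\sin\theta\, \phi'(\theta) = (n-1)\cos\theta\, \phi(\theta)$ for the density $\phi$ of the limit measure, forcing $\phi(\theta) = c \sin^{n-1}\theta$ on each length-$\pi$ segment; by Lipschitz continuity $\phi(\pi) = 0$, contradicting the Cheeger--Colding density inequality (\ref{impe}).

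Since this entire chain of reasoning is already carried out in Corollary \ref{CHCO2}, the actual proof reduces to quoting that corollary together with Propositions \ref{p2k} and \ref{p4a}: the assumption that no uniform $\eta(n,K,D) > 0$ exists yields the failing sequence, whose GH-limit violates Corollary \ref{CHCO2}.

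The main obstacle I anticipate is purely the bookkeeping of the contradiction setup, namely verifying that ``failure of the conclusion with constant $\delta^{1/200n^2}$ for the $d(p,q)$ estimate and $\delta^{1/250n^2}$ for the excess estimate'' is compatible with the stated output of Proposition \ref{p2k}: concretely, one must check that the $N=1$ case of Proposition \ref{p2k} plus Proposition \ref{p4a} delivers exactly the quantitative bounds in the statement of Proposition \ref{p2ki}, so that the only way the conclusion can fail for all sufficiently small $\delta$ is if $N \geq 2$ occurs, thereby triggering the contradiction from Corollary \ref{CHCO2}. The analytic heart of the argument, i.e.\ the spectral convergence and the ODE/density analysis, is already packaged inside Corollary \ref{CHCO2}.
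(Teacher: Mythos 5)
Your proposal is correct and is essentially the paper's own argument: the paper deduces Proposition \ref{p2ki} exactly by noting that Corollary \ref{CHCO2} forces $N=1$ in Proposition \ref{p2k} once $\delta$ is small (via the same compactness/contradiction reasoning you describe), and then the $N=1$ case, i.e.\ Proposition \ref{p4a}, delivers precisely the stated bounds $|d(p,q)-\pi|\leq\delta^{\frac{1}{200n^2}}$ and $d(p,x)+d(x,q)\leq d(p,q)+\delta^{\frac{1}{250n^2}}$.
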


To prove Theorem \ref{apa1}, let us turn to the consideration of the $\delta$-pinching condition for a subspace $V\subset C^\infty(M)$.
For the standard sphere $S^n\subset \mathbb{R}^{n+1}$, the height functions are the first eigenfunction, and if $f_1$ and $f_2$ are height functions that are orthogonal to each other in $L^2$ sense,
then the distance between the maximum points of these functions is equal to $\pi/2$.
The following lemma asserts that such a property almost holds under our $\delta$-pinching condition.
\begin{Lem}\label{p31d}
Given an integer $n\geq 2$ and positive real numbers $K>0$ and $D>0$, there exists a positive constant $\eta(n,K,D)>0$ such that the following properties hold.
Take a positive real number $0<\delta\leq \eta$.
Let $(M,g)$ be an $n$-dimensional closed Riemannian manifold with $\Ric\geq -Kg$ and $\diam(M)\leq D$.
Suppose that a 2-dimensional subspace $V\subset C^\infty(M)$ satisfies the $\delta$-pinching condition.
Take $f_1,f_2\in T_{(n,\delta)}(V)$ such that
\begin{equation*}
\|f_1\|_2^2=\|f_2\|_2^2=\frac{1}{n+1},\quad \int_M f_1 f_2 \,d\mu_g=0.
\end{equation*}
For each $f_i$ $(i=1,2)$, we use the notation $p_i,q_i$ of Proposition \ref{p2ki}.
Then, we have
\begin{equation*}
\left|d(p_1,p_2)-\frac{\pi}{2}\right|\leq C\delta^\frac{1}{96n},\quad
\left|d(p_1,q_2)-\frac{\pi}{2}\right|\leq C\delta^\frac{1}{250n^2}.
\end{equation*}
\end{Lem}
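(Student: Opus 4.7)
The plan is to reduce everything to a single pointwise bound $|f_2(p_1)| \leq C\delta^{1/96n}$, which can be obtained by testing the $L^\infty$ ceiling of Proposition \ref{p2i} against a cleverly chosen linear combination of $f_1$ and $f_2$. First, by Proposition \ref{p2i} applied (via the lift to $V$) to $f_1$ and to $f_2$ separately, I obtain
\[
|f_i(p_i) - 1| \leq C\delta^{1/48n}, \qquad \|f_i - \cos d(p_i, \cdot)\|_\infty \leq C\delta^{1/48n}
\]
for $i=1,2$. Evaluating the second approximation for $f_2$ at $p_1$ gives $|f_2(p_1) - \cos d(p_1, p_2)| \leq C\delta^{1/48n}$, so any bound on $|f_2(p_1)|$ immediately controls $|\cos d(p_1, p_2)|$.

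To bound $|f_2(p_1)|$, set $r := (f_1(p_1)^2 + f_2(p_1)^2)^{1/2}$, which is strictly positive since $f_1(p_1) \geq 1 - C\delta^{1/48n}$, and put $\alpha_i := f_i(p_1)/r$. I consider the linear combination $f := \alpha_1 f_1 + \alpha_2 f_2 \in T_{(n,\delta)}(V)$. Since $\{f_1, f_2\}$ is $L^2$-orthogonal with $\|f_i\|_2^2 = 1/(n+1)$ and $\alpha_1^2 + \alpha_2^2 = 1$, a direct computation yields $\|f\|_2^2 = 1/(n+1)$. As $f$ is the $(n,\delta)$-projection of an element of $V$, Proposition \ref{p2i} applies to $f$ and produces a point $p$ with $\|f - \cos d(p, \cdot)\|_\infty \leq C\delta^{1/48n}$, whence $\|f\|_\infty \leq 1 + C\delta^{1/48n}$. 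By construction $f(p_1) = r$, so
\[
f_1(p_1)^2 + f_2(p_1)^2 = r^2 \leq 1 + C\delta^{1/48n},
\]
and combining with $f_1(p_1)^2 \geq 1 - C\delta^{1/48n}$ gives $|f_2(p_1)| \leq C\delta^{1/96n}$. Hence $|\cos d(p_1, p_2)| \leq C\delta^{1/96n}$. Since Proposition \ref{p2ki} applied to $f_1$ yields $d(p_1, x) \leq \pi + 2\delta^{1/250n^2}$ for every $x \in M$, the value $d(p_1, p_2)$ cannot lie near $\pi$ for small $\delta$ (there $|\cos|$ would be close to $1$), so $d(p_1, p_2) \in [0, \pi]$; the first-order behavior of $\cos$ near $\pi/2$ (where $|\cos t| \geq (2/\pi)|t-\pi/2|$) then gives $|d(p_1, p_2) - \pi/2| \leq C\delta^{1/96n}$.

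For the second estimate, Proposition \ref{p2ki} applied to $f_2$ furnishes $|d(p_2, q_2) - \pi| \leq \delta^{1/200n^2}$ and the small excess inequality $d(p_2, x) + d(x, q_2) \leq d(p_2, q_2) + \delta^{1/250n^2}$ valid for every $x \in M$. Setting $x = p_1$ and using the first estimate already established gives the upper bound $d(p_1, q_2) \leq d(p_2, q_2) - d(p_2, p_1) + \delta^{1/250n^2} \leq \pi/2 + C\delta^{1/250n^2}$. The reverse direction follows from the triangle inequality $d(p_2, q_2) \leq d(p_2, p_1) + d(p_1, q_2)$ together with $d(p_2, q_2) \geq \pi - \delta^{1/200n^2}$ and $d(p_1, p_2) \leq \pi/2 + C\delta^{1/96n}$, yielding $d(p_1, q_2) \geq \pi/2 - C\delta^{1/250n^2}$. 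The main obstacle is the pointwise bound $|f_2(p_1)| \leq C\delta^{1/96n}$: the insight is that the linear combination $f$ is designed precisely so that $f(p_1)$ equals the Euclidean norm of $(f_1(p_1), f_2(p_1))$, while Proposition \ref{p2i} caps $\|f\|_\infty$ at $1 + O(\delta^{1/48n})$ because the maximum of $f$ is almost $1$; everything else is distance-function bookkeeping.
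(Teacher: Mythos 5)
Your argument is correct, and it takes a genuinely different route from the paper's proof of this lemma. The paper sets $f_0 := f_1 + f_2$, applies Proposition \ref{p2i} to $f_0$ (suitably normalized) to produce a point $p_0$ with $\|f_0 - \sqrt{2}\cos d(p_0,\cdot)\|_\infty \leq C\delta^{1/48n}$, then evaluates the resulting approximate identity $\sqrt{2}\cos d(p_0,\cdot) \approx \cos d(p_1,\cdot) + \cos d(p_2,\cdot)$ at the three points $p_0, p_1, p_2$ and algebraically eliminates the unknown distances $d(p_0,p_1), d(p_0,p_2)$ to deduce $|\cos d(p_1,p_2)| \leq C\delta^{1/48n}$; Lemma \ref{p2j} then gives the exponent $1/96n$. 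Your approach dispenses with the auxiliary sum $f_0$ and the small linear system: you instead build the unit-norm combination $f = \alpha_1 f_1 + \alpha_2 f_2$ with $\alpha_i = f_i(p_1)/r$, so that $f(p_1) = r = (f_1(p_1)^2 + f_2(p_1)^2)^{1/2}$, and use the $L^\infty$ ceiling $\|f\|_\infty \leq 1 + C\delta^{1/48n}$ from Proposition \ref{p2i} to conclude $r^2 \leq 1 + C\delta^{1/48n}$, whence $|f_2(p_1)| \leq C\delta^{1/96n}$. This is precisely the device the paper uses later, in Claim \ref{c32a} of Lemma \ref{32b}, to bound $|\widetilde\Psi(a)|$; you are applying it earlier to the two-dimensional setting. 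The intermediate cosine bound you get ($C\delta^{1/96n}$) is a factor-of-two weaker in the exponent than the paper's $C\delta^{1/48n}$, but you recover the same final estimate $|d(p_1,p_2)-\pi/2| \leq C\delta^{1/96n}$ by using the sharp linear inequality $|\cos t| \geq (2/\pi)|t-\pi/2|$ near $\pi/2$, instead of the square-root bound of Lemma \ref{p2j} (which is only lossy away from the endpoints). The exclusion of the range $d(p_1,p_2) \in (\pi, \pi + 2\delta^{1/250n^2}]$ via the observation that $|\cos|$ is close to $1$ there, contradicting the small-cosine estimate, is correct and needed (the paper glosses over it). Your treatment of the second estimate via Proposition \ref{p2ki} matches the paper's.
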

\begin{proof}
Set $f_0:=f_1+f_2$ and use the notation $p_0$ of Proposition \ref{p2i} for $f_0$.
Note that $\|f_0\|_2^2=\frac{2}{n+1}$.

By Proposition \ref{p2i}, we have
\begin{equation}\label{31b}
\begin{split}
|f_0(x)-\sqrt{2}\cos d(p_0,x)|\leq &C \delta^\frac{1}{48n},\\
|f_1(x)-\cos d(p_1,x)|\leq &C \delta^\frac{1}{48n},\\
|f_2(x)-\cos d(p_2,x)|\leq& C\delta^\frac{1}{48n}
\end{split}
\end{equation}
for all $x\in M$.
Combining (\ref{31b}) with $f_0=f_1+f_2$, we get
\begin{equation}\label{31da}
\left|\sqrt{2}\cos d(p_0,x)-\cos d(p_1,x)-\cos d(p_2,x)\right|\leq C \delta^\frac{1}{48n}
\end{equation}
for all $x\in M$.
Putting $x=p_0, p_1,p_2$ into (\ref{31da}), we get
\begin{equation*}
\begin{split}
\left|\sqrt{2}-\cos d(p_0,p_1)-\cos d(p_0,p_2)\right|\leq& C \delta^\frac{1}{48n},\\
\left|\sqrt{2}\cos d(p_0,p_1)-1-\cos d(p_1,p_2)\right|\leq&C \delta^\frac{1}{48n},\\
\left|\sqrt{2}\cos d(p_0,p_2)-1-\cos d(p_1,p_2)\right|\leq&C \delta^\frac{1}{48n}.
\end{split}
\end{equation*}
Thus, we get
\begin{align*}
\left|\cos d(p_1,p_2)\right|\leq&C\delta^\frac{1}{48n}.
\end{align*}
Since we have $d(p_1,p_2)\leq \pi+2\delta^{\frac{1}{250n^2}}$ by Proposition \ref{p4a},
we get
\begin{equation*}
\left|d(p_1,p_2)-\frac{\pi}{2}\right|\leq C\delta^\frac{1}{96n}.
\end{equation*}
by Lemma \ref{p2j}.
Thus, we get
\begin{equation*}
\left|d(p_1,q_2)-\frac{\pi}{2}\right|\leq C\delta^\frac{1}{250n^2}
\end{equation*}
by Proposition \ref{p2ki}.
\end{proof}

Now, we are in position to prove Theorem \ref{apa1}.
\begin{proof}[Proof of Theorem \ref{apa1}]
Let $\{(M_i,g_i)\}_{i\in \mathbb{N}}$ be a sequence of $n$-dimensional closed Riemannian manifolds such that $\Ric_{g_i}\geq -K g_i$, $\diam(M_i)\leq D$, $\lim_{i\to \infty}\lambda_k(\bar{\Delta}^E,M_i)= 0$, and $\{(M_i,g_i)\}_{i\in \mathbb{N}}$ converges to a geodesic space $X$.

By Corollary \ref{CHCO2} and Lemma \ref{p31d}, we have $\diam(X)=\pi$, and there exist pairs of points $(p_1,q_1),\ldots,(p_k,q_k)$ such that the following properties hold:
\begin{itemize}
\item[(i)] $d(p_i,q_i)=\pi$ for all $i$.
\item[(ii)] $d(p_i,p_j)=d(p_i,q_j)=\frac{\pi}{2}$ if $i\neq j$.
\item[(iii)] Define $Z_i:=\{x\in X:d(x,p_i)=d(x,q_i)\}$. For all $i$, we define $\phi_i \colon X\to Z_i$ as follows.
If $x\in X$ satisfies $d(x,p_i)\leq d(x,q_i)$, we define $\phi_i(x)$ to be $\phi_i(x)\in \Imag \gamma_{x,q_i}\cap Z_{i}$.
If $d(x,p_i)> d(x,q_i)$, we define $\phi_{i}(x)$ to be $\phi_{i}(x)\in \Imag \gamma_{x,p_i}\cap Z_{i}$.
Then, we have 
\begin{equation}\label{apec6}
\cos d(x,y)=\cos d(p_i,x)\cos d(p_i,y)+\sin d(p_i,x)\sin d(p_i,y)\cos d(\phi_i(x),\phi_i(y))
\end{equation}
 for all $x,y\in X$.
\end{itemize}
Note that for all $i$, we have an isometry $X\cong S^0\ast Z_i$ such that $p_i\mapsto 0^\ast$ and $q_i\mapsto \pi^\ast$.
Under this identification, we have $\phi_i([t,z])=[\pi/2,z]$ for all $[t,z]\in S^0\ast Z_i$ with $0<t<\pi$.
\begin{Clm}\label{clap6}
Take arbitrary $i$ and $x,y\in Z_i$ with $d(x,y)<\pi$.
Then, we have $\gamma_{x,y}(s)\in Z_i$ for all $s\in [0,d(x,y)]$.
\end{Clm}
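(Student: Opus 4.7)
The plan is to leverage the spherical suspension structure of $X$ given by property (iii) of Corollary \ref{CHCO}, under which $X$ is isometric to $S^0 \ast Z_i$ with $Z_i$ corresponding to the equator. In particular, every $x \in Z_i$ satisfies $d(x, p_i) = d(x, q_i) = \pi/2$. Fix a minimal geodesic $\gamma = \gamma_{x,y}\colon [0, L] \to X$ with $L := d(x, y) < \pi$. First, I verify that $\gamma$ avoids the poles: if $\gamma(s_0) = p_i$, then $s_0 = d(x, \gamma(s_0)) = \pi/2$ and $L - s_0 = d(\gamma(s_0), y) = \pi/2$, forcing $L = \pi$, a contradiction; the case $\gamma(s_0) = q_i$ is analogous. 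Hence $\phi_i(\gamma(s))$ is well-defined throughout.

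The heart of the argument is to show that the midpoint $\gamma(L/2)$ already lies in $Z_i$. Set $\tau(s) := d(p_i, \gamma(s))$, $m := \phi_i(\gamma(L/2))$ and $r := d(x, m)$. Applying property (iii) to the pairs $(x, \gamma(L/2))$ and $(\gamma(L/2), y)$, and using $d(x, p_i) = d(y, p_i) = \pi/2$, one obtains
$$\cos(L/2) = \sin\tau(L/2)\cos d(x, m) = \sin\tau(L/2)\cos d(m, y),$$
so $d(x, m) = d(m, y) = r$. The triangle inequality gives $2r \geq d(x,y) = L$, i.e., $r \geq L/2$. Conversely, $\cos r = \cos(L/2)/\sin\tau(L/2) \geq \cos(L/2)$ since $\cos(L/2) > 0$ (this is where $L < \pi$ enters) and $\sin\tau(L/2) \leq 1$; since cosine is strictly decreasing on $[0,\pi]$, this forces $r \leq L/2$. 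Combining, $r = L/2$ and $\sin\tau(L/2) = 1$, so $\tau(L/2) = \pi/2$. Under the isometry $X \cong S^0 \ast Z_i$ this means $\gamma(L/2)$ is an equator point, hence $\gamma(L/2) \in Z_i$.

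Finally, $\gamma|_{[0, L/2]}$ and $\gamma|_{[L/2, L]}$ are themselves minimal geodesics between points of $Z_i$ of length $L/2 < \pi$, so the midpoint step iterates to give $\gamma(kL/2^n) \in Z_i$ for every dyadic rational $k/2^n \in [0, 1]$. Since $Z_i$ is closed in $X$ as the zero set of the continuous function $d(\cdot, p_i) - d(\cdot, q_i)$, continuity of $\gamma$ upgrades this to $\gamma(s) \in Z_i$ for all $s \in [0, L]$. The only delicate point is the sign of $\cos(L/2)$, which is positive precisely because $L < \pi$; beyond this, the argument is a direct application of the suspension distance formula and I do not anticipate a serious obstacle.
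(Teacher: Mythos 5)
Your argument is correct and rests on the same key ingredient as the paper's proof: the spherical suspension distance formula (\ref{apec6}) combined with a midpoint argument. The execution differs in two places. For the midpoint step, the paper proves a slightly more general sublemma — any $t$ with $d(x,\gamma(t))<\pi/2$ and $d(y,\gamma(t))<\pi/2$ satisfies $\gamma(t)\in Z_i$ — by contradiction: assuming $\gamma(t)\notin Z_i$ forces $d(x,\gamma(t))>d(x,\phi_i(\gamma(t)))$ and $d(y,\gamma(t))>d(y,\phi_i(\gamma(t)))$, which contradicts the triangle inequality through $\phi_i(\gamma(t))$. You instead exploit the symmetry of the midpoint directly, squeezing $r=d(x,m)=d(m,y)$ between $L/2$ (triangle inequality) and $L/2$ (from $\cos r\ge\cos(L/2)$ using $\sin\tau(L/2)\le1$), which forces $\sin\tau(L/2)=1$; this is a clean alternative for the midpoint specifically but does not cover the other parameters. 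For the extension step, the paper applies its sublemma once more to the two half-geodesics $\gamma|_{[0,L/2]}$ and $\gamma|_{[L/2,L]}$ (every $t$ in each half now automatically satisfies the "both $<\pi/2$" condition), finishing in one stroke; you instead iterate the midpoint argument over dyadic parameters and invoke closedness of $Z_i$ plus continuity of $\gamma$. Both routes are valid; the paper's is a little more economical because its sublemma handles an interval of parameters at once rather than a single point.
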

\begin{proof}[Proof of Claim \ref{clap6}]
We have
\begin{equation*}
\begin{split}
\cos d(x,\gamma_{x,y}(s))=&\sin d(p_i,\gamma_{x,y}(s))\cos d(x,\phi_i(\gamma_{x,y}(s))),\\
\cos d(y,\gamma_{x,y}(s))=&\sin d(p_i,\gamma_{x,y}(s))\cos d(y,\phi_i(\gamma_{x,y}(s))).
\end{split}
\end{equation*}
We first show that $\gamma_{x,y}(t)\in Z_i$ for all $t\in[0,d(x,y)]$ with $d(x,\gamma_{x,y}(t))<\frac{\pi}{2}$ and $d(y,\gamma_{x,y}(t))<\frac{\pi}{2}$.
Take arbitrary $t\in[0,d(x,y)]$ with $d(x,\gamma_{x,y}(t))<\frac{\pi}{2}$ and $d(y,\gamma_{x,y}(t))<\frac{\pi}{2}$.
Suppose that $\gamma_{x,y}(t)\notin Z_i$.
Then, we have
\begin{equation*}
\begin{split}
0<\cos d(x,\gamma_{x,y}(t))<&\cos d(x,\phi_i(\gamma_{x,y}(t))),\\
0<\cos d(y,\gamma_{x,y}(t))<&\cos d(y,\phi_i(\gamma_{x,y}(t))),
\end{split}
\end{equation*}
and so
\begin{equation*}
\begin{split}
d(x,\gamma_{x,y}(t))>& d(x,\phi_i(\gamma_{x,y}(t))),\\
d(y,\gamma_{x,y}(t))>&d(y,\phi_i(\gamma_{x,y}(t))).
\end{split}
\end{equation*}
Thus, we get
$d(x,y)=d(x,\gamma_{x,y}(t))+d(y,\gamma_{x,y}(t))> d(x,\phi_i(\gamma_{x,y}(t)))+ d(y,\phi_i(\gamma_{x,y}(t)))$.
This is a contradiction.
Therefore, we get $\gamma_{x,y}(t)\in Z_i$.

We next show that $\gamma_{x,y}(t)\in Z_i$ for the general case.
Put $t_0:=\frac{1}{2}d(x,y)$.
Then $d(x,\gamma_{x,y}(t_0))<\frac{\pi}{2}$ and $d(y,\gamma_{x,y}(t_0))<\frac{\pi}{2}$.
Thus, we get $\gamma_{x,y}(t_0)\in Z_i$.
For all $t\in[0,t_0]$, we have $d(x,\gamma_{x,y}(t))<\frac{\pi}{2}$ and $d(\gamma_{x,y}(t_0),\gamma_{x,y}(t))<\frac{\pi}{2}$, and so we get $\gamma_{x,y}(t)\in Z_i$.
Similarly, we get $\gamma_{x,y}(t)\in Z_i$ for all $t\in[t_0,d(x,y)]$.
\end{proof}

We have shown that $X=S^0\ast Z_1$.
To carry out the iteration process, we investigate the structure of $Z_1\cap \cdots \cap Z_{i-1}$.
\begin{Clm}\label{clap7}
Suppose that $k\geq 2$, $2\leq i\leq k$ and
\begin{equation*}
Z_1\cap \cdots \cap Z_{i-1}\setminus\{p_{i},q_{i}\}\neq \emptyset.
\end{equation*}
Then, we have $Z_{1}\cap \cdots \cap Z_{i}\neq \emptyset$ and
\begin{equation*}
Z_1\cap \cdots \cap Z_{i-1}=S^0\ast(Z_{1}\cap \cdots \cap Z_{i}).
\end{equation*}
\end{Clm}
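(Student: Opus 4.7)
The plan is to leverage the isometric identification $X \cong S^0 \ast Z_i$ supplied by (iv) to realize $Z_1 \cap \cdots \cap Z_{i-1}$ itself as a spherical suspension. The preliminary observation, immediate from (ii), is that $d(p_i, p_j) = d(p_i, q_j) = \pi/2$ for every $j \neq i$, so $p_i, q_i \in Z_j$, and in particular $\{p_i, q_i\} \subset Z_1 \cap \cdots \cap Z_{i-1}$. Fixing the identification $X \cong S^0 \ast Z_i$ sending $p_i \mapsto 0^\ast$ and $q_i \mapsto \pi^\ast$, the points $p_j, q_j$ (for $j \neq i$) therefore sit on the equator $Z_i \subset X$, and in particular $\phi_i(p_j) = p_j$ and $\phi_i(q_j) = q_j$.

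The heart of the argument will be the equivalence
$$[t, z_0] \in Z_j \iff z_0 \in Z_j \qquad (j \neq i,\; t \in (0, \pi),\; z_0 \in Z_i),$$
which I would obtain by applying (\ref{apec6}) twice, once with $y = p_j$ and once with $y = q_j$, using the data from the previous paragraph. The computation yields
$$\cos d([t, z_0], p_j) = \sin t \, \cos d(z_0, p_j), \qquad \cos d([t, z_0], q_j) = \sin t \, \cos d(z_0, q_j),$$
and the equivalence follows because $\sin t \neq 0$ and all four distances lie in $[0, \pi]$ (for $d(z_0, p_j), d(z_0, q_j) \leq \pi$ use the triangle inequality through $p_i$).

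Given the equivalence, choosing any $x_0 \in Z_1 \cap \cdots \cap Z_{i-1} \setminus \{p_i, q_i\}$ provided by the hypothesis and writing $x_0 = [t, z_0]$ with $z_0 = \phi_i(x_0)$ yields $z_0 \in Z_1 \cap \cdots \cap Z_i$, so the first assertion holds. For the structural conclusion, the natural map $\Phi \colon S^0 \ast (Z_1 \cap \cdots \cap Z_i) \to X$, $[t, z] \mapsto [t, z] \in S^0 \ast Z_i = X$, is an isometric embedding (the subspace metric on $Z_1 \cap \cdots \cap Z_i \subset Z_i$ coincides with the metric used to form the suspension, and all intra-$Z_i$ distances are automatically at most $\pi$); its image equals $Z_1 \cap \cdots \cap Z_{i-1}$ by the equivalence together with the fact that $p_i, q_i \in Z_1 \cap \cdots \cap Z_{i-1}$ come from $0^\ast$ and $\pi^\ast$. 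I expect no serious obstacle here: the only point that needs care is identifying $\phi_i(p_j)$ and $\phi_i(q_j)$ in suspension coordinates, and this reduces to the elementary fact that $p_j, q_j \in Z_i$.
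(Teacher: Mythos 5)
Your proof is correct but follows a genuinely different route for the key step.

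The paper proves that $\phi_i(x)\in Z_1\cap\cdots\cap Z_i$ for every $x\in Z_1\cap\cdots\cap Z_{i-1}\setminus\{p_i,q_i\}$ by invoking Claim \ref{clap6} (applied with index $j<i$): since $x$ and one of $p_i,q_i$ lie in $Z_j$ with distance $<\pi$, the connecting geodesic stays in $Z_j$, and $\phi_i(x)$ lies on that geodesic. You avoid Claim \ref{clap6} entirely and instead substitute $y=p_j$ and $y=q_j$ into (\ref{apec6}), exploiting $d(p_i,p_j)=d(p_i,q_j)=\pi/2$, to obtain the cleaner characterization
\begin{equation*}
[t,z_0]\in Z_j\iff z_0\in Z_j\qquad(j\neq i,\ t\in(0,\pi),\ z_0\in Z_i),
\end{equation*}
from which both the membership $\phi_i(x_0)\in Z_1\cap\cdots\cap Z_i$ and the surjectivity of the suspension map drop out at once. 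Both arguments share the observation that $p_i,q_i\in Z_j$ for $j\neq i$ (you derive it from (ii); the paper uses it implicitly when applying Claim \ref{clap6} to the geodesics $\gamma_{x,p_i}$, $\gamma_{x,q_i}$). Your computation is more self-contained and gives an explicit description of $Z_j$ in the suspension coordinates over $Z_i$, which makes the bijectivity of the map transparent; the paper's route is shorter given that Claim \ref{clap6} is already established and reused elsewhere in the argument. For the isometry statement both proofs ultimately rely on (\ref{apec6}) and on the fact that $\operatorname{diam}(X)=\pi$ so that the $\min\{\cdot,\pi\}$ in the suspension formula is inert, a point you correctly flag.
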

\begin{proof}[Proof of Claim \ref{clap7}]
For all $x\in Z_1\cap \cdots \cap Z_{i-1}\setminus\{p_{i},q_{i}\}$, we have $d(x,p_{i})<\pi$ and $d(x,q_{i})<\pi$, and so we get $\phi_i(x)\in Z_{1}\cap \cdots \cap Z_{i}$ by Claim \ref{clap6}.
Thus, we have $Z_{1}\cap \cdots \cap Z_{i}\neq \emptyset$.
Since we have (\ref{apec6})
for all $x,y \in Z_1\cap \cdots \cap Z_{i-1}\setminus\{p_{i},q_{i}\}$,
the map $$Z_1\cap \cdots \cap Z_{i-1}\to S^0\ast(Z_{1}\cap \cdots \cap Z_{i}),\,x\mapsto [d(p_i,x),\phi_i(x)]$$
gives the isomorphism.
Note that the surjectivity of this map also follows from Claim \ref{clap6}.
\end{proof}
For all $2\leq i<k$, we have $p_{k}\in Z_1\cap \cdots \cap Z_{i-1}\setminus\{p_i,q_i\}$,
and so $Z_1\cap \cdots \cap Z_{i-1}\setminus\{p_i,q_i\}\neq \emptyset$.
Since $Z_1\cap \cdots \cap Z_{k}\subset Z_1\cap \cdots \cap Z_{k-1}\setminus\{p_k,q_k\}$,
we get
\begin{align*}
Z_1\cap \cdots \cap Z_{k}= \emptyset &\iff Z_1\cap \cdots \cap Z_{k-1}\setminus\{p_k,q_k\}=\emptyset\\
&\iff Z_1\cap \cdots \cap Z_{k-1}=\{p_k,q_k\}
\end{align*}
by Claim \ref{clap7}.
Therefore, we get the following inductively by Claim \ref{clap7}:
\begin{empheq}[left={X=\empheqlbrace}]{align*}
&\qquad S^{k-1} && (Z_1\cap \cdots \cap Z_{k}= \emptyset),\\
&S^{k-1}\ast(Z_1\cap \cdots \cap Z_{k})&&(Z_1\cap \cdots \cap Z_{k}\neq \emptyset).
\end{empheq}
Finally, we investigate the structure of $Z_1\cap \cdots \cap Z_{k}$ when $Z_1\cap \cdots \cap Z_{k}\neq \emptyset$.
\begin{Clm}\label{clap8}
Suppose that $Z_1\cap \cdots \cap Z_{k}\neq \emptyset$.
Then, the tangent cone $X_{p_k}$ of $X$ at $p_k$ is isometric to $\mathbb{R}^{k-1}\times C(Z_1\cap \cdots \cap Z_{k})$.
Moreover, if $Z_1\cap \cdots \cap Z_{k}$ is not connected, then we have $\Card(Z_1\cap \cdots \cap Z_{k})=2$.
\end{Clm}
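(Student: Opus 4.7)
The plan is to exploit the iterated spherical suspension decomposition $X = S^{k-1} \ast W$ (with $W := Z_1 \cap \cdots \cap Z_k$) built up by Claim \ref{clap7}, and to compute the tangent cone at $p_k$ one suspension layer at a time. Unfolding the decomposition as $X = S^0 \ast Z_1 = S^0 \ast (S^0 \ast (Z_1 \cap Z_2)) = \cdots$, the pair $(p_i, q_i)$ corresponds to the poles of the $i$-th outer $S^0$; in particular $p_k$ lies at the equator $t = \pi/2$ of each of the first $k-1$ suspensions and is the pole $0^\ast$ of the innermost suspension $Z_1 \cap \cdots \cap Z_{k-1} = S^0 \ast W$.

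For the tangent cone formula, the key computation is: for any metric space $Y$ and $y_0 \in Y$, the tangent cone of $S^0 \ast Y$ at $[\pi/2, y_0]$ is $\mathbb{R} \times Y_{y_0}$, while at the pole $0^\ast$ it is $C(Y)$. Both follow from direct Taylor expansions of the suspension identity
\begin{equation*}
\cos d([t_1, y_1], [t_2, y_2]) = \cos t_1 \cos t_2 + \sin t_1 \sin t_2 \cos d(y_1, y_2),
\end{equation*}
first at $t_i = \pi/2 + s_i$ (giving $d^2 = (s_1 - s_2)^2 + d(y_1, y_2)^2 + o(1)$ in the rescaling limit, hence the product metric with $\mathbb{R}$), and then at $t_i = r \tau_i$ with $r \to 0$ (giving $d^2 = \tau_1^2 + \tau_2^2 - 2 \tau_1 \tau_2 \cos d(y_1, y_2) + o(1)$, the cone formula). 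Applying the first identity $k - 1$ times to peel off an $\mathbb{R}$ factor at each layer, and the second once at the innermost suspension, we obtain $X_{p_k} = \mathbb{R}^{k-1} \times C(W)$.

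For the second assertion, assume $W$ is disconnected with $\Card W \geq 3$. I would first show that two points in different connected components of $W$ must be at distance exactly $\pi$: since suspension distances never exceed $\pi$, if they were strictly less, iterating Claim \ref{clap6} through each $Z_i$ would force the minimal geodesic to stay in $W = \bigcap_i Z_i$, connecting the components. Pick $w_1, w_2 \in W$ in different components, so $d(w_1, w_2) = \pi$; then the two rays $t \mapsto [t, w_1]$ and $t \mapsto [t, w_2]$ in $C(W)$, joined at the apex, form a line. Because $X_{p_k}$ is a tangent cone of the Ricci limit $X$ and so satisfies $\Ric \geq 0$ in the generalized sense, the Cheeger-Colding splitting theorem yields $C(W) = \mathbb{R} \times C'$ with $[t, w_1] \mapsto (-t, y_0)$, $[t, w_2] \mapsto (t, y_0)$, and the apex $\mapsto (0, y_0)$. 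A third point $w_3 \in W$, written as $[1, w_3] \mapsto (s, y)$, yields two cases: if $w_3$ lies in a third component, the equations $s^2 + d(y, y_0)^2 = 1$ and $(s \pm 1)^2 + d(y, y_0)^2 = 4$ are inconsistent; if $w_3$ lies in the same component as $w_1$ at intrinsic distance $\rho \in (0, \pi)$, the analogous system forces $\cos \rho = 1$. Both contradict the hypotheses, so $\Card W = 2$.

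The main obstacle is Part 1: carefully identifying $p_k$ as the innermost pole in the iterated spherical suspension and upgrading the pointwise Taylor expansions to genuine pointed Gromov-Hausdorff limits under the rescaling. Part 2 is then a concrete computation in the metric cone once the splitting theorem is invoked, in the same spirit as the argument in Claim \ref{imp1}.
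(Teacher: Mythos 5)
Your proposal is correct and follows essentially the same route as the paper. For Part~1 the paper does not literally iterate tangent-cone computations; instead it builds a single explicit rescaled map $\Psi_r \colon X \to \mathbb{R}^{k-1}\times C(Z_1\cap\cdots\cap Z_k)$ out of the compositions $\psi_i := \phi_i\circ\cdots\circ\phi_1$ and verifies in one pass (via the $O(r^a)$ bookkeeping in (\ref{apec7})--(\ref{apec11})) that $\Psi_r$ is a $2r$-Hausdorff approximation from $(B_{rR}(p_k), r^{-1}d)$ to the ball in the target. The Taylor expansions you write down are precisely those of the paper; the ``upgrade to a genuine pointed GH limit'' that you flag as the obstacle is exactly what the explicit $\Psi_r$ construction accomplishes, so the difference is organizational rather than conceptual. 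For Part~2 your argument matches the paper's: two points in different components have distance $\pi$ (by Claim~\ref{clap6}), the two rays from the apex give a line, the Cheeger--Colding splitting theorem is applied, and distance equations to a third point force a contradiction. One small imprecision: the splitting theorem should be applied to $X_{p_k}$ itself (the paper writes $X_{p_k}=\mathbb{R}\times Y$), not to the factor $C(W)$ directly; this does not affect you, since all your distance equations involve points of the slice $\{0\}\times C(W)\subset X_{p_k}$ and are therefore read off identically in $\mathbb{R}\times Y$. Also, the paper's choice of points $a\in A$, $b,c\in W\setminus A$ avoids your case split --- it uses only the two equations $s^2+d(y,y_0)^2=1$ and $(s+1)^2+d(y,y_0)^2=4$ to force $(0,[1,c])\mapsto(1,y_0)=(0,[1,b])$ and hence $b=c$ --- but your two-case analysis is equivalent and also valid.
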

\begin{proof}[Proof of Claim \ref{clap8}]
For each $i=1,\ldots, k$, we define a map 
$\psi_i\colon X\to Z_1\cap \cdots \cap Z_{i}$ by $\psi_i:=\phi_i\circ\cdots \circ \phi_1$. Define $\psi_0:=\Id_X\colon X\to X$.
For each $r>0$,
we define $\Psi_r \colon X\to \mathbb{R}^k\times C(Z_1\cap \cdots \cap Z_{k})$ by
\begin{equation*}
\begin{split}
&\Psi_r(x)\\
:=&\left(\frac{1}{r}\left(d(p_1,\psi_0(x))-\frac{\pi}{2}\right),\ldots,\frac{1}{r}\left(d(p_{k-1},\psi_{k-2}(x))-\frac{\pi}{2}\right), \left[\frac{1}{r}d(p_k,\psi_{k-1}(x)), \psi_{k}(x)\right]\right).
\end{split}
\end{equation*}
Note that we have $\Psi_r(p_k)=(0,\ldots,0,0^\ast)$.
In the following, we show that $(X,p_k,r^{-1}d)$ converges to $(\mathbb{R}^k\times C(Z_1\cap \cdots \cap Z_{k}),(0,\ldots,0,0^\ast))$ in the pointed Gromov-Hausdorff sense through the map $\Psi_r$.

Fix $R>0$.
Let us show that
$$
\Psi_r\colon \left(B_{rR}(p_k),r^{-1}d \right)\to B_{R+r}\left((0,\ldots,0,0^\ast)\right)\subset \mathbb{R}^k\times C(Z_1\cap \cdots \cap Z_{k})
$$
is a Hausdorff approximation map for sufficiently small $r>0$.
Here, $B_{rR}(p_k)$ denotes the open metric ball for the original metric on $(X,d)$.
Note that if this was proved, we can easily modify the map $\Psi_r$ to a Hausdorff approximation map $\left(B_{rR}(p_k),r^{-1} d\right)\to B_{R}\left((0,\ldots,0,0^\ast)\right)$.

For all $x,y\in B_{r R}(p_k)$, we have
\begin{equation*}
d(\phi_1(x),\phi_1(y))\leq d(x,\phi_1(x))+d(x,y)+d(y,\phi_1(y))\leq 4r R.
\end{equation*}
Since $\psi_i(p_k)=p_k$ for $i=0,\ldots,k-1$, we have
\begin{equation*}
d(\psi_i(x),\psi_i(y))\leq  C(R,k)r
\end{equation*}
for all $x,y\in B_{r R}(p_k)$ and $i=0,\ldots,k-1$ inductively.
In particular, $\psi_i(x),\psi_i(y)\in B_{C(R,k)r}(p_k)$.
For all $a>0$, we use the following notation:
\begin{equation*}
f=O(r^a):\iff |f|\leq C(R,k)r^a.
\end{equation*}
For all $1\leq i \leq k-1$ and $x,y\in B_{r R}(p_k)$, we have
\begin{equation}\label{apec7}
\begin{split}
&\cos d(\psi_{i-1} (x),\psi_{i-1}(y))\\
=&\cos d(p_{i},\psi_{i-1} (x))\cos d(p_i,\psi_{i-1} (y))\\
&\qquad \qquad\qquad \qquad+\sin d(p_i,\psi_{i-1} (x)) \sin d(p_i,\psi_{i-1} (y))\cos d(\psi_{i} (x),\psi_{i} (x))\\
=&\left(\frac{\pi}{2}-d(p_i, \psi_{i-1}(x))+O(r^3)\right)\left(\frac{\pi}{2}-d(p_i, \psi_{i-1}(y))+O(r^3)\right)\\
&+\left(1-\frac{1}{2}\left(d(p_i, \psi_{i-1}(x))-\frac{\pi}{2}\right)^2+O(r^4)\right) \times\\
&\,\left(1-\frac{1}{2}\left(d(p_i, \psi_{i-1}(y))-\frac{\pi}{2}\right)^2+O(r^4)\right)\left(1-\frac{1}{2}d(\psi_i(x), \psi_i(y))^2+O(r^4)\right)\\
=&1-\frac{1}{2}\big(d(p_i,\psi_{i-1}(x))-d(p_i,\psi_{i-1}(y))\big)^2-\frac{1}{2}d(\psi_i(x), \psi_i(y))^2+O(r^4)
\end{split}
\end{equation}
by (\ref{apec6}), $\frac{\pi}{2}-d(p_i, \psi_{i-1}(x))=O(r)$ and $\frac{\pi}{2}-d(p_i, \psi_{i-1}(y))=O(r)$.
By (\ref{apec7}) and
\begin{equation*}
\cos d(\psi_{i-1} (x),\psi_{i-1}(y))=1-\frac{1}{2}d(\psi_{i-1} (x),\psi_{i-1}(y))^2+O(r^4),
\end{equation*}
we get
\begin{equation*}
d(\psi_{i-1} (x),\psi_{i-1}(y))^2=\big(d(p_i,\psi_{i-1}(x))-d(p_i,\psi_{i-1}(y))\big)^2+d(\psi_i(x), \psi_i(y))^2+O(r^4).
\end{equation*}
By induction, we get
\begin{equation}\label{apec8}
\begin{split}
d(x,y)^2
=\sum_{i=1}^{k-1}\Big(d(p_i,&\psi_{i-1}(x))-d(p_i,\psi_{i-1}(y))\Big)^2\\
&+d(\psi_{k-1}(x), \psi_{k-1}(y))^2+O(r^4).
\end{split}
\end{equation}
We have
\begin{equation}\label{apec9}
\begin{split}
&\cos d(\psi_{k-1}(x), \psi_{k-1}(y))\\
=&\cos d(p_{k},\psi_{k-1} (x))\cos d(p_k,\psi_{k-1} (y))\\
&\qquad \qquad\qquad +\sin d(p_k,\psi_{k-1} (x)) \sin d(p_k,\psi_{k-1} (y))\cos d(\psi_{k} (x),\psi_{k} (y))\\
=&\left(1-\frac{1}{2}d(p_k, \psi_{k-1}(x))^2+O(r^4)\right)\left(1-\frac{1}{2}d(p_k, \psi_{k-1}(y))^2+O(r^4)\right)\\
&+\left(d(p_k, \psi_{k-1}(x))+O(r^3)\right)\left(d(p_k, \psi_{k-1}(y))+O(r^3)\right)
\cos d(\psi_{k}(x), \psi_{k}(y))\\
=& 1-\frac{1}{2}d(p_k, \psi_{k-1}(x))^2-\frac{1}{2}d(p_k, \psi_{k-1}(y))^2\\
&+d(p_k, \psi_{k-1}(x)) d(p_k, \psi_{k-1}(y))\cos d(\psi_{k}(x), \psi_{k}(y))+O(r^4).
\end{split}
\end{equation}
By (\ref{apec9}) and
\begin{equation*}
\cos d(\psi_{k-1} (x),\psi_{k-1}(y))=1-\frac{1}{2}d(\psi_{k-1} (x),\psi_{k-1}(y))^2+O(r^4),
\end{equation*}
we get
\begin{equation}\label{apec10}
\begin{split}
&d(\psi_{k-1} (x),\psi_{k-1}(y))^2\\
=&d(p_k, \psi_{k-1}(x))^2+d(p_k, \psi_{k-1}(y))^2\\
&\qquad-2d(p_k, \psi_{k-1}(x)) d(p_k, \psi_{k-1}(y))\cos d(\psi_{k}(x), \psi_{k}(y))+O(r^4).
\end{split}
\end{equation}
By (\ref{apec8}) and (\ref{apec10}), we get
\begin{equation}\label{apec11}
\left(\frac{1}{r}d(x,y)\right)^2
=d(\Psi_r(x),\Psi_r(y))^2+O(r^2).
\end{equation}
In particular, we have $\Psi_r(x),\Psi_r(y)\in B_{R+r}\left((0,\ldots,0,0^\ast)\right)\subset \mathbb{R}^k\times C(Z_1\cap \cdots \cap Z_{k})$ for sufficient small $r>0$.

For all $(t_1,\ldots,t_{k-1},[t_k,z])\in \mathbb{R}^{k-1}\times C(Z_1\cap \cdots \cap Z_{k})$, we have
\begin{equation*}\label{apec12}
\Psi_r\left(\left[r t_1+\frac{\pi}{2},\ldots,rt_{k-1}+\frac{\pi}{2},r t_k,z\right]\right)=(t_1,\ldots,t_{k-1},[t_k,z]).
\end{equation*}
If 
$$(t_1,\ldots,t_{k-1},[t_k,z])\in B_{R-r}\left((0,\ldots,0,0^\ast)\right)\subset\mathbb{R}^{k-1}\times C(Z_1\cap \cdots \cap Z_{k}),$$
then $\left[r t_1+\frac{\pi}{2},\ldots,rt_{k-1}+\frac{\pi}{2},r t_k,z\right]\in B_{C(R,k)r}(p_k),$ and so $$\left[r t_1+\frac{\pi}{2},\ldots,rt_{k-1}+\frac{\pi}{2},r t_k,z\right]\in B_{rR}(p_k)$$ by (\ref{apec11}) if $r>0$ is small enough. This shows that the map $\Psi_r\colon \left(B_{rR}(p_k),r^{-1}d \right)\to B_{R+r}\left((0,\ldots,0,0^\ast)\right)$ is $2r$-dense.
Combining this and (\ref{apec11}), we have that $\Psi_r$ is a $2r$-Hausdorff approximation map, and so we get the first assertion.

We next show the second assertion.
The proof is similar to Corollary \ref{CHCO2}.
Suppose that there exist more than two different connected components of $Z_1\cap \cdots \cap Z_{k}$ (let $A$ be one of the connected components), and there exist two points $b,c\in Z_1\cap \cdots \cap Z_{k}\setminus A$ ($b\neq c$).
Take arbitrary $a\in A$.
Then,
$\gamma\colon (-\infty,\infty)\to C(Z_1\cap \cdots \cap Z_{k})$ defined by
$\gamma(t)=[-t,a]$ for $t\leq 0$, and $\gamma(t)=[t,b]$ for $t>0$, is a line in $C(Z_1\cap \cdots \cap Z_{k})$.
Thus, $(0,\gamma)$ in $X_{p_k}=\mathbb{R}^{k-1}\times C(Z_1\cap \cdots \cap Z_{k})$ is a line.
By the splitting theorem \cite[Theorem 9.27]{Ch},
there exists a geodesic space $Y$, a point $y_0\in Y$ and an isometry $X_{p_k}\to \mathbb{R}\times Y$ such that $(0,\gamma(t))\in X_{p_k}$ corresponds to $(t,y_0)\in\mathbb{R}\times Y$ for all $t\in \mathbb{R}$.
Take a point $(s,y)\in \mathbb{R}\times Y$ that corresponds to $(0,[1,c])\in X_{p_k}$.
Since $d((0,0^\ast), (0,[1,c]))=1$, we have $s^2+d(y,y_0)^2=1$.
Since $d((0,[1,a]), (0,[1,c]))=2$, and $(0,[1,a])$ corresponds to $(-1,y_0)$, we have $(s+1)^2+d(y,y_0)^2=4$.
Thus, we get that $s=1$ and $d(y,y_0)=0$.
However, $(0,[1,b])$ corresponds to and $(1,y_0)$.
Since we assumed that $c\neq b$, this is a contradiction.
Thus, we get the second assertion.
\end{proof}
If $\Card (Z_1\cap \cdots \cap Z_{k})=2$, then $\diam(Z_1\cap \cdots \cap Z_{k})=\pi$ by Claim \ref{clap6}, and so we get $X=S^{k-1}\ast \{0,\pi \}=S^k$.

If  $Z_1\cap \cdots \cap Z_{k}$ is connected,
we define a metric $d_L$ on $Z_1\cap \cdots \cap Z_{k}$ by
\begin{equation*}
\begin{split}
d_L(x,y):=\inf \Big\{\sum_{j=1}d(&x_{j-1},x_j): N\in\mathbb{Z}_{>0},\, x_j\in Z_1\cap \cdots \cap Z_{k} \text{ for all $j=0,\ldots,N$,}\\
& x_0=x,\, x_N=y \text{ and } d(x_{j-1},x_j)<\pi \text{ for all $j=1,\ldots,N$}\Big\}.
\end{split}
\end{equation*}
Then, $(Z_1\cap \cdots \cap Z_{k},d_L)$ is a geodesic space, and $X=S^{k-1}\ast(Z_1\cap \cdots \cap Z_{k},d_L)$ holds.

By the Gromov's pre-compactness theorem and the above argument, we get the theorem.
\end{proof}
\begin{proof}[Another proof of Main Theorem 1]
Let $\{(M_i,g_i)\}_{i\in \mathbb{N}}$ be a sequence of $n$-dimensional closed Riemannian manifolds such that $\Ric_{g_i}\geq -K g_i$, $\diam(M_i)\leq D$ for all $i$ and $\lim_{i\to \infty}\lambda_n(\bar{\Delta}^E,M_i)= 0$.
For each $i$, define an orientable $n$-dimensional closed Riemannian manifold $(N_i,\tilde{g}_i)$ to be the orientable Riemannian covering of $(M_{i},g_{i})$ with two sheets if $M_i$ is not orientable, and
 $(N_i,\tilde{g}_i)=(M_i,g_i)$ if $M_i$ is orientable.
Then, we have $\lim_{i\to \infty}\lambda_{n+1}(\bar{\Delta}^E,N_i)= 0$ by Corollary \ref{p42c}.
Take a subsequence $i(j)$ such that $(M_{i(j)},g_{i(j)})$, $(N_{i(j)},\tilde{g}_{i(j)})$ converges to some geodesic spaces $X,Y$ in Gromov-Hausdorff topology, respectively.
Since $\dim Y\leq n$ holds, we have $Y=S^n$ by Theorem \ref{apa1} and \cite[Proposition 5.6]{Ho}, where $\dim$ denotes the Hausdorff dimension.
In particular, we get $\{(M_{i(j)},g_{i(j)})\}$ is a non-collapsing sequence.
Thus, we get $X=S^n$ or $X=S^{n-1}\ast Z$ for some geodesic space $Z$ by Theorem \ref{apa1}.
By \cite[Proposition 5.6]{Ho}, we get that $\Card Z=1$ if $X=S^{n-1}\ast Z$.
However, $S^{n-1}\ast \{\text{point}\}=S^n_{+}$ ($n$-dimensional hemisphere), and this contradict to \cite[Theorem 6.2]{CC1}. Thus, we get $X=S^n$ and Main Theorem 1.
\end{proof}


\section{Continuity of the eigenvalues}
In this appendix we prove the continuity of the eigenvalues of the Laplacian defined in Definition \ref{def1} for a non-collapsed Gromov-Hausdorff convergent sequence of $n$-dimensional closed Riemannian manifolds with a uniform $2$-sided bound on the Ricci curvature.
As an application, on such a limit space, we consider the Obata equation $\nabla^2 f+fg=0$ and generalize our main theorem.

Take an integer $n\geq 2$, real numbers $K_1,K_2\in \mathbb{R}$ with $K_1<K_2$ and positive real numbers $D>0$ and $v>0$.
Let $\mathcal{M}=\mathcal{M}(n,K_1,K_2,D,v)$ be the set of isometry classes of $n$-dimensional closed Riemannian manifolds $(M,g)$ with $K_1g\leq \Ric_g \leq K_2 g$, $\diam(M)\leq D$ and $\Vol(M)\geq v$.
Let $\overline{\mathcal{M}}=\overline{\mathcal{M}}(n,K_1,K_2,D,v)$ be the closure of $\mathcal{M}$ in the Gromov-Hausdorff topology.
If $X_i\in\overline{\mathcal{M}}$ ($i\in \mathbb{N}$) converges to $X\in\overline{\mathcal{M}}$ in the Gromov-Hausdorff topology, then there exist a sequence of positive real numbers $\{\epsilon_i\}_{i\in \mathbb{N}}$ with $\lim_{i\to \infty}\epsilon_i=0$, and a sequence of $\epsilon_i$-Hausdorff approximation maps $\phi_i \colon X_i\to X$. Fix such a sequence. We say a sequence $x_i\in X_i$ converges to $x\in X$ if $\lim_{i\to \infty}\phi_i(x_i)=x$ (denote it by $x_i\stackrel{GH}{\to} x$).
By the volume convergence theorem \cite[Theorem 5.9]{CC1}, $(X_i,H^n)$ converges to $(X,H^n)$ in the measured Gromov-Hausdorff sense, i.e., for all $r>0$ and all sequence $x_i\in X_i$ that converges to $x\in X$, we have $\lim_{i\to \infty}H^n(B_r (x_i))=H^n(B_r(x))$, where $H^n$ denotes the $n$-dimensional Hausdorff measure.
In particular, for all $X\in \overline{\mathcal{M}}$, we have $v\leq H^n(X)\leq C(n,K_1,D)$.

For all $X\in \overline{\mathcal{M}}$, we can consider the cotangent bundle $\pi \colon T^\ast X \to X$ with a canonical inner product by \cite{Ch0} and \cite{CC3} (see also \cite[Section 2]{Ho1} for a short review).
We have $H^n(X\setminus \pi(T^\ast X))=0$ and $T^\ast_x X:=\pi^{-1}(x)$ is an $n$-dimensional vector space for all $x\in \pi(T^\ast X)$.
For all Lipschitz function $f$ on $X$, we can define $d f(x)\in T_x^\ast X$ for almost all $x\in X$, and we have $d f\in L^\infty(T^\ast X)$.
Let $TX$ be the dual bundle of $T^\ast X$.
Let $\langle\cdot,\cdot \rangle$ denotes the inner product on $T^\ast X$, $TX$ and tensor bundles.
We also denote the inner product on $TX$ by $g=g_X\in L^\infty (T^\ast X\otimes T^\ast X)$.
We consider the vector bundle $E(X):=T^\ast X\oplus \mathbb{R}e$ on $X$ with the product metric $\langle\cdot,\cdot\rangle=\langle\cdot,\cdot\rangle_E$.
We have an identification $L^2(E(X))=L^2(T^\ast X)\oplus L^2(X)$ by considering the map $\omega+f e\mapsto (\omega,f)$.

Honda \cite{Ho1} (see also page 1595--1596 of \cite{Ho2}) defined the concepts of $L^p$ weakly convergence and $L^p$ strong convergence for functions and tensors fields ($p\in (1,\infty)$).
Most of important properties are summarized in page 1596--1598 of \cite{Ho2}.
Similarly, we give the following definition.
\begin{Def}
Let $\{X_i\}_{i\in \mathbb{N}}$ be a sequence in $\mathcal{M}$ and let $X\in\overline{\mathcal{M}}$ be the Gromov-Hausdorff limit.
Take a sequence $T_i\in L^2(E(X_i))$ and $T\in L^2(E(X))$.
\begin{itemize}
\item[(i)] We say that $T_i$ converges to $T$ weakly in $L^2$ if 
\begin{equation*}
\sup_{i\in \mathbb{N}}\|T_i\|_2<\infty,
\end{equation*}
and for all $r>0$, $y_i,z_i\in X_i$ and $y,z\in X$ with $y_i\stackrel{GH}{\to} y$ and $z_i\stackrel{GH}\to z$, we have
\begin{equation*}
\begin{split}
\lim_{i\to \infty}\int_{B_r(y_i)} \langle T_i, d r_{z_i}\rangle_E \,d H^n&=\int_{B_r(y)} \langle T, d r_{z}\rangle_E \,d H^n,\\
\lim_{i\to \infty}\int_{B_r(y_i)} \langle T_i, e\rangle_E \,d H^n&=\int_{B_r(y)} \langle T, e\rangle_E \,d H^n,
\end{split}
\end{equation*}
where $r_z(x):=d(z,x)$ for all $x\in X$.
\item[(ii)] We say that $T_i$ converges to $T$ strongly in $L^2$ if $T_i$ converges to $T$ weakly in $L^2$, and
\begin{equation*}
\limsup_{i\to \infty} \|T_i\|_2\leq \|T\|_2
\end{equation*}
holds.
\end{itemize}
\end{Def}
We have that $T_i=\omega_i+f_i e\in L^2(E(X_i))$ converges to $T=\omega+f e\in L^2(E(X))$ weakly in $L^2$ if and only if $\omega_i$, $f_i$ converges to $\omega$, $f$ weakly in $L^2$, respectively.
Similarly, $T_i\in L^2(E(X_i))$ converges to $T\in L^2(E(X))$ strongly in $L^2$ if and only if $\omega_i$, $f_i$ converges to $\omega$, $f$ strongly in $L^2$, respectively.
One of the important property for the weakly convergent sequence is the lower semi-continuity of the norm:
\begin{equation*}
\liminf_{i\to \infty}\|T_i\|_2\geq \|T\|_2
\end{equation*}
holds if $T_i\in L^2(X_i)$ converges to $T\in L^2(X)$ weakly in $L^2$ (see \cite[Proposition 3.64]{Ho1}).
Thus, $T_i$ converges to $T$ strongly in $L^2$ if and only if $T_i$ converges to $T$ weakly in $L^2$ and $\lim_{i\to \infty} \|T_i\|_2=\|T\|_2$ holds.
For the linearity of the strong convergence, see \cite[Corollary 3.59]{Ho1}.

We next consider the space $L^2(T^\ast X \oplus E(X))$.
We also have an identification $L^2(T^\ast X \oplus E(X))=L^2(T^\ast X\otimes T^\ast X)\oplus L^2(TX)$, and define the concepts of convergence for $L^2(T^\ast X\oplus E(X))$.
\begin{Def}
Let $\{X_i\}_{i\in \mathbb{N}}$ be a sequence in $\mathcal{M}$ and let $X\in\overline{\mathcal{M}}$ be the Gromov-Hausdorff limit.
Take a sequence $T_i\in L^2(T^\ast X_i\oplus E(X_i))$ and $T\in L^2(T^\ast X\oplus E(X))$.
\begin{itemize}
\item[(i)] We say that $T_i$ converges to $T$ weakly in $L^2$ if 
\begin{equation*}
\sup_{i\in \mathbb{N}}\|T_i\|_2<\infty,
\end{equation*}
and for all $r>0$, $y_i,z_i^1,z_i^2\in X_i$ and $y,z^1,z^2\in X$ with $y_i\stackrel{GH}{\to} y$, $z_i^j\stackrel{GH}\to z$ ($j=1,2$), we have
\begin{equation*}
\begin{split}
\lim_{i\to \infty}\int_{B_r(y_i)} \langle T_i, d r_{z_i^1}\otimes d r_{z_i^2}\rangle_E \,d H^n&=\int_{B_r(y)} \langle T, d r_{z^1}\otimes d r_{z^2}\rangle_E \,d H^n,\\
\lim_{i\to \infty}\int_{B_r(y_i)} \langle T_i, d r_{z_i^1}\otimes e\rangle_E \,d H^n&=\int_{B_r(y)} \langle T, dr_{z^1}\otimes e\rangle_E \,d H^n.
\end{split}
\end{equation*}
\item[(ii)] We say that $T_i$ converges to $T$ strongly in $L^2$ if $T_i$ converges to $T$ weakly in $L^2$ and
\begin{equation*}
\limsup_{i\to \infty} \|T_i\|_2\leq \|T\|_2
\end{equation*}
holds.
\end{itemize}
\end{Def}
We have that $T_i=S_i+\omega_i\otimes e\in L^2(T^\ast X_i\otimes E(X_i))$ converges to $T=S+\omega\otimes e\in L^2(T^\ast X\otimes E(X))$ weakly in $L^2$ if and only if $S_i$, $\omega_i$ converges to $S$, $\omega$ weakly in $L^2$, respectively.
Similarly, $T_i\in L^2(T^\ast X_i\otimes E(X_i))$ converges to $T\in L^2(T^\ast X\otimes E(X))$ strongly in $L^2$ if and only if $S_i$, $\omega_i$ converges to $S$, $\omega$ strongly in $L^2$, respectively.

We list the definitions of important functional spaces and the eigenvalues of Laplacian on limit spaces.
Some of them are first introduced by Gigli \cite{Gig}.
\begin{Def}\label{defapb}
Let $X\in \overline{\mathcal{M}}$.
\begin{itemize}
\item[(i)] Let $\LIP(X)$ be the set of the Lipschitz functions on $X$. For all $f\in \LIP(X)$, we define $\|f\|_{H^{1,2}}^2=\|f\|_2^2+\|d f\|_2^2$.
Let $H^{1,2}(X)$ be the completion of $\LIP(X)$ with respect to this norm.
\item[(ii)] Define
\begin{equation*}
\begin{split}
\mathcal{D}^2(\Delta,X):=\Big\{f\in H^{1,2}(X)& : \text{there exists $F\in L^2(X)$ such that}\\
&\int_X \langle df, dh \rangle\,d H^n=\int_X F h\,d H^n \text{ for all $h\in H^{1,2}(X)$} \Big\}.
\end{split}
\end{equation*}
For any $f\in\mathcal{D}^2(\Delta,X)$, the function $F\in L^2(X)$ is uniquely determined.
Thus, we define $\Delta f:=F$.
\item[(iii)] Define
\begin{equation*}
\begin{split}
\Test F(X):=&\left\{f\in\mathcal{D}^2(\Delta,X)\cap \LIP(X):\Delta f\in H^{1,2}(X)\right\},\\
\Test T^\ast X:=&\left\{\sum_{i=1}^N f_i d h_i: N\in \mathbb{N},\, f_i, h_i\in \Test F(X)\right\},\\
\Test E(X):=&\left\{\sum_{i=1}^N \psi_i d\phi_i+fe: N\in \mathbb{N},\, \psi_i, \phi_i,f\in \Test F(X)\right\}.
\end{split}
\end{equation*}
We have an identification $\Test E(X)=\Test T^\ast X\oplus \Test F(X)$.
\item[(vi)] The operator
\begin{equation*}
\nabla\colon \Test T^\ast X\to L^2(T^\ast X \otimes T^\ast X)
\end{equation*}
is defined by $\nabla \sum_{i=1}^N f_i d h_i:=\sum_{i=1}^N \left(d f_i\otimes d h_i+ f_i \nabla^2 h_i \right)$, where $\nabla^2$ denotes the Hessian $\Hess^g$ defined in \cite{Ho0}. Note that $\nabla \omega\in L^2(T^\ast X \otimes T^\ast X)$ for all $\omega\in \Test T^\ast X$ by \cite[Theorem 4.11]{Ho3}.
We define
\begin{equation*}
\nabla^E\colon \Test E(X)\to L^2(T^\ast X\otimes E(X))
\end{equation*}
by $\nabla^E (\omega+f e):=\nabla \omega +f g+(d f-\omega)\otimes e$.
\item[(v)] We define the norms
\begin{equation*}
\begin{split}
\|\omega\|_{H_C^{1,2}}^2&:=\|\omega\|_2^2+\|\nabla \omega\|_2^2\quad (\omega\in \Test T^\ast X),\\
\|T\|_{H_E^{1,2}}^2&:=\|T\|_2^2+\|\nabla^E T\|_2^2\quad (T\in \Test E(X)).
\end{split}
\end{equation*}
Let $H^{1,2}_C(T^\ast X)$ and $H^{1,2}_E(E(X))$ be the completion of $\Test T^\ast X$ and $\Test E(X)$ with respect to the norms $\|\cdot\|_{H^{1,2}_C}$ and $\|\cdot\|_{H^{1,2}_E}$.
\item[(vi)] Define
\begin{equation*}
\begin{split}
\mathcal{D}^2(\Delta_{C,1},X):=\Big\{\omega \in& H^{1,2}_C(T^\ast X) : \text{there exists $\hat{\omega}\in L^2(T^\ast X)$ such that}\\
&\int_X \langle \nabla \omega, \nabla \eta \rangle\,d H^n=\int_X \langle\hat{\omega}, \eta\rangle \,d H^n \text{ for all $\eta\in H_C^{1,2}(T^\ast X)$} \Big\}.
\end{split}
\end{equation*}
For any $\omega\in\mathcal{D}^2(\Delta_{C,1},X)$, the form $\hat{\omega}\in L^2(T^\ast X)$ is uniquely determined.
Thus, we put $\Delta_{C,1} \omega:=\hat{\omega}$.
\item[(vii)] Define
\begin{equation*}
\begin{split}
\mathcal{D}^2(\bar{\Delta}^E,X):=\Big\{T\in& H^{1,2}_E(E(X)) : \text{there exists $\hat{T}\in L^2(E(X))$ such that}\\
&\int_X \langle \nabla^E T, \nabla^E S \rangle\,d H^n=\int_X \langle\hat{T}, S\rangle \,d H^n \text{ for all $S\in H_E^{1,2}(E(X))$} \Big\}.
\end{split}
\end{equation*}
For any $T\in\mathcal{D}^2(\bar{\Delta}^E,X)$, the section $\hat{T}\in L^2(E(X))$ is uniquely determined.
Thus, we define $\bar{\Delta}^E T:=\hat{T}$.
\item[(viii)] For all $k\in \mathbb{Z}_{>0}$, we define
\begin{equation*}
\begin{split}
\lambda_k(\bar{\Delta}^{E}):=\inf\left\{\sup_{T\in \mathcal{E}_k\setminus \{0\}}\frac{\|\nabla^E T\|^2_2}{\|T\|^2_2}: \mathcal{E}_k\subset H^{1,2}_E(E(X))\text{ is a $k$-dimensional subspace}\right\}.
\end{split}
\end{equation*}
\end{itemize}
\end{Def}
We give some easy lemmas about Definition \ref{defapb}.

We first investigate the relationship between $H^{1,2}_C(T^\ast X)$, $H^{1,2}(X)$ and $H^{1,2}(E(X))$.
\begin{Lem}\label{ap1}
Let $X\in \overline{\mathcal{M}}$.
For all $T=\omega + f e\in \Test E(X)$, we have
\begin{equation*}
\frac{1}{2(n+1)}(\|\omega\|_{H^{1,2}_C}^2+\|f\|_{H^{1,2}}^2)\leq \|T\|_{H^{1,2}_E}^2\leq 2(n+1)(\|\omega\|_{H^{1,2}_C}^2+\|f\|_{H^{1,2}}^2),
\end{equation*}
and so we have an identification $H^{1,2}_E(E(X))=H^{1,2}_C(T^\ast X)\oplus H^{1,2}(X)$.
\end{Lem}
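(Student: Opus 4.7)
The plan is to prove the two norm inequalities by a direct pointwise computation, and then deduce the identification of completions from the norm equivalence.

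First, I would unfold the definition. For $T=\omega+fe\in\Test E(X)$, the defining formula gives
\begin{equation*}
\nabla^E T=\nabla\omega+fg+(df-\omega)\otimes e\in L^2(T^\ast X\otimes E(X)),
\end{equation*}
and since the $T^\ast X$ and $\mathbb{R}e$ components of $E(X)$ are orthogonal,
\begin{equation*}
\|\nabla^E T\|_2^2=\|\nabla\omega+fg\|_2^2+\|df-\omega\|_2^2,\qquad \|T\|_2^2=\|\omega\|_2^2+\|f\|_2^2.
\end{equation*}
The key pointwise ingredient is that on the non-collapsed Ricci limit space $X\in\overline{\mathcal M}$ the cotangent bundle has rank $n$ almost everywhere (by Cheeger–Colding), so $|g|^2=n$ $H^n$-a.e., which yields $\|fg\|_2^2=n\|f\|_2^2$ for any $f\in L^2(X)$.

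Next I would apply the elementary inequalities $(a+b)^2\le 2a^2+2b^2$ and $a^2\le 2(a+b)^2+2b^2$ to each of the two summands. This produces the four estimates
\begin{align*}
\|\nabla\omega+fg\|_2^2&\le 2\|\nabla\omega\|_2^2+2n\|f\|_2^2, &\|\nabla\omega\|_2^2&\le 2\|\nabla\omega+fg\|_2^2+2n\|f\|_2^2,\\
\|df-\omega\|_2^2&\le 2\|df\|_2^2+2\|\omega\|_2^2, &\|df\|_2^2&\le 2\|df-\omega\|_2^2+2\|\omega\|_2^2.
\end{align*}
Adding the first/third pair (and the $\|T\|_2^2$ and $\|\omega\|_2^2+\|f\|_2^2$ terms) gives the upper bound
\begin{equation*}
\|T\|_{H^{1,2}_E}^2\le 3\|\omega\|_2^2+(2n+1)\|f\|_2^2+2\|\nabla\omega\|_2^2+2\|df\|_2^2\le 2(n+1)\bigl(\|\omega\|_{H^{1,2}_C}^2+\|f\|_{H^{1,2}}^2\bigr),
\end{equation*}
and the analogous sum of the second/fourth pair gives the lower bound
\begin{equation*}
\|\omega\|_{H^{1,2}_C}^2+\|f\|_{H^{1,2}}^2\le 3\|\omega\|_2^2+(2n+1)\|f\|_2^2+2\|\nabla\omega+fg\|_2^2+2\|df-\omega\|_2^2\le 2(n+1)\|T\|_{H^{1,2}_E}^2,
\end{equation*}
using $n\ge 2$ so that $2(n+1)\ge\max\{3,2n+1\}$.

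Finally, for the identification $H^{1,2}_E(E(X))=H^{1,2}_C(T^\ast X)\oplus H^{1,2}(X)$, the linear map $(\omega,f)\mapsto\omega+fe$ is a bijection of $\Test T^\ast X\oplus\Test F(X)$ onto $\Test E(X)$, and the two-sided inequality just established says it is bi-Lipschitz between the norm $\sqrt{\|\omega\|_{H^{1,2}_C}^2+\|f\|_{H^{1,2}}^2}$ on the left and $\|\cdot\|_{H^{1,2}_E}$ on the right. Passing to completions then yields the desired Banach space isomorphism; the only mild subtlety is that $H^{1,2}(X)$ is defined as the completion of $\LIP(X)$ rather than $\Test F(X)$, which I would handle by invoking the known density of $\Test F(X)$ in $H^{1,2}(X)$ for non-collapsed Ricci limit spaces. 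This density step is the only place where more than the elementary computation enters, and it is the only possible obstacle.
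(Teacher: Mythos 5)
Your proof is correct and takes essentially the same route as the paper: both decompose $\|\nabla^E T\|_2^2=\|\nabla\omega+fg\|_2^2+\|df-\omega\|_2^2$, use $\|fg\|_2^2=n\|f\|_2^2$ (rank $n$ a.e.), and then apply elementary quadratic inequalities, the only cosmetic difference being that the paper uses the $L^2$ triangle inequality and then squares while you apply $(a+b)^2\le 2a^2+2b^2$ directly. You are also right to flag, and correctly resolve, the subtlety the paper leaves implicit, namely that the identification requires the $H^{1,2}$-closure of $\Test F(X)$ to coincide with $H^{1,2}(X)$, which holds by the known density of test functions in these non-collapsed Ricci limit spaces.
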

\begin{proof}
Take arbitrary $T=\omega + f e\in \Test E(X)$.
We have
\begin{equation*}
\begin{split}
\|T\|_{H^{1,2}_E}=&\|T\|_2^2+\|\nabla^E T\|_2^2\\
\leq&\|\omega\|_2^2+\|f\|_2^2+(\|\nabla \omega\|_2+\sqrt{n}\|f\|_2)^2+(\|d f\|_2+\|\omega\|_2)^2\\
\leq&2(n+1)(\|\omega\|_{H^{1,2}_C}^2+\|f\|_{H^{1,2}}^2).
\end{split}
\end{equation*}
On the other hand, we have
\begin{equation*}
\begin{split}
&(n+1)\|T\|_{H^{1,2}_E}\\
\geq&\|\omega\|_2^2+\|f\|_2^2+(\sqrt{n}\|f\|_2)^2+(\|\nabla \omega\|_2-\sqrt{n}\|f\|_2)^2
+\|\omega\|_2^2+(\|d f\|_2-\|\omega\|_2)^2\\
\geq &\|\omega\|_2^2+\|f\|_2^2+\frac{1}{2}\|\nabla \omega\|_2^2+\frac{1}{2}\|d f\|_2^2\\
\geq &\frac{1}{2}(\|\omega\|_{H^{1,2}_C}^2+\|f\|_{H^{1,2}}^2).
\end{split}
\end{equation*}
Thus, we get the lemma.
\end{proof}
We next investigate the relationship between $\mathcal{D}^2(\Delta_{C,1},X)$, $\mathcal{D}^2(\Delta,X)$ and $\mathcal{D}^2(\bar{\Delta}^E,X)$.
\begin{Lem}\label{ap2}
Let $X\in \overline{\mathcal{M}}$.
We have an identification 
\begin{equation*}
\mathcal{D}^2(\bar{\Delta}^E,X)=\mathcal{D}^2(\Delta_{C,1},X)\oplus \mathcal{D}^2(\Delta,X).
\end{equation*}
Moreover, for all $T=\omega+f e\in \mathcal{D}^2(\bar{\Delta}^E,X)$, we have 
$\bar{\Delta}^E T=\Delta_{C,1} \omega-2d f +\omega+(\Delta f -2\delta \omega +n f)e$.
\end{Lem}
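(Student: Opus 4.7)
The plan is to prove both inclusions simultaneously by unwinding the definitions and applying integration by parts. First I would fix $T = \omega + f e \in L^{2}(E(X))$ and an arbitrary test section $S = \eta + h e \in \Test E(X)$. Using $\nabla^{E}(\omega + f e) = \nabla \omega + f g + (df - \omega)\otimes e$ and the orthogonality of the $T^{\ast}X\otimes T^{\ast}X$ and $T^{\ast}X\otimes\mathbb{R}e$ components of $T^{\ast}X\otimes E(X)$, the pointwise inner product expands to
\[
\langle \nabla^{E} T,\nabla^{E} S\rangle = \langle\nabla\omega,\nabla\eta\rangle + f\,\mathrm{tr}_{g}(\nabla\eta) + h\,\mathrm{tr}_{g}(\nabla\omega) + n f h + \langle df,dh\rangle - \langle df,\eta\rangle - \langle\omega,dh\rangle + \langle\omega,\eta\rangle.
\]

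Second, I would invoke the integration-by-parts identity on the limit space $X$: for any $\eta\in\Test T^{\ast}X$ and any $u\in H^{1,2}(X)$ one has $\int_{X} u\,\mathrm{tr}_{g}(\nabla\eta)\,dH^{n} = -\int_{X}\langle du,\eta\rangle\,dH^{n}$, so that $\delta\eta := -\mathrm{tr}_{g}(\nabla\eta)$ realises the codifferential in the weak sense. This identity is built into Gigli's/Honda's construction of $\Test T^{\ast}X$ from sums $u\,dv$ of test functions, for which it reduces to the defining property of $\Delta$ on $X$. Substituting this into the expansion gives
\[
\int_{X}\langle\nabla^{E}T,\nabla^{E}S\rangle\,dH^{n} = \int_{X}\langle\nabla\omega,\nabla\eta\rangle + \int_{X}\langle df,dh\rangle - 2\!\int_{X}\langle df,\eta\rangle - 2\!\int_{X} h\,\delta\omega + \int_{X}(n f h + \langle\omega,\eta\rangle),
\]
where all integrals are with respect to $H^{n}$.

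Third, regrouping by the $\eta$-component and $he$-component and using the defining identities $\int_{X}\langle\nabla\omega,\nabla\eta\rangle = \int_{X}\langle\Delta_{C,1}\omega,\eta\rangle$ (when $\omega\in\mathcal{D}^{2}(\Delta_{C,1},X)$) and $\int_{X}\langle df,dh\rangle = \int_{X}(\Delta f)h$ (when $f\in\mathcal{D}^{2}(\Delta,X)$), the right-hand side becomes
\[
\int_{X}\langle\Delta_{C,1}\omega - 2 df + \omega,\eta\rangle\,dH^{n} + \int_{X}\bigl(\Delta f - 2\delta\omega + n f\bigr)h\,dH^{n}.
\]
This simultaneously establishes the inclusion $\mathcal{D}^{2}(\Delta_{C,1},X)\oplus\mathcal{D}^{2}(\Delta,X)\subset\mathcal{D}^{2}(\bar\Delta^{E},X)$ together with the claimed formula for $\bar\Delta^{E} T$. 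For the reverse inclusion, given $T=\omega+f e\in\mathcal{D}^{2}(\bar\Delta^{E},X)$ I would test against purely form sections $S=\eta\in\Test T^{\ast}X\subset\Test E(X)$ and purely scalar sections $S=h e$ separately; since each choice only involves $\omega$ or $f$ respectively in the pairing with $\bar\Delta^{E}T$, this isolates the two components and shows $\omega\in\mathcal{D}^{2}(\Delta_{C,1},X)$ and $f\in\mathcal{D}^{2}(\Delta,X)$, identifying $\Delta_{C,1}\omega$ and $\Delta f$ from the corresponding pieces of $\bar\Delta^{E}T$.

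The main obstacle is the rigorous justification of the codifferential identity $\int_{X} u\,\mathrm{tr}_{g}(\nabla\eta) = -\int_{X}\langle du,\eta\rangle$ on $X\in\overline{\mathcal M}$, since $X$ is only a non-collapsed Ricci limit space and $\nabla\eta$ is defined a priori only in $L^{2}$. Once one verifies this from the product-rule construction of $\Test T^{\ast}X$ and the definition of $\Delta$, the remainder of the argument is algebraic bookkeeping.
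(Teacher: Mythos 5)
Your proposal is correct and follows essentially the same route as the paper: expand $\langle\nabla^E T,\nabla^E S\rangle$, identify $-\tr_g\nabla$ with the weak codifferential $\delta$, regroup against the form and scalar parts of $S$ to get the forward inclusion with the stated formula, and test against purely form and purely scalar sections for the reverse inclusion. The only points to finish are routine density extensions (from $\Test T^\ast X$, $\Test E(X)$ to $H^{1,2}_C$, $H^{1,2}_E$) and the identity $-\tr_g\nabla\omega=\delta\omega$ on the limit space, which you correctly flag and which the paper settles by citing Honda's smooth approximation results.
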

\begin{proof}
For all $T=\omega+f e,S=\eta+he\in H^{1,2}_E(E(X))$, we have
\begin{equation}\label{apeqb1}
\begin{split}
&\int_X \langle \nabla^E T, \nabla^E S \rangle\,d H^n\\
=&\int_{X}\Big( \langle \nabla \omega,\nabla \eta\rangle
-f\nabla^\ast \eta - h\nabla^\ast \omega+n f h +\langle d f-\omega,d h-\eta \rangle \Big) \,d H^n,
\end{split}
\end{equation}
where we defined $\nabla^\ast \omega:=-\tr \nabla \omega\in L^2(X)$.
Since $\nabla^\ast \omega=\delta \omega$ holds for the smooth case, we have $\nabla^\ast \omega=\delta \omega$ for the general case by \cite[Theorem 3.74]{Ho1} and the smooth approximation theorem \cite[Theorem 3.5]{Ho2}, where $\delta \omega\in L^2(X)$ is characterized by satisfying
\begin{equation*}
\int_X  f \delta \omega\,dH^n=\int_X \langle \omega, d f\rangle \,dH^n,
\end{equation*}
for all $f\in \Test F(X)$.

We first prove $\mathcal{D}^2(\Delta_{C,1},X)\oplus \mathcal{D}^2(\Delta,X)\subset \mathcal{D}^2(\bar{\Delta}^E,X)$ and the second assertion.
Take arbitrary $T=\omega+f e\in \mathcal{D}^2(\Delta_{C,1},X)\oplus \mathcal{D}^2(\Delta,X)$.
For all $S=\eta+he\in H^{1,2}_E(E(X))$, by (\ref{apeqb1}) we have
\begin{equation*}
\begin{split}
&\int_X \langle \nabla^E T, \nabla^E S \rangle\,d H^n\\
=&\int_{X}\Big( \langle \Delta_{C,1} \omega,\eta\rangle
-\langle d f, \eta\rangle - h\delta \omega+n f h +\langle \Delta f-\delta \omega, h\rangle-\langle d f-\omega,\eta \rangle \Big) \,d H^n\\
=&\int_{X} \langle \Delta_{C,1} \omega-2d f +\omega+(\Delta f -2\delta \omega +n f)e,S\rangle\,d H^n.
\end{split}
\end{equation*}
Thus, we get $T\in \mathcal{D}^2(\Delta,X)$ and $\bar{\Delta}^E T=\Delta_{C,1} \omega-2d f +\omega+(\Delta f -2\delta \omega +n f)e$.

We next prove $\mathcal{D}^2(\bar{\Delta}^E,X)\subset\mathcal{D}^2(\Delta_{C,1},X)\oplus \mathcal{D}^2(\Delta,X)$.
Take arbitrary $T=\omega+f e\in \mathcal{D}^2(\bar{\Delta}^E,X)$.
Take $\hat{\omega}\in L^2(T^\ast X)$ and $\hat{f}\in L^2(X)$ with $\bar{\Delta}^E T=\hat{\omega}+\hat{f} e$.
For all $\eta \in H^{1,2}_C(T^\ast X)$, by (\ref{apeqb1}) we have
\begin{equation*}
\begin{split}
\int_X \langle \hat{\omega},\eta\rangle \,d H^n
=&\int_X \langle \nabla^E T, \nabla^E \eta \rangle\,d H^n\\
=&\int_{X}\Big( \langle \nabla \omega,\nabla \eta\rangle
-\langle d f, \eta\rangle +\langle d f-\omega,-\eta \rangle \Big) \,d H^n,
\end{split}
\end{equation*}
and so
\begin{equation*}
\int_{X} \langle \nabla \omega,\nabla \eta\rangle\,d H^n
=\int_X \langle \hat{\omega}+2d f -\omega ,\eta\rangle \,d H^n.
\end{equation*}
Thus, we have $\omega\in \mathcal{D}^2(\Delta_{C,1},X)$.
Similarly, we have $f\in\mathcal{D}^2(\Delta,X)$.
\end{proof}

We mention the convergence of the image of the connection $\nabla^E$.
\begin{Lem}\label{ap3}
Let $\{X_i\}_{i\in \mathbb{N}}$ be a sequence in $\mathcal{M}$ and let $X\in\overline{\mathcal{M}}$ be the Gromov-Hausdorff limit.
Suppose that a sequence $T_i=\omega_i+f_i e\in H^{1,2}_E(E(X_i))$ converges to $T=\omega +f e\in H^{1,2}(E(X))$ strongly in $L^2$. Then, we have that $\nabla^E T_i$ converges to $\nabla^E T$ weakly in $L^2$ if and only if $\nabla \omega_i,d f_i$ converges to $\nabla\omega,d f$ weakly in $L^2$, respectively. We also have that $\nabla^E T_i$ converges to $\nabla^E T$ strongly in $L^2$ if and only if
$\nabla \omega_i,d f_i$ converges to $\nabla\omega,d f$ strongly in $L^2$, respectively.
\end{Lem}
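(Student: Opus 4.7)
The plan is to reduce everything to the identification $L^2(T^\ast X\otimes E(X))=L^2(T^\ast X\otimes T^\ast X)\oplus L^2(T^\ast X)$, under which
\begin{equation*}
\nabla^E T=(\nabla\omega+f g,\,d f-\omega).
\end{equation*}
First I would recall that by the very definition of weak/strong $L^2$ convergence on $L^2(T^\ast X\otimes E(X))$ through test functions of the form $d r_{z^1}\otimes d r_{z^2}$ and $d r_{z^1}\otimes e$, weak (resp. strong) convergence of $\nabla^E T_i$ to $\nabla^E T$ is equivalent to weak (resp. strong) convergence of both components $\nabla\omega_i+f_i g_{X_i}$ and $d f_i-\omega_i$ separately. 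Since $f_i\to f$ strongly in $L^2$, the standard compatibility of the canonical metric $g$ with the weak/strong $L^2$ convergence for Ricci limits (as in \cite{Ho1}, \cite{Ho2}) gives $f_i g_{X_i}\to f g_X$ strongly in $L^2$. Similarly $\omega_i\to\omega$ strongly in $L^2$ by assumption.

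The weak equivalence then follows immediately from the linearity of weak $L^2$ convergence together with the fact that any strongly convergent sequence is weakly convergent: adding or subtracting a strongly convergent sequence does not affect weak convergence of another. Thus $\nabla\omega_i+f_i g_{X_i}\to\nabla\omega+f g$ weakly iff $\nabla\omega_i\to\nabla\omega$ weakly, and $d f_i-\omega_i\to d f-\omega$ weakly iff $d f_i\to d f$ weakly.

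For the strong part I would expand $\|\nabla^E T\|_2^2$. Using $\tr_g\nabla\omega=-\delta\omega$ and $|g|^2=n$,
\begin{equation*}
\|\nabla^E T\|_2^2=\|\nabla\omega\|_2^2+\|d f\|_2^2-4\int_X\langle d f,\omega\rangle\,dH^n+n\|f\|_2^2+\|\omega\|_2^2.
\end{equation*}
Strong convergence of $T_i$ to $T$ yields $\|f_i\|_2^2\to\|f\|_2^2$, $\|\omega_i\|_2^2\to\|\omega\|_2^2$, and, assuming the weak convergence of $d f_i$ to $d f$ (already provided by the weak part once one direction is proved), the cross term $\int\langle d f_i,\omega_i\rangle\,dH^n$ converges to $\int\langle d f,\omega\rangle\,dH^n$ via the standard ``weak$\times$strong'' pairing. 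Consequently $\|\nabla^E T_i\|_2^2\to\|\nabla^E T\|_2^2$ if and only if $\|\nabla\omega_i\|_2^2+\|d f_i\|_2^2\to\|\nabla\omega\|_2^2+\|d f\|_2^2$. Combined with the lower semicontinuity of the $L^2$ norm under weak convergence applied separately to $\nabla\omega_i$ and $d f_i$, a subsequence argument shows that this joint convergence of the sum forces convergence of each summand, which is the desired strong convergence. The reverse direction of the strong equivalence is immediate: if $\nabla\omega_i\to\nabla\omega$ and $d f_i\to d f$ strongly, then combining with the already established strong convergence of $f_i g_{X_i}$ and $\omega_i$ gives strong convergence of $\nabla^E T_i$.

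The only non-formal step is the claim that $f_i g_{X_i}\to f g_X$ strongly whenever $f_i\to f$ strongly, which rests on the metric compatibility facts in Honda's framework; this is what I would expect to have to cite carefully rather than reprove. Everything else is algebra in the direct sum decomposition plus the standard lemma that weak${}\times{}$strong pairings converge.
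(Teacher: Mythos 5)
Your proposal is correct and follows the same route as the paper: identify $\nabla^E T_i=\nabla\omega_i+f_ig_{X_i}+(df_i-\omega_i)\otimes e$, invoke Honda's results that $f_ig_{X_i}\to fg$ and $\omega_i\to\omega$ strongly in $L^2$, and then transfer the weak/strong convergence componentwise by linearity. Your norm-expansion argument for the strong direction is correct but more elaborate than needed; once one knows $f_ig_{X_i}$ and $\omega_i$ converge strongly, linearity of strong $L^2$-convergence (\cite[Corollary 3.59]{Ho1}) applied to each summand $\nabla\omega_i+f_ig_{X_i}$ and $(df_i-\omega_i)\otimes e$ gives both implications directly, without expanding $\|\nabla^E T\|_2^2$.
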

\begin{proof}
Since $\nabla^E T_i= \nabla \omega_i+f_i g_{X_i} + (d f_i-\omega_i)\otimes e$ and $f_i g_{X_i}, \omega_i$ converges to $f g,\omega$ strongly in $L^2$, respectively by Proposition 3.44 and Proposition 3.70 in \cite{Ho1}, we get the lemma.
\end{proof}

We show basic properties about the eigenvalues of $\bar{\Delta}^E$ on limit spaces.
\begin{Thm}\label{ap4}
Let $X\in \overline{\mathcal{M}}$.
\begin{itemize}
\item[(i)] We have
\begin{equation*}
0\leq \lambda_1(\bar{\Delta}^{E})\leq\lambda_2(\bar{\Delta}^{E})\leq \cdots  \to \infty.
\end{equation*}
\item[(ii)] There exists a complete orthonormal system of eigensection $\{T_k\}$ in $L^2(E(X))$, i.e., for all $k\in \mathbb{Z}_{>0}$, we have $T_k\in \mathcal{D}^2(\bar{\Delta}^E,X)$ and
\begin{equation*}
\bar{\Delta}^{E} T_k= \lambda_k(\bar{\Delta}^{E}) T_k.
\end{equation*}
\item[(iii)] For all $\lambda \in \mathbb{R}$, we have
\begin{equation*}
\begin{split}
\{T\in \mathcal{D}^2(\bar{\Delta}^E,X): \bar{\Delta}^{E} T= \lambda T\}
=&\bigoplus_{\lambda_k(\bar{\Delta}^{E}) =\lambda}\mathbb{R}T_k\\
:=&\Span_{\mathbb{R}}\{T_k : k\in\mathbb{Z}_{>0} \text{ with } \lambda_k(\bar{\Delta}^E) = \lambda\}.
\end{split}
\end{equation*}
\end{itemize}
\end{Thm}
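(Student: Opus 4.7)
The plan is to realize $\bar{\Delta}^E$ as the self-adjoint operator associated with the closed symmetric form
\[
\mathcal{E}(T,S):=\int_X \langle \nabla^E T,\nabla^E S\rangle\,dH^n
\]
on $L^2(E(X))$ with form domain $H^{1,2}_E(E(X))$, and then invoke the standard spectral theorem for non-negative self-adjoint operators with compact resolvent. I would first check that $\mathcal{E}$ is indeed densely defined, symmetric, non-negative, and closed: density follows from the definition of $H^{1,2}_E(E(X))$ as the completion of $\Test E(X)$ together with the density of $\Test E(X)$ in $L^2(E(X))$ (which can be read off from $\Test F(X)\subset H^{1,2}(X)$ being dense in $L^2(X)$ and the identification $L^2(E(X))=L^2(T^\ast X)\oplus L^2(X)$); symmetry and non-negativity are immediate from the definition; and closedness is just completeness of $H^{1,2}_E$ with respect to $\|\cdot\|_{H^{1,2}_E}^2=\|\cdot\|_2^2+\|\nabla^E\cdot\|_2^2$. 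The operator associated with $\mathcal{E}$ is, by construction and by the definition in Definition \ref{defapb} (vii), exactly $\bar{\Delta}^E$ with domain $\mathcal{D}^2(\bar{\Delta}^E,X)$.

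The second, and main, step is to show that the embedding $H^{1,2}_E(E(X))\hookrightarrow L^2(E(X))$ is compact. Here I would use Lemma \ref{ap1} to reduce to proving compactness of the two embeddings
\[
H^{1,2}_C(T^\ast X)\hookrightarrow L^2(T^\ast X),\qquad H^{1,2}(X)\hookrightarrow L^2(X).
\]
The second is the standard Rellich-type theorem on non-collapsed Ricci limit spaces and follows from the Bishop--Gromov inequality together with a Poincar\'e inequality (as in \cite{Ho1}); the first is Honda's compactness result for the Hessian Sobolev space of $1$-forms on non-collapsed limits with two-sided Ricci bounds (the results from \cite{Ho2} cited in Appendix B). Combining the two gives the desired compact embedding, using the equivalence of norms in Lemma \ref{ap1}.

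Once compactness is established, the spectral theorem for a non-negative self-adjoint operator with compact resolvent applies directly: the resolvent $(1+\bar{\Delta}^E)^{-1}\colon L^2(E(X))\to L^2(E(X))$ factors through $H^{1,2}_E(E(X))$ and is therefore compact and self-adjoint, hence has a complete orthonormal system of eigensections $\{T_k\}$ with eigenvalues accumulating only at $0$. Translating back to $\bar{\Delta}^E$ gives the complete orthonormal system $\{T_k\}\subset L^2(E(X))$ with $\bar{\Delta}^E T_k=\lambda_k(\bar{\Delta}^E)T_k$ and $\lambda_k(\bar{\Delta}^E)\to\infty$, proving (i) and (ii). The min-max characterization of eigenvalues built into Definition \ref{defapb} (viii) then forces $\lambda_k(\bar{\Delta}^E)$ to coincide with the $k$-th eigenvalue produced by the spectral theorem, and (iii) follows from the orthogonal decomposition of $L^2(E(X))$ into eigenspaces together with self-adjointness.

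The step I expect to be the real obstacle is the compactness of $H^{1,2}_C(T^\ast X)\hookrightarrow L^2(T^\ast X)$; for smooth members of $\mathcal{M}$ this is classical Rellich, but on the limit space it rests on Honda's machinery of $L^2$-convergence of tensor fields and the two-sided Ricci bound $K_1 g\le \Ric\le K_2 g$, and cannot be read off from Lipschitz and measure considerations alone. The rest is a bookkeeping exercise once this compactness is in hand.
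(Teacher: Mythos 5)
Your proposal is correct but takes a genuinely different route from the paper. The paper proves the theorem by a direct variational argument: it takes a minimizing sequence $S_i$ for the Rayleigh quotient defining $\lambda_1(\bar{\Delta}^E)$, extracts a subsequence converging strongly in $L^2$ (with $\nabla^E S_i$ converging weakly) using weak compactness of balls in Hilbert spaces together with the Rellich-type theorem \cite[Theorem 4.9]{Ho1}, verifies via a first-variation computation that the limit $T_1$ lies in $\mathcal{D}^2(\bar{\Delta}^E,X)$ with $\bar{\Delta}^E T_1=\lambda_1 T_1$, iterates by minimizing over the orthogonal complement of the eigensections already found, shows by a diagonal/contradiction argument that $\lambda_k\to\infty$ and that the resulting system is complete, and observes that (iii) is proved the same way. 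You instead package the same analytic content as the abstract spectral theorem for a non-negative self-adjoint operator with compact resolvent arising from the closed form $\mathcal{E}(T,S)=\int_X\langle\nabla^E T,\nabla^E S\rangle\,dH^n$, reduce the needed compact embedding $H^{1,2}_E\hookrightarrow L^2(E(X))$ to the two embeddings $H^{1,2}_C(T^\ast X)\hookrightarrow L^2(T^\ast X)$ and $H^{1,2}(X)\hookrightarrow L^2(X)$ via Lemma \ref{ap1}, and invoke Honda's compactness for $1$-forms under two-sided Ricci bounds. You have correctly identified that the compact embedding for $1$-forms is the genuine technical bottleneck and that it is not a metric-measure-theoretic triviality; this is exactly what the paper is leaning on when it cites \cite{Ho1} and \cite{Ho3}/\cite{Ho2}, though the paper does so implicitly inside the Rellich-and-weak-compactness step rather than stating a compact embedding theorem. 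What the paper's route buys is that it never needs to verify the form is closed, identify its associated operator, or appeal to an external spectral theorem — everything is done by hand with the Rayleigh characterization already baked into Definition \ref{defapb}(viii). What your route buys is brevity and conceptual clarity once the form/operator framework is set up, and it makes part (iii) an immediate consequence of the spectral decomposition rather than a ``similarly'' at the end of the argument.
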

\begin{proof}
Take a sequence $S_i\in H^{1,2}(E(X))$ ($i\in \mathbb{N}$) such that $\|S_i\|_2=1$ and
$\lim_{i\to \infty}\|\nabla^E S_i\|_2^2=\lambda_1(\bar{\Delta}^E,X)$ hold.
Since $\sup_{i}\|\nabla^E S_i\|_2<\infty$, there exist a subsequence (denote it again by $S_i$) and $T_1\in H^{1,2}(E(X))$ such that 
$S_i$ converges to $T_1$ strongly in $L^2$ and $\nabla^E S_i$ converges to $\nabla^E T_1$ weakly in $L^2$ by the weak compactness of the closed ball in Hilbert spaces and the Rellich theorem for limit spaces \cite[Theorem 4.9]{Ho1}.
Then, we have $\|T_1\|_2=1$ and $\|\nabla^E T_1\|_2^2\leq \liminf \|\nabla^E S_i\|_2^2=\lambda_1(\bar{\Delta}^E,X)$.
By the definition of $\lambda_1(\bar{\Delta}^E,X)$, we have $\|\nabla^E T_1\|_2^2\geq\lambda_1(\bar{\Delta}^E,X)$, and so $\|\nabla^E T_1\|_2^2=\lambda_1(\bar{\Delta}^E,X)$.

For all $t\in\mathbb{R}$ and $S\in H^{1,2}(E(X))$ with $\int_X \langle T_1,S\rangle \,d H^n=0$, we have
\begin{equation*}
\begin{split}
\lambda_1(\bar{\Delta}^E,X)(1+t^2\|S\|_2^2)
=&\lambda_1(\bar{\Delta}^E,X)\|T_1+t S\|_2^2\\
\leq& \|\nabla^E T_1+t \nabla^E S\|_2^2\\
= & \lambda_1(\bar{\Delta}^E,X)+\frac{2t}{H^n(X)}\int_X\langle \nabla^E T_1,\nabla^E S\rangle \,d H^n+t^2 \|\nabla^E S\|_2^2.
\end{split}
\end{equation*}
Thus, we have $\int_X\langle \nabla^E T_1,\nabla^E S\rangle \,d H^n=0$.
Combining this and $\int_X\langle \nabla^E T_1,\nabla^E T_1\rangle \,d H^n=\lambda_1(\bar{\Delta}^E,X) \int_X\langle T_1,T_1\rangle \,d H^n$, we get $\int_X\langle \nabla^E T_1,\nabla^E S\rangle \,d H^n=\lambda_1(\bar{\Delta}^E,X) \int_X\langle T_1,S\rangle \,d H^n$ for all $S\in H^{1,2}(E(X))$.
Therefore, we get $T_1\in \mathcal{D}^2(\bar{\Delta}^E,X)$ and $\bar{\Delta}^{E} T_1= \lambda_1(\bar{\Delta}^{E}) T_1$.

Suppose that we have chosen $T_1,\ldots,T_k\in \mathcal{D}^2(\bar{\Delta}^E,X)$ ($k\in \mathbb{Z}_{>0}$) such that $\frac{1}{H^n(X)}\int_X\langle  T_i, T_j\rangle \,d H^n=\delta_{i j}$ and $\bar{\Delta}^{E} T_i= \lambda_i(\bar{\Delta}^{E}) T_i$ hold.
Considering
\begin{equation*}
\begin{split}
\widetilde{\lambda}_{k+1}:=\inf\Big\{\|\nabla^E S\|_2^2: S\in& H^{1,2}_E(E(X))\text{ with $\|S\|_2=1$ and}\\
& \int_X\langle S,T_i \rangle\,dH^n=0 \text{ for all $i=1,\ldots,k$}\Big\},
\end{split}
\end{equation*}
we can take $T_{k+1}\in \mathcal{D}^2(\bar{\Delta}^E,X)$ such that $\|T_{k+1}\|_2=1$,  $\bar{\Delta}^{E} T_{k+1}= \widetilde{\lambda}_{k+1} T_{k+1}$ and $\int_X\langle T_{k+1},T_i \rangle\,dH^n=0$ for all $i=1,\ldots,k$.

Let us show $\widetilde{\lambda}_{k+1}=\lambda_{k+1}(\bar{\Delta}^{E})$.
Define $\widetilde{\mathcal{E}}_{i}:=\Span_\mathbb{R}\{T_1,\ldots,T_{i},T_{k+1}\}$ for $i=1,\ldots,k$.
If $\widetilde{\lambda}_{k+1}\geq \lambda_{i}(\bar{\Delta}^{E})$, we get
\begin{equation*}
\widetilde{\lambda}_{k+1}=\sup_{T\in\widetilde{\mathcal{E}}_{i}\setminus\{0\}} \frac{\|\nabla^E T\|_2^2}{\|T\|_2^2}\geq \lambda_{i+1}(\bar{\Delta}^{E}).
\end{equation*}
Since we have $\widetilde{\lambda}_{k+1}\geq \lambda_{1}(\bar{\Delta}^{E})$, we get $\widetilde{\lambda}_{k+1}\geq \lambda_{k+1}(\bar{\Delta}^{E})$ by induction.
To prove $\widetilde{\lambda}_{k+1}\leq \lambda_{k+1}(\bar{\Delta}^{E})$, we take arbitrary $(k+1)$-dimensional subspace $\mathcal{E}_{k+1}\subset H^{1,2}_E(E(X))$.
Then, there exists an element $S\in \mathcal{E}_{k+1}$ with $\|S\|_2=1$ and $\int_X\langle S,T_i \rangle\,dH^n=0$ for all $i=1,\ldots,k$.
Thus, we get
\begin{equation*}
\sup_{T\in\mathcal{E}_{k+1}\setminus\{0\}} \frac{\|\nabla^E T\|_2^2}{\|T\|_2^2}\geq \widetilde{\lambda}_{k+1}.
\end{equation*}
This shows that $\widetilde{\lambda}_{k+1}\leq \lambda_{k+1}(\bar{\Delta}^{E})$.
Therefore, $\widetilde{\lambda}_{k+1}= \lambda_{k+1}(\bar{\Delta}^{E})$.

By induction, we get a sequence $\{T_k\}_{k\in\mathbb{Z}_{>0}}\subset \mathcal{D}^2(\bar{\Delta}^E,X)$ such that $\bar{\Delta}^{E} T_k= \lambda_k(\bar{\Delta}^{E}) T_k$ and $\frac{1}{H^n(X)}\int_X\langle T_i, T_j\rangle \,d H^n=\delta_{i j}$.

We next prove that $\lim_{k\to \infty}\lambda_k(\bar{\Delta}^{E})=\infty$.
Suppose that $\lim_{k\to \infty}\lambda_k(\bar{\Delta}^{E})=\lambda<\infty$.
Since $\sup_{i}\|\nabla^E T_i\|_2\leq \lambda$, there exist a subsequence $i(j)$ and $T\in H^{1,2}(E(X))$ such that 
$T_{i(j)}$ converges to $T$ strongly in $L^2$ and $\nabla^E T_{i(j)}$ converges to $\nabla^E T$ weakly in $L^2$. We have $\|T\|_2=1$.
For all $k\in\mathbb{Z}_{>0}$, we have
\begin{equation*}
\int_X\langle T_k, T \rangle\,d H^n=\lim_{l\to \infty}\int_X\langle T_k, T_l \rangle\,d H^n=0.
\end{equation*}
Thus, we get
\begin{equation*}
\int_X\langle T, T \rangle\,d H^n=\lim_{k\to \infty}\int_X\langle T_k, T \rangle\,d H^n=0.
\end{equation*}
This contradicts to $\|T\|_2=1$.
Therefore, we get $\lim_{k\to \infty}\lambda_k(\bar{\Delta}^{E})=\infty$ and (i).

Suppose that $\bigoplus_{k=1}^\infty \mathbb{R} T_k$ is not dense in $H^{1,2}_E(E(X))$.
Then, there exists $T\in H^{1,2}_E(E(X))$ such that  $\|T\|_2=1$ and $\int_X\langle T_k, T \rangle\,d H^n=0$ for all $k\in\mathbb{Z}_{>0}$.
By the definition of $\widetilde{\lambda}_k$, we get $\|\nabla^E T\|_2^2\geq \widetilde{\lambda}_k= \lambda_k(\bar{\Delta}^{E})$ for all $k\in\mathbb{Z}_{>0}$.
This contradicts to $\lim_{k\to \infty}\lambda_k(\bar{\Delta}^{E})=\infty$.
Since $H^{1,2}_E(E(X))$ is dense in $L^2(E(X))$ (see \cite[Claim 3.2]{Ho2}), we get that $\{T_k\}$ is complete orthonormal system in $L^2(E(X))$.
Thus, we get (ii).

Similarly, we have (iii).
\end{proof}

Let us give the $L^\infty$ estimate for the eigensections of the Laplacian $\bar{\Delta}^E$.
\begin{Lem}\label{ap5}
Let $(M,g)\in \mathcal{M}$.
Take positive real numbers $\beta>0$ and $0\leq \alpha \leq \beta$.
Then, for any $T\in \Gamma(E(M))$ with $\bar{\Delta}^E T=\alpha T$, we have
\begin{equation*}
\|T\|_{\infty}\leq C(n,K_1,D,\beta) \|T\|_2.
\end{equation*} 
\end{Lem}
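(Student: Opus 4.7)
The plan is to derive a Bochner--Kato inequality for $|T|$ and then apply Moser iteration, whose constants can be controlled purely in terms of $n$, $K_1$, $D$ via the Bishop--Gromov comparison. First, using the fact that $\bar{\Delta}^{E}=(\nabla^E)^\ast\nabla^E$ together with the pointwise identity $\Delta\langle T,T\rangle_E=2\langle\bar{\Delta}^{E}T,T\rangle_E-2|\nabla^E T|^2$ (our sign convention $\Delta=-\operatorname{tr}\nabla^2$), the eigensection equation $\bar{\Delta}^E T=\alpha T$ yields
\begin{equation*}
\Delta |T|^2 = 2\alpha |T|^2-2|\nabla^E T|^2.
\end{equation*}

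Next I would invoke Kato's inequality $|\nabla|T||^2\leq|\nabla^E T|^2$, valid a.e.\ on $\{T\neq 0\}$, and the chain rule $\Delta|T|^2=2|T|\,\Delta|T|-2|\nabla|T||^2$ to conclude
\begin{equation*}
\Delta |T|\leq \alpha |T|
\end{equation*}
in the weak sense on $M$ (the inequality extends across $\{T=0\}$ because $|T|$ is Lipschitz). Since $0\leq\alpha\leq\beta$, this gives $\Delta|T|\leq\beta|T|$ weakly.

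Now I would run Moser iteration. Setting $u:=|T|\geq 0$, the inequality $\Delta u^p\leq p\alpha u^p-p(p-1)u^{p-2}|\nabla u|^2$ for $p\geq 1$ (valid at least weakly, with the usual cutoff regularisation) combined with the Sobolev inequality on $(M,g)$---whose constants depend only on $n$, $K_1$, $D$ thanks to the Bishop--Gromov volume comparison and the uniform normalisation by $\Vol(M)$---produces the familiar recursion
\begin{equation*}
\|u\|_{\chi q}\leq \bigl(C(n,K_1,D)(1+p\beta)\bigr)^{1/p}\|u\|_q, \qquad q=p+1,\ \chi>1,
\end{equation*}
and iterating from $q=2$ leads to a geometric product that converges, giving $\|u\|_\infty\leq C(n,K_1,D,\beta)\|u\|_2$. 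Alternatively, and more directly, one can combine the already-stated inequality (Claim~\ref{c32b}) $u\leq C(n,K_1,D)\|\Delta u\|_n+\|u\|_1$ with the Nash--Moser style bootstrap $\|u\|_q\leq C(n,K_1,D,\beta)\|u\|_2$ for every finite $q$ (which follows from the Bochner bound on $\Delta u^{q/2}$), taking $q=2n$ to bound $\|\Delta u\|_n\leq\beta\|u\|_n$ and close the loop.

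The main technical point to be careful about is the justification of the Kato inequality and the distributional inequality for $\Delta|T|$ near the nodal set $\{T=0\}$; this is handled in the standard way by replacing $|T|$ with $(|T|^2+\varepsilon)^{1/2}$, carrying out the computations, and letting $\varepsilon\to 0$. Everything else is a straightforward application of Moser iteration on closed manifolds with a lower Ricci bound and an upper diameter bound, where the Sobolev constant depends only on $n$, $K_1$, $D$; this is why the upper Ricci bound $K_2$ and the volume lower bound $v$ do not appear in the final constant.
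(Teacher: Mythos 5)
Your proposal is correct and follows essentially the same route as the paper: the identity $\Delta|T|^2=2\langle\bar{\Delta}^E T,T\rangle_E-2|\nabla^E T|^2\leq 2\beta|T|^2$ followed by a Moser-iteration-type sup bound whose constants depend only on $n$, $K_1$, $D$ (the paper simply cites \cite[Proposition 9.2.7]{Pe3} for that step). The only difference is that the paper applies the estimate directly to the smooth function $|T|^2$, obtaining $\||T|^2\|_\infty\leq C\||T|^2\|_1=C\|T\|_2^2$, so your detour through Kato's inequality and the regularization $(|T|^2+\varepsilon)^{1/2}$ near $\{T=0\}$ is unnecessary.
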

\begin{proof}
Since we have
\begin{equation*}
\Delta |T|^2=2\langle \bar{\Delta}^E T, T\rangle-2|\nabla^E T|^2\leq 2 \beta |T|^2,
\end{equation*}
we get the lemma by  \cite[Proposition 9.2.7]{Pe3} (see also Proposition 7.1.13 and Proposition 7.1.17 in \cite{Pe3}).
Note that our sign convention of the Laplacian is different from \cite{Pe3}.
\end{proof}

We investigate the convergence of eigensections.
The following proposition plays an important role to prove Theorem \ref{ap7}.
\begin{Prop}\label{ap6}
Let $\{X_i\}_{i\in \mathbb{N}}$ be a sequence in $\mathcal{M}$ and let $X\in\overline{\mathcal{M}}$ be the Gromov-Hausdorff limit.
Take a sequence $\{\lambda_i\}$ in $\mathbb{R}_{\geq 0}$ and $T_i\in \Gamma(E(X_i))$ with $\bar{\Delta}^{E} T_i =\lambda_i T_i$ and $\|T_i\|_2=1$.
Suppose that $T_i$ converges to $T\in L^2(E(X))$ weakly in $L^2$, and there exists $\mu>0$ with $\sup_{i}\lambda \leq \mu$.
Then, we have the following properties.
\begin{itemize}
\item[(i)]  There exist $\lambda \in \mathbb{R}_{\geq 0}$ with $\lambda=\lim_{i\to \infty} \lambda_i$.
\item[(ii)] We have $T\in \mathcal{D}^2(\bar{\Delta}^E,X)\cap L^\infty (E(X))$,
$\|T\|_{\infty}\leq C(n,K,D,\mu)$ and $\bar{\Delta}^E T=\lambda T$.
\item[(iii)] $T_i,\nabla^E T_i$ converges $T,\nabla^E T$ strongly in $L^2$, respectively.
\end{itemize}
\end{Prop}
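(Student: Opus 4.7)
The plan is to combine the standard Rellich-type compactness on Ricci limit spaces with the $L^\infty$ estimate of Lemma \ref{ap5} to extract strong limits, and then to pass to the limit in the weak form of the eigensection equation. First I would apply Lemma \ref{ap5} to each $T_i$ with $\alpha=\lambda_i\leq \mu$ and $\beta=\mu$ to obtain a uniform bound $\|T_i\|_\infty\leq C(n,K_1,D,\mu)$, which in particular gives (ii)'s $L^\infty$ bound on the weak limit $T$ by Fatou-type reasoning. Next, testing $\bar{\Delta}^E T_i=\lambda_i T_i$ against $T_i$ itself yields $\|\nabla^E T_i\|_2^2=\lambda_i\|T_i\|_2^2\leq \mu$, so the sequence is bounded in $H^{1,2}_E(E(X_i))$. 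After passing to a subsequence (which we will later show is unnecessary), I would assume $\lambda_i\to \lambda$ for some $\lambda\in[0,\mu]$, and, using the Rellich-type theorem for limits of non-collapsing Ricci-bounded sequences together with Lemma \ref{ap3}, assume that $T_i\to \widetilde{T}$ strongly in $L^2$ for some $\widetilde{T}\in H^{1,2}_E(E(X))$ and $\nabla^E T_i\to \nabla^E \widetilde{T}$ weakly in $L^2$. Since $T_i\to T$ weakly in $L^2$ by hypothesis, $\widetilde{T}=T$, so $T\in H^{1,2}_E(E(X))$.

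Next I would establish $\bar\Delta^E T=\lambda T$ by passing to the limit in the weak formulation. For any test section $S\in \Test E(X)$, one can construct a sequence $S_i\in \Test E(X_i)$ such that $S_i\to S$ strongly in $L^2$ and $\nabla^E S_i\to \nabla^E S$ strongly in $L^2$; concretely, writing $S=\sum_k \psi_k d\phi_k+f e$ with $\psi_k,\phi_k,f\in \Test F(X)$, one approximates each test function on $X$ by test functions on $X_i$ in the $H^{1,2}$-strong sense following the smooth/test approximation results used throughout \cite{Ho1,Ho2,Ho3}, then reassembles the sections via Lemma \ref{ap3}. Inserting these into
\[
\frac{1}{H^n(X_i)}\int_{X_i}\langle \nabla^E T_i,\nabla^E S_i\rangle\, dH^n=\frac{\lambda_i}{H^n(X_i)}\int_{X_i}\langle T_i,S_i\rangle\, dH^n,
\]
the right-hand side converges to $\lambda\, H^n(X)^{-1}\int_X\langle T,S\rangle\, dH^n$ (using volume convergence and strong $L^2$ convergence of both factors), while the left-hand side converges to $H^n(X)^{-1}\int_X\langle \nabla^E T,\nabla^E S\rangle\, dH^n$ (weak$\times$strong $L^2$ pairing). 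Density of $\Test E(X)$ in $H^{1,2}_E(E(X))$ then gives $T\in \mathcal{D}^2(\bar\Delta^E,X)$ with $\bar\Delta^E T=\lambda T$, completing (ii).

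Finally, to get (iii) and the full convergence statement in (i), I would observe that weak $L^2$ lower semicontinuity together with the identity $\|\nabla^E T_i\|_2^2=\lambda_i\|T_i\|_2^2$ gives
\[
\|\nabla^E T\|_2^2\leq \liminf_{i\to\infty}\|\nabla^E T_i\|_2^2=\liminf_{i\to\infty}\lambda_i\|T_i\|_2^2=\lambda\|T\|_2^2=\|\nabla^E T\|_2^2,
\]
where the last equality uses $\bar\Delta^E T=\lambda T$ together with $\|T\|_2=1$ (which in turn follows once strong $L^2$ convergence of $T_i$ is known). This forces all inequalities to be equalities, hence $\|\nabla^E T_i\|_2\to \|\nabla^E T\|_2$, which upgrades weak to strong $L^2$ convergence of $\nabla^E T_i$. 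The same reasoning applied to any subsequential limit shows that the eigenvalue limit $\lambda$ is independent of the chosen subsequence (it is determined by $\|\nabla^E T\|_2^2/\|T\|_2^2$), giving $\lim_{i\to\infty}\lambda_i=\lambda$ along the whole sequence and thus (i). The main obstacle is the construction of the approximating test sections $S_i\in \Test E(X_i)$ with $\nabla^E S_i\to \nabla^E S$ strongly in $L^2$ on $X$; this requires invoking the smooth/test approximation and Hessian convergence machinery of Honda's papers, and verifying it is compatible with the direct-sum decomposition $E=T^\ast X\oplus\mathbb{R}e$ via Lemma \ref{ap1} and Lemma \ref{ap3}. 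Everything else is a standard weak/strong-convergence bookkeeping argument in the spirit of \cite{Ho2}.
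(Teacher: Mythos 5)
Your overall strategy is the same as the paper's: uniform $L^\infty$ bound from Lemma \ref{ap5}, the energy identity $\|\nabla^E T_i\|_2^2=\lambda_i$, Rellich-type compactness to extract a strong $L^2$ limit, passage to the limit in the weak formulation against approximated test sections, and then the norm-convergence/uniqueness argument to upgrade to strong convergence of $\nabla^E T_i$ and to remove the subsequence in (i). The bookkeeping at the end (lower semicontinuity forcing $\lim\|\nabla^E T_i\|_2=\|\nabla^E T\|_2$, and $\lambda=\|\nabla^E T\|_2^2/\|T\|_2^2$ with $\|T\|_2=1$ pinning down all subsequential limits) is correct and matches the paper.

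However, there is a genuine gap at the step where you ``assume that $T_i\to\widetilde T$ strongly in $L^2$ for some $\widetilde T\in H^{1,2}_E(E(X))$ and $\nabla^E T_i\to\nabla^E\widetilde T$ weakly in $L^2$'' as a consequence of the Rellich theorem and Lemma \ref{ap3}. The Rellich-type compactness ([Ho1, Theorem 4.9] together with [Ho3, Theorem 6.11]) only yields a limit $1$-form $\omega$ that is differentiable almost everywhere with $\nabla\omega\in L^2$ and weak $L^2$ convergence of $\nabla\omega_i$; it does \emph{not} by itself place $\omega$ in $H^{1,2}_C(T^\ast X)$, which by Definition \ref{defapb} is the completion of $\Test T^\ast X$, and membership there is exactly what is needed (via Lemma \ref{ap1}) to have $T\in H^{1,2}_E(E(X))$ and hence to make sense of $T\in\mathcal{D}^2(\bar\Delta^E,X)$ and of the pairing $\int\langle\nabla^E T,\nabla^E S\rangle$ on the left-hand side of your limit identity. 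The paper closes this gap with a dedicated argument: using the uniform $L^\infty$ bound on $\omega$ it shows $\langle\omega,dh\rangle\in H^{1,2}(X)$ for suitable test functions $h$ (as in [Ho2, Proposition 4.8(ii)]) and then invokes [Ho2, Proposition 4.5] to conclude $\omega\in H^{1,2}_C(T^\ast X)$. Relatedly, you identify the approximation of test sections $S$ on $X$ by sections $S_i$ on $X_i$ as the main obstacle, but that is supplied directly by [Ho2, Proposition 3.5] (the paper approximates $\eta_i,\nabla\eta_i,\delta\eta_i,h_i,dh_i$ strongly and passes to the limit term by term in the explicit expansion (\ref{apeqb1})); the real crux is the $H^{1,2}_C$-membership of the limit, which your proposal leaves unaddressed.
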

\begin{proof}
Take $\omega_i\in \Gamma(T^\ast X_i)$ and $f_i\in C^\infty(X_i)$ with $T_i=\omega_i+f_i e$.
Since $\|T_i\|_{\infty}\leq C(n,K,D,\beta)$, we have 
$\|\omega_i\|_{\infty} \leq C(n,K,D,\beta)$.
By \cite[Theorem 4.9]{Ho1} and \cite[Theorem 6.11]{Ho3} (see also \cite[Proposition 4.2]{Ho2}), there exist a subsequence $i(j)$, $\omega\in L^2(T^\ast X)$ and $f\in H^{1,2}(X)$ such that $\omega$ is differentiable at almost all point in X, $\nabla \omega\in L^2(T^\ast X\otimes T^\ast X)$, $\omega_i$, $f_i$ converges to $\omega$, $f$ strongly in $L^2$ and $\nabla \omega_i$, $d f_i$ converges to $\nabla\omega$, $d f$ weakly in $L^2$, respectively.
By the lower semi-continuity of $L^\infty$ norm \cite[Proposition 3.64]{Ho1}, we have $\|f\|_{\infty}\leq C(n,K,D,\mu)$, $\|\omega\|_{\infty}\leq C(n,K,D,\mu)$ and $\|T\|_{\infty}\leq C(n,K,D,\mu)$.
As with the proof of \cite[Proposition 4.8 (ii)]{Ho2}, we have $\langle\omega,d h\rangle\in H^{1,2}(X)$ for all $h\in \mathcal{D}^2(\Delta,X)$ with $\Delta h\in L^{\infty}(X)$.
By \cite[Proposition 4.5]{Ho2}, we get $\omega\in H^{1,2}_C(T^\ast X)$.

Take arbitrary $S=\eta+he\in \Test E(X)$.
Then, there exist sequences $\eta_i\in \Gamma(T^\ast X_i)$ and $h_i\in C^\infty(X_i)$ such that $\eta_i$, $\nabla \eta_i$, $\delta \eta_i$, $h_i$, $d h_i$ converges to $\eta$, $\nabla \eta$, $\delta \eta$, $h$, $d h$ strongly in $L^2$, respectively by \cite[Proposition 3.5]{Ho2}.
Set $S_i:= \eta_i+ h_i e\in \Gamma(E(X_i))$.
Take a subsequence $i(j)$ such that the limit $\lambda=\lim_{j\to\infty} \lambda_{i(j)}$ exists.
Then, we get 
\begin{equation*}
\begin{split}
\lim_{j\to \infty}\int_{X_{i(j)}}\langle \nabla^E T_{i(j)},\nabla^E S_{i(j)}\rangle \,d H^n=&
\lim_{j\to \infty}\lambda_{i(j)}\int_{X_{i(j)}}\langle T_{i(j)}, S_{i(j)}\rangle \,d H^n\\
=&\lambda\int_{X}\langle T, S \rangle \,d H^n.
\end{split}
\end{equation*}
We have
\begin{equation*}
\begin{split}
&\lim_{j\to \infty}\int_{X_{i(j)}}\langle \nabla^E T_{i(j)},\nabla^E S_{i(j)}\rangle \,d H^n\\
=&\lim_{j\to \infty}\int_{X_{i(j)}}\Big( \langle \nabla \omega_{i(j)},\nabla \eta_{i(j)}\rangle
-f_{i(j)}\delta \eta_{i(j)} - h_{i(j)}\delta \omega_{i(j)}+n f_{i(j)} h_{i(j)}\\
&\qquad\qquad\qquad\qquad\qquad\qquad\qquad+\langle d f_{i(j)}-\omega_{i(j)},d h_{i(j)}-\eta_{i(j)}\rangle \Big) \,d H^n\\
=&\int_{X}\Big( \langle \nabla \omega,\nabla \eta\rangle
-f\delta \eta - h\delta \omega+n f h +\langle d f-\omega,d h-\eta \rangle \Big) \,d H^n\\
=&\int_{X}\langle \nabla^E T,\nabla^E S\rangle \,d H^n.
\end{split}
\end{equation*}
Thus, we get
\begin{equation*}
\int_{X}\langle \nabla^E T,\nabla^E S\rangle \,d H^n=\lambda\int_{X}\langle T, S \rangle \,d H^n.
\end{equation*}
This shows that $T\in \mathcal{D}^2(\bar{\Delta}^E,X)\cap L^\infty (E(X))$ and $\bar{\Delta}^E T=\lambda T$.
Since $\lambda$ is uniquely determined, we have $\lambda=\lim_{i\to \infty} \lambda_i$.
Thus, we get (i) and (ii)

Since we have
\begin{equation*}
\lim_{i\to \infty}\|\nabla^E T_i\|_2^2=\lim_{i\to \infty}\lambda_i=\lambda=\|\nabla^E T\|_2^2,
\end{equation*}
$\nabla^E T_i$ converges to $\nabla^E T$ strongly in $L^2$.
\end{proof}

Let us show the continuity of the eigenvalues of the Laplacian $\bar{\Delta}^E$ under our setting.
The following theorem is the main goal of this appendix.
\begin{Thm}\label{ap7}
Let $\{X_i\}_{i\in \mathbb{N}}$ be a sequence in $\overline{\mathcal{M}}$ and let $X\in\overline{\mathcal{M}}$ be the Gromov-Hausdorff limit.
Then, we have 
\begin{equation*}
\lim_{i\to \infty}\lambda_k(\bar{\Delta}^E,X_i)=\lambda_k(\bar{\Delta}^E,X)
\end{equation*}
for all $k\in\mathbb{Z}_{>0}$.
\end{Thm}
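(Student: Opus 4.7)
The plan is to establish both upper semi-continuity and lower semi-continuity of $\lambda_k(\bar{\Delta}^E, \cdot)$ separately, following the standard variational approach. By a diagonal argument using density of $\mathcal{M}$ in $\overline{\mathcal{M}}$ (one approximates each $X_i \in \overline{\mathcal{M}}$ by a sequence in $\mathcal{M}$), it suffices to prove the claim under the additional assumption $X_i \in \mathcal{M}$ for every $i$.

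For the upper bound $\limsup_{i\to\infty}\lambda_k(\bar{\Delta}^E, X_i) \leq \lambda_k(\bar{\Delta}^E, X)$, fix $\epsilon>0$ and choose orthonormal eigensections $T^{(1)},\ldots, T^{(k)}\in \mathcal{D}^2(\bar{\Delta}^E,X)$ supplied by Theorem \ref{ap4}. Using the density of $\Test E(X)$ in $H^{1,2}_E(E(X))$, replace each $T^{(j)}$ by a test section that is $\epsilon$-close in $H^{1,2}_E$. Writing such a test section as $\omega^{(j)} + f^{(j)}e$, I would invoke the smooth approximation theorem \cite[Proposition 3.5]{Ho2} (used already in the proof of Proposition \ref{ap6}) to construct sequences $\omega_i^{(j)} \in \Gamma(T^*X_i)$ and $f_i^{(j)}\in C^\infty(X_i)$ whose values, together with $\nabla \omega_i^{(j)}$, $\delta\omega_i^{(j)}$, $df_i^{(j)}$, converge strongly in $L^2$ to their counterparts on $X$. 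Setting $T_i^{(j)} := \omega_i^{(j)} + f_i^{(j)} e$, Lemma \ref{ap3} yields strong $L^2$-convergence of both $T_i^{(j)}$ and $\nabla^E T_i^{(j)}$. Applying Gram-Schmidt to $\{T_i^{(j)}\}_{j=1}^k$ (which is permissible for large $i$ since the Gram matrices converge to the identity), one obtains a genuine $k$-dimensional subspace of $\Gamma(E(X_i))$ on which the Rayleigh quotient is bounded by $\lambda_k(\bar{\Delta}^E, X) + C\epsilon$. The Rayleigh characterization gives $\lambda_k(\bar{\Delta}^E, X_i) \leq \lambda_k(\bar{\Delta}^E, X) + C\epsilon + o(1)$, and letting $\epsilon \to 0$ closes the argument.

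For the lower bound $\liminf_{i\to\infty}\lambda_k(\bar{\Delta}^E, X_i) \geq \lambda_k(\bar{\Delta}^E, X)$, the upper bound just proved provides a uniform bound $\lambda_k(\bar{\Delta}^E, X_i) \leq \mu$ for some $\mu>0$. For each $i$ pick an $L^2$-orthonormal system $T_i^{(1)},\ldots,T_i^{(k)}$ of eigensections realizing $\lambda_1(\bar{\Delta}^E,X_i),\ldots,\lambda_k(\bar{\Delta}^E,X_i)$; Lemma \ref{ap5} gives a uniform $L^\infty$-bound. By the weak $L^2$-compactness afforded by \cite[Theorem 4.9]{Ho1} together with Proposition \ref{ap6}, one extracts (via a diagonal argument over $j=1,\ldots,k$) a subsequence along which each $T_i^{(j)}$ converges strongly in $L^2$ to some $T^{(j)}\in \mathcal{D}^2(\bar{\Delta}^E,X)$ with $\bar{\Delta}^E T^{(j)} = \lambda^{(j)} T^{(j)}$, where $\lambda^{(j)} = \lim_{i} \lambda_j(\bar{\Delta}^E, X_i)$ exists. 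Strong $L^2$-convergence passes through the bilinear pairing (using the volume convergence theorem and the definition of strong convergence), so $\{T^{(j)}\}_{j=1}^k$ is an orthonormal system in $L^2(E(X))$; in particular it spans a $k$-dimensional subspace of $H^{1,2}_E(E(X))$. The Rayleigh principle then gives $\lambda_k(\bar{\Delta}^E, X) \leq \max_j \lambda^{(j)} = \lambda^{(k)}$, as desired. Combining with the upper bound, $\lim_{i\to\infty}\lambda_k(\bar{\Delta}^E, X_i) = \lambda_k(\bar{\Delta}^E, X)$.

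The main obstacle is the upper-bound step: the abstract test space $\Test E(X)$ is defined intrinsically on the limit $X$, and one must transport these sections to the smooth approximants $X_i$ while controlling both the $L^2$-norm and the covariant derivative $\nabla^E$. This is reduced via Lemma \ref{ap3} to the already-developed $L^2$-convergence machinery for $\nabla\omega$ and $df$ from \cite{Ho1,Ho2,Ho3}, but packaging the approximation so that Gram-Schmidt produces admissible test families on $X_i$ requires careful bookkeeping. The lower bound, by contrast, is a fairly direct application of Proposition \ref{ap6} together with strong $L^2$-preservation of orthonormality.
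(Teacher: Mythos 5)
Your proof is correct and follows essentially the same variational approach as the paper's: the lower bound via weak $L^2$-compactness, Lemma \ref{ap5}, and Proposition \ref{ap6}, and the upper bound via transporting eigensections of $X$ to $X_i$ using the smooth approximation theorem and the Rayleigh characterization. The only differences are cosmetic — you prove the upper bound first (which cleanly avoids the paper's explicit $\liminf=\infty$ case distinction), insert an extra $\epsilon$-approximation by test sections before applying smooth approximation (the paper applies \cite[Theorem 3.5]{Ho2} directly to the $\omega_j\in H^{1,2}_C(T^\ast X)$ and $f_j\in H^{1,2}(X)$ components of the eigensections), and invoke Gram–Schmidt where the paper simply notes that $\dim\mathcal{E}_{i,k}=k$ for large $i$.
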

\begin{proof}
We consider the case when $X_i\in \mathcal{M}$ for all $i$.
The general case is its easy consequence.

Take arbitrary $k\in \mathbb{Z}_{>0}$.

We first show the lower semi-continuity
\begin{equation*}
\liminf_{i\to \infty} \lambda_k(\bar{\Delta}^E,X_i)\geq\lambda_k(\bar{\Delta}^E,X).
\end{equation*}
If $\liminf_{i\to \infty} \lambda_k(\bar{\Delta}^E,X_i)=\infty$, this is trivial.
We assume that $\liminf_{i\to \infty} \lambda_k(\bar{\Delta}^E,X_i)<\infty$.
For each $i$, let $\{T_{i,j}\}_{j\in\mathbb{Z}_{>0}}$ be the complete orthonormal system of eigensections in $L^2(E(X_i))$:
\begin{equation*}
\bar{\Delta}^E T_{i,j}=\lambda_j(\bar{\Delta}^E,X_i)T_{i,j}.
\end{equation*}
We can take a subsequence $i(l)$ such that $\liminf_{i\to \infty} \lambda_k(\bar{\Delta}^E,X_i)=\lim_{l\to \infty } \lambda_k(\bar{\Delta}^E,X_{i(l)})$ and $T_{i(l),j}$ converges to some $T_j\in L^2(E(X))$ weakly in $L^2$ as $l\to \infty$ for all $j=1,\ldots,k$ by \cite[Proposition 3.50]{Ho1}.
By Proposition \ref{ap6}, there exist $\lambda_1,\ldots,\lambda_k$ with $\lambda_j=\lim_{l\to \infty}\lambda_j(\bar{\Delta}^E,X_{i(l)})$ for all $j=1,\ldots,k$, and we have $T_j\in \mathcal{D}^2(\bar{\Delta}^E,X)$ and $\bar{\Delta}^E T_{j}=\lambda_j T_{j}$.
Define $\mathcal{E}_{k}:=\Span_{\mathbb{R}}\{T_1,\ldots,T_k\}$.
Then, we get
\begin{equation*}
\lambda_k(\bar{\Delta}^E,X)\leq \sup_{T\in\mathcal{E}_{k}\setminus\{0\}}\frac{\|\nabla^E T\|_2^2}{\|T\|_2^2}=\liminf_{i\to \infty} \lambda_k(\bar{\Delta}^E,X_i).
\end{equation*}

We next show the upper semi-continuity
\begin{equation*}
\limsup_{i\to\infty} \lambda_k(\bar{\Delta}^E,X_i)\leq\lambda_k(\bar{\Delta}^E,X).
\end{equation*}
Let  $\{T_{j}\}_{j\in\mathbb{Z}_{>0}}$ be the complete orthonormal system of eigensections in $L^2(E(X))$:
\begin{equation*}
\bar{\Delta}^E T_{j}=\lambda_j(\bar{\Delta}^E,X)T_{j}.
\end{equation*}
For each $j$, take $\omega_j\in H^{1,2}_C(T^\ast X)$ and $f_j\in H^{1,2}(X)$ such that $T_j=\omega_j+f_j e$.
By \cite[Theorem 3.5]{Ho2}, there exist $\omega_{i,j}\in \Gamma(T^\ast X_i)$ and $f_{i,j}\in C^\infty(X_i)$ such that
$\omega_{i,j}$, $\nabla \omega_{i,j}$, $f_{i,j}$, $d f_{i,j}$ converges to $\omega_j$, $\nabla \omega_{j}$, $f_{j}$, $d f_{j}$ strongly in $L^2$, respectively.
Define $T_{i,j}:=\omega_{i,j}+f_{i,j}e\in\Gamma(E(X_i))$ and $\mathcal{E}_{i,k}:=\Span_{\mathbb{R}}\{T_{i,1},\ldots,T_{i,k}\}\subset \Gamma(E(X_i))$.
Then, $\dim\mathcal{E}_{i,k}=k$ for sufficient large $i$, and
\begin{equation*}
\lambda_k(\bar{\Delta}^E,X)=\lim_{i\to\infty}\sup_{T\in \mathcal{E}_{i,k}\setminus\{0\}}\frac{\|\nabla^E T\|_2^2}{\|T\|_2^2}.
\end{equation*}  
Thus, we get
\begin{equation*}
\limsup_{i\to\infty}\lambda_k(\bar{\Delta}^E,X_i)\leq \limsup_{i\to\infty}\sup_{T\in \mathcal{E}_{i,k}\setminus\{0\}}\frac{\|\nabla^E T\|_2^2}{\|T\|_2^2}=\lambda_k(\bar{\Delta}^E,X).
\end{equation*}

\end{proof}

Let us investigate the relationship between $\lambda_k(\bar{\Delta}^E)$ and the Obata equation $\nabla^2 f + f g=0$.
\begin{Thm}\label{ap8}
Let $X\in\overline{\mathcal{M}}$.
For all $k\in\mathbb{Z}_{>0}$ the following conditions are mutually equivalent.
\begin{itemize}
\item[(i)] $\lambda_k(\bar{\Delta}^E,X)=0$.
\item[(ii)] There exists a $k$-dimensional subspace $V\subset \mathcal{D}^2(\Delta,X)$ such that $\nabla^2 f+f g=0$ holds for all $f\in V$.
\end{itemize}
\end{Thm}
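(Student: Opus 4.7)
The plan is to translate the pointwise identity
\begin{equation*}
\nabla^E(df + fe) = \nabla^2 f + fg + (df - df)\otimes e = \nabla^2 f + fg
\end{equation*}
into both directions of the equivalence, using Lemma \ref{ap2} to keep track of the domains $\mathcal{D}^2(\Delta_{C,1},X)$, $\mathcal{D}^2(\Delta,X)$, and $\mathcal{D}^2(\bar\Delta^E,X)$.

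For (ii) $\Rightarrow$ (i), I would pick a basis $f_1,\ldots,f_k$ of $V$ and set $S_j := df_j + f_j e$. The hypothesis $\nabla^2 f_j + f_j g = 0$ together with the displayed identity gives $\nabla^E S_j = 0$ in $L^2$, placing each $S_j$ in the zero eigenspace of $\bar\Delta^E$. Linear independence of the $S_j$'s follows immediately from that of the $f_j$'s, since the $e$-component of $\sum_j c_j S_j$ equals $\sum_j c_j f_j$. Thus $\ker \bar\Delta^E$ has dimension at least $k$, and the Rayleigh characterization in Theorem \ref{ap4} yields $\lambda_k(\bar\Delta^E, X) = 0$.

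For (i) $\Rightarrow$ (ii), I would start with orthonormal eigensections $T_1,\ldots,T_k\in\mathcal{D}^2(\bar\Delta^E,X)$ of eigenvalue $0$ produced by Theorem \ref{ap4}. Pairing $\bar\Delta^E T_j=0$ against $T_j$ gives $\|\nabla^E T_j\|_2=0$, so $\nabla^E T_j=0$ in $L^2$. Writing $T_j = \omega_j + f_j e$ with $\omega_j\in\mathcal{D}^2(\Delta_{C,1},X)$ and $f_j\in\mathcal{D}^2(\Delta,X)$ by Lemma \ref{ap2}, and projecting $\nabla^E T_j = \nabla\omega_j + f_j g + (df_j - \omega_j)\otimes e = 0$ onto its $T^*X\otimes T^*X$- and $T^*X\otimes\mathbb{R}e$-components, I will read off $\omega_j = df_j$ and $\nabla\omega_j = -f_j g$, that is, $\nabla^2 f_j + f_j g = 0$. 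Setting $V := \Span\{f_1,\ldots,f_k\}$, linear independence is automatic, since $\sum_j c_j f_j = 0$ would force $\sum_j c_j T_j = d(\sum_j c_j f_j) + (\sum_j c_j f_j)e = 0$, contradicting orthonormality of the $T_j$'s unless every $c_j = 0$.

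The main technical point will lie in the (ii) $\Rightarrow$ (i) direction: one has to verify that $S_j = df_j + f_j e$ actually belongs to $\mathcal{D}^2(\bar\Delta^E, X)$ rather than merely satisfying $\nabla^E S_j = 0$ in $L^2$, when the Hessian condition is interpreted on the limit space. By Lemma \ref{ap2} this amounts to showing $df_j \in \mathcal{D}^2(\Delta_{C,1}, X)$, which is natural because taking the trace of $\nabla^2 f_j + f_j g = 0$ produces $\Delta f_j = n f_j$, making $f_j$ a Laplacian eigenfunction that enjoys enough regularity (e.g., Lipschitz continuity) to be approximated in $H^{1,2}_C$ by test forms.
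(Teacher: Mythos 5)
Your proposal is correct and follows essentially the same route as the paper: for (ii)$\Rightarrow$(i) the sections $df+fe$ with $\nabla^E(df+fe)=\nabla^2 f+fg=0$ span a $k$-dimensional subspace on which the Rayleigh quotient vanishes, and for (i)$\Rightarrow$(ii) one reads the components of $\nabla^E T=0$ for kernel eigensections $T=\omega+fe$ (giving $\omega=df$, $\nabla\omega+fg=0$) and uses injectivity of $T\mapsto f$. The only divergence is the technical point you flag: the paper settles it by directly citing Honda's result that $df\in H^{1,2}_C(T^\ast X)$ for \emph{every} $f\in\mathcal{D}^2(\Delta,X)$, so $df+fe\in H^{1,2}_E(E(X))$ by Lemma \ref{ap1} and $\nabla^E T=0$ then automatically places $T$ in $\mathcal{D}^2(\bar{\Delta}^E,X)$ with $\bar{\Delta}^E T=0$; no eigenfunction detour is needed, and note that Lipschitz regularity of $f$ alone would not yield the required second-order ($H^{1,2}_C$) approximability of $df$ by test forms.
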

\begin{proof}
We first prove that (ii) implies (i).
For all $f\in \mathcal{D}^2(\Delta,X)$, we have $d f\in H^{1,2}_C(T^\ast X)$ (see \cite[Theorem 1.9]{Ho3} and the definition of $H^{1,2}_C(T^\ast X)$), and so $d f + f e\in H^{1,2}_E(E(X))$ by Lemma \ref{ap1}.
Define $\widetilde{V}:=\{d f + f e: f\in V\}\subset H^{1,2}_E(E(X))$.
Since $\nabla^E T=0$ for all $T\in\widetilde{V}$,
we have $\widetilde{V}\subset \{T\in \mathcal{D}^2(\bar{\Delta}^E,X):\bar{\Delta}^E T=0\}$.
Thus, we get $\lambda_k(\bar{\Delta}^E,X)=0$ by Theorem \ref{ap4}.

We next prove that (i) implies (ii).
For all $T=\omega+f e\in \mathcal{D}^2(\bar{\Delta}^E,X)$ with $\bar{\Delta}^E T=0$, we have $f\in\mathcal{D}^2(\Delta,X)$ (see Lemma \ref{ap2}), $\nabla \omega +f g=0$ and $d f-\omega=0$, and so $\nabla^2 f+ fg =0$.
Since the map $\{T\in \mathcal{D}^2(\bar{\Delta}^E,X):\bar{\Delta}^E T=0\}\to \mathcal{D}^2(\Delta,X),\, T=\omega+fe \mapsto f$ is injective, the image $V\subset \mathcal{D}^2(\Delta,X)$ satisfies $\dim V\geq k$.
Thus, we get (ii).
\end{proof}

\begin{Cor}\label{ap9}
Let $X\in\overline{\mathcal{M}}$.
If there exists a non-zero element $f\in \mathcal{D}^2(\Delta,X)$ with $\nabla^2 f+f g=0$,
then there exists a compact geodesic space $Z$ such that $X$ is isometric to the spherical suspension $S^0\ast Z$.
\end{Cor}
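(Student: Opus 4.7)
The plan is to reduce the statement on the limit space $X\in\overline{\mathcal{M}}$ to the smooth case, where Corollary \ref{CHCO2} from Appendix A applies directly. First, from the non-zero $f\in\mathcal{D}^2(\Delta,X)$ with $\nabla^2 f+fg=0$, I would invoke Theorem \ref{ap8} (the case $k=1$) to deduce that $\lambda_1(\bar{\Delta}^E,X)=0$. Since $X\in\overline{\mathcal{M}}$, there exists a sequence $X_i=(M_i,g_i)\in\mathcal{M}$ converging to $X$ in the Gromov--Hausdorff sense; by the continuity established in Theorem \ref{ap7}, we get $\lambda_1(\bar{\Delta}^E,X_i)\to 0$.

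Next, I would pass from the smallness of $\lambda_1(\bar{\Delta}^E,X_i)$ to the pinching condition on a single smooth function. By Proposition \ref{p41c} applied with $k=1$, for every sufficiently large $i$ there exists a non-zero $f_i\in C^\infty(M_i)$ satisfying $\|\nabla^2 f_i+f_ig_i\|_2\leq \delta_i\|f_i\|_2$ with $\delta_i\to 0$ (one may take $\delta_i=C(n,K_1,D)\lambda_1(\bar{\Delta}^E,X_i)^{1/2}$). Since the elements of $\mathcal{M}$ satisfy $\Ric_{g_i}\geq K_1 g_i$ and $\diam(M_i)\leq D$ by definition, the hypotheses of Corollary \ref{CHCO2} are met, with $K=\max\{-K_1,0\}+1$ say.

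Finally, applying Corollary \ref{CHCO2} to the sequence $\{(M_i,g_i,f_i)\}$ yields directly that the Gromov--Hausdorff limit $X$ is isometric to $S^0\ast Z$ for some compact geodesic space $Z$, which is the desired conclusion.

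The argument is essentially a dictionary translation, so no single step should pose a genuine obstacle; the only point requiring some care is checking that Proposition \ref{p41c} applies uniformly along the sequence $X_i$, i.e.\ that for $i$ large enough $\lambda_1(\bar{\Delta}^E,X_i)\leq\epsilon_1(n,K,D)$, which is guaranteed by Theorem \ref{ap7} together with $\lambda_1(\bar{\Delta}^E,X)=0$. Note also that the existence of a non-trivial Obata function $f$ on $X$ forces $X$ to be non-trivial and connected, so the resulting spherical suspension decomposition is genuine.
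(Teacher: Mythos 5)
Your proposal is correct and takes essentially the same route as the paper: Theorem \ref{ap8} gives $\lambda_1(\bar{\Delta}^E,X)=0$, Theorem \ref{ap7} gives $\lambda_1(\bar{\Delta}^E,X_i)\to 0$ along an approximating sequence in $\mathcal{M}$, and then the Appendix A structure result is invoked. The only (harmless) difference is that the paper cites Theorem \ref{apa1} at the last step, whereas you go explicitly through Proposition \ref{p41c} and Corollary \ref{CHCO2} --- which is precisely how that theorem is proved, and in fact gives the isometric conclusion directly rather than mere Gromov--Hausdorff closeness.
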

\begin{proof}
By Theorem \ref{ap8}, we have $\lambda_1(\bar{\Delta}^E,X)=0$.
Take a sequence $\{X_i\}$ in $\mathcal{M}$ with $X_i$ converges to $X$ in the Gromov-Hausdorff topology.
Then, we have $\lim_{i\to\infty}\lambda_1(\bar{\Delta}^E,X_i)=0$.
Thus, we get the corollary by Theorem \ref{apa1}.
\end{proof}

We get the following three corollaries immediately by Theorem \ref{apa1}, Theorem \ref{ap6} and Main Theorem 1.
\begin{Cor}\label{ap11}
Given a positive real number $\epsilon>0$ and a integer $1\leq k\leq n-1$, there exists $\delta(n,K_1,D,v,\epsilon)>0$ such that if $X\in\overline{\mathcal{M}}$ satisfies $\lambda_k(\bar{\Delta}^E,X)\leq\delta$, then $d_{GH}(X,S^{k-1}\ast Z)\leq \epsilon$ for some compact geodesic space $Z$.
\end{Cor}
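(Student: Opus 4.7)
The plan is to argue by contradiction, reducing the claim on the limit class $\overline{\mathcal{M}}$ to an application of Theorem \ref{apa1} on smooth members of $\mathcal{M}$, with the two non-suspension alternatives of that theorem ruled out by a dimension-counting argument that uses the constraint $k \le n-1$. Suppose the corollary fails, so there exist $1 \le k \le n-1$, $\epsilon_0 > 0$, and a sequence $X_i \in \overline{\mathcal{M}}$ with $\lambda_k(\bar{\Delta}^E, X_i) \to 0$ yet $d_{GH}(X_i, S^{k-1} \ast Z) > \epsilon_0$ for every compact geodesic space $Z$ and every $i$. Since each $X_i$ lies in the GH closure of $\mathcal{M}$, a diagonal selection using Theorem \ref{ap7} produces $M_i \in \mathcal{M}$ with $d_{GH}(X_i, M_i) < 1/i$ and $\lambda_k(\bar{\Delta}^E, M_i) \to 0$; indeed, for each fixed $X_i$ one may choose a defining sequence $M_{i,j} \to X_i$ for which $\lambda_k(\bar{\Delta}^E, M_{i,j}) \to \lambda_k(\bar{\Delta}^E, X_i)$ by Theorem \ref{ap7}, and then pass to the diagonal.

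I then apply Theorem \ref{apa1} to $M_i$ with a parameter $\epsilon>0$ to be fixed below. For all large $i$, one of the three alternatives of Theorem \ref{apa1} holds: $M_i$ is $\epsilon$-close in Gromov-Hausdorff distance to (i) $S^{k-1}$, (ii) $S^k$, or (iii) $S^{k-1} \ast Z_i$ for some compact geodesic space $Z_i$. The key observation is that cases (i) and (ii) cannot occur once $\epsilon$ is sufficiently small, because the restriction $k \le n-1$ makes both model spaces of Hausdorff dimension strictly less than $n$, whereas $M_i$ is a genuine $n$-manifold with volume bounded below by $v$. Quantitatively, covering $S^{k-1}$ (respectively $S^k$) by $N \lesssim \epsilon^{-(k-1)}$ (resp.~$\epsilon^{-k}$) balls of radius $\epsilon$ and pulling back through a Hausdorff approximation, the Bishop-Gromov upper bound $\Vol(B_{2\epsilon}(x)) \le C(n,K_1)\,\epsilon^n$ inside $M_i$ yields $v \le \Vol(M_i) \le C\,\epsilon^{\,n-k+1}$ (resp.~$C\,\epsilon^{\,n-k}$), which fails as soon as $\epsilon \le \epsilon^{\ast}(n,K_1,D,v)$.

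Choosing $\epsilon := \min(\epsilon_0/3,\,\epsilon^{\ast})$ forces case (iii) for all large $i$, so there is a compact geodesic space $Z_i$ with $d_{GH}(M_i, S^{k-1} \ast Z_i) \le \epsilon_0/3$. The triangle inequality then gives
\[
d_{GH}(X_i, S^{k-1} \ast Z_i) \le d_{GH}(X_i, M_i) + d_{GH}(M_i, S^{k-1} \ast Z_i) < \frac{1}{i} + \frac{\epsilon_0}{3} < \epsilon_0
\]
for all sufficiently large $i$, contradicting the assumption on the sequence $(X_i)$. The main substantive step will be the diagonal extraction producing $(M_i)$ with vanishing pinching, which genuinely relies on Theorem \ref{ap7} and is the only place where convergence of eigenvalues rather than just semicontinuity is used; the dismissal of the two degenerate alternatives of Theorem \ref{apa1} is a standard volume-comparison argument in the non-collapsed regime, and the rest is a formal triangle inequality.
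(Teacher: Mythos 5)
Your argument is correct and is essentially the proof the paper intends: the corollary is stated as an immediate consequence of the multi-suspension Theorem \ref{apa1} together with the eigenvalue continuity of Theorem \ref{ap7}, and your contradiction/diagonal extraction plus the Bishop--Gromov covering bound (which is exactly where the dependence on $v$ enters, ruling out the alternatives $S^{k-1}$ and $S^k$ since $k\le n-1$ forces their dimension below $n$) just makes that "immediately" explicit. I see no gaps.
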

\begin{Cor}
Let $\{X_i\}_{i\in \mathbb{N}}$ be a sequence in $\overline{\mathcal{M}}$.
Then the following three conditions are mutually equivalent.
\begin{itemize}
\item[(i)] $\lambda_n(\bar{\Delta}^E,X_i)\to 0$ as $i\to\infty$.
\item[(ii)] $\lambda_{n+1}(\bar{\Delta}^E,X_i)\to 0$ as $i\to\infty$.
\item[(iii)] $d_{GH}(X_i,S^n)\to 0$ as $i\to \infty$.
\end{itemize}
\end{Cor}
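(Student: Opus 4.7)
The plan is to prove the cycle (ii) $\Rightarrow$ (i) $\Rightarrow$ (iii) $\Rightarrow$ (ii), leveraging the continuity theorem for the eigenvalues of $\bar{\Delta}^E$ (Theorem \ref{ap7}) to transfer results between smooth manifolds in $\mathcal{M}$ and their Gromov--Hausdorff limits in $\overline{\mathcal{M}}$. The implication (ii) $\Rightarrow$ (i) is immediate from the monotonicity $\lambda_n(\bar{\Delta}^E) \leq \lambda_{n+1}(\bar{\Delta}^E)$ in Theorem \ref{ap4}(i).

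For (i) $\Rightarrow$ (iii), I would argue by approximation. By the definition of $\overline{\mathcal{M}}$ as the closure of $\mathcal{M}$ and by Theorem \ref{ap7}, for each $i$ I can choose a smooth manifold $M_i \in \mathcal{M}$ satisfying both $d_{GH}(M_i, X_i) \leq 1/i$ and $|\lambda_n(\bar{\Delta}^E, M_i) - \lambda_n(\bar{\Delta}^E, X_i)| \leq 1/i$. The hypothesis (i) then forces $\lambda_n(\bar{\Delta}^E, M_i) \to 0$. Since each $M_i$ satisfies $\Ric_{M_i} \geq K_1 g_{M_i}$ and $\diam(M_i) \leq D$, Main Theorem 1 (in the quantitative form given by Theorem \ref{p43b}) yields $d_{GH}(M_i, S^n) \to 0$, and the triangle inequality gives $d_{GH}(X_i, S^n) \to 0$.

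For (iii) $\Rightarrow$ (ii), I would invoke the continuity of eigenvalues together with the rigidity side of the Obata equation. Since $X_i \in \overline{\mathcal{M}}$ and $\overline{\mathcal{M}}$ is Gromov--Hausdorff closed by construction, the limit $S^n$ also lies in $\overline{\mathcal{M}}$; Theorem \ref{ap7} then gives $\lambda_{n+1}(\bar{\Delta}^E, X_i) \to \lambda_{n+1}(\bar{\Delta}^E, S^n)$. To finish, it suffices to show this limit is zero. The $n+1$ linear coordinate functions $f_k(x) = x_k$ on $S^n \subset \mathbb{R}^{n+1}$ are smooth, linearly independent, and classically satisfy $\nabla^2 f_k + f_k g = 0$. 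Hence they span an $(n+1)$-dimensional subspace of $\mathcal{D}^2(\Delta, S^n)$ of Obata solutions, and Theorem \ref{ap8} gives $\lambda_{n+1}(\bar{\Delta}^E, S^n) = 0$.

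The genuinely new input needed is already packaged: the quantitative Main Theorem 1 for the forward direction and the continuity Theorem \ref{ap7} plus the Obata characterization Theorem \ref{ap8} for the backward direction. The only point requiring a moment of care is that in the (iii) $\Rightarrow$ (ii) step one must verify that the constants $(n, K_1, K_2, D, v)$ defining $\overline{\mathcal{M}}$ admit $S^n$ as a member, but this is automatic once $X_i \to S^n$ by closedness of $\overline{\mathcal{M}}$ under Gromov--Hausdorff convergence; no separate compatibility argument is required.
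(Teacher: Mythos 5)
Your proof is correct and follows essentially the route the paper intends (the paper leaves this corollary as ``immediate''): the implication (i)$\Rightarrow$(iii) via approximating $X_i$ by manifolds in $\mathcal{M}$, the eigenvalue continuity of Theorem \ref{ap7}, and the quantitative Main Theorem 1, and (iii)$\Rightarrow$(ii) via closedness of $\overline{\mathcal{M}}$, Theorem \ref{ap7}, and the vanishing $\lambda_{n+1}(\bar{\Delta}^E,S^n)=0$ coming from the coordinate functions (Theorem \ref{ap8}, or equivalently Lemma \ref{p41b} applied to the smooth sphere). No gaps; the only cosmetic difference is that the paper also cites the multi-suspension Theorem \ref{apa1}, which your argument bypasses by using Theorem \ref{p43b} directly.
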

\begin{Cor}\label{ap10}
Given a positive real number $\epsilon>0$, there exists $\delta(n,K_1,D,\epsilon)>0$ such that if $X\in\overline{\mathcal{M}}$ satisfies $\lambda_n(\bar{\Delta}^E,X)\leq\delta$, then $d_{GH}(X,S^n)\leq \epsilon$.
\end{Cor}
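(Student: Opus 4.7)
The plan is to deduce Corollary \ref{ap10} from Main Theorem 1 by a contradiction/diagonal argument, using the eigenvalue continuity result Theorem \ref{ap7} to bridge between smooth manifolds in $\mathcal{M}$ and general limit spaces in $\overline{\mathcal{M}}$.

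Suppose, for the sake of contradiction, the conclusion fails. Then there exist $\epsilon_0>0$ and a sequence $\{X_i\}\subset\overline{\mathcal{M}}$ with $\lambda_n(\bar{\Delta}^E,X_i)\to 0$ while $d_{GH}(X_i,S^n)\geq\epsilon_0$ for every $i$. By the definition of $\overline{\mathcal{M}}$, each $X_i$ is the Gromov--Hausdorff limit of some sequence $\{M_{i,j}\}_{j\in\mathbb{N}}\subset\mathcal{M}$. Theorem \ref{ap7}, applied to the convergent sequence $M_{i,j}\to X_i$ (as $j\to\infty$ with $i$ fixed), tells us that $\lambda_n(\bar{\Delta}^E,M_{i,j})\to\lambda_n(\bar{\Delta}^E,X_i)$.

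A standard diagonal choice then yields indices $j(i)$ such that the manifolds $M_i:=M_{i,j(i)}\in\mathcal{M}$ satisfy
\[
d_{GH}(M_i,X_i)\leq \tfrac1i,\qquad |\lambda_n(\bar{\Delta}^E,M_i)-\lambda_n(\bar{\Delta}^E,X_i)|\leq\tfrac1i.
\]
In particular, $\lambda_n(\bar{\Delta}^E,M_i)\to 0$ and, by the triangle inequality for $d_{GH}$, $d_{GH}(M_i,S^n)\geq \epsilon_0/2$ for all sufficiently large $i$.

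Since $M_i\in\mathcal{M}(n,K_1,K_2,D,v)$, we have $\Ric_{M_i}\geq K_1 g_{M_i}\geq -Kg_{M_i}$ with $K:=\max(-K_1,0)+1$, together with $\diam(M_i)\leq D$. Main Theorem 1 therefore forces $d_{GH}(M_i,S^n)\to 0$, contradicting $d_{GH}(M_i,S^n)\geq\epsilon_0/2$. This contradiction establishes the corollary. The only non-routine input is Theorem \ref{ap7}, which is already proved; everything else is a soft reduction, so no genuinely hard step remains in this argument.
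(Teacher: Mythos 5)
Your argument uses exactly the ingredients the paper intends for this corollary (smooth approximation of $X$ by manifolds in $\mathcal{M}$, the spectral continuity Theorem \ref{ap7}, and Main Theorem 1), so the route is essentially the paper's; the only difference is that you package it as a contradiction/diagonal argument rather than a direct estimate. There is, however, one point where your write-up falls short of the statement as claimed: the corollary asserts $\delta=\delta(n,K_1,D,\epsilon)$, independent of $K_2$ and $v$, and the paper explicitly stresses this right after the corollary. A contradiction argument in which the counterexamples $X_i$ are drawn from one fixed class $\overline{\mathcal{M}}(n,K_1,K_2,D,v)$ only produces some $\delta>0$ for that class, which a priori may depend on $K_2$ and $v$. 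The fix is cheap: either let $X_i\in\overline{\mathcal{M}}(n,K_1,K_{2,i},D,v_i)$ with arbitrary $K_{2,i}$ and $v_i$ --- nothing in your argument changes, since the approximating manifolds are used only through $\Ric\geq K_1 g\geq -Kg$ and $\diam\leq D$, and Theorem \ref{ap7} is applied within each class separately --- or argue directly and quantitatively: for $M_i\in\mathcal{M}$ converging to $X$, Theorem \ref{ap7} gives $\lambda_n(\bar{\Delta}^E,M_i)\to\lambda_n(\bar{\Delta}^E,X)\leq\delta$, Theorem \ref{p43b} gives $d_{GH}(M_i,S^n)\leq C(n,K,D)\lambda_n(\bar{\Delta}^E,M_i)^{\frac{1}{1000n^2}}$, and letting $i\to\infty$ yields $d_{GH}(X,S^n)\leq C(n,K_1,D)\,\delta^{\frac{1}{1000n^2}}$, from which the stated dependence of $\delta$ is immediate. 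Apart from this uniformity point, your reduction is correct.
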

Note that $\delta$ in Corollary \ref{ap10} does not depend on $K_2$ and $v$.
If $\epsilon$ is sufficiently small in Corollary \ref{ap10}, then $X$ is bi-H\"{o}lder equivalent to $S^n$ by Theorem 5.9, Theorem A.1.2, Theorem A.1.3, Theorem A.1.5 in \cite{CC1}. In particular, $X$ is homeomorphic to $S^n$.

\bibliographystyle{amsbook}

\begin{thebibliography}{99}
\bibitem{Ai} M. Aino,
{Riemannian invariants that characterize rotational symmetries of the standard sphere,}
Manuscripta Math. 156 (2018), 241--272.
\bibitem{Au} E. Aubry,
{Pincement sur le spectre et le volume en courbure de Ricci positive,}
Ann. Sci. \'{E}cole Norm. Sup. (4) 38 (2005), 387--405.
\bibitem{Au2} E. Aubry,
{Finiteness of $\pi_1$ and geometric inequalities in almost positive Ricci curvature,}
Ann. Sci. École Norm. Sup. (4) 40 (2007), 675--695.
\bibitem{AG0} E. Aubry, J.-F. Grosjean,
{Spectrum of hypersurfaces with small extrinsic radius or large $\lambda_1$ in
Euclidean spaces,}
J. Funct. Anal. 271 (2016), 1213--1242.
\bibitem{Ch0} J. Cheeger,
{Differentiability of Lipschitz functions on metric measure spaces,}
Geom. Funct. Anal. 9 (1999), 428--517.
\bibitem{Ch} J. Cheeger,
{Degeneration of Riemannian metrics under Ricci curvature bounds,}
Lezioni Fermiane, Scuola Normale Superiore, Pisa, (2001).
\bibitem{CC2} J. Cheeger, T. Colding,
{Lower bounds on Ricci curvature and the almost rigidity of warped products,}
Ann. of Math. (2) 144 (1996), 189--237.
\bibitem{CC1} J. Cheeger, T. Colding,
{On the structure of spaces with Ricci curvature bounded below. I,}
J. Differential Geom. 46 (1997), 406--480.
\bibitem{CC3} J. Cheeger, R. Colding,
{On the structure of spaces with Ricci curvature bounded below. III,}
J. Differential Geom. 54 (2000), no. 1, 37--74.
\bibitem{Chen} L. Chen,
{A remark on regular points of Ricci limit spaces,}
Front. Math. China 11 (2016), 21--26.
\bibitem{CZ} X. Cheng, D. Zhou,
{Rigidity for Closed Totally Umbilical Hypersurfaces in Space Forms,}
J. Geom. Anal. (2014), 1337–-1345.
\bibitem{Co1} T. Colding,
{Shape of manifolds with positive Ricci curvature,}
Invent. Math. 124 (1996), 175--191.
\bibitem{Co2} T. Colding,
{Large manifolds with positive Ricci curvature,}
Invent. Math. 124 (1996), 193--214.
\bibitem{Co3} T. Colding,
{Ricci curvature and volume convergence,}
Ann. of Math. (2) 145 (1997), 477--501.
\bibitem{DM} C. De Lellis, S. M\"{u}ller,
{Optimal rigidity estimates for nearly umbilical surfaces,}
J. Differential. Geom. 69 (2005), 75--110.
\bibitem{DM2} C. De Lellis, S. M\"{u}ller,
{A $C^0$-estimate for nearly umbilical surfaces,}
Calc. Var. Partial Differential Equations 26(3) (2006), 283--296.
\bibitem{DT} C. De Lellis, P.M. Topping
{Almost-Schur lemma,}
Calc. Var. Partial Differential Equations 43 (2012), 347–-354.
\bibitem{DRG} A. De Rosa, S. Gioffr\`{e}
{Quantitative stability for anisotropic nearly umbilical hypersurfaces,}
The Journal of Geometric Analysis, Aug (2018).
\bibitem{Gig} N. Gigli,
{Nonsmooth differential geometry—an approach tailored for spaces with Ricci curvature bounded from below,}
Mem. Amer. Math. Soc. 251 (2018), no. 1196, v+161 pp.
\bibitem{Gr} M. Gromov,
{Metric structures for Riemannian and non-Riemannian spaces,}
Based on the 1981 French original. With appendices by M. Katz, P. Pansu and S. Semmes. Translated from the French by Sean Michael Bates. Progress in Mathematics, 152. Birkhuser Boston, Inc., Boston, MA, 1999. xx+585 pp.
\bibitem{GR} J.-F. Grosjean, J. Roth,
{Eigenvalue pinching and application to the stability and the almost umbilicity of hypersurfaces,}
Math. Z. 271 (2012), 469–-488.
\bibitem{GP} K. Grove,  P. Petersen,
{A pinching theorem for homotopy spheres,}
J. Amer. Math. Soc. 3 (1990), 671-–677.
\bibitem{Ho} S. Honda,
{Ricci curvature and almost spherical multi-suspension,}
Tohoku Math. J. (2) 61 (2009), 499--522. 
\bibitem{Ho0} S. Honda,
{A weakly second-order differential structure on rectifiable metric measure spaces,}
Geom. Topol. 18 (2014), 633--668. 
\bibitem{Ho1} S. Honda,
{Ricci curvature and $L^p$-convergence,}
J. Reine Angew. Math. 705 (2015), 85--154.
\bibitem{Ho2} S. Honda,
{Spectral convergence under bounded Ricci curvature,}
J. Funct. Anal. 273 (2017), 1577--1662.
\bibitem{Ho3} S. Honda,
{Elliptic PDEs on compact Ricci limit spaces and applications,}
Mem. Amer. Math. Soc. 253 (2018), v+92 pp.
\bibitem{KL} Y. Kitabeppu, S. Lakzian,
{Characterization of low dimensional $RCD^\star (K,N)$ spaces,}
Anal. Geom. Metr. Spaces 4 (2016), 187--215. 
\bibitem{Kh} W. K\"{u}hnel,
\textit{Conformal transformations between Einstein spaces,}
Conformal geometry (Bonn, 1985/1986), Aspects Math., E12, vieweg, Braunschweig (1988), 105--146.
\bibitem{Ob} M. Obata, 
\textit{Certain conditions for a Riemannian manifold to be isometric with a sphere,}
J. Math. Soc. Japan 14  (1962), 333--340.
\bibitem{Pz} D. Perez,
{On nearly umbilical hypersurfaces,}
Ph.D. thesis, Zuerich, 2011.
\bibitem{Pe1} P. Petersen,
{On eigenvalue pinching in positive Ricci curvature,}
Invent. Math. 138 (1999), 1--21.
\bibitem{Pe2} P. Petersen,
{On eigenvalue pinching in positive Ricci curvature, Erratum,}
Invent. Math. 155 (2004), 223.
\bibitem{Pe3} P. Petersen,
{Riemannian geometry. Third edition,}
Springer, Cham, 2016. xviii+499 pp.
\bibitem{Rei} R.C. Reilly,
{On the first eigenvalue of the Laplacian for compact submanifolds of Euclidean space,} Comment. Math. Helv. 52 (1977), 525–533.
\bibitem{Ro1} J. Roth,
{A remark on almost umbilical hypersurfaces,}
Arch. Math. (Brno) 49 (2013), 1--7.
\bibitem{RS} J. Roth, J. Scheuer,
{Explicit rigidity of almost-umbilical hypersurfaces,}
Asian J. Math. 22, no. 6, (2018), p. 1075-1088.
\bibitem{Sa} T. Sakai,
{Riemannian geometry,}
Translated from the 1992 Japanese original by the author. Translations of Mathematical Monographs, 149. American Mathematical Society, Providence, RI, (1996), xiv+358 pp.
\bibitem{SY} R. Schoen, S.T. Yau,
{Lectures on differential geometry,}
Conference Proceedings and Lecture Notes in Geometry and Topology, I. International Press, Cambridge, MA, (1994) v+235 pp.
\bibitem{SX} K. Shiohama, H. Xu,
{Rigidity and sphere theorems for submanifolds,}
Kyushu J. Math. 48 (1994), 291--306.
\bibitem{SX2} K. Shiohama, H. Xu,
{Rigidity and sphere theorems for submanifolds II,}
Kyushu J. Math. 54 (2000), 103--109.
\end{thebibliography}

\end{document}